\newtheorem{theorem}{Theorem}[section]
\newtheorem{lemma}[theorem]{Lemma}
\newtheorem{proposition}[theorem]{Proposition}
\newtheorem{df}{Definition}[section]
\newtheorem{cor}[theorem]{Corollary}
\newtheorem{rem}{\it Remark}
\newtheorem{claim}{\it {Claim}}[section]
\newenvironment{altproof}[1]
{\noindent%\addvspace{0.3cm}
	{\em Proof of {#1}}.}
{\nopagebreak\mbox{}\hfill $\Box$\par\addvspace{0.5cm}}
\numberwithin{equation}{section}
\def\i{\mathrm{i}}
\def\R{\mathbb{R}}
\def\dd{\operatorname{d}}
\newcommand{\la}{\langle}
\newcommand{\ra}{\rangle}
\newcommand{\NN}{\mathbb{N}}
\newcommand{\RR}{\mathbb{R}}
\newcommand{\cA}{\mathcal{A}}
\newcommand{\cB}{\mathcal{B}}
\newcommand{\cF}{\mathcal{F}}
\newcommand{\intsigma}{\mathring\Sigma}
\def\deg{\mathrm{deg}}
\def\B{\mathbb{B}}
\date{\today}
\begin{document}
	\title{{Blow-up solutions for  mean field equations with Neumann boundary conditions  on Riemann surfaces  }}
	\author{Mohameden Ahmedou   \and Thomas Bartsch \footnote{Supported by DFG grant BA 1009/19-1.} \and Zhengni Hu \footnote{Supported by CSC No. 202106010046.}}
	
	\date{4 Jan. 2025}
	\maketitle
	%\begin{center}
	%{\sl }
	%\end{center}
	
	%\begin{abstract}
	
	%\end{abstract}
	
	%{\bf MSC 2010:} Primary: ; Secondary: 
	
	%{\bf Key words:} 
	
	%%%
	\noindent{\bf Abstract:} 
	On a compact Riemann surface $(\Sigma, g)$ with a smooth boundary $\partial \Sigma$ and the interior  $\intsigma=\Sigma\setminus\partial\Sigma$, we consider the following mean field equations with Neumann boundary conditions:
	\begin{equation*}
		\left\{\begin{aligned}
			-\Delta_g u &= \lambda \left(\frac{Ve^u}{\int_{\Sigma} Ve^u \, dv_g} - \frac{1}{|\Sigma|_g}\right) & & \text{in } \intsigma\\
			\partial_{\nu_g} u &= 0 & & \text{on } \partial \Sigma
		\end{aligned}\right.,
	\end{equation*}
	We find conditions on potential functions $V$ such that solutions exist for the parameter $\lambda$ when it is in a small right (or left) neighborhood of a critical value $4\pi(m+k)$ for  integers $0\leq k \leq m \in \NN_+$ and blow up as $\lambda$ approaches the critical value parameter. Moreover, the blow-ups  occur exactly at $k$ points in the interior of $\Sigma$ and $(m-k)$ points on the boundary $\partial \Sigma$.\\
	%%%%%%%%%%%%%%%%
	\noindent{\bf Key Words:} Mean field equations;  Blow-up solutions;  Lyapunov-Schmidt reductions \\
	\noindent{\bf 2020 AMS Subject Classification:} 35B33, 35J61, 35R01. 
	
	\newpage
	%constent
	\tableofcontents
	\newpage
	% main parts of the article 
	
	%	\cleardoublepage
	%\include{Hu_intro}
	\section{Introduction}\label{sec:introduction}
	Let $(\Sigma, g)$ be a compact  Riemann surface with smooth boundary $\partial \Sigma$. We are interested in the existence of blow-up solutions of the following mean field equations with the Neumann boundary conditions:
	\begin{equation}~\label{eq:main_eq}
		\left\{\begin{aligned}
			-\Delta_g u &=\lambda\left(\frac{Ve^u}{\int_{\Sigma}  Ve^u d v_g}-\frac{1}{|\Sigma|_g}\right) & & \text { in } \intsigma,\\
			\partial_{ \nu_g } u&=0 & & \text { on } \partial \Sigma.
		\end{aligned}\right.
	\end{equation} 
	Here $\intsigma=\Sigma\setminus\partial\Sigma$ denotes the interior of $\Sigma$, $\Delta_g$ is the Laplace-Beltrami operator, $dv_g$ is the volume element in $(\Sigma, g)$, $|\Sigma|_g=\int_{\Sigma} dv_g$, the potential function $V:\Sigma\to\RR$ is smooth and positive, and $\nu_g$ is the unit outward normal of $\partial \Sigma$. 
	The mean field equation~\eqref{eq:main_eq} arises in many problems:  the Kazdan-Warner problem~\cite{kazdan1974curvature,Ding1997TheDE,nolasco1998sharp}; the prescribed Gauss curvature problem~\cite{chang1987prescribing,chen1987scalar,Chang1993,chang1988conformal,ching1993nirenberg,moser1973nonlinear}; the Chern-Simons-Higgs gauge theory \cite{nolasco1999double,tarantello1996multiple,ding1999multiplicity,ding2001self,caffarelli1995vortex}; statistical mechanics \cite{caglioti1992special,caglioti1995special2,li1999existence,chanillo1994rotational,kiessling1993statistical};
	and the Keller-Segel system for chemotaxis collapse \cite{Senba2000some,Wang2002SteadySS,keller1970,childress1984chemotactic,Battaglia2018}.

	Due to the blow-up analysis in~\cite{lisunyang2023} for Riemann surfaces with boundary, the compactness and concentration phenomenon is observed for~\eqref{eq:main_eq} alternatively. More precisely, we have the following lemma:
	\begin{lemma}[Lemma 3.1 of \cite{lisunyang2023}]\label{thm:cp}
		Assume that $\lambda>0$ and $(u_n)$ be a sequence of solutions of~\eqref{eq:main_eq} with $\lambda_n\rightarrow \lambda$. Up to a subsequence, one of the following alternatives holds:
		\begin{itemize}
			\item 	[(i)] There exists a constant $C_\lambda>0$  such that  $\|u_n\|\leq C_{\lambda} \text{ for any } n;$ 
			\item [(ii)]  there exists a finite singular set $\mathcal{S}=\left\{p_1, \cdots, p_m\right\} \subset \Sigma$ such that for any $1 \leqslant j \leqslant m$, there is a sequence of points $\left\{p_{j, n}\right\} \subset \Sigma$ satisfying $p_{j, n} \rightarrow p_j, u_n\left(p_{j, n}\right) \rightarrow+\infty$. Moreover, if $\mathcal{S}$ has $k$ points in $\Sigma$ and $(m-k)$ points on $\partial \Sigma$, then
			$
			\lambda_n  \rightarrow 4(m+k) \pi.
			$
		\end{itemize}
	\end{lemma}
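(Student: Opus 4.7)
The plan is to follow the classical Brezis--Merle concentration--compactness framework, adapted to a surface with boundary and the Neumann condition. First I would exploit the invariance of \eqref{eq:main_eq} under $u\mapsto u+c$ (which preserves both the right-hand side and the Neumann condition) to normalize the sequence so that $\int_\Sigma Ve^{u_n}\,dv_g=1$, turning the equation into $-\Delta_g u_n=\lambda_n Ve^{u_n}-\lambda_n/|\Sigma|_g$ in $\intsigma$ with $\partial_{\nu_g}u_n=0$ on $\partial\Sigma$. The measures $\mu_n:=\lambda_n Ve^{u_n}\,dv_g$ are then uniformly bounded, and up to a subsequence they converge weakly-$\star$ to a Radon measure $\mu$ on $\Sigma$ with $\mu(\Sigma)=\lim\lambda_n=\lambda$.

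Next I would apply the Brezis--Merle type $L^\infty$ estimate in the Neumann setting: if the local mass $\mu(B_r(p)\cap\Sigma)$ is strictly less than the critical threshold ($4\pi$ for $p\in\intsigma$, $2\pi$ for $p\in\partial\Sigma$), then $\|u_n-\bar u_n\|_{L^\infty_{loc}}$ stays bounded, and standard elliptic regularity (together with the $L^1$ bound on the right-hand side) upgrades this to uniform $C^0_{loc}$ bounds. Define the blow-up set
\[
\mathcal{S}:=\bigl\{p\in\Sigma:\mu(\{p\})\geq 4\pi\text{ if }p\in\intsigma,\ \mu(\{p\})\geq 2\pi\text{ if }p\in\partial\Sigma\bigr\}.
\]
Since $\mu$ has finite total mass, $\mathcal S$ is finite. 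Away from $\mathcal S$ the sequence $(u_n)$ is locally bounded, so if $\mathcal{S}=\emptyset$ we are in alternative (i); otherwise, at each $p_j\in\mathcal{S}$ a standard choice of local maximizers $p_{j,n}\to p_j$ gives $u_n(p_{j,n})\to+\infty$, which is alternative (ii).

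The main obstacle is the mass quantization $\mu(\{p_j\})\in\{4\pi,8\pi\}$ at each blow-up point. For an interior $p_j\in\intsigma$, I would use isothermal coordinates to reduce to the Euclidean Laplacian and perform the standard rescaling $\tilde u_n(x)=u_n(p_{j,n}+\eps_n x)+2\log\eps_n$ with $\eps_n=e^{-u_n(p_{j,n})/2}$; the limit $U$ solves $-\Delta U=V(p_j)e^U$ on $\RR^2$ with $\int_{\RR^2}e^U<\infty$, and the Chen--Li classification of entire solutions yields $\mu(\{p_j\})=8\pi$. For a boundary $p_j\in\partial\Sigma$, I would choose an isothermal half-disk chart in which the Neumann condition becomes $\partial_{x_2}u_n=0$ on $\{x_2=0\}$; even reflection across this axis then extends $u_n$ to a $C^1$ solution of a mean-field equation on the full disk (with the evenly reflected potential). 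The same rescaling now produces an entire solution on $\RR^2$ of total mass $8\pi$ whose half-plane mass is $4\pi$, so $\mu(\{p_j\})=4\pi$.

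Finally, a neck analysis (or equivalently a Pohozaev identity applied on small geodesic balls around each $p_j$, combined with the $C^0_{loc}$ bounds away from $\mathcal{S}$) shows that no residual mass escapes into the regular part, i.e.\ $\mu=\sum_{p_j\in\mathcal{S}}\mu(\{p_j\})\,\delta_{p_j}$. Summing, if $\mathcal{S}$ has $k$ interior points and $m-k$ boundary points,
\[
\lambda_n=\mu_n(\Sigma)\longrightarrow\mu(\Sigma)=8\pi k+4\pi(m-k)=4\pi(m+k),
\]
which is the claimed quantization. I expect the delicate step to be the boundary reflection argument and the accompanying neck/Pohozaev analysis, since one must verify that the reflected potential is regular enough for the Chen--Li classification to apply and that there is truly no mass leakage along the boundary between the concentration points.
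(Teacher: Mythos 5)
This statement is not proved in the paper at all: it is imported verbatim as Lemma~3.1 of \cite{lisunyang2023}, and the present authors use it only as a black box to motivate which values of $\lambda$ are critical. So there is no internal proof to compare your argument against; what can be judged is whether your sketch would actually establish the quoted result. Your outline does follow the route one expects (and essentially the route of the cited blow-up analysis): Brezis--Merle concentration--compactness for the normalized measures $\lambda_n Ve^{u_n}dv_g$, halved thresholds at the boundary via even reflection in isothermal coordinates (which is legitimate here, since $\partial_{\nu_g}u=0$ becomes exactly $\partial_{y_2}u=0$ on $\{y_2=0\}$), rescaling plus the Chen--Li classification at each concentration point, and a Pohozaev/neck argument to rule out residual mass.

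The genuine gap is at the quantization step, which is the heart of the lemma. Your rescaling argument at a blow-up point only produces one bubble, hence only the lower bound $\mu(\{p_j\})\ge 8\pi$ (interior) resp. $\ge 4\pi$ (boundary); the statement $\lambda_n\to 4\pi(m+k)$ requires the exact equalities, i.e. excluding bubble towers and multiple bubbles coalescing at a single point, and proving that $u_n-\bar u_n\to-\infty$ locally uniformly off $\mathcal{S}$ so that the limit measure is purely atomic. That requires the full machinery of Li--Shafrir quantization together with sup+inf (or Green's representation and Pohozaev) estimates, not just the classification of the first bubble, and in the boundary case these estimates must be rechecked for the reflected problem, whose coefficient $V e^{\varphi}$ extended evenly across $\{y_2=0\}$ is in general only Lipschitz (its normal derivative involves $k_g\neq 0$). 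You flag both issues as ``delicate'' but do not resolve them; as written the proposal proves blow-up with masses at least $8\pi$ and $4\pi$, not the exact quantization claimed in alternative (ii). A complete argument should either carry out the simple-blow-up analysis for Lipschitz coefficients or reduce to the boundary-blow-up results already established in \cite{lisunyang2023} and the references therein.
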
 
	If $(ii)$ of Lemma~\ref{thm:cp} holds, we say $\{u_n\}$ is a sequence of blow-up solutions of~\eqref{eq:main_eq} with parameter $\lambda$. The blow-up phenomenon for~\eqref{eq:main_eq} can only appear when the parameter $\lambda\in 4\pi \NN_+$, which is the so-called critical value set. The existence of solutions for $\lambda\notin 4\pi \NN_+$  has been fully addressed in~\cite{lisunyang2023} for general Riemann surfaces with smooth boundary via the min-max scheme introduced by Djadli and Malchiodi in \cite{djadli2008existenceq_curvature}. 
	When $\lambda$ lies in the critical value set, it is more complicated to understand the solutions of mean field equations.
	In~\cite{ahmedou_resonant_2017}, for  Dirichlet boundary conditions on bounded domains of $\RR^2$ the authors establish sufficient conditions for the existence of solutions at the critical value $\lambda$ using the approach of ``critical points at infinity." They compute the topological degree $\dd_\lambda$ under the assumption that a quantity related to the Green's function does not vanish, thereby identifying conditions that ensure $\dd_\lambda \neq 0$. Subsequently, \cite{Ahmedou_Ben_Morse2023} extends their results on Riemann surfaces with boundary. On the other hand, the study of blow-up solutions of mean field equations  has garnered significant interest, too. 
	The blow-up solutions are constructed in  \cite{brezis_uniform_1991, del_pino_singular_2005, pino_collapsing_2006, Esposito2005, ma_convergence_2001, Nagasaki1990AsymptoticAF, suzuki_two_1992} for the mean field equation on domains, and in  \cite{Bartolucci2020, Esposito2014singular, figueroa2022bubbling} for the mean field equations on a compact surface without boundary, and the references therein. 
	
	This paper aims to construct solutions of the mean field equations~\eqref{eq:main_eq} with Neumann boundary conditions for $\lambda$ close to one of the critical parameter values $\lambda_{k,m}:= 4\pi(m+k) $ for $ 0\leq  k\leq m\in \mathbb{N}_+$, which blow up at $k$ points in the interior and $(m-k)$ points on the boundary as $\lambda$ approaches $\lambda_{k,m}$.

	As in \cite{Esposito2005,Esposito2014singular}, our approach to finding blow-up solutions of \eqref{eq:main_eq} is based on variational methods combined with a Lyapunov-Schmidt reduction, which is the so-called ``localized energy method'' (see \cite{pino_collapsing_2006}). The main difficulty in the Neumann boundary conditions scenario is that the blow-up point may appear on the boundary. The estimates for the interior case generally cannot be applied directly to the boundary case. The paper modifies the approximation solutions presented in \cite{Esposito2014singular} for Riemann surfaces and applies  Lyapunov-Schmidt reduction and the variational method to obtain a sufficient condition for the existence of blow-up solutions. Furthermore, depending on the sign of quantities related to the potential function, Gaussian curvature of the surface and geodesic curvature of the boundary, we can construct bubbling solutions for $\lambda$ in a small left (or right) neighborhood of $\lambda_{k,m}$. 
	
	We begin by defining ``stable'' critical points  to state the main results. Specifically, we adopt the following definition of stability for critical points (see~\cite{del_pino_singular_2005,Esposito2005,Li1997OnAS}):
	\begin{df}  Let $D \subset \intsigma^k \times (\partial\Sigma)^{m-k}\setminus \Delta$, where $0\leq k\leq m\in \mathbb{N}_+$ and
		\begin{equation}
			\label{def:thick_d} \Delta:=\{\xi=(\xi_1,\cdots,\xi_m)\in \Sigma^m: \xi_i=\xi_j \text{ for some } i\neq j\}.
		\end{equation} 
		{ And $F: D \rightarrow \mathbb{R}$ be a $C^{1}$-function and  $x^*$ be  {a critical point} of $F$.
			We say $x^*$ is $C^1$-stable if for any closed neighborhood $U$ of $x$ in $ \intsigma^k \times(\partial\Sigma)^{m-k}\setminus \Delta$, there exists $\varepsilon>0$
			such that, if $G: D \rightarrow \mathbb{R}$ is a $C^{1}$-function with $\|F-G\|_{C^1(U)}<\varepsilon$, then G has at least one critical point in $U$.}
	\end{df}
	\begin{rem}
		If one of the following conditions holds:
		\begin{itemize}
			\item [i)] $x^*$ is a non-degenerate critical point of $F$,
			\item [ii)] $x^*$ is a strict local maximizer or minimizer,
			\item [iii)] for any $\varepsilon>0$ the Brouwer degree   $\deg(\nabla F, B_{\varepsilon}(x^*), 0)\neq 0$, where $B_{\varepsilon}(x^*):=\{ x: \text{dist}(x,x^*)<\varepsilon\}$,
		\end{itemize}
		then, $x^*$ is a $C^1$-stable critical point of $F.$
	\end{rem}
	Let $K_g$ be the Gaussian curvature of $\Sigma$ and $k_g$ be the geodesic curvature of the boundary $\partial\Sigma.$
	
	For the one-point blow-up case, we establish the following theorem concerning interior blow-ups:
	\begin{theorem}
		\label{thm:1_point_in}
		Let $\xi$ be a $C^1$-stable critical point of the Robin's function $R^g(\xi)$ (defined in Section~\ref{sec_green}) on $\intsigma$. If 
		$\Delta_g \log V(\xi^*) -2 K_g (\xi^*)+\frac{8\pi}{|\Sigma|_g}>0 \quad (<0 \text{ \it resp.})$ where $K_g$ is the Gaussian curvature of $(\Sigma, g)$
		then for all $\lambda$ in a small right (left \text{ \it resp.}) neighborhood of $8\pi$, there is a  solution $u_{\lambda}$ of
		\eqref{eq:main_eq}.
		As $\lambda\rightarrow 8\pi^+$ ($\lambda\rightarrow 8\pi^-$ resp.), 
		$u_\lambda$ blow up precisely at  $\xi^*$ with 
		\begin{equation*}
			\frac{\lambda V e^{u_{\lambda}}} { \int_{\Sigma} V e^{u_{\lambda}  dv_g}} \rightarrow 8\pi\delta_{\xi^*}, 
		\end{equation*}
		which is  convergent as measures in $\Sigma$ and $\delta_{\xi}$ is the Dirac mass on $\Sigma$ concentrated at point $\xi$. 
	\end{theorem}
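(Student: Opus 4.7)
The plan is to carry out the localized energy method (Lyapunov--Schmidt reduction) in the spirit of \cite{Esposito2005,Esposito2014singular}, adapted to the Neumann setting. First I would introduce a two-parameter family of approximate solutions $W_{\xi,\delta}$ indexed by a concentration point $\xi\in\intsigma$ and a scale $\delta>0$, built from the standard bubble
$$U_{\xi,\delta}(x)=\log\frac{8\delta^2}{\bigl(\delta^2+c(V,\xi)\,d_g(x,\xi)^2\bigr)^2},$$
written in geodesic normal coordinates around $\xi$, with a $V$-dependent normalization $c(V,\xi)>0$ chosen so that $V(\xi)$ is absorbed in the leading-order ansatz. The bubble is then composed with a corrector $H_{\xi,\delta}$, determined by a linear Neumann boundary value problem on $\Sigma$, so that $W_{\xi,\delta}=U_{\xi,\delta}+H_{\xi,\delta}$ satisfies the Neumann condition, has zero mean, and, outside a small geodesic ball centred at $\xi$ (entirely contained in $\intsigma$ since $\xi^*$ is interior), matches $-8\pi G^g(\cdot,\xi)+o(1)$, where $G^g$ is the Neumann Green's function of $-\Delta_g$ whose regular part at $\xi$ is precisely the Robin function $R^g(\xi)$.

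Next, writing $u=W_{\xi,\delta}+\phi$ with $\phi$ small and mean zero, I would linearise~\eqref{eq:main_eq} about $W_{\xi,\delta}$ and solve the auxiliary problem on the $L^2$-orthogonal complement of the approximate three-dimensional kernel spanned by $\partial_\delta W_{\xi,\delta}$ and the two components of $\partial_\xi W_{\xi,\delta}$, which arises from the conformal invariance of the Liouville equation on $\RR^2$. Invertibility of the projected linearised operator with norm $O(|\log\delta|)$, together with a contraction mapping argument in a weighted norm adapted to the bubble scale, yields a unique $\phi=\phi(\xi,\delta)$ of size $o(\delta)$. This reduces~\eqref{eq:main_eq} to finding a critical point of the three-variable function $F_\lambda(\xi,\delta):=J_\lambda(W_{\xi,\delta}+\phi(\xi,\delta))$, where $J_\lambda$ is the natural variational potential of~\eqref{eq:main_eq}.

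The decisive computation is the asymptotic expansion of $F_\lambda$ as $\delta\to0$. Integrating in geodesic normal coordinates and using $dv_g=(1-\tfrac{K_g(\xi)}{6}|y|^2+O(|y|^3))\,dy$, together with the Neumann Green identity and sharp estimates on the corrector, one expects
$$F_\lambda(\xi,\delta)=c_0(\lambda)+8\pi R^g(\xi)-(\lambda-8\pi)\log\delta^2+c_1\,\delta^2\Bigl[\Delta_g\log V(\xi)-2K_g(\xi)+\tfrac{8\pi}{|\Sigma|_g}\Bigr]+o(\delta^2),$$
with an explicit constant $c_1>0$, the contributions of $V(\xi)$ having been absorbed into the ansatz. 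Imposing $\partial_\delta F_\lambda=0$ gives the constitutive relation
$$\lambda-8\pi=c_1\,\delta^2\Bigl[\Delta_g\log V(\xi)-2K_g(\xi)+\tfrac{8\pi}{|\Sigma|_g}\Bigr]+o(\delta^2),$$
which is solvable for small $\delta>0$ only when $\lambda-8\pi$ has the same sign as the bracket evaluated near $\xi^*$. This yields the left/right-neighbourhood dichotomy of the theorem. Substituting the optimal $\delta=\delta_\lambda(\xi)$ reduces $F_\lambda$ to a $C^1$ function of $\xi$ alone converging, in $C^1$ on compact subsets of $\intsigma$, to a nonzero multiple of $R^g(\xi)$. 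The $C^1$-stability of $\xi^*$ then produces a critical point $\xi_\lambda\to\xi^*$ of the reduced functional, and the associated $u_\lambda=W_{\xi_\lambda,\delta_\lambda}+\phi(\xi_\lambda,\delta_\lambda)$ solves~\eqref{eq:main_eq}; the measure convergence $\lambda V e^{u_\lambda}/\int V e^{u_\lambda}\to 8\pi\delta_{\xi^*}$ follows from the mass of the standard bubble together with $\|\phi\|_{L^\infty}=o(1)$.

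The main obstacle will be deriving the expansion of $F_\lambda$ to the required accuracy $o(\delta^2)$. This demands sharp $C^1$ control, in $\xi$, of both the Neumann corrector $H_{\xi,\delta}$---through fine properties of the Neumann Green's function and the geodesic-coordinate expansion of its regular part, where the $\tfrac{8\pi}{|\Sigma|_g}$ contribution enters via the zero-mean constraint---and of the remainder $\phi(\xi,\delta)$. Equally, one must carry out the reduction in $C^1$ rather than in $C^0$ so that the $C^1$-stability of $\xi^*$ can be invoked at the end; this forces the use of weighted norms adapted to the bubble scale $\delta$ and careful differentiation under the integral sign when tracking the $\xi$-dependence of every ingredient.
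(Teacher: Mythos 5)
Your overall strategy coincides with the paper's (projected bubbles adapted to the Neumann problem, Lyapunov--Schmidt reduction, expansion of the reduced energy, solving for the scale, then invoking $C^1$-stability), but there is a genuine gap in the step where you claim the potential is ``absorbed into the ansatz''. No $V$-dependent normalization $c(V,\xi)$ of the bubble scale can remove $\log V$ from the reduced functional: reparametrizing the concentration parameter by a $\xi$-dependent factor only shifts the reduced energy by terms of the form $(\lambda-8\pi)\log c(V,\xi)$ (coming from the $2(\lambda-8\pi)\log\rho$ term), and these vanish as $\lambda\to 8\pi$. The $O(1)$ part of the reduced energy is independent of such normalizations and equals, up to additive constants, $-\tfrac12\bigl(64\pi^2R^g(\xi)+16\pi\log V(\xi)\bigr)$, i.e.\ $-\tfrac12\cF_{1,1}(\xi)$ as in the expansion \eqref{expansion_JW}. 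Hence your reduced function of $\xi$ converges in $C^1$ to a multiple of $64\pi^2R^g+16\pi\log V$, not of $R^g$ alone, and the final step must use a stable critical point of this combined Kirchhoff--Routh type function (the paper's $\cF_{k,m}$, cf.\ Theorem~\ref{main_thm}). Moreover, it is only at such critical points, where $\nabla\log\bigl(Ve^{8\pi H^g(\cdot,\xi)}\bigr)\big|_{\xi}=0$, that the second-order coefficient collapses to $\Delta_g\log V-2K_g+\tfrac{8\pi}{|\Sigma|_g}$; for general $\xi$ the coefficient is $\cA_2(\xi)$, which also carries $|\nabla\log\tau|^2$-type contributions. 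As written, your plan would ``prove'' the statement for a critical point of $R^g$ by a mechanism that does not exist.

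There is also a quantitative slip in the anticipated expansion: the curvature/potential bracket multiplies $\delta^2\log\delta$ (the term $\cA_2(\xi)\rho^2\log\rho$ in \eqref{expansion_JW}), not $\delta^2$; the pure $\delta^2$ coefficient is the different global quantity $\cB(\xi)$. Consequently the relation obtained from $\partial_\delta F_\lambda=0$ is $\lambda-8\pi\approx c\,\delta^2|\log\delta|\,\bigl[\Delta_g\log V-2K_g+\tfrac{8\pi}{|\Sigma|_g}\bigr]$, and the solvability analysis is more delicate than a clean implicit-function step: the paper sets $\rho=\mu\sqrt{\lambda-8\pi}$ and verifies a sign change of $\rho\,\partial_\rho E_\lambda$ together with negativity of $\rho^2\partial^2_\rho E_\lambda$ on an interval whose lower endpoint shrinks like $|\log(\lambda-8\pi)|^{-1/2}$, restricted to the set where $|\nabla\cF_{k,m}|$ is small, before extending $\rho(\lambda,\xi)$ by a cut-off to apply stability. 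Your sign dichotomy survives, but the claimed expansion with error $o(\delta^2)$ and a $\delta^2$ coefficient equal to that bracket is not what the computation yields. A secondary caution: in geodesic normal coordinates the standard bubble is not an exact solution of the rescaled local problem, and the resulting error enters precisely at the $\delta^2\log\delta$ and $\delta^2$ orders you need; the paper avoids this by working in isothermal coordinates normalized so that $\varphi_\xi(0)=0$ and $\nabla\varphi_\xi(0)=0$, through which $-2K_g$ enters via $-\Delta\varphi_\xi=2K_ge^{\varphi_\xi}$.
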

	We further consider boundary blow-up scenarios with a single blow-up point, as in the following theorem:
	\begin{theorem}
		\label{thm:1_point_bo}
		Let $\xi^*$ be a $C^1$-stable critical point of the Robin's function $R^g(\xi)$ (defined in Section~\ref{sec_green}) on $\partial\Sigma$. If  in a small closed neighborhood of $\xi^*$: 
	$$  \partial_{\nu_g}\log V +2k_g= 0, \quad \Delta_g \log V -2 K_g +4k_g+\frac{4\pi}{|\Sigma|_g}>0 \quad ( <0\text{ \it resp.}),$$ 
then for all $\lambda$ in a small right (left \text{ \it resp.}) neighborhood of $4\pi$, there is a  solution $u_{\lambda}$ of
		\eqref{eq:main_eq}.
		As $\lambda\rightarrow 4\pi^+$ ($\lambda\rightarrow 4\pi^-$ resp.), 
		$u_\lambda$ blow up precisely at points $\xi^*$  (up to a subsequence), 
		\begin{equation*}
			\frac{\lambda V e^{u_{\lambda}}} { \int_{\Sigma} V e^{u_{\lambda}  dv_g}} \rightarrow 4\pi\delta_{\xi^*}, 
		\end{equation*}
		which is convergent as measures on  $\Sigma$.
	\end{theorem}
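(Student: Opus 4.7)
The plan is to extend the localized energy method used to establish Theorem~\ref{thm:1_point_in} to a concentration point lying on $\partial\Sigma$. Fix Fermi coordinates $(x_1,x_2)$ centered at $\xi^*$ that straighten $\partial\Sigma$ to $\{x_2=0\}$, with a half-neighborhood in $\Sigma$ identified with $\{x_2\geq 0\}$. For $\xi\in\partial\Sigma$ close to $\xi^*$ and $\mu>0$ small, take as building block the standard bubble
\begin{equation*}
U_{\xi,\mu}(x)=\log\frac{8\mu^2}{(\mu^2+|x-\xi|^2)^2}.
\end{equation*}
Its radial symmetry and the fact that its center sits on the flattened boundary make it an approximate solution of \eqref{eq:main_eq} carrying mass $4\pi$ inside $\Sigma$ which approximately satisfies the Neumann condition on $\partial\Sigma$. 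Define the projected bubble $PU_{\xi,\mu}$ by correcting via the Neumann Green's function $G^g(\cdot,\xi)$ of Section~\ref{sec_green}, so that $PU_{\xi,\mu}$ satisfies the Neumann boundary condition exactly and has zero mean on $\Sigma$. The ansatz for a blow-up solution is $u_\lambda=PU_{\xi,\mu}+\phi$, with $\phi$ in the $H^1$-orthogonal complement of the approximate kernel spanned by $\partial_\mu PU_{\xi,\mu}$ and the single tangential derivative $\partial_{\xi_T}PU_{\xi,\mu}$ along $\partial\Sigma$; the normal direction is unavailable because $\xi$ is constrained to lie on $\partial\Sigma$.

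The next step is the standard Lyapunov--Schmidt reduction. After showing that the linearized operator
\begin{equation*}
L_{\xi,\mu}\phi=-\Delta_g\phi-\lambda\,\frac{Ve^{PU_{\xi,\mu}}}{\int_\Sigma Ve^{PU_{\xi,\mu}}\,dv_g}\,\phi,
\end{equation*}
with Neumann boundary condition, is uniformly invertible on this orthogonal complement---via the half-plane adaptation of the a priori estimates of \cite{Esposito2005,Esposito2014singular}---a contraction map yields a unique small correction $\phi=\phi_\lambda(\xi,\mu)$ depending smoothly on $(\xi,\mu)$. Solving \eqref{eq:main_eq} thus reduces to finding critical points of the finite-dimensional reduced functional
\begin{equation*}
F_\lambda(\xi,\mu):=J_\lambda\bigl(PU_{\xi,\mu}+\phi_\lambda(\xi,\mu)\bigr),
\end{equation*}
where $J_\lambda$ is the energy associated with \eqref{eq:main_eq} and $(\xi,\mu)$ ranges over a small neighborhood of $(\xi^*,0)$ in $\partial\Sigma\times(0,\infty)$.

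The analytic core is the expansion of $F_\lambda$. A careful computation in Fermi coordinates, collecting the $k_g$-contributions coming from the expansion of $g$ and from every boundary integration by parts, yields schematically
\begin{equation*}
F_\lambda(\xi,\mu)=A_\lambda(\mu)+4\pi R^g(\xi)+a_1\,\mu\bigl(\partial_{\nu_g}\log V(\xi)+2k_g(\xi)\bigr)+a_2\,\mu^2\bigl(\Delta_g\log V(\xi)-2K_g(\xi)+4k_g(\xi)+\tfrac{4\pi}{|\Sigma|_g}\bigr)+\mathrm{l.o.t.},
\end{equation*}
with explicit positive constants $a_1,a_2$ and $A_\lambda(\mu)$ collecting the $\xi$-independent contributions, most importantly a $(\lambda-4\pi)\log(1/\mu)$-term. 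The first-order $O(\mu)$ coefficient is the hallmark of boundary concentration and is absent from the interior case: unless it vanishes in a whole neighborhood of $\xi^*$, tangential variations of $\xi$ would be dictated by this linear term and would force a $\mu$-scale incompatible with $\lambda$ approaching $4\pi$. This is precisely the role of the hypothesis $\partial_{\nu_g}\log V+2k_g\equiv 0$ near $\xi^*$. Once that term is eliminated, $\partial_\mu F_\lambda=0$ can be solved to leading order for $\mu=\mu_\lambda(\xi)>0$ provided $(\lambda-4\pi)$ has the correct sign relative to the second bracket at $\xi^*$: a positive bracket yields a small solution $\mu>0$ iff $\lambda>4\pi$, and a negative one iff $\lambda<4\pi$. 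Substitution produces a reduced functional $\widetilde F_\lambda(\xi):=F_\lambda(\xi,\mu_\lambda(\xi))$ on a neighborhood of $\xi^*$ in $\partial\Sigma$ whose leading $\xi$-dependence is $4\pi R^g(\xi)$ up to a $C^1$-small perturbation, so the $C^1$-stability of $\xi^*$ as a critical point of $R^g|_{\partial\Sigma}$ furnishes a critical point $\xi_\lambda\to\xi^*$ of $\widetilde F_\lambda$, hence a blow-up solution $u_\lambda$; the measure convergence $\lambda Ve^{u_\lambda}/\!\int_\Sigma Ve^{u_\lambda}\,dv_g\to 4\pi\delta_{\xi^*}$ then follows from the bubble profile and the smallness of $\phi_\lambda$. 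The main obstacle I anticipate is the derivation of the boundary-adapted expansion itself: every $k_g$-contribution coming from the metric, the boundary measure, and boundary integrations by parts must be tracked carefully in order to reproduce the exact combinations $\partial_{\nu_g}\log V+2k_g$ and $\Delta_g\log V-2K_g+4k_g+\tfrac{4\pi}{|\Sigma|_g}$ that appear in the hypotheses.
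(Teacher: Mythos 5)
Your overall strategy is the same as the paper's: the paper proves this statement as the special case $k=0$, $m=1$ of Theorem~\ref{main_thm}, i.e.\ projected Liouville bubble centered on $\partial\Sigma$, Lyapunov--Schmidt reduction with only the dilation and the single tangential translation in the approximate kernel, an expansion of the reduced energy whose first-order coefficient is (a multiple of) $\partial_{\nu_g}\log V+2k_g$ and whose second-order coefficient is the curvature bracket, then solving $\partial_\mu F_\lambda=0$ first and invoking $C^1$-stability in $\xi$. Two points, however, need repair. First, the coordinate choice: the paper works in refined isothermal coordinates (Claim~\ref{claim:0}), with $\varphi_\xi(0)=0$ and $\nabla\varphi_\xi(0)=(0,-2k_g(\xi))$, precisely because the problem is conformally covariant; then the flat bubble solves the transformed equation exactly and the $k_g$-contributions enter only through $\varphi_\xi$ and the boundary identity \eqref{eq:b_restrict}. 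In Fermi coordinates $\Delta_g$ is not the flat Laplacian, so $(\Delta_g-\Delta)U_{\xi,\mu}$ produces errors of size $O(\mu)$ near the concentration point --- exactly the order of the $\mathcal{A}_1$-term you are trying to isolate --- and the half-plane linear theory you invoke is formulated for the conformally flat model operator. As written, the expansion step is not obviously controllable; to carry it out one is effectively forced back to isothermal coordinates, as the paper does.

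Second, two inaccuracies in the schematic expansion. The $O(1)$ part of the reduced energy is not $4\pi R^g(\xi)$ alone but $\tfrac12\cF_{0,1}(\xi)=8\pi\log V(\xi)+16\pi^2R^g(\xi)$ (see Section~\ref{introf} and Proposition~\ref{pro_expansion_Elamda}); the hypothesis $\partial_{\nu_g}\log V+2k_g\equiv0$ kills only the normal derivative of $\log V$, so the tangential variation of $\log V$ survives and the finite-dimensional argument must be run for $\cF_{0,1}$, not for $R^g$ by itself (the paper's own statement is loose on this point, but your proof should use $\cF_{0,1}$). Also, once $\mathcal{A}_1\equiv0$, the next term in the paper's expansion is $\mathcal{A}_2(\xi)\,\rho^2\log\rho-\cB(\xi)\rho^2$, not a pure $\mu^2$-term; the $\log$ factor does not change the sign dichotomy (bracket $>0$ forces $\lambda>4\pi$, bracket $<0$ forces $\lambda<4\pi$), but it does fix the concentration scale $\mu_\lambda^2\sim\bigl(\mathcal{A}_2|\log(\lambda-4\pi)|\bigr)^{-1}$ and is needed to verify the sign conditions $\partial_\mu F_\lambda>0$ at the lower endpoint and $<0$ at the upper endpoint of the $\mu$-interval, as in the proof of Theorem~\ref{main_thm}. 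With these corrections your argument coincides with the paper's.
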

	
	For scenarios of multiple blow-ups, we have the following theorem:
	\begin{theorem}\label{cor:refine_mean_2}
		Let  $m\geq k\geq 0$  and $\partial_{\nu_g} \log V=-2k_g$ on $\partial\Sigma$ satisfying that 
		\[ m+k>\max\left\{\sup_{\Sigma} (-\Delta_g\log V+2K_g), \sup_{\partial \Sigma} (-\Delta_g\log V+2K_g+4k_g^2)\right\}, \]
		\[ \left( m+k< \min \left\{\inf_{\Sigma} (-\Delta_g\log V+2K_g), \inf_{\partial \Sigma} (-\Delta_g\log V+2K_g+4k^2_g)\right\}  \text{ resp.}\right).\]
		If there exists a $C^1$-stable critical point of $\cF_{k,m}$, denoted by $\xi^*= (\xi^*_1,\cdots,\xi^*_m)$, then 
		there exists a solution $u_{\lambda}$ of~\eqref{eq:main_eq} for $\lambda$ a small right (left resp.) neighborhood of $\lambda_{k,m}:=4\pi(m+k)$  blowing up at points $\xi^*_1,\cdots,\xi^*_m$.   Moreover, there exists a family of blow-up solutions $u_\lambda$ of~\eqref{eq:main_eq} with $\lambda\rightarrow \lambda_{k,m}$  such that 
		\[ \lambda\frac{V e^{u_\lambda}}{\int_\Sigma V e^{u_\lambda}}\rightarrow \sum_{i=1}^k 8\pi \delta_{\xi^*_i}+ \sum_{i=k+1}^m 4\pi \delta_{\xi^*_i}, \]
		which is convergent as measures in $\Sigma.$
	\end{theorem}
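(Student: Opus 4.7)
The plan is to construct the blow-up solutions via a localized energy Lyapunov--Schmidt reduction in the spirit of \cite{Esposito2005,Esposito2014singular,pino_collapsing_2006}, with the additional care required to accommodate bubbles sitting on the Neumann boundary. Concretely, for a configuration $\xi=(\xi_1,\dots,\xi_m)\in\intsigma^k\times(\partial\Sigma)^{m-k}\setminus\Delta$ and concentration parameters $\delta=(\delta_1,\dots,\delta_m)$ to be chosen, I would first build an approximate solution $W_\xi$ by patching standard interior bubbles near each $\xi_i$ with $i\leq k$, and half-bubbles (obtained by even reflection across $\partial\Sigma$ in Fermi coordinates) near each $\xi_i$ with $i>k$. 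The scales $\delta_i$ are calibrated so that the interior bubbles carry mass $8\pi$ and the boundary bubbles carry mass $4\pi$, so that the total mass equals $4\pi(m+k)=\lambda_{k,m}$. The global part of $W_\xi$ is corrected using the regular part of the Neumann Green's function and the Robin function $R^g$, so that both the zero-mean normalization implicit in \eqref{eq:main_eq} and the Neumann boundary condition hold to leading order. The hypothesis $\partial_{\nu_g}\log V=-2k_g$ on $\partial\Sigma$ is crucial here, since it makes the first-order boundary expansion of $\log V$ compatible with the reflection symmetry of the half-bubble and kills what would otherwise be an $O(\delta)$ obstruction at each boundary bubble.

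Second, I would perform the Lyapunov--Schmidt reduction at $W_\xi$. The linearization $L_\xi$ of the mean-field operator at a single standard bubble has the well-known $2$-dimensional kernel generated by translations; after reflection, its boundary counterpart inherits a $1$-dimensional kernel generated by tangential translations (the normal direction is killed by the Neumann condition). The finite-dimensional kernel $K_\xi$ thus has dimension $2k+(m-k)=m+k$. In suitable weighted norms up to the boundary, a standard a priori argument gives uniform invertibility of the projection of $L_\xi$ onto $K_\xi^\perp$, uniformly for $\xi$ in compact subsets of the configuration space. A contraction mapping then produces a correction $\phi_\lambda(\xi)\in K_\xi^\perp$ solving \eqref{eq:main_eq} modulo $K_\xi$, with a smallness estimate of the form $\|\phi_\lambda(\xi)\|=o(1)$ as $\lambda\to\lambda_{k,m}$.

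Third, the problem reduces to finding a critical point of the reduced energy $J_\lambda(\xi):=I_\lambda(W_\xi+\phi_\lambda(\xi))$, where $I_\lambda$ is the variational functional associated to \eqref{eq:main_eq}. Expanding the bubble energies and incorporating the corrections from step two, one obtains an expansion of the form
\begin{equation*}
J_\lambda(\xi)=c_\lambda+4\pi\bigl[\cF_{k,m}(\xi)+o(1)\bigr],
\end{equation*}
uniformly on compact subsets of the configuration space, where the coefficient controlling optimization in the scale parameters $\delta_i$ carries the sign of $(m+k)-(-\Delta_g\log V+2K_g)$ at interior bubbles and of $(m+k)-(-\Delta_g\log V+2K_g+4k_g^2)$ at boundary bubbles. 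The strict inequality on $m+k$ in the hypothesis fixes the sign of this optimization and forces $\lambda$ to approach $\lambda_{k,m}$ strictly from the right (respectively the left) for the reduction to admit a critical point near $\xi^*$. Once the sign is fixed, the $C^1$-stability of $\xi^*$ as a critical point of $\cF_{k,m}$ directly yields, by definition of stability, a critical point $\xi_\lambda$ of $J_\lambda$ in a small neighborhood of $\xi^*$, hence a solution $u_\lambda=W_{\xi_\lambda}+\phi_\lambda(\xi_\lambda)$ of \eqref{eq:main_eq}; the claimed measure convergence follows at once from the bubble profile of $W_{\xi_\lambda}$ and the smallness of $\phi_\lambda$.

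The main obstacle I expect is the boundary analysis. Interior bubbles can be treated essentially as in \cite{Esposito2005,Esposito2014singular}, but each boundary bubble requires (i) a Fermi chart flattening $\partial\Sigma$ while preserving the conformal class to leading order, (ii) an even reflection that extends the half-bubble across $\partial\Sigma$ so that Neumann data become automatic and the classical interior nondegeneracy theory applies to the linearization, and (iii) a careful bookkeeping of $k_g$ in every expansion, producing the $4k_g^2$ correction appearing in the hypothesis and making the assumption $\partial_{\nu_g}\log V+2k_g=0$ indispensable for cancelling the first-order boundary defect. Keeping all error estimates uniform up to $\partial\Sigma$, and verifying that the $L^2$-projection onto $K_\xi^\perp$ does not degrade the smallness of $\phi_\lambda$ near the boundary, is the technical heart of the argument.
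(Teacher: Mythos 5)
Your overall strategy coincides with the paper's: adapted interior/boundary bubbles, a Lyapunov--Schmidt reduction, a reduced energy that is a perturbation of $\cF_{k,m}$, the hypothesis $\partial_{\nu_g}\log V=-2k_g$ killing the first-order boundary term, and the conditions on $m+k$ fixing the sign of the scale optimization and hence the side of $\lambda_{k,m}$ (the paper packages this as a special case of Theorem~\ref{main_thm}, with $\cA_1\equiv 0$ and the sign of $\cA_2$). But your linear step, as stated, is wrong: the kernel of the limiting Liouville linearization is not spanned by translations alone. On $\RR^2$ it is spanned by $z_0,z_1,z_2$, on the half-plane with Neumann data by $z_0,z_1$, and for the nonlocal mean-field linearization constants enter as well. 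Consequently, uniform invertibility of the linearization on the orthogonal complement of an $(m+k)$-dimensional space of translation modes fails; the dilation modes are approximate kernel elements exactly in the regime $\lambda\to\lambda_{k,m}$. The paper projects out, besides the $m+k$ translations, the combined dilation direction $PZ=\sum_i PZ_{i0}$ and keeps a common scale $\rho$ (with $\rho_i^2=\rho^2\tau_i(\xi_i)$) as a reduction variable; it is the scale equation $\partial_\rho E_\lambda=0$, balancing $2(\lambda-\lambda_{k,m})/\rho$ against $2\cA_2(\xi)\rho\log\rho$, that decides whether solutions exist for $\lambda$ slightly above or below $\lambda_{k,m}$. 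Optimizing in the scales while excluding the dilation modes from the projected kernel is internally inconsistent.

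The second gap is that the central sign identification is asserted rather than derived, and pointwise it is not quite what you claim. The relevant coefficient is $\cA_2(\xi)$, which involves $\Delta_g\log\tau_i+|\nabla\log\tau_i|^2-2K_g$ at $\xi_i$ with $\log\tau_i=\log V+\varrho(\xi_i)H^g(\cdot,\xi_i)+\sum_{l\neq i}\varrho(\xi_l)G^g(\cdot,\xi_l)$. The factor $4\pi(m+k)/|\Sigma|_g$ in the hypothesis enters only through $\Delta_g H^g(\cdot,\xi_i)=1/|\Sigma|_g$ near $\xi_i$ and $\Delta_g G^g(\cdot,\xi_l)=1/|\Sigma|_g$ away from $\xi_l$, i.e.\ from the Green-function part of the ansatz, not from the bubble or from $V$; and the term $|\nabla\log\tau_i|^2$ disappears only at critical points of $\cF_{k,m}$, where $\nabla\log\tau_i(\xi_i)=0$. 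Hence the hypotheses on $m+k$ give the required sign of $\cA_2$ only on a small neighborhood of $\xi^*$, and for the left-neighborhood case this localization is essential, since $|\nabla\log\tau_i|^2\geq 0$ works against negativity. Finally, to conclude via $C^1$-stability you need the reduced energy, after solving the scale equation, to converge to $-\tfrac12\cF_{k,m}$ (up to additive constants) in $C^1$ uniformly on that neighborhood; the $C^0$ expansion you state is not sufficient, and controlling $\partial_\xi$ of the solved scale $\rho(\lambda,\xi)$ (as the paper does through the mixed derivative estimates and a cut-off extension) is precisely the missing ingredient.
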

	The paper is organized as follows:\\ Section~\ref{sec:introduction} provides a concise overview of the background and the key conclusions of this study. Section~\ref{sec:2} details the main theorems and analyzes the conditions applied in different settings. Section~\ref{prelim} is dedicated to constructing approximate solutions using Liouville-type equations and providing their asymptotic estimates. Section~\ref{reduction} discusses applying the Lyapunov-Schmidt reduction method, which transforms our problem into a finite-dimensional one. The asymptotic behavior of the reduced energy functional is analyzed in Section~\ref{expansion}, highlighting its connection to the reduced function $\cF_{k,m}$. The proofs of our main theorems are presented in Section~\ref{proof_main_thm}. Finally, Section~\ref{appendix} includes technical estimates as an appendix.

	\section{Main results}\label{sec:2}
	Let $K_g$ be the Gaussian curvature of the Riemann surface $(\Sigma,g)$ and $k_g$ is the geodesic curvature of the boundary $\partial\Sigma$,  $\lambda_{k,m}=4\pi(m+k)$ and $\varrho(x)=
	8\pi \text{ if }\xi\in \intsigma \text{ and }
	4\pi  \text{ if }\xi\in \partial\Sigma$, through the whole paper.

	In this section, we discuss  definitions of the quantities $\cA_1$, $\cA_2$, and $\cB$, whose signs decide whether the solutions we constructed lie in the left or right side of the critical value $\lambda_{k,m}$ and illustrate the main theorems that we obtain. 
	
	Given any integers $0\leq k\leq m\in \NN_+$,  we define a scaling function
	\begin{equation*}
		\tau_i(x)=V(x)e^{\varrho(\xi_i)H^g(x,\xi_i)+\sum_{l\neq i}\varrho(\xi_l)G^g(x,\xi_l)},
	\end{equation*}
	where  $H^g(,\xi_i)$ is the regular part of the Green's function $G^g(\cdot,\xi_i)$ for $i=1,\cdots,m$ (see Section~\ref{sec_green}).

	For any $\xi=(\xi_1,\cdots,\xi_m)\in \intsigma^k\times(\partial\Sigma)^{m-k}\setminus\Delta$, we define 
	\begin{equation}\label{eq:A_xi_1}
		\mathcal{A}_1(\xi)=-\sum_{i=k+1}^m4\pi \sqrt{\tau_i} (\partial_{ \nu_g}\log V+ 2k_g)|_{\xi_i},
	\end{equation}
	\begin{equation}
		\label{eq:A_xi_2}	\mathcal{A}_2(\xi)=\sum_{i=1}^m 4\pi (\Delta_g\tau_i-2K_g\tau_i)|_{\xi_i}+ \sum_{i=k+1}^m 8\pi  k_g(\partial_{\nu_g
		}\log V +k_g)\tau_i|_{\xi_i},
	\end{equation}
	and 
	\begin{equation}
		\label{eq:B_xi}
		\begin{array}{lcl}
			&&\mathcal{B}(\xi)=-\sum_{i=1}^m 2\pi \log(\tau_i) (\Delta_g\tau_i-2K_g\tau_i)|_{\xi_i} - \sum_{i=k+1}^{m} 4\pi k_g(\partial_{\nu_g}\log V+k_g)\tau_i\log\tau_i |_{\xi_i}\\
			&&-\frac 1 2\mathcal{A}_2(\xi)
			+\lim_{r\rightarrow 0}\left( 8\int_{\Sigma\setminus \cup_{i=1}^mU_{r}(\xi_i)}Ve^{\sum_{i=1}^m \varrho(\xi_i)G^g(x,\xi_i)} dv_g- \right.\\	&&\left. \frac{1}{r^2}\left(\sum_{i=1}^m \varrho(\xi_i)\tau_i|_{\xi_i}-8 \sum_{i=k+1}^m  (\partial_{\nu_g}\log V+2k_g )\tau_i |_{\xi_i} \right)-\mathcal{A}_2(\xi)\log \frac 1 r\right). 
		\end{array}
	\end{equation}  
	
	The reduced function
	$\cF_{k,m}:  \Xi_{k,m}:=\intsigma^{k}\times (\partial \Sigma)^{m-k}\setminus \Delta\rightarrow \mathbb{R}$ is defined as
	\begin{equation*}
		{ \cF_{k,m}(\xi_1,\cdots,\xi_m)=\sum_{i=1}^m\varrho^2(\xi_i) R^g(\xi_i)+\sum^m _{ 
				\begin{array}{l}
					i,j=1\\
					i\neq j
			\end{array} }  \varrho(\xi_i)\varrho(\xi_j) G^g(\xi_i,\xi_j)}+  \sum_{i=1}^m 2\varrho(\xi_i)\log V(\xi_i),
	\end{equation*}
	where $R^g$ is the Robin's function and $G^g$ is the Green's function (refer to  Section~\ref{sec_green}) and the thick diagonal $\Delta$ is defined by~\eqref{def:thick_d}.

	The following theorem shows the existence of a family of blow-up solutions to~\eqref{eq:main_eq}, under conditions involving the potential functions, the Gaussian curvature, and the geodesic curvature of the boundary.
	\begin{theorem}~\label{main_thm}
		Given integers $0\leq k\leq m\in \NN_+$. Let  $\xi^* \in   \intsigma^k \times (\partial \Sigma)^{m-k}\setminus \Delta$ be a $C^1$-stable critical
		point of $\cF_{k,m}$.  $\mathcal{A}_1,\mathcal{A}_2,\mathcal{B}$  are defined by~\eqref{eq:A_xi_1},~\eqref{eq:A_xi_2} and~\eqref{eq:B_xi}, respectively. If one of the following conditions is satisfied in a closed neighborhood of $\xi^*$: \begin{itemize}
			\item 
			[$\mathfrak{a}1$.] $\mathcal{A}_1=0$ and $\mathcal{A}_2> 0(< 0\text{ resp.})$ 
			\item[$\mathfrak{a}2$.]$\mathcal{A}_2=0$, $\mathcal{A}_1=0$ and $\mathcal{B}>0(<0\text{ resp.})$
		\end{itemize}
		then for all $\lambda$ in a small right (left \text{ \it resp.}) neighborhood of $\lambda_{k,m}$, there is a  solution $u_{\lambda}$ of
		\eqref{eq:main_eq}.
		As $\lambda\rightarrow \lambda_{k,m}^+$ ($\lambda\rightarrow \lambda_{k,m}^-$ resp.), 
		$u_\lambda$ blow up precisely at points $\xi_{1}, \cdots, \xi_{m}$, such that $\left(\xi_{1}, \cdots, \xi_{m}\right) \in K $ (up to a subsequence), 
		\begin{equation*}
			\frac{\lambda V e^{u_{\lambda}}} { \int_{\Sigma} V e^{u_{\lambda}  dv_g}} \rightarrow \sum_{ i=1}^m \varrho(\xi_i)\delta_{\xi_i}, 
		\end{equation*}
		which is  convergent as measures in $\Sigma$.
	\end{theorem}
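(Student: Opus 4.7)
The plan is to construct the family $u_\lambda$ by a localized energy (Lyapunov--Schmidt) scheme, realizing each $u_\lambda$ as a small perturbation of an explicit sum of Liouville-type bubbles centered near $\xi^*$, and then reducing the existence problem to finding a critical point of a finite-dimensional functional whose leading term is $\cF_{k,m}$.

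First, using the approximate solutions $W_\xi$ built in Section \ref{prelim} (truncated and corrected Liouville profiles, with the construction for interior points given in conformal normal coordinates and for boundary points by a reflection across $\partial\Sigma$ in Fermi coordinates), I would set $u = W_\xi + \phi$ with concentration parameters $\delta = (\delta_1,\dots,\delta_m)$ and rewrite \eqref{eq:main_eq} as a fixed-point equation $L_\xi \phi = N_\xi(\phi) + R_\xi$, where $L_\xi$ is the linearization at $W_\xi$, $N_\xi$ is the superlinear remainder, and $R_\xi$ is the ansatz error. The (approximate) kernel of $L_\xi$ is spanned by the bubble derivatives with respect to the parameters $\delta_i$ and $\xi_i$; following Section \ref{reduction}, I would invert $L_\xi$ on the orthogonal complement of this kernel in a weighted norm and use a contraction argument to produce, uniformly for $\xi$ in a small neighborhood of $\xi^*$, a unique small correction $\phi = \phi(\xi,\lambda)$ solving the projected equation.

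Next, standard reduction theory identifies solutions of \eqref{eq:main_eq} with critical points of the reduced functional $J_\lambda(\xi) := E_\lambda(W_\xi+\phi(\xi,\lambda))$ on $\Xi_{k,m}$. The crux is the asymptotic expansion of $J_\lambda$ carried out in Section \ref{expansion}: the leading $\xi$-independent contribution is absorbed into a constant $c_0(\lambda)$; the next order recovers, up to a positive multiplicative constant, $\cF_{k,m}(\xi)$; and the subleading geometric corrections are exactly encoded by $\cA_1,\cA_2,\cB$ via \eqref{eq:A_xi_1}--\eqref{eq:B_xi}. Schematically,
\begin{equation*}
J_\lambda(\xi) \;=\; c_0(\lambda)\;+\;c_1\,\cF_{k,m}(\xi)\;+\;\alpha_\lambda\,\cA_1(\xi)\;+\;\beta_\lambda\,\cA_2(\xi)\;+\;\gamma_\lambda\,\cB(\xi)\;+\;o(\,\cdot\,),
\end{equation*}
where the coefficients $\alpha_\lambda,\beta_\lambda,\gamma_\lambda$ carry the dependence on $\lambda-\lambda_{k,m}$ and on $\log\delta_i$. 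The parameters $\delta_i$ are then chosen by solving the associated algebraic system, which is where the hypothesis $\cA_1\equiv 0$ near $\xi^*$ enters in case $\mathfrak{a}1$, and additionally $\cA_2\equiv 0$ in case $\mathfrak{a}2$: these identities guarantee the $\delta_i$-system is solvable and fix $\delta_i$ as an explicit function of $\lambda-\lambda_{k,m}$. After substituting these optimal $\delta_i$, the residual functional becomes a $C^1$-small perturbation of $c_1\cF_{k,m}$ on a neighborhood of $\xi^*$.

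Finally, the $C^1$-stability of $\xi^*$ as a critical point of $\cF_{k,m}$ yields a critical point $\xi_\lambda$ of the reduced functional close to $\xi^*$ for $\lambda$ sufficiently close to $\lambda_{k,m}$, and the resulting $u_\lambda = W_{\xi_\lambda}+\phi(\xi_\lambda,\lambda)$ is the desired solution. The sign conditions on $\cA_2$ (case $\mathfrak{a}1$) or on $\cB$ (case $\mathfrak{a}2$) fix the sign of the coefficient multiplying $\lambda-\lambda_{k,m}$ in the solvability condition for the $\delta_i$-system, thereby selecting the one-sided neighborhood: right of $\lambda_{k,m}$ when the quantity is positive, left when it is negative. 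The measure-concentration statement is then immediate from the explicit shape of $W_{\xi_\lambda}$ together with $\phi\to 0$. The main obstacle is the third step: producing a uniform-in-$\xi$ $C^1$ expansion of $J_\lambda$ sharp enough to detect the subleading $\cA_1,\cA_2$ and the finite part $\cB$, especially for boundary bubbles, where the reflection construction produces boundary integrals whose cancellation hinges on the compatibility condition $\partial_{\nu_g}\log V + 2k_g = 0$ built into $\cA_1(\xi^*)=0$.
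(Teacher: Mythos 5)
Your overall scheme---bubble ansatz, Lyapunov--Schmidt reduction, an expansion of the reduced energy whose leading $\xi$-dependent part is $\cF_{k,m}$ with corrections $\cA_1,\cA_2,\cB$, and finally the $C^1$-stability of $\xi^*$---is the same as the paper's. However, two steps as you describe them would not go through. First, the boundary bubbles are not built ``by a reflection across $\partial\Sigma$ in Fermi coordinates.'' The paper uses isothermal charts sending a boundary neighborhood onto a half-disk, normalized so that $\varphi_\xi(0)=0$ and $\nabla\varphi_\xi(0)=(0,-2k_g(\xi))$; in such a chart the radial bubble centered at the origin satisfies the Neumann condition exactly (since $\partial_{\nu_g}$ corresponds to $-e^{-\varphi_\xi/2}\partial_{y_2}$), and it is precisely $\partial_{y_2}\varphi_\xi(0)=-2k_g$ together with $\partial_{y_2}(V\circ y_\xi^{-1})(0)$ that produces the boundary quantities $\partial_{\nu_g}\log V+2k_g$ and $k_g(\partial_{\nu_g}\log V+k_g)$ entering $\cA_1,\cA_2,\cB$. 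An even reflection/doubling yields a metric that is only Lipschitz across $\partial\Sigma$ unless $k_g\equiv 0$, so the reflected construction either loses the regularity needed for the linear theory or obscures exactly the geodesic-curvature terms the theorem is about; the hypothesis $\cA_1\equiv 0$ near $\xi^*$ is something you may use only after the expansion has been established, not an input that makes a reflection construction consistent.

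Second, the mechanism fixing the concentration scale and selecting the side of $\lambda_{k,m}$ is not an $m$-dimensional $\delta_i$-system whose solvability is ``guaranteed'' by $\cA_1\equiv 0$. In the paper the scales are slaved to one parameter, $\rho_i^2=\rho^2\tau_i(\xi_i)$ (this matching is what makes the ansatz error $R$ small in the $\|\cdot\|_*$-norm), and one solves the single equation $\partial_\rho E_\lambda(\rho,\xi)=0$, whose leading balance is
\begin{equation*}
\frac{2(\lambda-\lambda_{k,m})}{\rho}-\cA_1(\xi)+2\cA_2(\xi)\,\rho\log\rho+\Bigl(\cA_2(\xi)-2\cB(\xi)+\tfrac{\cA_1^2(\xi)}{\lambda_{k,m}}\Bigr)\rho+o(\rho)=0 .
\end{equation*}
This balance comes from the term $2(\lambda-\lambda_{k,m})\log\rho$ in $E_\lambda$, which is absent from your schematic expansion; with $\cA_1\equiv 0$ it forces $\lambda-\lambda_{k,m}\approx-\cA_2(\xi)\rho^2\log\rho$ (and $\approx\cB(\xi)\rho^2$ when also $\cA_2\equiv 0$), whence $\rho\sim\sqrt{\lambda-\lambda_{k,m}}$ and the sign hypotheses dictate whether $\lambda$ must lie to the right or left of $\lambda_{k,m}$. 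Moreover the solution $\rho(\lambda,\xi)$ is obtained via the intermediate value theorem together with $\partial^2_\rho E_\lambda<0$, and, because the error terms carry factors $|\nabla\cF_{k,m}(\xi)|_g^2|\log\rho|^j$, only for $\xi$ in the set where $|\nabla\cF_{k,m}(\xi)|_g$ is small; one then extends $\rho(\lambda,\cdot)$ by a cutoff before the substituted functional becomes a uniform $C^1$-small perturbation of $-\tfrac12\cF_{k,m}$ and the stability of $\xi^*$ can be invoked. Your phrase ``the residual functional becomes a $C^1$-small perturbation of $c_1\cF_{k,m}$'' presupposes exactly this uniform control, which is the substantive technical content of the reduction and needs the above structure to be proved.
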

	
	It is clear  that Theorem~\ref{thm:1_point_in} to Theorem~\ref{cor:refine_mean_2} are the special cases of Theorem~\ref{main_thm}. 
	\begin{rem}
		The quantities $\cA_2$ and $\cB$
		originate from  \cite{Esposito2014singular}, where they are used to construct blow-up solutions for singular mean-field equations on closed Riemann surfaces. In our paper, we largely adhere to the idea proposed in \cite{Esposito2014singular}, with additional estimates for cases of boundary blow-up. Specifically, the quantity $\cA_1$ appears to deal with the case  where blow-up points occur on the boundary. Notably, when $\Sigma$ is a closed surface, $\cA_1 \equiv 0$, and $\cA_2$ and $\cB$ simplify to the constructions given in~\cite{Esposito2014singular}. 
	\end{rem}
	The quantity $\cA_2$ can be re-written as follows: 
	\begin{eqnarray*}
		\cA_2(\xi):= \sum_{i=1}^m \frac{\varrho(\xi_i)}{2}\tau_i\left( \Delta_g \log \tau_i+|\nabla \log \tau_i|^2 -2K_g\right)|_{x=\xi_i}+ \sum_{i=k+1}^m 2\varrho(\xi_i) k_g( \partial_{\nu_g}\log V+k_g) \tau_i|_{\xi_i}.
	\end{eqnarray*}
	For $\xi=(\xi_1,\cdots, \xi_m)$ as a critical point of $\cF_{k,m}$, we observe that $\nabla\log \tau_i|_{x=\xi_i}=0$ for $i=1,\cdots,m$. It follows that 
	\begin{eqnarray*}
		\cA_2(\xi)&=& \sum_{i=1}^k \frac{\varrho(\xi_i)}{2}\tau_i\left. \left(\Delta_g\log V-2K_g+\frac{4\pi(k+m)}{|\Sigma|_g} \right)\right|_{x=\xi_i}\\
		&&+\sum_{i=k+1}^m \frac{\varrho(\xi_i)}{2}\tau_i\left. \left(\Delta_g\log V-2K_g+ 4k_g(\partial_{\nu_g}\log V+k_g)+\frac{4\pi(k+m)}{|\Sigma|_g} \right)\right|_{x=\xi_i} 
	\end{eqnarray*}
	for any $\xi$ is a critical point of $\cF_{k,m}.$
	This observation leads us to Theorem~\ref{cor:refine_mean_2}.

	More detailed estimates are imperative when $\cA_1\equiv \cA_2\equiv 0$ in a small neighborhood of stable critical point. Initially, $\cB(\xi)$ was introduced in \cite{Chang_chen_lin2003} to analyze mean field equations on bounded domains. This concept was further applied by \cite{Esposito2014singular}, where $\cB(\xi)$ was utilized to develop blow-up solutions in cases where $\cA_1(\xi) \text{ and }\cA_2(\xi)$ vanish on compact Riemann surfaces. As for the examples involving the unit sphere and flat torus, we refer to \cite[Section 1]{Esposito2014singular}.

	\section{Preliminaries }~\label{prelim}
	\subsection{Isothermal Coordinate}
	Riemann surfaces are locally conformal flat and one can find isothermal coordinates where the metric is conformal to the Euclidean metric (see~\cite{chern1955,Hartman1955,Vekua1955,bers1957riemann}, for instance).  In our paper, we construct a family of isothermal coordinates based on the isothermal coordinates applied in~\cite{Esposito2014singular} and\cite{yang2021125440}, which maps an open neighborhood in $\Sigma$ onto an open or half-disk in $\RR^2.$ Set
	$$
	\mathbb{B}_r(y^*)=\left\{y=\left(y_1, y_2\right) \in \mathbb{R}^2: (y_1-y^*_1)^2+(y_2-y^*_2)^2<r\right\}, \B_r:=\B_r(0)$$
	$$
	\mathbb{B}_r^{+}=\mathbb{B}_r\bigcap\left\{y=\left(y_1, y_2\right) \in \mathbb{R}^2: y_2\geq0\right\},  \text{ and  }$$
	$$
	\mathbb{R}^{2}_+=\left\{y=\left(y_1, y_2\right) \in \mathbb{R}^2: y_2 \geq 0\right\}.$$
	
	For any $\xi\in\intsigma$, there exists an isothermal coordinate system $\left(U(\xi), y_{\xi}\right)$ such that $y_{\xi}$  maps  $U(\xi)$ around $\xi$ in $\Sigma$ onto an open disk $B^{\xi}:= \B_{2r_\xi}$ and  transforms $g$ to $e^{\varphi_\xi}\langle\cdot,\cdot\rangle_{\RR^2}$ with  $y_{\xi}(\xi)=0,$  
	$\overline{U(\xi)}\subset \intsigma$. 
	
	For $\xi\in \partial\Sigma$ there exists an isothermal coordinate system $\left(U(\xi), y_{\xi}\right)$ around $\xi$ such that the image of $y_{\xi}$ is a half disk $\B_{2r_{\xi}}^{+}$,  $y_{\xi}\left(U(\xi)\cap \partial \Sigma\right)= {\B_{2r_{\xi}}^{+}} \cap \partial \RR^{2}_+$ and  transforming $g$ to $e^{\varphi_\xi}\langle\cdot,\cdot\rangle_{\RR^2}$.
	In this case, we take $B^\xi = {\B}_{2r_{\xi}}^{+}$. 
	For $\xi\in \Sigma$ and $0<r\le 2r_\xi$ we set
	\[
	B_r^\xi := B^\xi \cap \{ y\in\RR^2: |y|< r\}\quad \text{and}\quad U_{r}(\xi):=y_\xi^{-1}(B_{r}^{\xi}).
	\]
	Let 
	$K_g$ be the Gaussian curvature of $\Sigma$ and $k_g$ be the geodesic curvature of the boundary $ \partial\Sigma$. Then,  for $\xi\in \Sigma$
	\begin{equation}
		\label{eq:Gauss}
		-\Delta \varphi_\xi(y) = 2K_g\big(y^{-1}_\xi(y)\big) e^{\varphi_\xi(y)} \quad\text{for all } y\in B^\xi. 
	\end{equation}
	and for $\xi\in \partial\Sigma$,
	\begin{equation}\label{eq:b_restrict}
		\frac{\partial}{\partial y_2}  \varphi_{\xi}(y) =- 2k_g(y_{\xi}^{-1}(y)) e^{ \frac{\varphi_{\xi}(y)}{2}} \quad \text{ for all } y\in B^{\xi}\cap \{ y_2=0\}.
	\end{equation}

	Additionally, the isothermal coordinates reserve the Neumann boundary conditions in following sense:   for any $x \in$ $y_{\xi}^{-1}\left({{\B}_{2r_{\xi}}^{+}} \cap \partial \mathbb{R}^{2}_+\right)$, 
	\begin{equation}\label{eq:out_normal_derivatives}
		\left(y_{\xi}\right)_*(\nu_g(x))=\left. -\exp\left( -\frac{\varphi_{\xi}(y)}2\right) \frac {\partial} { \partial y_2 }\right|_{	y=y_{\xi}(x)}.
	\end{equation}

	For technical reasons, we expect that the isothermal coordinates depend smoothly 
	on the parameter $\xi$. Locally smooth dependence can be achieved by using appropriate conformal maps. 
	\begin{claim}\label{claim:0}
		For any fixed $\zeta\in \Sigma$, starting from its isothermal coordinate, we can construct a family of refined isothermal coordinates  $( y_{\xi}, U(\xi))$ in a small neighborhood of $\zeta$ in which both $y_{\zeta}$ and $\varphi_{\zeta}$ smoothly depend  on the parameter 
		$\xi$ in $U_{r_{\zeta}}(\zeta)$. Moreover, 
		the Riemann metric takes the form in $U(\xi)$: 
		\[ g= e^{\varphi_{\xi}(y)}  ( dy_1^2+ dy_2^2)\]
		with $\varphi_{\xi}(0)=0$ and $\nabla \varphi_{\xi}(0)=0$ when $\xi\in \intsigma$ and $\nabla \varphi_{\xi}(0)=(0, -2k_g(\xi))$ when $\xi\in \partial\Sigma.$
	\end{claim}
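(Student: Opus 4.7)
The approach is to start from one fixed isothermal chart around $\zeta$ and, for each $\xi$ close to $\zeta$, to post-compose its inverse with a quadratic holomorphic diffeomorphism of a small (half-)disc whose coefficients depend smoothly on $\xi$. In two dimensions the obstructions to arranging $\varphi_\xi(0)=0$ together with the prescribed value of $\nabla\varphi_\xi(0)$ are local and can be absorbed by a conformal change of coordinate of degree two. Thus I fix the isothermal chart $(U_{2r_\zeta}(\zeta),y^0)$ produced by the classical existence theory, with pulled-back metric $e^{\psi(y)}(dy_1^2+dy_2^2)$, identify $\RR^2\cong\CC$ via $y\leftrightarrow y_1+iy_2$, and write $\psi_z:=\tfrac12(\partial_{y_1}-i\partial_{y_2})\psi$.

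In the interior case $\zeta\in\intsigma$, for each $\xi$ in a small neighborhood of $\zeta$ set $p(\xi):=y^0(\xi)$ and define the quadratic holomorphic map
\[
f_\xi(w):=p(\xi)+a(\xi)\,w+b(\xi)\,w^2,\qquad a(\xi):=e^{-\psi(p(\xi))/2},\quad b(\xi):=-\tfrac12\,a(\xi)^2\,\psi_z(p(\xi)),
\]
and let $y_\xi:=f_\xi^{-1}\circ y^0$ on its natural domain $U(\xi)$. Pulling back the metric gives $g=e^{\varphi_\xi(w)}|dw|^2$ with $\varphi_\xi(w)=\psi(f_\xi(w))+2\log|f_\xi'(w)|$; using $f_\xi(0)=p(\xi)$, $f_\xi'(0)=a(\xi)$, $f_\xi''(0)=2b(\xi)$ and the identity $\partial_w\log|f_\xi'(w)|^2=f_\xi''(w)/f_\xi'(w)$, a direct computation yields $\varphi_\xi(0)=0$ and $\partial_w\varphi_\xi(0)=a(\xi)\psi_z(p(\xi))+2b(\xi)/a(\xi)=0$, hence $\nabla\varphi_\xi(0)=0$.

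In the boundary case $\zeta\in\partial\Sigma$ the same scheme applies, but $f_\xi$ must send a half-disc into a half-disc. I therefore restrict to real coefficients, taking $a(\xi):=e^{-\psi(p(\xi))/2}>0$ and $b(\xi):=-\tfrac14\,a(\xi)^2\,\partial_{y_1}\psi(p(\xi))\in\RR$; then $f_\xi$ preserves the real axis and, being orientation-preserving at $0$, maps a small half-disc $\B_r^+$ into $\B^+$. The same algebra gives $\varphi_\xi(0)=0$ and $\partial_{y_1}\varphi_\xi(0)=2\mathrm{Re}(\partial_w\varphi_\xi(0))=0$, while the normal derivative is not a free parameter: the boundary identity~\eqref{eq:b_restrict} applied to the new chart forces $\partial_{y_2}\varphi_\xi(0)=-2k_g(\xi)\,e^{\varphi_\xi(0)/2}=-2k_g(\xi)$, exactly the claimed value.

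Smoothness in $\xi$ and the existence of a common domain are now routine: $a(\xi)$ and $b(\xi)$ are smooth since $\psi$ is smooth and $a$ is bounded away from $0$, so $\{f_\xi\}$ is a smooth family of conformal embeddings, and by the inverse function theorem each $f_\xi$ is a diffeomorphism of a disc $\B_r$ (or half-disc $\B_r^+$) of some common radius $r>0$, uniformly in $\xi$ close to $\zeta$. The main technical point is ensuring both the preservation of $\B^+$ in the boundary case and the uniformity of the radius $r$; both follow from the fact that $f_\xi$ is a $C^k$-small perturbation of the affine map $w\mapsto p(\xi)+a(\xi)w$, and after shrinking $r$ if necessary the pair $(y_\xi,\varphi_\xi)$ satisfies all required properties and depends smoothly on $\xi$.
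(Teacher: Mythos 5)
Your proof is correct and follows essentially the same route as the paper: normalize the chart by post-composing with a degree-two conformal (quadratic holomorphic) map whose coefficients depend smoothly on $\xi$, use real coefficients in the boundary case so the half-disc and the real axis are preserved, and read off $\partial_{y_2}\varphi_{\xi}(0)=-2k_g(\xi)$ from the geodesic-curvature identity~\eqref{eq:b_restrict}. The only cosmetic difference is that you absorb both first-order corrections into a single complex coefficient $b(\xi)$ (applied through $f_\xi^{-1}$), whereas the paper composes two real-coefficient quadratic maps $T_1, T_2$ with a rotation.
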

	\begin{proof}
		Let $ \mathfrak{R}_{\theta}=\begin{bmatrix}
			\cos\theta & \sin\theta\\
			-\sin\theta & \cos\theta
		\end{bmatrix}$ be the rotation matrix. 
		Define  $T_{\theta,b,c}(y)= b  \mathfrak{R}_{\theta}y + c ( y_1^2-y_2^2, 2 y_1 y_2)^T$, where $b\neq 0  $  and $c\in \R$ which will be chosen later.
		It follows that 
		\[  DT_{\theta,b,c}(y)= b \mathfrak{R}_{\theta}+  \begin{bmatrix}
			2cy_1  & -2 cy_2 \\
			2c y_2  & 2cy_1
		\end{bmatrix}.\]
		Since $ DT_{\theta,b,c}(0)=b\mathfrak{R}_{\theta}  $ is invertible,  $T_{\theta,b,c}$ is  locally conformal around $0$ with  $d(y^*_1)^2+ d(y^*_2)^2=  \exp\{  \log ( (b\cos\theta +2cy_1 )^2+(b\sin \theta -2c y_2)^2) \} ( dy^2_1+dy^2_2)$. Denote $f_{\theta,b, c}(y)= \log ( (b\cos\theta +2cy_1 )^2+(b\sin \theta -2c y_2)^2)$.
		
		{\it Case I: $\xi\in \partial\Sigma$.}
		Take $T_1:=T_{\theta_1,b_1,c_1},  y_{\xi}(x)= T_1( y_{\zeta}(x)- y_{\zeta}(\xi))$ with $  \theta_1=0, b_1=e^{\varphi_{\zeta}\circ y_{\zeta}(\xi)/2}, c_1= \frac{ e^{\frac 1 2 \varphi_{\zeta}(y_{\zeta}(\xi))} \partial_{y_1} \varphi_{\zeta}(y)|_{y=y_{\zeta}(\xi)}}{4}$ and $r_{\xi}>0$ sufficiently small such that $  T^{-1}_1( \B^+_{2r_{\xi}})\subset  y_{\zeta} (U(\zeta))$ and $T^{-1}_1$ is a diffeomorphism for $\B^+_{2r_{\xi}}$ onto its image. Then, the Riemann metric has the form in $ U(\xi):= y^{-1}_{\xi}( \B^+_{2r_{\xi}})$
		$$g= e^{\varphi_{\zeta}( y_{\zeta}(\xi)+ T^{-1}_1( y^*)) - f_{0,b_1,c_1}(T^{-1}_1( y^*))} d(y^*_1)^2+ d(y^*_2)^2. $$
		Let $\varphi_{\xi}(y^*)= \varphi_{\zeta}( y_{\zeta}(\xi)+ T^{-1}_1( y^*)) - f_{0,b_1,c_1}( T^{-1}_1( y^*))$.
		The conformal factor $\varphi_{\xi}$  has the following properties: 
		\[ \varphi_{\xi}(0)= 0  \text{ and } \partial_{y^*_1} \varphi_{\xi}(0)= 0. \]
		Since $(y_{\xi}, U(\xi))$ is conformal coordinate, \eqref{eq:b_restrict} implies that  $\partial_{y^*_2} \varphi_{\xi}(0)= -2k_g(\xi)$.
		
		{\it Case II. $\xi\in \intsigma$.}
		Take $T_2:= T_{\theta_2, b_2, c_2}$ with $\theta_2=\frac \pi 2, b_2=1 $ and $c_2= -\frac{e^{-\frac 1 2 \varphi_{\zeta}(y_{\zeta}(\xi))} \partial_{y_2} \varphi_{\zeta}(y)|_{y=y_\zeta(\xi)}}{4}$. Let 
		\[ y_{\xi}(x)= T_2\circ T_1 ( y_{\zeta}(x)-y_{\zeta}(\xi))\]
		and $r_{\xi}>0$  be sufficiently small such that 
		$U(\xi):= y_{\xi}^{-1}(\B_{2r_{\xi}}) \subset U(\zeta)$ with $U(\xi)\cap \partial\Sigma=\emptyset$ and $ y_{\xi}(x)$ is a diffeomorphism from $U(\xi)$ onto $\B_{2r_{\xi}}$. 
		For the Riemann metric  in $U(\xi)$ we have the form:
		\[ g =  e^{\varphi_{\zeta}( y_{\zeta}(\xi)+ T^{-1}_1\circ T^{-1}_2( y^*)) - f_{0,b_1,c_1}(T^{-1}_1\circ T^{-1}_2( y^*))- f_{\frac \pi 2, 1, c_2}(T^{-1}_2(y^*))} d(y^*_1)^2+ d(y^*_2)^2.\]
		Let $\varphi_{\xi}(y^*):= \varphi_{\zeta}( y_{\zeta}(\xi)+ T^{-1}_1\circ T^{-1}_2( y^*)) - f_{0,b_1,c_1}(T^{-1}_1\circ T^{-1}_2( y^*))- f_{\frac \pi 2, 1, c_2}(T^{-1}_2(y^*))$. The conformal factor $\varphi_{\xi}$  has the following properties: 
		\begin{eqnarray*}
			\varphi_{\xi}(0)= 0 \text{ and } \nabla \varphi_{\xi}(0)=0.
		\end{eqnarray*}
		From our construction, it is clear that for any $\xi\in U_{r_{\zeta}}(\zeta)$, $y_{\xi}$ and $\varphi_{\xi}$ smoothly depend on $\xi$.
	\end{proof}
	Unless  specified, we will use these refined isothermal coordinates from Claim~\ref{claim:0} throughout the paper.

	\subsection{Green's functions}\label{sec_green} 
	For any $\xi\in \Sigma$, we define the Green's function for~\eqref{eq:main_eq} by following equations:
	\begin{equation}~\label{eq:green}
		\begin{cases}
			-\Delta_g G^g(x,\xi)  =\delta_{\xi} -\frac{1}{|\Sigma|_g} &  x\in \intsigma\\
			\partial_{ \nu_g } G^g(x,\xi) =0 & x\in \partial \Sigma\\
			\int_{\Sigma} G^g(x,\xi) dv_g(x) =0
		\end{cases}. 
	\end{equation}
	\begin{rem}\label{rk:G^g}
		We give several important properties of Green's functions (see  \cite[Lemma 6]{yang2021125440}):
		\begin{itemize}
			\item[a.] there exists a unique Green function $G^g( \cdot,\xi) \in L^1(\Sigma)$ solves~\eqref{eq:green} 
			in the distributional sense;
			\item [b.] for any  distinct points $x, \xi \in {\Sigma}, G^g(x, \xi)=G^g(\xi, x)$;
			\item [c.] the representation formula holds, for any $h\in C^2(\Sigma)$, 
			\begin{equation}
				\label{eq:representation}
				h(x)- \frac 1 {|\Sigma|_g}\int_{\Sigma} h dv_g=- \int_{\Sigma}G^g(\cdot,x)\Delta_gh dv_g+ \int_{\partial \Sigma} G^g(\cdot,x) \partial_{\nu_g}h ds_g, 
			\end{equation}
			where $ds_g$ is the line element of the boundary $\partial\Sigma$
			\item [d.]for any  distinct points $x, \xi \in {\Sigma}$, there exists a constant $C>0$ such that $$
			|G^g(x, \xi)| \leq C\left(1+\left|\log \dd_g(x, \xi)\right|\right), \quad\left|\nabla_{g, \xi} G^g(x, \xi)\right| \leq C\dd^{-1}_g(x, \xi),
			$$
		\end{itemize}
		where $\dd_g(x, \xi)$ denotes the geodesic distance between $x$ and $\xi$.
	\end{rem}
	Define the Robin's function as follows: 
	\[ R^g(\zeta):=\lim_{x\to\zeta}\left(G^g(x,\zeta)+\frac4{\varrho(\zeta)}\log \dd_g(x,\zeta)\right). \]
	Observe that for $\zeta\in U(\xi)$, 
	$
	\lim_{x\to\zeta}\frac{d_g(x,\zeta)}{\big|y_\xi(x)-y_\xi(\zeta)\big|} = e^{\frac12\varphi_\xi\circ y_{\xi}(\zeta)}.
	$
	It follows  
	\begin{equation}\label{eq:robin}
		R^g(\zeta) = \lim_{x\to\zeta}\left(G^g(x,\zeta)+\frac4{\varrho(\zeta)}\log\big|y_\xi(x)-y_\xi(\zeta)\big|\right)
		+ \frac 2 {\varrho(\zeta)}\varphi_\xi\big(y_\xi(\zeta)\big).
	\end{equation}
	In particular, using the assumption  $\varphi_\xi\big(y_\xi(\xi)\big)=\varphi_\xi(0)=0$, we obtain that 
	$$
	R^g(\xi) = \lim_{x\to\xi}\left(G^g(x,\xi)+\frac4{\varrho(\xi)}\log\big|y_\xi(x)\big|\right) . 
	$$
	
	Next, we will construct the regular part $H^g(x,\xi)$ of the Green's function $G^g(x,\xi)$ such that $H^g(\xi,\xi)=R^g(\xi)$.
	Let $\chi$ be a radial cut-off function in  $C^{\infty}(\mathbb{R}, [0,1])$ such that
	\begin{equation*}
		\chi(s)=	\begin{cases}
			1 & |s|\leq 1\\
			0& |s|\geq 2 
		\end{cases}. 
	\end{equation*}
	Setting $\chi_{\xi}(x)=\chi(4|y_{\xi}(x)|/\bar{r}_{\xi})$, we  define 
	\begin{eqnarray*}\Gamma_{\xi}^g(x):=\Gamma^g(x,\xi)=- \frac{4}{\varrho(\xi)} \chi_{\xi}(x)\log{|y_{\xi}(x)|}= \left\{ \begin{array}{ll}	-\frac{1}{2\pi} \chi_{\xi}(x)\log{|y_{\xi}(x)|}& \text{ if }\xi\in \intsigma \\
			-{\frac{1}{\pi}}\chi_{\xi}(x)\log{|y_{\xi}(x)|}	& \text{ if } \xi\in\partial\Sigma \end{array}\right., 
	\end{eqnarray*}   
	where $\bar{r}_{\xi}< r_{\xi}$ will be chosen later. 
	Then, for $\xi\in\Sigma$ the function $H^g_{\xi}:=H^g(\cdot,\xi):\Sigma\to\R$ is defined to be the unique solution of the Neumann problem 
	\begin{equation}\label{eq:eqR}
		\quad\left\{\begin{aligned}
			\Delta_g H_{\xi}^g  =& \frac{4}{\varrho(\xi)}\left(\Delta_g \chi_{\xi}\right) \log |y_{\xi}|+\frac{8}{\varrho(\xi)}\left\langle\nabla \chi_{\xi}, \nabla \log |y_{\xi}|\right\rangle_g+\frac{1}{|\Sigma|_g} &\text { in } \intsigma\\ 
			{\partial_{\nu_g} H_{\xi}^g}=&\frac{4}{\varrho(\xi)}\left(\partial_{\nu_g} \chi_{\xi}\right) \log |y_{\xi}|+\frac{4}{\varrho(\xi)} \chi_{\xi} \partial_{\nu_g} \log |y_{\xi}| &\text { on } \partial \Sigma . \\
			\int_{\Sigma} H_{\xi}^g d v_g  =&\frac{4}{\varrho(\xi)} \int_{\Sigma} \chi_{\xi} \log |y_{\xi}| d v_g& 
		\end{aligned}\right.
	\end{equation}
	\begin{rem}
		For the regular part $H^g(x,\xi)$ we will list some important properties that we have to use (see Lemma~\ref{lem:re_green}). 
		\begin{itemize}
			\item [a.]	$
			G^g(x,\xi) = \Gamma_{\xi}^g(x)+ H^g(x,\xi)
			\text{ and } H^g(\xi,\xi)=R^g(\xi);$
			\item[b.] for any fixed $\xi\in\Sigma$ and $\alpha\in (0,1)$, 
			$H^g(x,\xi)$ is $C^{2,\alpha}$ in $\Sigma$ with respect to  $x$. Moreover, $ H^g(x,\xi)$ is uniformly bounded in $C^{2,\alpha}$ for any $\xi$ in any compact subsect of $\intsigma$ or $\partial\Sigma$.
		\end{itemize}
	\end{rem}
	\subsection{ The reduced function $\cF_{k,m}$} \label{introf}
	Denote \[\Delta=\{\xi:=(\xi_1,\cdots,\xi_m)\in  \intsigma^{k}\times (\partial \Sigma)^{m-k} : \xi_i=\xi_j \text{ for some } i\neq j\},\] 
	as the thick diagonal of $\intsigma^{k}\times (\partial \Sigma)^{m-k}.$
	Define a  Kirchhoff-Routh type function on $\Xi_{k,m}$ (see~\cite{lin1941motion,Bartsch2017TheMP,Ahmedou-Bartsch-Fiernkranz:2023,BartschHuSubmitted})
	\begin{equation*}
		\begin{array}{ccc}
			&  \cF_{k,m} : \intsigma^{k}\times (\partial \Sigma)^{m-k}\setminus \Delta \rightarrow \mathbb{R},&  \\
			&&\\
			&\cF_{k,m}(\xi)=\sum_{i=1}^m 2\varrho(\xi_i)\log V(\xi_i) 
			+\sum_{i=1}^m\varrho(\xi_i)^2  R^g(\xi_i) 
			+\sum^m _{
				\begin{array}{l}
					i,j=1\\
					i\neq j
			\end{array} }  \varrho(\xi_i)\varrho(\xi_j) G^g(\xi_i,\xi_j), & 
		\end{array}.
	\end{equation*} 
	
	Observe that for any $\alpha\in (0,1)$,  $G^g(x,\xi) \in C^{2,\alpha}(\Sigma\setminus \{ \xi \})$ and $H^g_{\xi}$  is $C^{2,\alpha}(\Sigma)$, too. Thus $ \cF_{k,m}$ is of  $C^{2,\alpha}$ class. In particular, $\cF_{k,m}\in C^1( \intsigma^{k}\times (\partial \Sigma)^{m-k}\setminus \Delta).$
	
	\subsection{Approximation of Solutions}\label{sec:approx}
	It is well known that 
	\[ u_{\tau,\eta}(y)=\log \frac{ 8\tau^2 }{(\tau^2 +|y-\eta|^2)^2}\] for $(\tau, \eta)\in (0,\infty)\times  \mathbb{R}^2 $ are all the solutions to 
	the Liouville-type equation, 
	\begin{equation}
		\left\{\begin{aligned}
			-\Delta u= e^u  \text{ in } \mathbb{R}^2,\\
			\int_{\mathbb{R}^2}  e^u <\infty. 
		\end{aligned}\right.
	\end{equation}
	For all $x\in U(\xi)$, denote that 
	\[U_{\tau,\xi}(x)= u_{\tau, 0}(y_{\xi}(x))=\log\frac{8\tau^2}{(\tau^2+|y_{\xi}(x)|^2)^2}, \]
	for any $\tau>0$. 
	We try to construct approximate solutions to \eqref{eq:main_eq} applying $U_{\tau,\xi}$ with proper choices of $\tau$ and $\xi$. And for any $f\in L^1(\Sigma)$, we denote $\overline{f}=\frac{1}{|\Sigma|_g}\int_{\Sigma} fdv_g$. 
	
	Next we will introduce a projection operator $P$ to project $U_{\tau,\xi}$ into space $\overline{\mathrm{H}}^1$. 
	$P U_{\tau,\xi}$ is defined as the solution of the following problem:
	\begin{equation}~\label{eq:projection}
		\left\{\begin{array}{ll}
			-\Delta_g P U_{\tau,\xi}(x)= -\chi_{\xi} \Delta_g U_{\tau,\xi} +                      \overline{\chi_{\xi} \Delta_gU_{\tau,\xi}}  , &  x\in \intsigma,\\
			\partial_{ \nu_g } P U_{\tau,\xi}=0, &x\in \partial \Sigma, \\
			\int_{\Sigma} P U_{\tau, \xi} dv_g=0, &
		\end{array}\right..
	\end{equation}
	The solution of~\eqref{eq:projection} exists uniquely in $\overline{\mathrm{H}}^1$  and $ PU_{\tau,\xi}\in C^{\infty}({\Sigma})$ by the elliptic standard estimate. So $PU_{\tau,\xi}$ is well-defined.
	
	Through this paper, we denote  $\i(\xi)$ to be $ 
	2 \text{ if }\xi\in\intsigma \text{ and }
	1  \text{ if }\xi \in \partial \Sigma$. 	Given any $k\leq m\in \mathbb{N}_+$ and 
	$\xi^*:=(\xi_1^*,\cdots, \xi_m^*)\in \intsigma^k\times (\partial\Sigma)^{m-k}\setminus \Delta$. Then, we define 	the configuration space
	\begin{equation}
		\label{M_delta}
		M_{\xi^*,\delta}=
		\left\{ \xi=(\xi_1,\cdots,\xi_m)\in  \intsigma^{k}\times (\partial \Sigma)^{m-k}\setminus \Delta: 
		\xi_i \in \overline{U_{\delta}(\xi_i^*)} \text{ for } i=1,\cdots, m
		\right\},\end{equation} for any $\delta>0$ sufficiently small. 
	We fix a $\delta\in (0, \frac 1 2 \min\left\{r_{\xi^*_i}: i=1,\cdots,m\right\})$. 
	Consequently, for any $\xi\in M_{\xi^*,\delta}$, we have refined isothermal coordinates $(y_{\xi_i}, U(\xi_i))$ around $\xi_i$ with a uniform radius $ r_{\xi_i}$ for any $i=1,\cdots,m$, denoted by $4r_0$. We assume that $r_0>0$ is sufficiently small such that $U_{2r_0}(\xi_i)\cap U_{2r_0}(\xi_j)=\emptyset$ if $i\neq j$ and $U_{4r_0}(\xi_i)\cap \partial\Sigma=\emptyset$ for $i=1,\cdots, k.$
	We try to construction  approximate solutions to~\eqref{eq:main_eq} with 
	\begin{equation}\label{xi}
		(\rho, \xi)\in (0,+\infty)\times M_{\xi^*,\delta}.
	\end{equation} 
	For technical reasons, we define a scaling function
	\begin{equation}~\label{taui}
		\tau_i(x)=V(x)e^{\varrho(\xi_i)H^g(x,\xi_i)+\sum_{l\neq i}\varrho(\xi_l)G^g(x,\xi_l)}.
	\end{equation}
	and $\rho_i>0$ with 
	\begin{eqnarray} \label{rhoi}
		\rho^2_i=\rho^2\tau_i(\xi_i),  \text{ for any } i=1,2,\cdots,m.
	\end{eqnarray}
	We denote $\tau_i=\tau_i(\xi_i), \chi_i=\chi(4|y_{\xi_i}|/r_{\xi_i})$, $\varphi_i= \varphi_{\xi_i}$, $U_i=U_{\rho_i,\xi_i}$, $W_i=PU_i$ and $W=\sum_{i=1}^m W_i$. To make $W$ be a good approximation to the solution of~\eqref{eq:main_eq}, we assume that  for same constant $C>0$
	\begin{equation}~\label{lamda1}
		|\lambda-\lambda_{k,m}|\leq C\rho^2 |\log \rho|.
	\end{equation}
	The solution $u$ can decompose into two parts: one part is $\sum_{i=1}^m PU_i$ on the manifold $\mathcal{M}$; another part is the remainder term $\phi^{\rho}_{\xi}$, i.e. 
	\[u= \sum_{i=1}^m PU_i+\phi^{\rho}_{\xi},  \]
	where  $ \phi^{\rho}_{\xi}$ goes to $0$ as $\rho\rightarrow 0$.

	Define for $\xi\in M_{\xi^*,\delta}$
	\begin{equation*}
		\mathcal{A}_1(\xi)=-\sum_{i=k+1}^m\varrho(\xi_i)\sqrt{\tau_i}( \partial_{ \nu_g}\log V+2k_g)|_{\xi_i}, 
	\end{equation*}
	\begin{equation*}
		\mathcal{A}_2(\xi)=\sum_{i=1}^m \frac{\varrho(\xi_i)}2(\Delta_g\tau_i-2K_g\tau_i)|_{\xi_i}+\sum_{i=k+1}^m  2\varrho(\xi_i) k_g( \partial_{\nu_g}\log V +k_g )\tau_i|_{\xi_i},
	\end{equation*}
	and 
	\begin{equation*}
		\begin{array}{lcl}
			&&\mathcal{B}(\xi)=-\sum_{i=1}^m \frac{\varrho(\xi_i)} 4\log(\tau_i) (\Delta_g\tau_i-2K_g\tau_i)|_{\xi_i} \\
			&&-\sum_{i=k+1}^m 2\varrho(\xi_i) k_g(  \partial_{\nu_g}\log V +k_g) \log(\tau_i)\tau_i|_{\xi_i}
			\\
			&&-\frac 1 2\mathcal{A}_2(\xi)+\lim_{r\rightarrow 0}\left( 8\int_{\Sigma\setminus \cup_{i=1}^mU_{r}(\xi_i)}Ve^{\sum_{i=1}^m \varrho(\xi_i)G^g(x,\xi_i)} dv_g \right.\\	&&\left. -\frac{1}{r^2}(\sum_{i=1}^m \varrho(\xi_i)\tau_i-8 \sum_{i=k+1}^m \tau_i (\partial_{\nu_g}\log V+2k_g) |_{\xi_i})-\mathcal{A}_2(\xi)\log \frac 1 r\right). 
		\end{array}
	\end{equation*}   
	We note that the definition of $H^g_{\xi_i}$ depends on the choice coordinate system, while $H^g(\xi_i,\xi_i)$ and $\Delta_g H^g_{\xi_i}|_{\xi_i}= \frac 1 {|\Sigma|_g}$ are invariant under changes of isothermal coordinates. Hence, the quantities $\cA_1,\cA_2$ and $\cB$ are independent with the choice of isothermal coordinates.

	Now we can expand the energy functional for approximate solutions $$ J_{\lambda}(W)= \frac 1 2 \int_{\Sigma} |\nabla W|^2_g dv_g - \lambda \log\left(\int_{\Sigma} V e^W dv_g \right).$$
	\begin{proposition}
		Assume that~\eqref{xi}-\eqref{lamda1}, there exists $\rho_0>0$ such that  we have the expansion 
		\begin{equation}\label{expansion_JW}
			\begin{array}{lcl}
				J_{\lambda}(W)
				&=&-\lambda_{k,m}-\lambda\log\left(\frac{\lambda_{k,m}}{8}\right)-\frac 1 2 \cF_{k,m}(\xi)+2(\lambda-\lambda_{k,m})\log \rho\\
				&& -\mathcal{A}_1(\xi)\rho+\mathcal{A}_2(\xi)\rho^2\log\rho-\mathcal{B}(\xi)\rho^2+ \frac{\mathcal{A}^2_1(\xi)\rho^2}{2\lambda_{k,m}}+o(\rho^2).
			\end{array}
		\end{equation}
		which is  $C((0,\rho_0)\times M_{\xi^*,\delta})$ for $(\rho,\xi)$ as $\rho\rightarrow 0$. 
	\end{proposition}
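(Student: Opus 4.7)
The plan is to split $J_\lambda(W) = \tfrac12\int_\Sigma|\nabla W|_g^2\,dv_g - \lambda\log\int_\Sigma Ve^W\,dv_g$ into its quadratic and logarithmic parts, expand each to order $o(\rho^2)$ uniformly in $\xi\in M_{\xi^*,\delta}$, and then assemble the result.

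For the quadratic piece I would use the defining equation \eqref{eq:projection} of $PU_i$ together with the Neumann condition and $\int_\Sigma PU_j\,dv_g=0$ to rewrite
\begin{equation*}
\tfrac12\int_\Sigma|\nabla W|_g^2\,dv_g = \tfrac12\sum_{i,j=1}^m\int_\Sigma(-\chi_i\Delta_g U_i)\,PU_j\,dv_g.
\end{equation*}
On the support of $\chi_i$ I pass to the refined isothermal coordinates $y_{\xi_i}$ of Claim~\ref{claim:0}, in which $-\Delta_gU_i = e^{-\varphi_{\xi_i}}e^{U_i}$ up to the cut-off, and expand $PU_j$. For $j=i$ I use the boundary-layer expansion $PU_i(x) = U_i(x) - \log(8\rho_i^2) + \varrho(\xi_i)H^g(x,\xi_i) + O(\rho_i^2)$ near $\xi_i$; for $j\neq i$ I use $PU_j(x)=\varrho(\xi_j)G^g(x,\xi_j)+O(\rho_j^2)$. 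Rescaling by $y=\rho_i z$ and using $\int_{\RR^2}e^{u_{1,0}}=8\pi$ on interior bubbles and $\int_{\RR^2_+}e^{u_{1,0}}=4\pi$ on boundary bubbles, the diagonal terms contribute $\varrho(\xi_i)^2R^g(\xi_i)$ and the off-diagonal terms $\varrho(\xi_i)\varrho(\xi_j)G^g(\xi_i,\xi_j)$, reproducing the Green/Robin part of $\cF_{k,m}$.

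For the logarithmic piece I split $\Sigma = \bigcup_i U_r(\xi_i) \cup \bigl(\Sigma\setminus\bigcup_i U_r(\xi_i)\bigr)$ and take $r\to 0$ after the inner rescaling. Near each $\xi_i$ the integrand is $Ve^{W}=Ve^{U_i+\varrho(\xi_i)H^g_{\xi_i}+\sum_{l\neq i}\varrho(\xi_l)G^g_{\xi_l}}$ times $e^{o(1)}$, and in coordinates $y=\rho_iz$ one Taylor-expands $V$, $e^{\varphi_{\xi_i}}$, $H^g_{\xi_i}$ and the off-diagonal $G^g_{\xi_l}$'s to second order around $0$. The leading order produces $\sum_i\varrho(\xi_i)\tau_i/\rho_i^2=\sum_i\varrho(\xi_i)/\rho^2$ (so the total leading constant is $\lambda_{k,m}/8$ after a factor of $8$), and the key subleading gains are: a first-order-in-$y_2$ term on the boundary bubbles that integrates against $ze^{u_{1,0}}$ on $\RR^2_+$ to yield the linear-in-$\rho$ contribution $\mathcal{A}_1(\xi)\rho$, plus second-order Laplacian-type corrections that, combined with the outer region (which contributes $\int_{\Sigma\setminus\cup U_r}Ve^{\sum\varrho(\xi_l)G^g_{\xi_l}}\,dv_g$ after using $PU_j\approx\varrho(\xi_j)G^g(\cdot,\xi_j)$), produce precisely the $\mathcal{A}_2(\xi)\rho^2\log\rho$ term and the renormalized finite part encoded in $\mathcal{B}(\xi)$ via the limit $r\to 0^+$.

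Finally, writing $\int_\Sigma Ve^W\,dv_g = \tfrac{\lambda_{k,m}}{8\rho^2}\bigl(1 + a_1\rho + (a_2\log\rho + a_3)\rho^2+o(\rho^2)\bigr)$ where $a_1 = 8\mathcal{A}_1(\xi)/\lambda_{k,m}$ etc., expanding the logarithm with the identity $\log(1+t)=t-t^2/2+\ldots$ yields the extra quadratic term $-\tfrac12a_1^2\rho^2$; multiplied by $-\lambda$ and using $\lambda=\lambda_{k,m}+O(\rho^2|\log\rho|)$ this produces exactly $\mathcal{A}_1^2(\xi)\rho^2/(2\lambda_{k,m})$, while the leading $-\lambda\log(\lambda_{k,m}/(8\rho^2))$ accounts for $-\lambda\log(\lambda_{k,m}/8)+2(\lambda-\lambda_{k,m})\log\rho$ after adding the $-\lambda_{k,m}$ that comes from balancing with the quadratic piece. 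The smooth dependence of $\varphi_{\xi_i}$, $H^g_{\xi_i}$, $G^g_{\xi_i}$ and $V$ on $\xi\in M_{\xi^*,\delta}$ makes all remainders uniform.

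The main obstacle is the inner expansion at \emph{boundary} blow-up points. On the half-disk one cannot rely on odd/even cancellations as in the closed-surface case of \cite{Esposito2014singular}: the first-order Taylor coefficients of $V$ and $\varphi_{\xi_i}$ in the $y_2$ direction do not integrate to zero against $e^{u_{1,0}}$ on $\RR^2_+$, and their combination, regrouped via \eqref{eq:b_restrict} into $\partial_{\nu_g}\log V+2k_g$, is exactly what gives $\mathcal{A}_1(\xi)$. Tracking this first-order term consistently through both the inner and outer expansions, while ensuring that the $r\to 0$ limit of the renormalized outer integral exists and matches the inner divergent pieces, is the delicate calculation; the remaining estimates are standard Taylor expansions of Liouville bubbles, already used in the references cited in the paper.
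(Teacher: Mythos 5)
Your overall strategy is the same as the paper's: split $J_\lambda(W)$ into the Dirichlet part and $-\lambda\log\int_\Sigma Ve^W dv_g$, expand each via the refined isothermal coordinates, extract $\mathcal{A}_1$ from the non-cancelling first-order $y_2$-terms at boundary bubbles, obtain $\mathcal{A}_2\rho^2\log\rho$ and the renormalized finite part $\mathcal{B}$ from the matched inner/outer expansion, and produce $\mathcal{A}_1^2\rho^2/(2\lambda_{k,m})$ from the second-order term in $\log(1+t)$. So the route is not different; the issue is precision.

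The genuine gap is in the accuracy of your expansions of $PU_i$. You use $PU_i=U_i-\log(8\rho_i^2)+\varrho(\xi_i)H^g(\cdot,\xi_i)+O(\rho_i^2)$ and $PU_j=\varrho(\xi_j)G^g(\cdot,\xi_j)+O(\rho_j^2)$, but the true remainder is not a harmless $O(\rho^2)$: as in the paper's Lemma~\ref{lemb1}, $PU_{\rho,\xi}=\chi_\xi(U_{\rho,\xi}-\log 8\rho^2)+\varrho(\xi)H^g(\cdot,\xi)+h_{\rho,\xi}-2\rho^2F_\xi+\mathcal{O}(\rho^3|\log\rho|)$, where the constant $h_{\rho,\xi}$ is of size $\rho^2|\log\rho|$ and $F_\xi$ is a nontrivial function of size $1$. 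These corrections enter both $\tfrac12\int_\Sigma|\nabla W|^2_g dv_g$ (through the terms $2\lambda_{k,m}c_{\rho,\xi}-4\rho^2F_{\rho,\xi}$, since $\int_\Sigma \chi_ie^{-\varphi_i\circ y_{\xi_i}^{-1}}e^{U_i}\cdot\text{const}\,dv_g\neq 0$) and the exponent of $Ve^W$ (through $\lambda_{k,m}c_{\rho,\xi}-2F_{\rho,\xi}$ in $\log\int Ve^W$), i.e.\ exactly at the $\rho^2\log\rho$ and $\rho^2$ orders you are trying to compute. They cancel only in the final combination $\tfrac12\int|\nabla W|^2-\lambda\log\int Ve^W$, and this cancellation must be exhibited explicitly; with an unidentified $O(\rho^2)$ (in fact $O(\rho^2|\log\rho|)$) remainder you cannot legitimately isolate $\mathcal{A}_2(\xi)\rho^2\log\rho$ and $\mathcal{B}(\xi)\rho^2$ with an $o(\rho^2)$ error, so the claimed expansion is not justified as written. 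Relatedly, your matching step needs the additional verification (done in the paper via the two expressions \eqref{eq:B_chi1}--\eqref{eq:B_chi2}) that the finite part is independent of the cut-off $\chi$ and of $r$, which is what identifies it with $\mathcal{B}(\xi)$ as defined by the $r\to0$ limit. Finally, a small slip: with the normalization $\int_\Sigma Ve^W dv_g=\frac{e^{c_{\rho,\xi}}\lambda_{k,m}}{8\rho^2}(1+a_1\rho+\cdots)$ one has $a_1=\mathcal{A}_1(\xi)/\lambda_{k,m}$, not $8\mathcal{A}_1(\xi)/\lambda_{k,m}$; with your constant the $-\tfrac12a_1^2\rho^2$ term would not reproduce $\mathcal{A}_1^2(\xi)\rho^2/(2\lambda_{k,m})$.
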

	\begin{proof} Considering that for all $l=1,2,\cdots,m$ we have 
		$\int_{\Sigma} W_l dv_g=0$.
		\begin{equation*}
			\begin{array}{lcl}
				\int_{\Sigma}|\nabla W|_g^2 dv_g &=&\int_{\Sigma} (-\Delta_g W) W dv_g =\sum_{i,l=1}^m \int_{\Sigma} \chi_ie^{-\varphi_i\circ y_{\xi_i}^{-1}} e^{U_i} W_l dv_g, 
			\end{array}
		\end{equation*}
		
		By the representation formula, for any $i,l=1,\cdots,m$
		\begin{equation}~\label{rep_PU}
			\begin{array}{lcl}
				PU_i(\xi_l)&=& \frac 1 {|\Sigma|_g} \int_{\Sigma} PU_i dv_g +\int_{\Sigma}G^g(\cdot,\xi_l)(-\Delta_g)PU_i dv_g + \int_{\partial\Sigma} G^g(\cdot,\xi_l) \partial_{\nu_g}PU_i ds_g\\
				&=& \int_{\Sigma} \chi_i e^{-\varphi_i\circ y_{\xi_i}^{-1}} e^{U_i} G^g(\cdot,\xi_l) dv_g,
			\end{array}
		\end{equation}
		where $ds_g$ is the line element of $\partial\Sigma.$
		By a direct calculation, we have 
		\begin{equation}\label{eq:int_log_i}
			\int_{B^{\xi_i}_{r_0}} \frac{8\rho_i^2\log\left( 1+\frac{\rho_i^2}{|y|^2}\right)}{(\rho_i^2+|y|^2)^2} dy =\varrho(\xi_i)+\mathcal{O}(\rho^4). 
		\end{equation}
		Lemma~\ref{lem0}, Corollary~\ref{cor0} and Lemma~\ref{lemb1} yield that 
		\begin{equation*} \begin{array}{lcl}
				&&	\int_{\Sigma}\chi_i e^{-\varphi_i\circ y_{\xi_i}^{-1}}e^{U_i} PU_i  dv_g \\
				&=& \int_{ \Sigma} \chi_ie^{-\varphi_i\circ y_{\xi_i}^{-1}} \frac{8\rho_i^2}{ (\rho_i^2+|y_{\xi_i}(x)|^2)^2 } \left(\chi_i\left(U_{i}(x)-\log \left(8 \rho_i^2\right)\right)\right. \\
				&&\left. +\varrho(\xi_i) H^g(x, \xi_i)+h_{\rho_i,\xi_i}-2\rho_i^2 F_{\xi_i}+\mathcal{O}(\rho^3|\log \rho|)\right) dv_g(x) \\
				&=&\int_{\Sigma} \chi_ie^{-\varphi_i\circ y_{\xi_i}^{-1}} \frac{8\rho_i^2}{ (\rho_i^2+|y_{\xi_i}|^2)^2 }   \left(\chi_i\log \frac { |y_{\xi_i}|^4}{(\rho_i^2 +|y_{\xi_i}|^2)^2 } +\varrho(\xi_i) G^g(x,\xi_i) +h_{\rho_i,\xi_i}\right.\\
				&&\left. -2\rho_i^2 F_{\xi}+\mathcal{O}(\rho^3|\log \rho|) \right) dv_g\\
				&\stackrel{\eqref{rep_PU}}{=}& 
				-2\int_{B^{\xi_i}_{2r_0}}\chi^2(|y|/r_0) \frac{8\rho_i^2\log\left( 1+\frac{\rho_i^2}{|y|^2}\right)}{(\rho_i^2+|y|^2)^2} dy + \varrho(\xi_i)PU_i(\xi_i)\\
				&&+\varrho(\xi_i)h_{\rho_i,\xi_i}-2\rho_i^2\varrho(\xi_i) F_{\xi_i}(\xi_i)+o(\rho^2)\\
				&\stackrel{\eqref{eq:int_log_i}}{=} & -2\varrho(\xi_i) + \varrho(\xi_i)PU_i(\xi_i)+ \varrho(\xi_i)h_{\rho_i,\xi_i}-2\rho_i^2\varrho(\xi_i) F_{\xi_i}(\xi_i)+o(\rho^2)\\
				&=& -2\varrho(\xi_i) -4\varrho(\xi_i)\log\rho_i+
				\varrho^2(\xi_i) H^g(\xi_i,\xi_i)+ 2\varrho(\xi_i)h_{\rho_i,\xi_i}\\
				&&-4\rho_i^2\varrho(\xi_i) F_{\xi_i}(\xi_i)+o(\rho^2).
		\end{array}\end{equation*}
		For any $i\neq l$, by Lemma~\ref{lem0}, Corollary~\ref{cor0} and  Lemma~\ref{lemb1}, we have 
		\begin{equation*} \begin{array}{lcl}
				&&\int_{\Sigma} \chi_i e^{-\varphi_i\circ y_{\xi_i}^{-1}} e^{U_i} PU_l dv_g\\
				&=& \int_{ \Sigma} \chi_ie^{-\varphi_i\circ y_{\xi_i}^{-1}} \frac{8\rho_i^2}{ (\rho_i^2+|y_{\xi_i}(x)|^2)^2 } \left(\chi_l\left(U_{l}(x)-\log \left(8 \rho_l^2\right)\right)\right. \\
				&&\left. +\varrho(\xi_l) H^g(x, \xi_l)+h_{\rho_l,\xi_l}-2\rho_l^2 F_{\xi_l}+\mathcal{O}(\rho^3|\log \rho|)\right) dv_g(x) \\
				&=&\int_{\Sigma} \chi_ie^{-\varphi_i\circ y_{\xi_i}^{-1}} \frac{8\rho_i^2}{ (\rho_i^2+|y_{\xi_i}|^2)^2 }   \left(\chi_l\log \frac { |y_{\xi_l}|^4}{(\rho_l^2 +|y_{\xi_l}|^2)^2 } +\varrho(\xi_l) G^g(x,\xi_l) +h_{\rho_l,\xi_l}\right.\\
				&&\left. -2\rho_l^2 F_{\xi_l}+\mathcal{O}(\rho^3|\log \rho|) \right) dv_g\\
				&\stackrel{\eqref{rep_PU}}{=} &\varrho(\xi_l) PU_i(\xi_l)+ \varrho(\xi_i)h_{\rho_l,\xi_l}-2\rho_l^2\varrho(\xi_i)F_{\xi_l}(\xi_i)+ o(\rho^2)
				\\
				&=&\varrho(\xi_i)\varrho(\xi_l) G^g(\xi_l,\xi_i) +\varrho(\xi_l)h_{\rho_i,\xi_i}+ \varrho(\xi_i)h_{\rho_l,\xi_l}-2\rho_i^2\varrho(\xi_l)F_{\xi_i}(\xi_l)\\
				&&-2\rho_l^2\varrho(\xi_i)F_{\xi_l}(\xi_i)+ o(\rho^2).
		\end{array}\end{equation*}
		Since $\log \tau_i= \log V(\xi_i)+ \varrho(\xi_i)H^g(\xi_i,\xi_i)+ \sum_{l\neq i}\varrho(\xi_l)G^g(\xi_i,\xi_l)$, we derive that 
		\begin{equation}\label{eq:int_nablaW}
			\begin{array}{lll}
				\int_{\Sigma} |\nabla W|_g^2 dv_g&=& -\lambda_{k,m}(2+4\log \rho) -\cF_{k,m}(\xi)+2\lambda_{k,m}c_{\rho,\xi}-4\rho^2 F_{\rho,\xi}(\xi)+o(\rho^2),
			\end{array}
		\end{equation}
		where $c_{\rho,\xi}=\sum_{l=1}^m h_{\rho_l,\xi_l} ,\text{ and } F_{\rho,\xi}(\xi)=\sum_{i,l=1}^m \tau_i\varrho(\xi_l)F_{\xi_i}(\xi_l)$.
		Lemma~\ref{lemb1}  implies that
		\begin{equation}
			%~\label{intvew}
			\begin{array}{lcl}
				&&\int_{U_{r_0}(\xi_i)} Ve^W dv_g
				\\
				&=& \int_{U_{r_0}(\xi_i)} V(x) \exp\left\{U_i-\log(8\rho_i^2)+\varrho(\xi_i)H^g(x,\xi_i)+ \sum_{l\neq i} \varrho (\xi_l) G^g(x,\xi_l)\right. \\
				&&\left. + \sum_{l=1}^m  h_{\rho_l,\xi_l}-2\sum_{l=1}^m\rho_l^2 F_{\xi_l}\right\}\left(1+\mathcal{O}(\rho^3|\log \rho|)\right)dv_g  \\
				&=& \frac 1 {8\rho^2\tau_i}\int_{U_{r_0}(\xi_i)} e^{U_i}\tau_i(x) e^{\sum_{l=1}^m ( h_{\rho_l,\xi_l}-2 \rho_l^2 F_{\xi_l})} (1+\mathcal{O}(\rho^3|\log \rho|)) dv_g \\
				&=&  \left(\frac 1 {8\rho^2\tau_i}\int_{\Sigma} \chi_i  e^{U_i}\tau_i(x) e^{ \sum_{l=1}^m ( h_{\rho_l,\xi_l}-2 \rho_l^2 F_{\xi_l})} dv_g - \int_{A_{2r_0}(\xi_i)}\chi_i\frac{\tau_i(x)}{|y_{\xi}|^4}dv_g \right) +o(1),
			\end{array}
		\end{equation}
		where $A_{2r_0}(\xi_i)=U_{2r_0}(\xi_i)\setminus U_{r_0}(\xi_i).$
		
		Applying  Corollary~\ref{cor0} for $f(x)= \tau_i(x)e^{\varphi_i\circ y_{\xi_i}^{-1}(x)} e^{ \sum_{l=1}^m ( h_{\rho_l,\xi_l}-2 \rho_l^2 F_{\xi_l}(x))} $, it follows that 
		\begin{equation*}
			\begin{array}{lll}
				&&	\int_{\Sigma} \chi_i  e^{U_i}\tau_i(x) e^{ \sum_{l=1}^m ( h_{\rho_l,\xi_l}-2 \rho_l^2 F_{\xi_l})} dv_g= \varrho(\xi_i)\tau_i e^{\sum_{l=1}^m (h_{\rho_l,\xi_l}-2 \rho_l^2 F_{\xi_l}(\xi_i))}+ 	\varrho(\xi_i)\rho_i \mathfrak{f}_2(\xi_i)\\
				&&+\left( 4\tau_i e^{\sum_{l=1}^m ( h_{\rho_l,\xi_l}-2 \rho_l^2 F_{\xi_l}(\xi_i))}\int_{\Sigma}\frac 1 {r_0} \chi^{\prime}(\frac {|y_{\xi_i}|}{r_0})e^{-\varphi_{\xi_i}\circ y_{\xi_i}^{-1}}\frac {1} {|y_{\xi_i}|^3}dv_g\right.\\
				&&\left. +  4\mathfrak{f}_2(\xi_i) \int_{\Sigma} \frac 1 {r_0} \chi^{\prime}(\frac {|y_{\xi_i}|}{r_0})e^{-\varphi_{\xi_i}\circ y_{\xi_i}^{-1}}\frac {(y_{\xi_i})_2 } {|y_{\xi_i}|^3}dv_g -\frac {\varrho(\xi_i)} 4\Delta_g f(\xi_i)(2\log \rho_i+1)\right. \\
				&& \left. -2\Delta_g f(\xi_i) \int_{\Sigma} \frac 1 {r_0} \chi^{\prime}(\frac {|y_{\xi_i}|}{r_0})e^{-\varphi_{\xi_i}\circ y_{\xi_i}^{-1}}\frac {\log|y_{\xi_i}| } {|y_{\xi_i}|}dv_g  + 8 \int_{\Sigma} \chi_{i} e^{-\varphi_{i}\circ y_{\xi_i}^{-1}} \frac{f-P_2(f)}{|y_{\xi_i}|^4} dv_g \right)\rho_i^2
				+o(\rho^2);
			\end{array}
		\end{equation*}
		Lemma~\ref{lemb1} deduces that 
		\begin{equation*}
			\int_{\Sigma\setminus \cup_{i=1}^m U_{r_0}(\xi_i)} Ve^Wdv_g = \int_{\Sigma\setminus \cup_{i=1}^m U_{r_0}(\xi_i)} Ve^{\sum_{i=1}^m \varrho(\xi_i)G^g(x,\xi_i)}dv_g +\mathcal{O}(\rho^2|\log \rho|).
		\end{equation*}
		Since for any $x\in U_{r_0}(\xi_i)\setminus\{\xi_i\}$ $ \frac{\tau_{i}(x)}{|y_{\xi_i}(x)|^4}= V(x) e^{\sum_{l=1}^m \varrho(\xi_l)G^g(x,\xi_l)}$, 
		\[ \int_{U_{r_0}(\xi_i)}\chi_i \frac{\tau_{i}(x)}{|y_{\xi_i}(x)|^4} dv_g =  \int_{U_{r_0}(\xi_i)}V(x) e^{\sum_{l=1}^m \varrho(\xi_l)G^g(x,\xi_l)} dv_g. \]
		We observe that 
		$$\mathfrak{f}_2(\xi_i)=\begin{cases}
			0& \text{ if } i\leq k\\
			-\tau_i e^{c_{\rho,\xi}} (\partial_{ \nu_g}\log V+2 k_g)|_ {\xi_i}+ \mathcal{O}(\rho^2|\log \rho|) & \text{ if } i>k,
		\end{cases},$$  $$-\Delta_g f(\xi_i)=
		\begin{cases}
			e^{c_{\rho,\xi}}(-\Delta_g\tau_i+2K_g\tau_i)|_{\xi_i} +\mathcal{O}(\rho^2) & \text{ if } i\leq k\\
			e^{c_{\rho,\xi}}(-\Delta_g\tau_i+2K_g\tau_i-4k_g\tau_i(\partial_{\nu_g}\log V - k_g))|_{\xi_i} +\mathcal{O}(\rho^2)	&\text{ if } i>k
		\end{cases} $$
		and 
		\[\sum_{i=1}^m\varrho(\xi_i) e^{-2\sum_{l=1}^m\rho_l^2F_{\xi_l}(\xi_i)} =\lambda_{k,m} -2\rho^2\sum_{i,l=1}^m \varrho(\xi_i)\tau_iF_{\xi_l}(\xi_i)+\mathcal{O}(\rho^4)=\lambda_{k,m}-2F_{\rho,\xi}(\xi)+\mathcal{O}(\rho^4).\]
		Then 
		\begin{equation}\label{eq:rho2_int_veW}
			\begin{array}{lcl}
				8\rho^2 e^{-c_{\rho,\xi}}\int_{\Sigma}Ve^W dv_g &=& \lambda_{k,m} +\mathcal{A}_1(\xi)\rho- \mathcal{A}_2(\xi)\rho^2\log \rho  \\
				&&+B_{\chi}\rho^2-2F_{\rho,\xi} +o(\rho^2)\end{array}
		\end{equation}
		where  $$\mathcal{A}_1(\xi)=-\sum_{i=k+1}^m\varrho(\xi_i)\sqrt{\tau_i} (\partial_{ \nu_g}\log V+2k_g)|_{\xi_i},$$
		$$ \mathcal{A}_2(\xi)=\sum_{i=1}^m \frac{\varrho(\xi_i)}2(\Delta_g\tau_i-2K_g\tau_i)|_{\xi_i}+ \sum_{i=k+1}^m 2\varrho(\xi_i) k_g( \partial_{\nu_g}\log V+ k_g^2)\tau_i|_{\xi_i}$$ 
		and 
		\begin{equation}\label{eq:B_chi1}
			\begin{array}{lcl}
				B_{\chi}&=&  \sum_{i=1}^m \varrho(\xi_i)\tau_i \int_0^{\infty} \frac1 {r_0}\chi'(\frac s{r_0}) 
				\frac{ds}{s^2}-8 \sum_{i=k+1}^m ( \partial_{\nu_g}\log V(\xi_i)+2k_g(\xi_i))\tau_i\int_0^{\infty} \frac1 {r_0}\chi'(\frac s{r_0})
				\frac{ds}{s^2} \\
				&&-\sum_{i=1}^m \frac{\varrho(\xi_i)} 4\log(\tau_i) (\Delta_g\tau_i-2K_g\tau_i)|_{\xi_i} -\sum_{i=k+1}^m \varrho(\xi_i)k_g(\partial_{ \nu_g }\log V +k_g) \log(\tau_i)\tau_i|_{\xi_i}\\
				&&-\mathcal{A}_2(\xi)(\frac 1 2 + \int_0^{\infty}\frac 1 {r_0} \chi'(\frac s{r_0})\log s ds )\\
				&&+  8  \int_{\Sigma}\left( Ve^{\sum_{i=1}^m \varrho(\xi_i)G^g(x,\xi_i)}- \sum_{i=1}^m\chi_{i} e^{-\varphi_{i}\circ y_{\xi_i}^{-1}} \frac{P_2(\tau_i e^{\varphi_i\circ y_{\xi_i}^{-1}} )}{|y_{\xi_i}|^4}\right) dv_g. 
			\end{array}
		\end{equation}
		For any $r\leq r_0$, we divide the last integral in $B_{\chi}$ into two integrals on  $\Sigma\setminus \cup_{i=1}^mU_{r}(\xi_i)$ and $\cup_{i=1}^mU_{r}(\xi_i)$. 
		Using integrating by part, we have 
		\begin{equation*}
			\begin{array}{lcl}
				&&	\int_{\Sigma\setminus \cup_{i=1}^mU_{r}(\xi_i)}8\chi_{i} e^{-\varphi_{i}\circ y_{\xi_i}^{-1}} \frac{P_2(\tau_i e^{\varphi_i\circ y_{\xi_i}^{-1}} )}{|y_{\xi_i}|^4} dv_g =
				\sum_{i=1}^m8\tau_i \int_{B^{\xi}_{2r_0}\setminus B^{\xi}_{r}} \chi(|y|/r_0)\frac 1 {|y|^4} dy\\
				&&	+\sum_{i=k+1}^m 8(\frac{\partial}{\partial y_2} \tau_i\circ y_{\xi_i}^{-1}(0)+ \frac{\partial}{\partial y_2} \varphi_{\xi_i}(0) ) \int_{B^{\xi}_{2r_0}\setminus B^{\xi}_{r}} \chi(|y|/r_0)\frac {y_2} {|y|^4} dy\\
				&&+ 	\left(\sum_{i=1}^m4(\Delta_g\tau_i(\xi_i)-2K_g(\xi_i)\tau_i(\xi_i))+\sum_{i=k+1}^m  16k_g(\partial_{ \nu_g }\log V(\xi_i) +k_g(\xi_i)) \tau_i(\xi_i) \right)\\
				&&\cdot\int_{B^{\xi}_{2r_0}\setminus B^{\xi}_{r}} \chi(|y|/r_0)\frac 1 {|y|^2} dy\\
				&=&\left(\sum_{i=1}^m \varrho(\xi_i)\tau_i+8 \sum_{i=k+1}^m \tau_i (-\partial_{\nu_g}\log V(\xi_i)-2 k_g(\xi_i)) \right)\left(\int_0^{\infty}\frac1 {r_0}\chi'(\frac s{r_0})
				\frac{ds}{s^2} + \frac 1 {r^2}\right)\\
				&&-\mathcal{A}_2(\xi)\left( \int_0^{\infty}\frac 1 {r_0} \chi'(\frac s{r_0})\log s ds+\log r\right).
			\end{array}
		\end{equation*}
		The expression $B_{\chi}$ can be reformulated as follows: 
		\begin{equation}\label{eq:B_chi2}
			\begin{array}{lcl}
				B_{\chi}&=& -\sum_{i=1}^m \frac{\varrho(\xi_i)} 4\log(\tau_i) (\Delta_g\tau_i-2K_g\tau_i)|_{\xi_i}-\sum_{i=k+1}^m \varrho(\xi_i)k_g(\partial_{ \nu_g }\log V+k_g) \log(\tau_i)\tau_i|_{\xi_i}\\
				&&-\frac 1 2\mathcal{A}_2(\xi)+ 8\int_{\Sigma\setminus \cup_{i=1}^mU_{r}(\xi_i)}Ve^{\sum_{i=1}^m \varrho(\xi_i)G^g(x,\xi_i)} dv_g\\
				&&-\frac{1}{r^2} \left(\sum_{i=1}^m \varrho(\xi_i)\tau_i-8 \sum_{i=k+1}^m  (\partial_{\nu_g}\log V+2k_g)|_{\xi_i} \tau_i\right)\\
				&& 
				-\mathcal{A}_2(\xi)\log \frac 1 r+  8 \sum_{i=1}^m \int_{U_r(\xi_i)} e^{-\varphi_{i}\circ y_{\xi_i}^{-1}} \frac{\tau_i e^{\varphi_i\circ y_{\xi_i}^{-1}}-P_2(\tau_i e^{\varphi_i\circ y_{\xi_i}^{-1}} )}{|y_{\xi_i}|^4} dv_g. 
			\end{array}
		\end{equation}
		The equations~\eqref{eq:B_chi1} and  \eqref{eq:B_chi2} show that $B_{\chi}$ is independent with the choice of $\chi$ and $r$. 
		Since for some $\alpha\in(0,1)$ $e^{\varphi_i\circ y_{\xi_i}^{-1}}-P_2(\tau_i e^{\varphi_i\circ y_{\xi_i}^{-1}} )=o(|y_{\xi_i}(x)|^{2+\alpha})$ as $x\rightarrow \xi_i$, Lebesgue's dominated convergence theorem yields that  $$\lim_{r\rightarrow 0}\int_{U_r(\xi_i)} e^{-\varphi_{i}\circ y_{\xi_i}^{-1}} \frac{\tau_i e^{\varphi_i\circ y_{\xi_i}^{-1}}-P_2(\tau_i e^{\varphi_i\circ y_{\xi_i}^{-1}} )}{|y_{\xi_i}|^4} dv_g=0.$$
		Hence $B_{\chi}=\mathcal{B}(\xi)$, where $\cB(\xi)$ is defined by~\eqref{eq:B_xi}. 
		\begin{equation}\label{eq:int_veW}
			\begin{array}{lll}
				\lambda_{k,m} \log \int_{\Sigma}Ve^W dv_g &=& \lambda_{k,m}\log\left(\frac {\lambda_{k,m}} 8\right)-2\lambda_{k,m}\log\rho+\mathcal{A}_1(\xi)\rho-\mathcal{A}_2(\xi)\rho^2\log\rho+\mathcal{B}(\xi)\rho^2\\
				&&-2F_{\rho,\xi}+\lambda_{k,m}c_{\rho,\xi}-\frac{\mathcal{A}^2_1(\xi)\rho^2}{2\lambda_{k,m}}+o(\rho^2)
			\end{array}
		\end{equation}
		Applying~\eqref{eq:int_nablaW} and~\eqref{eq:int_veW}, we can derive that 
		\begin{equation*}
			\begin{array}{lcl}
				J_{\lambda_{k,m}}(W)&=&  \frac 1 2\int_{\Sigma} |\nabla W|^2_gdv_g -\lambda_{k,m}\log \int_{\Sigma} Ve^W dv_g\\
				& =& -\lambda_{k,m}(1+\log(\frac {\lambda_{k,m}} 8)) -\frac 1 2 \cF_{k,m}(\xi)-\mathcal{A}_1(\xi)\rho+\mathcal{A}_2(\xi)\rho^2\log\rho\\
				&&-\mathcal{B}(\xi)\rho^2+ \frac{\mathcal{A}^2_1(\xi)\rho^2}{2\lambda_{k,m}}+o(\rho^2).
			\end{array}
		\end{equation*}
		And 
		\begin{equation*}
			\begin{array}{lcl}
				J_{\lambda}(W)&=& J_{\lambda_{k,m}}(W)+(\lambda_{k,m}-\lambda)\log \int_{\Sigma}Ve^W dv_g\\
				&=& J_{\lambda_{k,m}}(W)+(\lambda_{k,m}-\lambda)(\log\left(\frac {\lambda_{k,m}} 8\right)-2\log\rho)+\mathcal{O}(\rho^3|\log\rho|)\\
				&=& -\lambda_{k,m}-\lambda\log\left(\frac{\lambda_{k,m}}{8}\right)-\frac 1 2 \cF_{k,m}(\xi)+2(\lambda-\lambda_{k,m})\log \rho\\
				&& -\mathcal{A}_1(\xi)\rho+\mathcal{A}_2(\xi)\rho^2\log\rho-\mathcal{B}(\xi)\rho^2+ \frac{\mathcal{A}^2_1(\xi)\rho^2}{2\lambda_{k,m}}+o(\rho^2).
			\end{array}
		\end{equation*} 
	\end{proof}
	
	\begin{proposition}
		\label{prop:C_1_expansion}
		Assume that~\eqref{xi}-\eqref{lamda1}, there exists $\rho_0>0$ such that  we have the expansion for any $i=1,\cdots,m$ and $j=1,\cdots,\i(\xi_i)$, where $\i(\xi)\text{ equals                                                                  }
		2 \text{ if }\xi\in\intsigma\text{ and }
		1 \text{ if }\xi \in \partial \Sigma$.
		\begin{equation}\label{expansion_JW_C_1}
			\partial_{(\xi_i)_j}J_{\lambda}(W)= -\frac 1 2 \partial_{(\xi_i)_j}\cF_{k,m}(\xi)+\mathcal{O}(\rho)
		\end{equation}
		which is  $C((0,\rho_0)\times M_{\xi^*,\delta})$ for $(\rho,\xi)$ as $\rho\rightarrow 0$.
	\end{proposition}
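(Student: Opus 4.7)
Since $W=\sum_l PU_{\rho_l,\xi_l}$ depends smoothly on $\xi$, and since each $\rho_l=\rho\sqrt{\tau_l(\xi_l)}$ involves $H^g$ and $G^g$ evaluated at points depending on $\xi_i$, I would start from the chain rule identity
\[
\partial_{(\xi_i)_j}J_\lambda(W)\;=\;\int_\Sigma \nabla W\cdot\nabla\bigl(\partial_{(\xi_i)_j}W\bigr)\,dv_g\;-\;\lambda\,\frac{\int_\Sigma Ve^W\,\partial_{(\xi_i)_j}W\,dv_g}{\int_\Sigma Ve^W\,dv_g},
\]
with
\[
\partial_{(\xi_i)_j}W\;=\;\partial_{(\xi_i)_j}PU_{\rho_i,\xi_i}\;+\;\sum_{l=1}^m\bigl(\partial_{\rho_l}PU_l\bigr)\,\partial_{(\xi_i)_j}\rho_l.
\]
The key observation is that in order to obtain an $\mathcal O(\rho)$ remainder, it suffices to identify only the $\mathcal O(1)$ contributions, so the full second-order precision used in the expansion of $J_\lambda(W)$ is not required.

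\textbf{Main computation.} Using $-\Delta_g W_l=\chi_l e^{-\varphi_l\circ y_{\xi_l}^{-1}}e^{U_l}-\overline{\chi_l e^{-\varphi_l\circ y_{\xi_l}^{-1}}e^{U_l}}$ together with $\int_\Sigma\partial_{(\xi_i)_j}W\,dv_g=0$, the Dirichlet term collapses to
\[
\sum_l\int_\Sigma\chi_l e^{-\varphi_l\circ y_{\xi_l}^{-1}}e^{U_l}\,\partial_{(\xi_i)_j}W\,dv_g,
\]
which I would expand exactly as in the proof of \eqref{expansion_JW}: via the representation formula \eqref{eq:representation} applied to $\partial_{(\xi_i)_j}PU_l(\xi_q)$, combined with the asymptotic tools Lemma~\ref{lem0}, Corollary~\ref{cor0} and Lemma~\ref{lemb1}. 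The second term is then handled using the already-established expansion \eqref{eq:rho2_int_veW}, with $\int_\Sigma Ve^W\,\partial_{(\xi_i)_j}W\,dv_g$ computed by the same techniques, after factoring out the common $e^{c_{\rho,\xi}}/(8\rho^2\tau_l)$.

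\textbf{The hard part.} The delicate bookkeeping is to recognise the $\mathcal O(1)$ piece as precisely $-\tfrac12\partial_{(\xi_i)_j}\cF_{k,m}(\xi)$. The a priori dangerous contributions come from differentiating $\rho_l$ for $l\neq i$, since $\partial_{(\xi_i)_j}\rho_l\neq 0$ through the dependence of $\tau_l(\xi_l)$ on $G^g(\xi_l,\xi_i)$, and they a priori contribute at $\mathcal O(1)$. However, using $\partial_{(\xi_i)_j}\log\tau_l(\xi_l)=\varrho(\xi_i)\,\partial_{(\xi_i)_j}G^g(\xi_l,\xi_i)$, these combine with the explicit derivatives of $W_i$ and of $\log V(\xi_i)$ and $H^g(\xi_i,\xi_i)$ (via the definition of $\rho_i$) to reconstruct exactly the derivatives of the three building blocks $\varrho(\xi_i)^2R^g(\xi_i)$, $2\sum_{l\neq i}\varrho(\xi_i)\varrho(\xi_l)G^g(\xi_i,\xi_l)$ and $2\varrho(\xi_i)\log V(\xi_i)$ making up $\cF_{k,m}$. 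All other contributions are uniformly $\mathcal O(\rho)$ on $(0,\rho_0)\times M_{\xi^*,\delta}$, giving both the pointwise expansion and, by the smooth dependence of all ingredients on $\xi$, its $C((0,\rho_0)\times M_{\xi^*,\delta})$ character.
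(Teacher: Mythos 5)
Your proposal follows essentially the same route as the paper: write $\partial_{(\xi_i)_j}J_\lambda(W)$ as the Dirichlet term plus the normalized term $-\lambda\int_\Sigma Ve^W\partial_{(\xi_i)_j}W\,dv_g/\int_\Sigma Ve^W dv_g$, expand $\partial_{(\xi_i)_j}W$ (direct dependence plus the $\partial_{(\xi_i)_j}\rho_l$ contributions through $\tau_l$, i.e.\ the content of \eqref{1w}), identify the $\mathcal O(1)$ part of the Dirichlet term via the representation formula, Corollary~\ref{cor0} and Lemma~\ref{lemb1} as $-\tfrac12\partial_{(\xi_i)_j}\cF_{k,m}(\xi)$ (using $\partial_{(\xi_i)_j}\log\tau_l(\xi_l)=\varrho(\xi_i)\partial_{(\xi_i)_j}G^g(\xi_l,\xi_i)$ and the Robin-function identity), and control the remaining quotient through \eqref{eq:rho2_int_veW}, exactly as in the paper's proof. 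The plan and the key cancellations are the ones the paper uses, so I see no gap.
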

	\begin{proof}
		
		The results of Lemma~\ref{lem:expansion1st} and Lemma~\ref{expansion_rho}  imply that for any $i,l=1,\cdots,m$ and $j=1,\cdots, \i(\xi_i)$
		\begin{equation}~\label{1w}
			\begin{array}{ll}
				\partial_{\left(\xi_i\right)_j} W_l(x)= & \delta_{il}\left(  -2\chi_i \frac{  \partial_{\left(\xi_i\right)_j} |y_{\xi_i}(x)|^2  }{\rho_i^2+\left|y_{\xi_i}(x)\right|^2} + \varrho(\xi_i) \partial_{\left(\xi_i\right)_j} H^g\left(x, \xi_i\right)-4\log|y_{\xi_i}|\partial_{(\xi_i)_j}\chi_i\right)\\
				&-2\chi_l \frac{ \rho_l^2 \partial_{\left(\xi_i\right)_j}\log \tau_l\left(\xi_l\right)}{\rho_l^2+\left|y_{\xi_l}(x)\right|^2} +\mathcal{O}(\rho^2|\log\rho|),
			\end{array}
		\end{equation}
		in $C(\Sigma)$. For any $l\neq i$, 
		\[ \partial_{\left(\xi_{i}\right)_j} W_l(x)=-2\varrho(\xi_i) \frac{\chi_l \rho_l^2}{\rho_l^2+\left|y_{\xi_l}(x)\right|^2} \partial_{\left(\xi_i\right)_j} G\left(\xi_l, \xi_i\right)+\mathcal{O}(\rho^2|\log\rho|), 
		\]
		in $C(\Sigma)$ and
		$$
		\partial_{\left(\xi_i\right)_j} W_i(x)=\varrho(\xi_i) \partial_{\left(\xi_i\right)_j} G\left(x, \xi_i\right)+\mathcal{O}(\rho^2|\log\rho|), 
		$$
		in $C_{loc}(\Sigma \setminus\left\{\xi_i\right\})$. 
		Since $\int_{\Sigma} \partial_{(\xi_i)_j} W=0$, 
		\begin{equation*}
			\begin{array}{lcl}
				\partial_{(\xi_i)_j} J_{\lambda}(W)&=& \int_{\Sigma}\left(-\Delta_g W-\frac{\lambda Ve^W }{\int_{\Sigma} Ve^W dv_g } + \frac{\lambda}{|\Sigma|_g}\right) \partial_{(\xi_i)_j} W dv_g \\
				&=& \int_{\Sigma}\left(-\Delta_g W-\frac{\lambda Ve^W }{\int_{\Sigma} Ve^W dv_g } \right) \partial_{(\xi_i)_j} W dv_g,
			\end{array}
		\end{equation*} 
		for any $i=1,2,\cdots,m$ and $j=1,\cdots,\i(\xi_i).$  
		Applying $\int_{\Sigma} \partial_{(\xi_i)_j} W=0$ again, 
		\begin{equation*}
			\begin{array}{lcl}
				\int_{\Sigma}\left(-\Delta_g\right) W\partial_{(\xi_i)_j} W dv_g &=& \sum_{l=1}^m \sum_{q=1}^m  \int_{\Sigma} \chi_l e^{-\varphi_l\circ y_{\xi_l}} e^{U_l} \partial_{(\xi_i)_j}W_q dv_g. \\
			\end{array}
		\end{equation*} 
		By Corollary~\ref{cor0} and~\eqref{1w}, we have 
		\begin{equation*}
			\begin{array}{lcl}
				&&\int_{\Sigma} \sum_{l,q\neq i}\chi_l e^{-\varphi_l\circ y_{\xi_l}} e^{U_l} \partial_{(\xi_i)_j}W_qdv_g\\
				&=& \sum_{l\neq i} \int_{\Sigma}  \chi_l e^{-\varphi_l\circ y_{\xi_l}} e^{U_l} \left( -2\varrho(\xi_i) \frac{\chi_l \rho_l^2 }{\rho_l^2 +|y_{\xi_l}(x)|^2 } \partial_{(\xi_i)_j}G^g(\xi_l,\xi_i)\right)   dv_g   + \mathcal{O}(\rho^2|\log\rho|)\\
				&=& -\sum_{l\neq i}\varrho(\xi_l)\varrho(\xi_i) \partial_{(\xi_i)_j}G^g(\xi_l,\xi_i)+\mathcal{O}(\rho^2|\log\rho|), 
			\end{array}
		\end{equation*}
		\begin{equation*}
			\int_{\Sigma}	\sum_{l\neq i} \sum_{q=i} \chi_l e^{-\varphi_l\circ y_{\xi_l}} e^{U_l} \partial_{(\xi_i)_j}W_q dv_g=  \sum_{l\neq i}\varrho(\xi_l) \varrho(\xi_i) \partial_{(\xi_i)_j}G^g(\xi_l,\xi_i)+\mathcal{O}(\rho^2|\log\rho|),
		\end{equation*}
		\begin{equation*}
			\int_{\Sigma}	\sum_{l= i} \sum_{q\neq i} \chi_l e^{-\varphi_l\circ y_{\xi_l}} e^{U_l} \partial_{(\xi_i)_j}W_q dv_g=\mathcal{O}(\rho^2|\log\rho|),
		\end{equation*}
		and 
		\begin{equation*} \begin{array}{lcl}
				&&  \int_{\Sigma}	\sum_{l= i} \sum_{q=i} \chi_l e^{-\varphi_l\circ y_{\xi_l}} e^{U_l} \partial_{(\xi_i)_j}W_q dv_g\\
				&=& \int_{\Sigma} \chi_i e^{-\varphi_i\circ y_{\xi_i}^{-1}} e^{U_i}\left( \partial_{\left(\xi_i\right)_j}\left(\chi_i( U_i-\log (8\rho_i^2)\right) + \varrho(\xi_i) \partial_{\left(\xi_i\right)_j} H^g(x,\xi_i) +\mathcal{O}(\rho^2|\log\rho|)\right)  dv_g,\\
				&=& \int_{\Sigma}\chi_i e^{U_i}e^{-\varphi_i\circ y_{\xi_i}^{-1}}(U_i-\log(8\rho_i^2)) \partial_{\left(\xi_i\right)_j} \chi_idv_g+ \int_{\Sigma}\chi^2_i e^{U_i} e^{-\varphi_i\circ y_{\xi_i}^{-1}} \partial_{(\xi_i)_j} U_idv_g\\&& -\int_{\Sigma}\chi^2_i e^{U_i} e^{-\varphi_i\circ y_{\xi_i}^{-1}}   \partial_{\left(\xi_i\right)_j}\log\tau_i dv_g  +\left.\varrho(\xi_i)^2 \partial_{\left(\xi_i\right)_j}H^g(x,\xi_i)\right|_{x=\xi_i}+\mathcal{O}(\rho^2|\log\rho|).\\
				&=& -\varrho(\xi_i)^2\partial_{(\xi_i)_j}H^g(\xi_i,\xi_i)+\left.\varrho(\xi_i)^2 \partial_{\left(\xi_i\right)_j}H^g(x,\xi_i)\right|_{x=\xi_i}-\sum_{l\neq i} \varrho(\xi_i)\varrho(\xi_l)\partial_{(\xi_i)_j}G^g(\xi_i,\xi_l)\\
				&&-\varrho(\xi_i)\partial_{(\xi_i)_j}\log V(\xi_i)+\mathcal{O}(\rho^2|\log\rho|),
		\end{array}\end{equation*}
		in view of $\partial_{\nu_g} \partial_{(\xi_i)_j} H^g_{\xi_i}(x)|_{x=\xi_i}=0$ for $i=k+1,\cdots,m$
		and $j=1.$		
		By straightforward calculation, we have 
		for $j=1,\cdots, \i(\xi_i)$
		\[ \partial_{(\xi_i)_j} R^g(\xi_i,\xi_i)=\partial_{(\xi_i)_j} H^g(\xi_i,\xi_i)=2 \partial_{(\xi_i)_j} H^g(x,\xi_i)|_{x=\xi_i}. \]
		Thus 
		\[	\int_{\Sigma}\left(-\Delta_g W\right) \partial_{(\xi_i)_j} W dv_g = - \frac 1 2\partial_{(\xi_i)_j}\cF_{k,m}(\xi)+\mathcal{O}(\rho^2|\log\rho|). \]
		We observe that  
		\begin{equation}~\label{extau}
			\tau_i(x)=\tau_i(\xi_i)+\la \nabla(\tau_i\circ y_{\xi_i}^{-1})(0), y_{\xi_i}(x)\ra+ \mathcal{O}(|y_{\xi_i}(x)|^2), \text{ for any }x\in U_{2r_0}(\xi_i). 
		\end{equation}
		Lemma~\ref{lemb1} implies  $Ve^W=\begin{cases}
			\frac{\tau_i(x)}{8\tau_i\rho^2}e^{U_i}(1+\mathcal{O}(\rho^2|\log \rho|)), & x\in U_{r_0}(\xi_i)\\
			\mathcal{O}(1), & x\in \Sigma\setminus \bigcup_{i=1}^m U_{r_0}(\xi_i)
		\end{cases}$. 
		\begin{equation*} \begin{array}{lcl}
				\int_{\Sigma}{Ve^W }\partial_{(\xi_i)_j} W dv_g&=&\frac 1 {8\rho_i^2}\left(   \sum_{l=1}^m \int_{U_{r_0}(\xi_l)}\chi_l \tau_l(x)e^{U_l}(1+\mathcal{O}(\rho^2|\log\rho|) )\partial_{(\xi_i)_j} W dv_g+  \mathcal{O}(\rho^2)\right). 
		\end{array}\end{equation*}
		By Corollary~\ref{cor0} and  \eqref{1w}, we have 
		\begin{equation*}
			\begin{array}{lcl}
			\sum_{l\neq i}	\int_{U_{r_0}(\xi_l)} \chi_l \tau_l(x)e^{U_l} \partial_{(\xi_i)_j} W_i &=& \sum_{l\neq i} \int_{U_{r_0}(\xi_l)} \chi_l \tau_l(x) e^{U_l} \partial_{(\xi_i)_j} W_i dv_g +\mathcal{O}(\rho^2|\log\rho|)\\
				&=&  \sum_{l\neq i} \tau_l(\xi_i) \varrho(\xi_i)\varrho(\xi_l)\partial_{(\xi_i)_j} G^g(\xi_l,\xi_i)+\mathcal{O}(\rho^2|\log\rho|),
			\end{array}
		\end{equation*}
		\begin{equation*}
			\begin{array}{lcl}
			\sum_{l\neq i}	\int_{U_{r_0}(\xi_i)} \chi_i \tau_i(x)e^{U_i} \partial_{(\xi_i)_j} W_l dv_g  &=&  -\sum_{l\neq i} \tau_l(\xi_l) \varrho(\xi_l)\varrho(\xi_i)\partial_{(\xi_i)_j} G^g(\xi_l,\xi_i)+\mathcal{O}(\rho^2|\log\rho|),
			\end{array}
		\end{equation*}
		and 
		\begin{equation*}
			\begin{array}{lcl}
				&&\int_{U_{r_0}(\xi_i)} \chi_i \tau_i(x)e^{U_i} \partial_{(\xi_i)_j} W_i dv_g \\
				&=&\int_{\Sigma} \chi_i \tau_i(x)e^{U_i} \left( \partial_{\left(\xi_i\right)_j}\left(\chi_i( U_i-\log (8\rho_i^2)\right) + \varrho(\xi_i) \partial_{\left(\xi_i\right)_j} H^g_{\xi_i} +\mathcal{O}(\rho^2|\log\rho|)\right) dv_g +\mathcal{O}(\rho^2)\\
				&=& \partial_{(\xi_i)_j} \int_{\Sigma} \chi_i \tau_i(x) e^{U_i} dv_g - \int_{\Sigma} \chi_i e^{U_i}  \partial_{(\xi_i)_j}  \tau_i(x) dv_g\\
				&& + \int_{\Sigma} \chi_i\tau_i(x) e^{U_i} ( -                                                                                                                                                                                                                                                                                                                                                                                                   \partial_{(\xi_i)_j}  \log \tau_i+ \varrho(\xi_i) \partial_{(\xi_i)_j} H^g_{\xi_i}) dv_g+\mathcal{O}(\rho^2|\log\rho|)\\
				&=&\partial_{(\xi_i)_j} \int_{\Sigma} \chi_i \tau_i(x) e^{U_i} dv_g -\int_{\Sigma} \chi_i\tau_i(x) e^{U_i}                                                                                                                                                                                                                                                                                                                                                                                                    \partial_{(\xi_i)_j}  \log \tau_i dv_g +\mathcal{O}(\rho^2|\log\rho|),
			\end{array}
		\end{equation*}
		where we applied $\varrho(\xi_i)\partial_{(\xi_i)_j}H^g(x,\xi_i)= \partial_{(\xi_i)_j}\log \tau_i(x)$. 
		Then via Corollary~\ref{cor0}, we deduce  
		\begin{equation*}
			\begin{array}{lcl}
				\int_{\Sigma} \chi_i\tau_i(x) e^{U_i}                                       \partial_{(\xi_i)_j}  \log \tau_i dv_g &=&\varrho(\xi_i)\tau_i                                     \partial_{(\xi_i)_j}  \log \tau_i  +\mathcal{O}(\rho), \\
				\partial_{(\xi_i)_j} \int_{\Sigma} \chi_i \tau_i(x) e^{U_i} dv_g
				&=& 
				\varrho(\xi_i)\partial_{(\xi_i)_j} \tau_i+\mathcal{O}(\rho).
			\end{array}
		\end{equation*}
		Thus $\int_{U_{r_0}(\xi_i)} \chi_i \tau_i(x)e^{U_i} \partial_{(\xi_i)_j} W_i dv_g  =\mathcal{O}(\rho).$
		Combining the estimates above, we have  
		\begin{equation}\label{eq:intvew_upper}
			\int_{\Sigma} Ve^W \partial_{(\xi_i)_j} W dv_g = \mathcal{O}(\rho^{-1}).
		\end{equation}
		By \eqref{eq:rho2_int_veW} and \eqref{eq:intvew_upper},
		\begin{equation}
			\label{eq:int_ve_pa_xi_normal}\int_{\Sigma} \frac{Ve^W}{\int_{\Sigma} Ve^W dv_g} \partial_{(\xi_i)_j} W dv_g =\frac{  \mathcal{O}(\rho^{-1})}{ \frac{\lambda_{k,m}+ \mathcal{O}(\rho)}{8\rho^2}}= \mathcal{O}(\rho).
		\end{equation} 
		Hence, we deduce that 
		$ \partial_{(\xi_i)_j}J_{\lambda}(W)= -\frac 1 2 \partial_{(\xi_i)_j}\cF_{k,m}(\xi)+\mathcal{O}(\rho). $
	\end{proof}
	\begin{proposition}
		Assume that~\eqref{xi}-\eqref{lamda1}, there exists $\rho_0>0$ such that we have the following expansions
		\begin{equation}\label{expansion_JW_rho1}
			\begin{array}{lcl}
				\partial_{\rho}	J_{\lambda}(W)	&=& \frac{2(\lambda-\lambda_{k,m})}{\rho}-\mathcal{A}_1(\xi)+\frac{\mathcal{A}^2_1(\xi)}{\lambda_{k,m}}\rho +\mathcal{A}_2(\xi)\rho+2\mathcal{A}_2(\xi)\rho\log\rho-2 \mathcal{B}(\xi)\rho+o(\rho).
			\end{array}
		\end{equation}
		which is  $C((0,\rho_0)\times M_{\xi^*,\delta})$ for $(\rho,\xi)$ as $\rho\rightarrow 0$. And 
		\begin{equation}\label{expansion_JW_rho2}
			\begin{array}{lcl}
				\partial^2_{\rho}	J_{\lambda}(W)	&=&- \frac{2(\lambda-\lambda_{k,m})}{\rho^2}+\frac{\mathcal{A}_1^2(\xi)}{\lambda_{k,m}}+3\mathcal{A}_2(\xi)+2\mathcal{A}_2\log\rho-2\mathcal{B}(\xi)+o(1),
			\end{array}
		\end{equation}
		which is  $C((0,\rho_0)\times M_{\xi^*,\delta})$ for $(\rho,\xi)$ as $\rho\rightarrow 0$. 
		\begin{equation}\label{expansion_JW_mix}
			\begin{array}{lcl}
				\partial_{(\xi_i)_j}\partial_{\rho}J_{\lambda}(W)=\mathcal{O}(\rho|\log\rho|),
			\end{array}
		\end{equation}
		which is  $C((0,\rho_0)\times M_{\xi^*,\delta})$ for $(\rho,\xi)$ as $\rho\rightarrow 0$.
	\end{proposition}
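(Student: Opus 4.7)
The plan is to prove all three expansions by differentiating the variational identity
\[
\partial J_\lambda(W) = \int_\Sigma\Big(-\Delta_g W - \frac{\lambda Ve^W}{\int_\Sigma Ve^W\,dv_g} + \frac{\lambda}{|\Sigma|_g}\Big)\,\partial W\,dv_g,
\]
applied with $\partial \in \{\partial_\rho,\partial_\rho^2,\partial_{(\xi_i)_j}\partial_\rho\}$. Since $\int_\Sigma W\,dv_g=0$ for every admissible parameter, every derivative of $W$ integrates to zero, so the constant term $\lambda/|\Sigma|_g$ drops out. Formally, the three claimed expansions are exactly the $\rho$, $\rho\rho$, and $\xi\rho$ derivatives of the $C^0$ expansion \eqref{expansion_JW}; the task is to justify this differentiation rigorously with the stated error orders.

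First I would compute $\partial_\rho W_i = P(\partial_\rho U_i)$. Because $\rho_i=\rho\sqrt{\tau_i(\xi_i)}$, one has $\partial_\rho\rho_i=\sqrt{\tau_i}$ and hence $\partial_\rho U_i = \frac{2\sqrt{\tau_i}}{\rho_i} - \frac{4\sqrt{\tau_i}\rho_i}{\rho_i^2+|y_{\xi_i}|^2}$. By linearity of the Neumann projection $P$ defined in \eqref{eq:projection}, $\partial_\rho W_i$ satisfies the projected Liouville equation with right-hand side $\partial_\rho U_i$, allowing a pointwise expansion in the spirit of Lemma~\ref{lem:expansion1st}: near $\xi_i$ it reproduces $\partial_\rho U_i$ plus a regular Robin part, and away from $\xi_i$ it is $\mathcal{O}(\rho)$ uniformly. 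Inserting this into the variational identity and splitting $\int_\Sigma(-\Delta_g W)\partial_\rho W\,dv_g$ into diagonal and off-diagonal parts, the machinery of Corollary~\ref{cor0} and Lemma~\ref{lemb1} used in the previous proof applies verbatim, producing the coefficients $-\cA_1(\xi)$, $\cA_2(\xi)(1+2\log\rho)$, and $-2\cB(\xi)$. The term $\int_\Sigma Ve^W\partial_\rho W\,dv_g/\int_\Sigma Ve^W\,dv_g$ is handled by expanding both integrals: the numerator through the same Taylor method applied to $\tau_i e^{\varphi_i}$, the denominator through \eqref{eq:rho2_int_veW}, and the quotient contributes both the singular $2(\lambda-\lambda_{k,m})/\rho$ term (from $\lambda$ vs.\ $\lambda_{k,m}$ mismatch) and the quadratic correction $\cA_1^2/\lambda_{k,m}\cdot\rho$.

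For the second derivative $\partial_\rho^2 J_\lambda(W)$ I would use $\partial_\rho^2 J_\lambda(W) = J''_\lambda(W)[\partial_\rho W,\partial_\rho W] + \langle J'_\lambda(W),\partial_\rho^2 W\rangle$, where $J''_\lambda(W)[\phi,\psi] = \int(-\Delta_g)\phi\,\psi - \lambda(\int Ve^W\phi\psi/\int Ve^W) + \lambda(\int Ve^W\phi)(\int Ve^W\psi)/(\int Ve^W)^2$. The second linear term is treated exactly as in the first-derivative case with $\partial_\rho^2 W$ replacing $\partial_\rho W$, while the quadratic $J''_\lambda$ piece generates the singular $-2(\lambda-\lambda_{k,m})/\rho^2$ factor and the $3\cA_2(\xi)+2\cA_2(\xi)\log\rho$ bookkeeping, matching the second derivative of $\cA_2(\xi)\rho^2\log\rho$. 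The mixed derivative \eqref{expansion_JW_mix} is the mildest: combining the identity $\partial_{(\xi_i)_j}\partial_\rho J_\lambda(W)=J''_\lambda(W)[\partial_{(\xi_i)_j}W,\partial_\rho W] + \langle J'_\lambda(W),\partial_{(\xi_i)_j}\partial_\rho W\rangle$ with the bounds $\partial_{(\xi_i)_j}W=\cO(1)$ from \eqref{1w} and $\partial_\rho W=\cO(1/\rho)$ near $\xi_i$, the extra $\partial_\rho$ contributes a $\rho|\log\rho|$ factor arising from the $\rho$-dependence of the kernel $\rho_i^2/(\rho_i^2+|y|^2)$.

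The main obstacle will be tracking the logarithmic terms in $\partial_\rho^2 J_\lambda(W)$, where one extra order of precision is required in the expansion of $\int_\Sigma Ve^W\,dv_g$ compared with \eqref{expansion_JW}: the $o(\rho^2)$ remainder in \eqref{eq:rho2_int_veW} must be shown to be twice differentiable in $\rho$ with $\rho^{-2}$ times an $o(1)$ control, which amounts to revisiting each application of Corollary~\ref{cor0} and Lemma~\ref{lemb1} and verifying that the next Taylor-coefficient terms assembled do not generate unexpected $\log\rho$ contributions. Once this uniformity is established, all three expansions follow and the continuity in $(\rho,\xi)$ on $(0,\rho_0)\times M_{\xi^*,\delta}$ is inherited from the smooth dependence of the coordinates $y_{\xi_i}$ and $\varphi_{\xi_i}$ on $\xi$ guaranteed by Claim~\ref{claim:0}.
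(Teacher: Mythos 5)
Your overall route (writing $\partial_\rho J_\lambda(W)$, $\partial_\rho^2 J_\lambda(W)$ and the mixed derivative via the chain rule and then expanding each term with the Taylor/integration machinery of Corollary~\ref{cor0} and Lemma~\ref{lemb1}) is the same as the paper's, but the sketch contains misstatements that would derail the computation at the claimed precision. First, you attribute the coefficients $-\cA_1(\xi)$, $\cA_2(\xi)(1+2\log\rho)$ and $-2\cB(\xi)$ to the term $\int_\Sigma(-\Delta_g W)\partial_\rho W\,dv_g$. That cannot be right: this integral involves $V$ only through the constants $\tau_i(\xi_i)$ hidden in $\rho_i$, whereas $\cB(\xi)$ contains the global quantity $\int_{\Sigma\setminus\cup_i U_r(\xi_i)}Ve^{\sum_i\varrho(\xi_i)G^g(\cdot,\xi_i)}dv_g$. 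In the paper all of $\cA_1,\cA_2,\cB$ come from $-\lambda\,\partial_\rho\log\int_\Sigma Ve^W dv_g$, i.e.\ from the ratio $\int Ve^W\partial_\rho W/\int Ve^W$ expanded against \eqref{eq:rho2_int_veW} (see \eqref{eq:int_vew_pa_rho_normal}), while $\int(-\Delta_g W)\partial_\rho W$ only yields $-2\lambda_{k,m}/\rho$ plus correction terms. Second, your pointwise description of $\partial_\rho W_i$ ("reproduces $\partial_\rho U_i$ plus a regular Robin part") is inaccurate in a way that matters: the Robin part is $\rho$-independent and drops out, and the projection removes the singular constant $2\sqrt{\tau_i}/\rho_i$; the correct statement is Lemma~\ref{expansion_rho}, $\rho\partial_\rho PU_{\rho,\xi}=\chi_\xi(\rho\partial_\rho U_{\rho,\xi}-2)-4\rho^2F_\xi+h^1_{\rho,\xi}+o(\rho^2)$. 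Because the ratio term shifts by any additive constant in $\partial_\rho W$, omitting the "$-2$" changes the answer at order $1/\rho$, and omitting the $O(\rho^2\log\rho)$ corrections $h^1_{\rho,\xi}$ and $F_\xi$ (whose contributions $\lambda_{k,m}c^1_{\rho,\xi}/\rho$ and $4F_{\rho,\xi}/\rho$ cancel between the two pieces in the paper's proof) corrupts the coefficient of $\rho\log\rho$ and the $o(\rho)$ remainder you claim.

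The mixed derivative is the weakest point: from the bounds you quote, $\partial_{(\xi_i)_j}W=\mathcal{O}(1)$ and $\partial_\rho W=\mathcal{O}(1/\rho)$ near $\xi_i$, the bilinear term $J''_\lambda(W)[\partial_{(\xi_i)_j}W,\partial_\rho W]$ is only $\mathcal{O}(1/\rho)$, and the phrase "the extra $\partial_\rho$ contributes a $\rho|\log\rho|$ factor" is an assertion, not an argument. Reaching $\mathcal{O}(\rho|\log\rho|)$ requires the expansion of the mixed derivative of the projection (Lemma~\ref{lem:expansion_mix}, $\partial_{\xi_j}\partial_\rho PU_{\rho,\xi}=\chi_\xi\partial_{\xi_j}\partial_\rho U_{\rho,\xi}+\mathcal{O}(\rho|\log\rho|)$) together with the same moment cancellations (vanishing first moments of $y_j/(\rho^2+|y|^2)$-type kernels and the pairing of $\partial_{\xi_j}\log\tau_i$ terms) that produce the leading-order cancellations in the first-derivative computation; this structural input is missing from your plan. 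Finally, your proposed detour through twice-differentiating the $o(\rho^2)$ remainder of \eqref{eq:rho2_int_veW} is unnecessary (and harder than what you need): as in the paper, one recomputes $\partial_\rho^2\log\int Ve^W$ directly using \eqref{w1strho}--\eqref{w2strho} and Corollary~\ref{cor0}, which is exactly the strategy you describe elsewhere, so you should drop the claim that the remainder itself must be shown to be twice differentiable.
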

	\begin{proof}
		Applying  Lemma~\ref{expansion_rho}, for any $l=1,\cdots,m$
		\begin{equation}~\label{w1strho}
			\rho_l\partial_{\rho_l}W_l=\rho\sqrt{\tau_l} \partial_{\tau}PU_{\tau,\xi_l}|_{\tau=\rho_l}  = - \chi_l\frac{4 \rho_l^2 }{\rho_l^2+|y_{\xi_l}(x)|^2}-4\rho_l^2F_{\xi_l}+h^1_{\rho_l,\xi_l}+o(\rho^2)
		\end{equation}
		and 
		\begin{equation}~\label{w2strho}
			\rho_l^2	\partial^2_{\rho_l}W_l=\rho^2\tau_l	\partial^2_{\tau}PU_{\tau,\xi_l}|_{\tau=\rho_l}= \chi_l\frac{4\rho_l^2(\rho_l^2-|y_{\xi_l}|^2)}{(\rho_l^2+|y_{\xi_l}|^2)^2}-4\rho_l^2F_{\xi_l}+h^2_{\rho_l,\xi_l}+o(\rho^2).
		\end{equation}
		
		We calculate the first derivative of $J_{\lambda}(W)$ with respect to  $\rho$. 
		$
		\partial_{\rho}J_{\lambda_{k,m}}(W)= \int_{\Sigma}(-\Delta_g W) \partial_{\rho} W dv_g-\lambda_{k,m}\frac{\int_{\Sigma} Ve^W \partial_{\rho}W dv_g }{\int_{\Sigma}Ve^W dv_g}.
		$
		Since $\int_{\Sigma}\partial_{\rho}W=0$,   we have
		\begin{equation*}
			\begin{array}{lcl}
				&&	\int_{\Sigma}(-\Delta_g W) \partial_{\rho} W dv_g
				=\sum_{i=1}^m \int_{\Sigma} \chi_ie^{-\varphi_i\circ y_{\xi_i}^{-1}} e^{U_i} \partial_{\rho}W dv_g \\
				&=& \sum_{i,l=1}^m  \sqrt{\tau_l}\int_{\Sigma}\chi_ie^{-\varphi_i\circ y_{\xi_i}^{-1}} e^{U_i} \partial_{\rho_l}W_ldv_g\\
				&\stackrel{\eqref{w1strho}}{=}&\frac 1 {\rho} \sum_{i,l=1}^m  \int_{\Sigma}\chi_ie^{-\varphi_i\circ y_{\xi_i}^{-1}} e^{U_i} \left(- \chi_l\frac{4 \rho_l^2 }{\rho_l^2+|y_{\xi_l}(x)|^2}-4\rho_l^2F_{\xi_l}+h^1_{\rho_l,\xi_l}+o(\rho^2)\right)dv_g\\
				&\stackrel{\eqref{eqspan}\text{ and }\eqref{eqspan_1}}{=}& \frac 1 \rho\sum_{i,l=1}^m\left(- 2\varrho(\xi_i)+\varrho(\xi_i)h^1_{\rho_l,\xi_l}+o(\rho^2)\right)=-\frac{2\lambda_{k,m}}{\rho}	-\frac{4F_{\rho,\xi}} {\rho}+\frac {\lambda_{k,m}c^1_{\rho,\xi}}\rho+o(\rho),
			\end{array}
		\end{equation*}
		where $\delta_{il}$ is the Kronecker symbol and $c^1_{\rho,\xi}=\sum_{i=1}^m h^1_{\rho_i,\xi_i}.$
		By Lemma~\ref{lemb1}, we can deduce that 
		\begin{equation}
			\label{eq:veW_detail}
			V(x)e^{W(x)}= \sum_{i=1}^m \chi_i e^{c_{\rho,\xi}-2\sum_{l=1}^m\rho_l^2F_{\xi_l}(x)} \frac{\tau_i(x)}{8\tau_i\rho^2}e^{U_i}(1+\mathcal{O}(\rho^3|\log \rho|))+ \mathcal{O}(1)\mathbbm{1}_{\Sigma\setminus\cup_{i=1}^m U_{r_0}(\xi_i)}. 
		\end{equation}
		By Corollary~\ref{cor0}, \eqref{w1strho} and  \eqref{eq:veW_detail}, it follows the following estimate:
		\begin{equation*}
			\begin{array}{lcl}
				&&\int_{\Sigma}Ve^W \partial_{\rho}W dv_g= \sum_{i,l=1}^m \int_{U_{2r_0}(\xi_i)} Ve^W\partial_{\rho}W_l dv_g +\mathcal{O}(\rho|\log\rho|)  \\
				&=&\sum_{i,l=1}^m \frac{e^{c_{\rho,\xi}}}{8\tau_i\rho^2}\int_{U_{2r_0}(\xi_i)}\chi_i\tau_i(x)e^{-2\sum_{l=1}^m \rho_l^2F_{\xi_l}}e^{U_i}\frac 1{\rho}\left(- \chi_l\frac{4 \rho_l^2 }{\rho_l^2+|y_{\xi_l}|^2}-4\rho_l^2F_{\xi_l}+h^1_{\rho_l,\xi_l}+o(\rho^2)\right)dv_g\\
				&& +\mathcal{O}(\rho|\log\rho|)\\
				&=& -\sum_{i=1}^m\frac{e^{c_{\rho,\xi}}}{2\rho}\left(\frac{\varrho(\xi_i)\tau_i e^{-2\sum_{l=1}^m\rho_l^2F_{\xi_l}(\xi_i)}}{2\rho_i^2}+\frac{\varrho(\xi_i)}{8}\Delta_g( e^{\varphi_{i}\circ y_{\xi_i}^{-1}\tau_i})(\xi_i)\right)\\
				&&+\sum_{i=k+1}^m\frac{e^{c_{\rho,\xi}}}{2\rho}\frac{\varrho(\xi_i) \partial_{\nu_g}( e^{\varphi_{i}}\circ y_{\xi_i}^{-1}\tau_i)(\xi_i) }{4\rho_i}
				-4\sum_{i,l=1}^m\frac{e^{c_{\rho,\xi}}}{8\rho_i^2\rho}\varrho(\xi_i)\tau_i e^{-2\sum_{l=1}^m\rho_l^2F_{\xi_l}(\xi_i)}\rho_l^2F_{\xi_l}(\xi_i)\\
				&&+\sum_{i,l=1}^m\frac{e^{c_{\rho,\xi}}}{8\rho_i^2\rho}\varrho(\xi_i)\tau_i e^{-2\sum_{l=1}^m\rho_l^2F_{\xi_l}(\xi_i)}h^1_{\rho_l,\xi_l} +o(\frac 1 \rho)\\
				&=&e^{c_{\rho,\xi}} \left(\frac{-\lambda_{k,m}}{4\rho^3}-\frac{1}{16\rho}\sum_{i=1}^m\varrho(\xi_i)(\Delta_g\tau_i-2K_g\tau_i)|_{\xi_i}-\frac 1 {16\rho} \sum_{i=k+1}^m 4 \varrho(\xi_i)k_g(\partial_{\nu_g}\log V+ k_g)\tau_i|_{\xi_i}
				\right. \\
				&&	\left.+\sum_{i=k+1}^m \frac{1}{8\rho^2}\varrho(\xi_i)\sqrt{\tau_i
				}(\partial_{ \nu_g}\log V+2 k_g)|_{\xi_i}+\frac{\lambda_{k,m}\sum_{l=1}^m h^1_{\rho_l,\xi_l}}{8\rho^3}\right) +o(\frac 1 \rho)\\
				&=&\frac {e^{c_{\rho,\xi}}}{8\rho^2} \left(\frac{-2\lambda_{k,m}}{\rho}-\mathcal{A}_1(\xi)-\rho\mathcal{A}_2(\xi)+\frac{\lambda_{k,m}c^1_{\rho,\xi}}{\rho}\right) +o(\frac 1\rho).
			\end{array}
		\end{equation*}
		Combining with~\eqref{eq:rho2_int_veW},
		\begin{equation}\label{eq:int_vew_pa_rho_normal} 
			\begin{array}{lcl}
				\frac{\int_{\Sigma}Ve^W \partial_{\rho}W dv_g}{\int_{\Sigma}Ve^W dv_g}&=& \frac{\frac {e^{c_{\rho,\xi}}}{8\rho^2} \left(\frac{-2\lambda_{k,m}}{\rho}-\mathcal{A}_1(\xi)-\rho\mathcal{A}_2(\xi)+\frac{\lambda_{k,m}c^1_{\rho,\xi}}{\rho}\right) +o(\frac 1\rho)}{\frac {e^{c_{\rho,\xi}}}{8\rho^2}(\lambda_{k,m} +\mathcal{A}_1(\xi)\rho- \mathcal{A}_2(\xi)\rho^2\log \rho  +\mathcal{B}(\xi)\rho^2-2F_{\rho,\xi} +o(\rho^2))}\\
				&=&-\frac 2 \rho+\frac{\mathcal{A}_1(\xi)}{\lambda_{k,m}} +\left(-\frac{\mathcal{A}_2(\xi)}{\lambda_{k,m}}+\frac{2 \mathcal{B}(\xi)}{\lambda_{k,m}}-\left(\frac{\mathcal{A}_1(\xi)}{\lambda_{k,m}}\right)^2\right)\rho-\frac{2\mathcal{A}_2(\xi)}{\lambda_{k,m}}\rho\log\rho\\
				&&+ \frac{c^1_{\rho,\xi}}{\rho} -\frac{4F_{\rho,\xi}}{\lambda_{k,m}\rho}+o(\rho).
			\end{array}
		\end{equation}
		Thus we have 
		\begin{equation*}
			\begin{array}{lll}
				\partial_{\rho}J_{\lambda}(W)&=& \int_{\Sigma}(-\Delta_g W) \partial_{\rho} W dv_g-\lambda_{k,m}\frac{\int_{\Sigma} Ve^W \partial_{\rho}W dv_g }{\int_{\Sigma}Ve^W dv_g}+(\lambda_{k,m}-\lambda)\frac{\int_{\Sigma} Ve^W \partial_{\rho}W dv_g }{\int_{\Sigma}Ve^W dv_g}\\
				&\stackrel{\eqref{lamda1}}{=}& -\mathcal{A}_1(\xi) +\left(\mathcal{A}_2(\xi)-2 \mathcal{B}(\xi)+\frac{\mathcal{A}^2_1(\xi)}{\lambda_{k,m}}\right)\rho+2\mathcal{A}_2(\xi)\rho\log\rho\\
				&&
				+(\lambda_{k,m}-\lambda)\left(-\frac 2 {\rho}+\mathcal{O}(1)\right)+o(\rho)\\
				&=&\frac{2(\lambda-\lambda_{k,m})}{\rho}-\mathcal{A}_1(\xi) +\left(\mathcal{A}_2(\xi)-2 \mathcal{B}(\xi)+\frac{\mathcal{A}^2_1(\xi)}{\lambda_{k,m}}\right)\rho+2\mathcal{A}_2(\xi)\rho\log\rho+o(\rho). 
			\end{array}
		\end{equation*}
		\[\begin{array}{
				lcl}
			\partial^2_{\rho}J_{\lambda_{k,m}}(W)&=&\int_{\Sigma}(-\Delta_g W) \partial^2_{\rho}Wdv_g -\lambda_{k,m}\frac{\int_{\Sigma}Ve^W(\partial^2_{\rho}W+(\partial_{\rho}W)^2) dv_g}{\int_{\Sigma}Ve^W dv_g}\\
			&&+\int_{\Sigma}(-\Delta_g \partial_{\rho}W) \partial_{\rho}Wdv_g+\lambda_{k,m}\left(\frac{\int_{\Sigma}Ve^W\partial_{\rho}Wdv_g}{\int_{\Sigma}Ve^W dv_g}\right)^2 
		\end{array} .\]
		Since $\int_{\Sigma}\partial_{\rho}W dv_g=\int_{\Sigma}\partial^2_{\rho}W dv_g=0,$ we have 
		\begin{equation*}
			\begin{array}{lcl}
				&&\int_{\Sigma}(-\Delta_g W) \partial^2_{\rho}Wdv_g=	\sum_{i=1}^m \int_{\Sigma} \chi_ie^{-\varphi_i\circ y_{\xi_i}^{-1}} e^{U_i} \partial^2_{\rho}W dv_g \\
				&=&\frac 1 {\rho^2} \sum_{i,l=1}^m  \int_{\Sigma}\chi_ie^{-\varphi_i\circ y_{\xi_i}^{-1}} e^{U_i}\rho_l^2 \partial^2_{\rho_l}W_ldv_g\\
				&\stackrel{\eqref{w2strho}}{=}&\frac 1 {\rho^2} \sum_{i,l=1}^m  \int_{\Sigma}\chi_ie^{-\varphi_i\circ y_{\xi_i}^{-1}} e^{U_i} \left(\chi_l\frac{4\rho_l^2(\rho_l^2-|y_{\xi_l}|^2)}{(\rho_l^2+|y_{\xi_l}|^2)^2}-4\rho_l^2F_{\xi_l}+h^2_{\rho_l,\xi_l}+o(\rho^2)\right)dv_g\\
				&\stackrel{\eqref{eqspan}\text{ and }\eqref{eqspan_2}}{=}& \frac 1 {\rho^2}\sum_{i,l=1}^m\left( \frac{2\varrho(\xi_i)\delta_{il}}{3}-4\rho_l^2\varrho(\xi_i)F_{\xi_l}(\xi_i)+\varrho(\xi_i)h^2_{\rho_l,\xi_l}+o(\rho^2)\right)\\
				&=&\frac{2\lambda_{k,m}}{3\rho^2}	-\frac{4F_{\rho,\xi}}{\rho^2}+\frac {\lambda_{k,m}c^2_{\rho,\xi}}{\rho^2}+o(1)
			\end{array}
		\end{equation*}
		and 
		\begin{equation*}
			\begin{array}{lcl}
				&&\int_{\Sigma}(-\Delta_g \partial_{\rho}W) \partial_{\rho}Wdv_g=	\sum_{i=1}^m \int_{\Sigma} \chi_ie^{-\varphi_i\circ y_{\xi_i}^{-1}} e^{U_i}\partial_{\rho}U_i \partial_{\rho}W dv_g \\
				&=& \sum_{i,l=1}^m  \tau_l\int_{\Sigma}\chi_ie^{-\varphi_i\circ y_{\xi_i}^{-1}} e^{U_i} \partial_{\rho}U_i \partial_{\rho}Wdv_g\\
				&\stackrel{\eqref{w2strho}}{=}&\frac 2 {\rho^2} \sum_{i,l=1}^m  \int_{\Sigma}\chi_ie^{-\varphi_i\circ y_{\xi_i}^{-1}} e^{U_i}\frac{|y_{\xi_i}|^2-\rho_i^2}{\rho_i^2+|y_{\xi_i}|^2}\left(- \chi_l\frac{4 \rho_l^2 }{\rho_l^2+|y_{\xi_l}(x)|^2}-4\rho_l^2F_{\xi_l}+h^1_{\rho_l,\xi_l}+o(\rho^2)\right)dv_g\\
				&\stackrel{\eqref{eqspan}\text{-}\eqref{eqspan_2}}{=}& \frac 2 {\rho^2}\sum_{i,l=1}^m\left( \frac{2\varrho(\xi_i)\delta_{il}}{3}-4\rho_l^2\varrho(\xi_i)F_{\xi_l}(\xi_i)+\varrho(\xi_i)h^1_{\rho_l,\xi_l}+o(\rho^2)\right)\\
				&=&\frac{4\lambda_{k,m}}{3\rho^2}+o(1),
			\end{array}
		\end{equation*}
		where $c^2_{\rho,\xi}=\sum_{i=1}^m h^2_{\rho_i,\xi_i}$. 
		By \eqref{w1strho} and \eqref{w2strho}, we have 
		$\partial^2_{\rho}W+(\partial_{\rho}W)^2=\mathcal{O}(|\log\rho|)$ for any $x\in\Sigma\setminus\cup_{i=1}^mU_{r_0}(\xi_i)$.
		By Corollary~\ref{cor0}, \eqref{w1strho}, \eqref{w2strho} and  \eqref{eq:veW_detail}, it follows the following estimate:
		\begin{equation*}
			\begin{array}{lcl}
				&&\int_{\Sigma}Ve^W(\partial^2_{\rho}W+(\partial_{\rho}W)^2) dv_g\\
				&=&\sum_{i,l=1}^m \frac{e^{c_{\rho,\xi}}}{8\tau_i\rho^2}\int_{U_{r_0}(\xi_i)}\chi_i\tau_i(x)e^{-2\sum_{l=1}^m \rho_l^2F_{\xi_l}}e^{U_i}\frac 1{\rho^2}\left(\chi_l\frac{4\rho_l^2(\rho_l^2-|y_{\xi_l}|^2)}{(\rho_l^2+|y_{\xi_l}|^2)^2}-4\rho_l^2F_{\xi_l}+h^2_{\rho_l,\xi_l}+o(\rho^2)\right)dv_g\\
				&&+ \sum_{i=1}^m \frac{e^{c_{\rho,\xi}}}{8\tau_i\rho^2}\int_{U_{r_0}(\xi_i)}\chi_i\tau_i(x)e^{-2\sum_{l=1}^m \rho_l^2F_{\xi_l}}e^{U_i}\frac 1{\rho^2}\\
				&&\left(\sum_{l=1}^m\left(- \chi_l\frac{4 \rho_l^2 }{\rho_l^2+|y_{\xi_l}|^2}-4\rho_l^2F_{\xi_l}+h^1_{\rho_l,\xi_l}+o(\rho^2)\right)\right)^2dv_g +\mathcal{O}(|\log\rho|)\\
				&=& \sum_{i=1}^m \frac{e^{c_{\rho,\xi}}}{8\tau_i\rho^2}\int_{U_{r_0}(\xi_i)}\tau_i(x)e^{-2\sum_{l=1}^m \rho_l^2F_{\xi_l}}e^{U_i}\frac 1{\rho^2}\frac{4\rho_i^2(5\rho_i^2-|y_{\xi_i}|^2)}{(\rho_i^2+|y_{\xi_i}|^2)^2} dv_g \\
				&&+ \sum_{i=1}^m \frac{e^{c_{\rho,\xi}}}{8\tau_i\rho^2}\int_{U_{r_0}(\xi_i)}\chi_i\tau_i(x)e^{-2\sum_{l=1}^m \rho_l^2F_{\xi_l}}e^{U_i}\frac 2{\rho^2} \frac{4\rho_i^2}{\rho_i^2+|y_{\xi_i}|^2}\left(\sum_{l=1}^m 4\rho_l^2F_{\xi_l}-\sum_{l=1}^m h^1_{\rho_l,\xi_l}+o(\rho^2) \right) dv_g \\
				&&+ \sum_{i,l=1}^m \frac{e^{c_{\rho,\xi}}}{8\tau_i\rho^2}\int_{U_{r_0}(\xi_i)}\chi_i\tau_i(x)e^{-2\sum_{l=1}^m \rho_l^2F_{\xi_l}}e^{U_i}\frac 1{\rho^2}\left(-4\rho_l^2F_{\xi_l}+h^2_{\rho_l,\xi_l}+o(\rho^2) \right)+\mathcal{O}\left(|\log\rho|^2\right)\\
				&=&\frac{e^{c_{\rho,\xi}} }{8\rho^2} \left(\sum_{ i=1}^m\frac{6\varrho(\xi_i)e^{-2\sum_{l=1}^m\rho_l^2F_{\xi_l}(\xi_i)}}{\rho^2}-\sum_{ i=k+1}^m\frac {2 \varrho(\xi_i)\sqrt{\tau_i}(\partial_{ \nu_g}\log V +2k_g)|_{\xi_i}}{\rho}
				\right. \\
				&&\left.+ \sum_{ i=1}^m\frac{ \varrho(\xi_i)(\Delta_g\tau_i-2K_g\tau_i)|{\xi_i}}{2}
				+\left. \sum_{i=1}^m  2\varrho(\xi_i)k_g(\partial_{\nu_g}\log V+k_g)\tau_i\right|_{\xi_i} \right)+\frac{e^{c_{\rho,\xi}} }{8\rho^2}\left( \frac{16F_{\rho,\xi}}{\rho^2}-\frac{4\lambda_{k,m}c^1_{\rho,\xi}}{\rho^2}\right)\\
				&&+ \frac{e^{c_{\rho,\xi}} }{8\rho^2}\left(\frac{-4F_{\rho,\xi}}{\rho^2}+\frac{\lambda_{k,m}c^2_{\rho,\xi}}{\rho^2}\right)+\mathcal{O}\left(\rho^{\alpha-2}\right)\\
				&=& \frac{e^{c_{\rho,\xi}} }{8\rho^2}\left( \frac{6\lambda_{k,m}}{\rho^2}-\frac{4\lambda_{k,m}c^1_{\rho,\xi}}{\rho^2}+\frac{\lambda_{k,m}c^2_{\rho,\xi}}{\rho^2}+\frac 2 \rho\mathcal{A}_1(\xi)  +\mathcal{A}_2(\xi)+ \mathcal{O}(\rho^{\alpha})\right).
			\end{array}
		\end{equation*}
		Combining with~\eqref{eq:rho2_int_veW},
		\begin{equation*}
			\begin{array}{lcl}
				\frac{\int_{\Sigma}Ve^W(\partial^2_{\rho}W+(\partial_{\rho}W)^2) dv_g }{\int_{\Sigma}Ve^W dv_g}
				&=& \frac 6{\rho^2} -\frac{4 \mathcal{A}_1(\xi)}{\lambda_{k,m}\rho}+\frac{\mathcal{A}_2(\xi)}{\lambda_{k,m}}+\frac{6\mathcal{A}_2(\xi)\log\rho}{\lambda_{k,m}}-\frac{6\mathcal{B}(\xi)}{\lambda_{k,m}} +\frac{12F_{\rho,\xi}}{\lambda_{k,m}\rho^2}+4\left(\frac{\mathcal{A}_1(\xi)}{\lambda_{k,m}}\right)^2\\
				&&-\frac{4c^1_{\rho,\xi}}{\rho^2} +\frac{c^2_{\rho,\xi}}{\rho^2} +o(1). 
			\end{array}
		\end{equation*}
		And~\eqref{eq:int_vew_pa_rho_normal} yields that 
		\begin{equation*}
			\begin{array}{lcl}
				\left(\frac{\int_{\Sigma}Ve^W\partial_{\rho}W dv_g }{\int_{\Sigma}Ve^W dv_g}\right)^2
				&=& \frac 4{\rho^2} -\frac{4\mathcal{A}_1(\xi)}{\lambda_{k,m}\rho}+\frac{4\mathcal{A}_2(\xi)}{\lambda_{k,m}}-\frac{8\mathcal{A}_2(\xi)\log\rho}{\lambda_{k,m}}-\frac{8\mathcal{B}(\xi)}{\lambda_{k,m}}+5\left(\frac{\mathcal{A}_1(\xi)}{\lambda_{k,m}}\right)^2+\frac{16F_{\rho,\xi}}{\lambda_{k,m}\rho^2}\\
				&&-\frac{4c^1_{\rho,\xi}}{\rho^2}+o(1)
			\end{array}
		\end{equation*}
		It follows that 
		\begin{equation*}
			\begin{array}{lcl}
				\partial_{\rho}^2J_{\lambda_{k,m}}(W)&=& 3\mathcal{A}_2(\xi)+2\mathcal{A}_2\log\rho-2\mathcal{B}(\xi)+\frac{\mathcal{A}_1^2(\xi)}{\lambda_{k,m}}+o(1).
			\end{array}
		\end{equation*}
		Hence, 
		\[ 	\begin{array}{lcl}
			\partial_{\rho}^2J_{\lambda_{k,m}}(W)&=&-(\lambda-\lambda_{k,m})\left( 	\frac{\int_{\Sigma}Ve^W(\partial^2_{\rho}W+(\partial_{\rho}W)^2) dv_g }{\int_{\Sigma}Ve^W dv_g}-	\left(\frac{\int_{\Sigma}Ve^W\partial_{\rho}W dv_g }{\int_{\Sigma}Ve^W dv_g}\right)^2\right)\\
			&&+ 3\mathcal{A}_2(\xi)+2\mathcal{A}_2\log\rho-2\mathcal{B}(\xi)+\frac{\mathcal{A}_1^2(\xi)}{\lambda_{k,m}}+o(1)\\
			&=&-\frac{2(\lambda-\lambda_{k,m})}{\rho^2}+3\mathcal{A}_2(\xi)+2\mathcal{A}_2\log\rho-2\mathcal{B}(\xi)+\frac{\mathcal{A}_1^2(\xi)}{\lambda_{k,m}}+o(1).
		\end{array}\]
		For any $i=1,\cdots,m$, $j=1,\cdots, \i(\xi_i)$, 
		\[\begin{array}{
				lcl}
			\partial_{(\xi_i)_j}\partial_{\rho}J_{\lambda_{k,m}}(W)&=&\int_{\Sigma}(-\Delta_g W) 	\partial_{(\xi_i)_j}\partial_{\rho}Wdv_g -\lambda_{k,m}\frac{\int_{\Sigma}Ve^W(	\partial_{(\xi_i)_j}\partial_{\rho}W+\partial_{\rho}W\partial_{(\xi_i)_j}W) dv_g}{\int_{\Sigma}Ve^W dv_g}\\
			&&+\int_{\Sigma}(-\Delta_g \partial_{(\xi_i)_j}W) \partial_{\rho}Wdv_g+\lambda_{k,m}\left(\frac{\int_{\Sigma}Ve^W\partial_{\rho}Wdv_g}{\int_{\Sigma}Ve^W dv_g}\right)\left(\frac{\int_{\Sigma}Ve^W\partial_{(\xi_i)_j}Wdv_g}{\int_{\Sigma}Ve^W dv_g}\right)
		\end{array} .\]
		Applying Lemma~\ref{lem:expansion_mix}, we can deduce that 
		\[ 	\partial_{(\xi_i)_j}\partial_{\rho}J_{\lambda}(W)=\mathcal{O}(\rho|\log\rho|),\]	via the same approach for calculating the second-order derivatives with respect to $\rho$.
	\end{proof}
	\newpage
	
	%%%%%%%%%%%%%%%%%%%%%%%%%%%%%%%%%%%%%%
	\section{The Lyapunov-Schmidt reduction}\label{reduction}
	\subsection{The solutions to a linear problem}\label{sec_linear}
	To study the non-linear problem~\eqref{eq:main_eq}, we try to find a solution with the form $W+\phi$ for $(\rho,\xi)\in (0,+\infty)\times M_{\xi^*,\delta}$.
	Firstly, we introduce the linear operator,
	\[ L(\phi)= -\Delta_g\phi  -\lambda\frac{Ve^{W}}{\int_{\Sigma} Ve^{W} dv_g }\left(\phi- \frac{\int_{\Sigma} Ve^{W} \phi dv_g}{\int_{\Sigma} Ve^{W} dv_g} \right).  \]
	To solve~\eqref{eq:main_eq}, it is sufficient to find $\phi\in \overline{\mathrm{H}}^1$ solves 
	\begin{equation}~\label{linear}
		\begin{cases}
			L(\phi) = R+N(\phi) & x\in \intsigma\\
			\partial_{ \nu_g } \phi=0 & x\in \partial\Sigma
		\end{cases},
	\end{equation}
	where 
	\[R= \Delta_g W +\lambda\left(\frac{Ve^{W}}{\int_{\Sigma} Ve^{W} dv_g }-\frac 1 {|\Sigma|_g} \right),\]
	and 
	\[ N(\phi)= \lambda\left( \frac{Ve^{W+\phi}}{\int_{\Sigma} Ve^{W+\phi} dv_g } -\frac{Ve^{W}\phi}{\int_{\Sigma} Ve^{W} dv_g } + \frac{Ve^{W} \int_{\Sigma} V e^W\phi dv_g}{(\int_{\Sigma} Ve^{W} dv_g )^2}-\frac{Ve^{W}}{\int_{\Sigma} Ve^{W} dv_g }\right).  \]
	Providing that $W, \phi \in \overline{\mathrm{H}}^1$,  
	\[ \int_{\Sigma}L(\phi) dv_g=\int_{\Sigma} R dv_g =\int_{\Sigma}N(\phi) dv_g. \]
	Formally, we do transformation on $\phi$ to minus a normalized integral over $\Sigma$ and  do scaling centered by $0$ with $y=y_{\xi_i}(x)/\rho_i$ in local isothermal coordinates, more precisely $\Phi_{i}(y)= \phi\circ y_{\xi_i}^{-1}(\rho_i y)-\frac{\int_{\Sigma} Ve^{W}\phi dv_g }{\int_{\Sigma} Ve^{W} dv_g}$ for any $y\in \frac 1 {\rho_i}B^{\xi_i}$.
	Define that 
	\[ L_i(\Phi_i)(y)= \rho^2_i e^{\varphi_i(\rho_i y)} L(\phi)\circ y_{\xi_i}^{-1}(\rho_i y), \text{ for any } y\text{ in } \frac{1}{\rho_i}B^{\xi_i}. \] 
	Let  $\mathbb{R}_i$ be 
	$	\mathbb{R}^2$ when $\xi_i \in \intsigma$ and 
	$	\mathbb{R}^2_+$ when $\xi_i\in \partial\Sigma.$
	As $\rho\rightarrow 0$, for any $i=1,\cdots,m$, the linear operator $L_i$ approaches to $\hat{L}_i$ (refer to the detail to  Lemma~\ref{lemlin1}), 
	\begin{equation*}
		\hat{L}_i(\Phi)(y)= -\Delta \Phi -\frac{8}{(1+|y|^2)^2} \Phi, 
	\end{equation*}
	where $\Phi\in L_b^{\infty}(\RR_i):=L^{\infty}(\RR_i)\cap\{ 	\frac{\partial}{\partial y_2}\Phi=0 \text{ on  }\partial \mathbb{R}_i
	\}$. 
	The kernel of $\hat{L}_i$ in $L_b^{\infty}(\mathbb{R}_i)$ is spanned by  
	$$z_0(y)=2\frac{ 1-|y|^2}{1+|y|^2} \text{ and } z_{j}(y)=\frac{4y_j}{1+|y|^2} \text{ for }j=1,\cdots, \i(\xi_i).$$ 
	The detail refers to Lemma 2.3 in \cite{chen2002sharp} or Lemma D.1. in \cite{Esposito2005}. 
	Let $\phi_i=\phi\circ y_{\xi_i}^{-1}(\rho_i y).$ 
	As $\rho\rightarrow 0$, we assume that $\Phi_i\rightarrow \Phi_{i,\infty}, \phi_i\rightarrow \phi_{i,\infty} $. It follows $$- \frac{\int_{\Sigma} Ve^{W}\phi dv_g }{\int_{\Sigma} Ve^{W} dv_g}\longrightarrow -\frac 1 {\lambda_{k,m}} \sum_{j=1}^m\int_{\RR_i}\frac{8}{(1+|z|^2)^2} \phi_{i,\infty} dz.$$ 
	The limit linearized operator about $\phi_{i,\infty}(i=1,\cdots,m)$ is as follows: 
	\[ L_{\infty}(\psi)(y^1,\cdots,y^m)=\left(\begin{array}{c}
		-\Delta \psi_1(y^1)-\frac{8}{(1+|y^1|^2)^2}\left(\psi_1(y^1)- \frac1 {\lambda_{k,m}}\sum_{i=1}^m \int_{\RR_i} \frac{8}{(1+|\cdot|^2)^2}\psi_i \right )\\
		\\
		\vdots
		\\
		-\Delta \psi_1(y^m)-\frac{8}{(1+|y^m|^2)^2}\left(\psi_1(y^m)- \frac1 {\lambda_{k,m}}\sum_{i=1}^m \int_{\RR_i} \frac{8}{(1+|\cdot|^2)^2}\psi_i\right )
	\end{array}\right),  \]
	where $\psi=(\psi_1,\cdots,\psi_m)\in L_b^{\infty}(\RR_1)\times\cdots\times L_b^{\infty}(\RR_m) $.
	The kernel of the linear operator $L_{\infty}$ on $ L_b^{\infty}(\RR_1)\times\cdots\times L_b^{\infty}(\RR_m)$ is generated by the constant function 
	$(\underbrace{1,\cdots,1}_{\text{m}})$
	and $\la z_{j}:j=0,\cdots,\i(\xi_1) \ra\times\cdots\times\la z_{j}:j=0,\cdots,\i(\xi_m)\ra$. 
	Based on the works in~\cite{del_pino_singular_2005,Esposito2005}, it is standard to show that  $L$ is invertible in some ``orthogonal subspace'' of $\overline{\mathrm{H}}^1$.

	For any $i=1,\cdots, m$ and $j=0,1,\cdots,\i(\xi_i)$, we define $Z_{ij}$ on $U_{2r_0}(\xi_i)\subset \Sigma$ by $z_j$ which defined on $\mathbb{R}_i$ with a proper scaling centered at the origin. Specifically, for any $\xi_i\in \Sigma$, 
	\[ Z_{ij}(x)= z_j\left(\frac{y_{\xi_i}(x)}{\rho_i}\right) \text{ in } U_{2r_0}(\xi_i), \]
	for any  $i=1,2,\cdots,m$ and $j=0,1,\cdots, \i(\xi_i).$
	We project $Z_{i0}, Z_{ij}$ into the space $\overline{\mathrm{H}}^1$ by following equations for $i=1,2,\cdots,m$ and $j=0,1,\cdots,\i(\xi_i)$,
	\begin{equation}
		\begin{cases}
			-\Delta_gP Z_{ij} =-\chi_i\Delta_g Z_{ij}+\overline{\chi_i\Delta_g Z_{ij}},& \text{ in } \intsigma\\
			\partial_{ \nu_g } PZ_{ij} =0, & \text{ on } \partial\Sigma
		\end{cases}
	\end{equation}
	where $\chi_i(x)= \chi(|y_{\xi_i}(x)|/r_0)$.
	Let $PZ=\sum_{i=1}^m Z_{i0}.$
	In this part, we will mainly study a linear problem: given $h\in L^{\infty}(\Sigma):=\{h: \|h\|_{\infty}:=\sup _{\Sigma}|h|<\infty\}$, find $\phi\in \overline{\mathrm{H}}^1\cap W^{2,2}(\Sigma)$ and $c_0, c_{ij}\in \mathbb{R}$ such that 
	\begin{equation}~\label{linprob}
		\begin{cases}
			L(\phi)= h -c_0\Delta_g PZ -\sum_{i=1}^m \sum_{j=1}^{\i(\xi_i)} c_{ij}\Delta_gPZ_{ij}& \text{ in }\intsigma;\\
			\partial_{ \nu_g }\phi= 0& \text{ on }\partial \Sigma;\\
			\int_{\Sigma} \phi \Delta_g PZ_{ij}dv_g=0, \int_{\Sigma} \phi \Delta_g PZ dv_g=0& \forall i=1,2,\cdots,m, j=1,\cdots, \i(\xi_i).  
		\end{cases}
	\end{equation}
	To estimate the size of the solutions, for any $h\in L^{\infty}(\Sigma)$, we introduce the following weighted norm (see~\cite{Esposito2014singular})
	\begin{equation}~\label{weightednorm}
		\|h\|_*= \sup_{x\in \Sigma}\left( \sum_{i=1}^m \frac{ \rho_i^{\kappa}}{( \rho_i^2 +\mathbbm{1}_{U_{r_0}(\xi_i)}(x) |y_{\xi_i}(x)|^2 + r_0^2 \mathbbm{1}_{\Sigma\setminus U_{r_0}(\xi_i)}(x))^{1+\frac{\kappa}{2}}}\right)^{-1}|h(x)|.
	\end{equation}
	Here, the characteristic functions  $\mathbbm{1}_A :=\begin{cases}
		1, & x\in A\\
		0, & x\notin A
	\end{cases},$ for any $A\subset \Sigma$ and $\kappa\in (0,\alpha_0)$, where $\alpha_0\in (0,1)$ is a constant. 
	\begin{lemma}~\label{est_R}
		For any $\delta>0$ sufficiently small, assume that $\xi\in M_{\xi^*,\delta}$,  \eqref{taui}-\eqref{lamda1} hold truly. Then there is a constant $C>0$ such that 
		\[\|R\|_*\leq 	C(|\nabla \cF_{k,m}(\xi)|_g\rho+\rho^{2-\kappa}|\log\rho|). \]
	\end{lemma}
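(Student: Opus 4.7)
The plan is to bound $R$ pointwise on each piece of $\Sigma$ and then convert to the weighted norm $\|\cdot\|_*$. I would partition $\Sigma$ into the concentration discs $U_{r_0}(\xi_i)$ for $i=1,\ldots,m$ and their complement.

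On $U_{r_0}(\xi_i)$ only the $i$-th bubble is active: by the projection equation \eqref{eq:projection}, $-\Delta_g W = \chi_i e^{-\varphi_i\circ y_{\xi_i}^{-1}}e^{U_i}$ modulo $O(\rho^2)$ mean-valued corrections. Combining Lemma~\ref{lemb1} with the expansion \eqref{eq:rho2_int_veW} of $\int_\Sigma Ve^W\,dv_g$ and the hypothesis \eqref{lamda1}, one obtains
\[
\lambda\frac{Ve^W}{\int_\Sigma Ve^W\,dv_g} = \chi_i\,\frac{\tau_i(x)}{\tau_i(\xi_i)}\,e^{U_i}\bigl(1+O(\rho|\log\rho|)\bigr),
\]
so on this disc
\[
R(x) = \chi_i\, e^{U_i(x)}\!\left(\frac{\tau_i(x)}{\tau_i(\xi_i)}-e^{-\varphi_i\circ y_{\xi_i}^{-1}(x)}\right) + O\!\bigl(\rho^2|\log\rho|\bigr)\,e^{U_i(x)}.
\]

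The next step is to Taylor-expand the bracket in $y=y_{\xi_i}(x)$. By Claim~\ref{claim:0}, $\varphi_i(0)=0$ with $\nabla\varphi_i(0)=0$ for $\xi_i\in\intsigma$ and $\nabla\varphi_i(0)=(0,-2k_g(\xi_i))$ for $\xi_i\in\partial\Sigma$. Combined with $\tau_i(x)/\tau_i(\xi_i) = 1 + \nabla\log\tau_i(\xi_i)\cdot y + O(|y|^2)$, the bracket reduces to a linear-in-$y$ piece plus $O(|y|^2)$. A direct computation from the definitions of $\tau_i$ and $\cF_{k,m}$ shows that the tangential component of $\nabla\log\tau_i(\xi_i)$ is controlled by $|\nabla\cF_{k,m}(\xi)|_g$; in the boundary case the normal $y_2$-contribution of $\varphi_i$ must be combined with the normal component of $\nabla\log\tau_i(\xi_i)$, and using the Neumann boundary condition in \eqref{eq:eqR} for $H^g_{\xi_i}$ (together with \eqref{eq:b_restrict}) one checks that the combined normal coefficient is $O(|\nabla\cF_{k,m}(\xi)|_g)+O(\rho)$. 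Therefore
\[
|R(x)|\leq C\,e^{U_i(x)}\bigl(|\nabla\cF_{k,m}(\xi)|_g\,|y_{\xi_i}(x)| + |y_{\xi_i}(x)|^2 + \rho^2|\log\rho|\bigr)\quad\text{on }U_{r_0}(\xi_i).
\]

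Outside $\bigcup_i U_{r_0}(\xi_i)$ all the $\chi_i$ vanish, so $-\Delta_g W$ reduces to $O(\rho^2)$ mean corrections, while the same integral expansion gives $\lambda Ve^W/\int_\Sigma Ve^W\,dv_g - \lambda/|\Sigma|_g = O(\rho^2|\log\rho|)$ uniformly; hence $|R|\leq C\rho^2|\log\rho|$ there. Conversion to $\|\cdot\|_*$ is now routine: on $U_{r_0}(\xi_i)$ one uses the elementary inequality
\[
\frac{|y|^j\cdot \rho^2/(\rho^2+|y|^2)^2}{\rho^\kappa/(\rho^2+|y|^2)^{1+\kappa/2}}\leq C\,\rho^j\qquad (j=1,2),
\]
verified by splitting $|y|\leq\rho$ versus $|y|>\rho$; on the complement the weight is bounded below by a constant multiple of $\rho^\kappa$, absorbing $|R|$. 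Summing the contributions yields $\|R\|_*\leq C(|\nabla\cF_{k,m}(\xi)|_g\,\rho + \rho^{2-\kappa}|\log\rho|)$, as claimed.

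The main obstacle is the linear-order cancellation in the bracket on $U_{r_0}(\xi_i)$, particularly in the boundary case: the first-order $y_2$-coefficient coming from $\varphi_i$ must combine with the normal part of $\nabla\log\tau_i(\xi_i)$, and controlling this combination requires a careful use of the Neumann condition in the definition of $H^g_{\xi_i}$. If this bookkeeping is mishandled a spurious $O(\rho)$ residual would survive and break the stated bound; once this is in place the remaining steps are standard scaling arguments.
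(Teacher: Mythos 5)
Your proposal follows essentially the same route as the paper: split $\Sigma$ into the discs $U_{r_0}(\xi_i)$ and their complement, use Lemma~\ref{lemb1} together with the expansion \eqref{eq:rho2_int_veW} of $\int_\Sigma Ve^W dv_g$ and the hypothesis \eqref{lamda1} to compare $\lambda Ve^W/\int_\Sigma Ve^W dv_g$ with $\chi_i e^{-\varphi_i\circ y_{\xi_i}^{-1}}e^{U_i}$, Taylor-expand $\tau_i$, identify the tangential linear coefficient with $\nabla\cF_{k,m}$ and the boundary-normal coefficient with $\partial_{\nu_g}\log V+2k_g$, and conclude by the elementary weighted-norm scaling; this is exactly the paper's argument, which keeps the resulting quantity $\mathcal{A}_1(\xi)$ explicit and disposes of it through the bound $\lambda_{k,m}|\mathcal{A}_1(\xi)|\le|\nabla\log\tau_i\circ y_{\xi_i}^{-1}(0)|\le C|\nabla_{\xi_i}\cF_{k,m}(\xi)|_g$ at the same place where you control your ``combined normal coefficient''.

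One bookkeeping caveat: your first display carries a multiplicative error $1+\mathcal{O}(\rho|\log\rho|)$ inherited from the denominator expansion, and as written this cannot be downgraded to the additive $\mathcal{O}(\rho^2|\log\rho|)e^{U_i}$ of your second display; an uncontrolled $\mathcal{O}(\rho)e^{U_i}$ remainder would only yield $\|R\|_*=\mathcal{O}(\rho|\log\rho|)$, which is weaker than the lemma. The $\mathcal{O}(\rho)$ part of that error is precisely $-\mathcal{A}_1(\xi)\rho/\lambda_{k,m}$, so it is admissible only after invoking the same control of $\partial_{\nu_g}\log V+2k_g$ by $|\nabla\cF_{k,m}(\xi)|_g$ (plus lower order) that you already use for the linear-in-$y_2$ term of the bracket; once it is routed through that estimate, your argument coincides with the paper's proof.
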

	\begin{proof}
		Denote $R_{\lambda_{k,m}}:= \Delta_g W +\lambda_{k,m}\left( \frac{Ve^W}{\int_{\Sigma} Ve^W dv_g} -\frac{ 1 }{|\Sigma|_g}\right)$. 
		Applying Corollary~\ref{cor0} with $f\equiv1$, we derive that 
		$ \int_{\Sigma}\chi_i e^{-\varphi_i\circ y_{\xi_i}^{-1}} e^{U_i} dv_g= \varrho(\xi_i)+ \mathcal{O}(\rho^2). $
		Then, 
		\begin{equation*}
			\begin{array}{lll}
				R_{\lambda_{k,m}}&=& -\sum_{i=1}^m \chi_i e^{-\varphi_i\circ y_{\xi_i}^{-1}} e^{U_i}+ \frac 1 {|\Sigma|_g}\left( \sum_{i=1}^m\int_{\Sigma} \chi_i e^{-\varphi_i\circ y_{\xi_i}^{-1}} e^{U_i} dv_g - \lambda_{k,m}\right)+  \frac{\lambda_{k,m}Ve^W}{\int_{\Sigma} Ve^W dv_g}\\
				&=&-\sum_{i=1}^m \chi_i e^{-\varphi_i\circ y_{\xi_i}^{-1}} e^{U_i} + \frac {\mathcal{O}(\rho^2) }{|\Sigma|_g}+  \frac{\lambda_{k,m}Ve^W}{\int_{\Sigma} Ve^W dv_g}
			\end{array}
		\end{equation*}
		Using Lemma~\ref{lemb1} combined with \eqref{eq:rho2_int_veW} and \eqref{extau}, 
		\[\lambda_{k,m}\frac{Ve^W}{\int_{\Sigma}Ve^Wdv_g}  =\frac{\lambda_{k,m}\mathcal{O}(1)}{ \frac{\lambda_{k,m}} {8\rho^2} +\mathcal{O}(\frac 1 {\rho})} =\mathcal{O}(\rho^2), \text{ for any } x\in \Sigma\setminus\bigcup_{i=1}^m U_{r_{0}}(\xi_i); \text{ and } \]
		\begin{equation*}
			\begin{array}{lcl}
				\lambda_{k,m}\frac{Ve^W}{\int_{\Sigma}Ve^Wdv_g}  &=& \frac{\lambda_{k,m}\tau_i(x)\frac {e^{U_i}}{8\rho^2 \tau_i} (1+c_{\rho,\xi}-2\sum_{l=1}^m\rho_l^2F_{\xi_l}(x)+\mathcal{O}(\rho^4|\log\rho|^2))}
				{\frac{\lambda_{k,m}} {8\rho^2} +\frac {\mathcal{A}_1(\xi)} {8\rho}+\mathcal{O}(|\log \rho|) }\\
				&=& \left(1+\left\la\frac 1 {\tau_i} \nabla(\tau_i\circ y_{\xi_i}^{-1})(0), y_{\xi_i}(x)\right\ra+\mathcal{O}(|y_{\xi_i}(x)|^2+\rho^2|\log \rho|) \right)\\
				&&\cdot (1+\mathcal{A}_1(\xi)\lambda_{k,m}^{-1}\rho)^{-1}e^{U_i},  
			\end{array}
		\end{equation*}
		$\text{ for any } x\in U_{r_0}(\xi_i).$ Combining the estimates above, 
		\begin{equation}~\label{sumex1}
			\begin{array}{ll}
				\lambda_{k,m}\frac{Ve^W}{\int_{\Sigma}Ve^Wdv_g} 
				&= \sum_{i=1}^m \chi_i  \left(1+\left\la\frac 1 {\tau_i} \nabla(\tau_i\circ y_{\xi_i}^{-1})(0), y_{\xi_i}(x)\right\ra+\mathcal{O}(|y_{\xi_i}(x)|^2+\rho^2|\log \rho|) \right)\\
				& \cdot(1+\mathcal{A}_1(\xi)\lambda_{k,m}^{-1}\rho)^{-1}e^{U_i}+ \mathcal{O}(\rho^2)\mathbbm{1}_{\Sigma\setminus \cup_{i=1}^m U_{r_0}(\xi_i)}.
			\end{array} 
		\end{equation}
		Applying~\eqref{sumex1}, 
		if $x\in \Sigma\setminus\cup_{i=1}^m U_{r_0}(\xi_i)$, 
		$R_{\lambda_{k,m}}=\mathcal{O}(\rho^2);$ if $x\in U_{r_0}(\xi_i)$, for any $i=1,2,\cdots,m$, 
		\begin{equation*}
			\begin{array}{lll}
				R_{\lambda_{k,m}}&=&\mathcal{O}(  |\nabla \log \tau_i\circ y_{\xi_i}^{-1}(0)||y_{\xi_i}(x)|+\frac{|\mathcal{A}_1(\xi)|}{\lambda_{k,m}}\rho+ |y_{\xi_i}(x)|^2+\rho^2|\log \rho| ) e^{U_i} +\mathcal{O}(\rho^2).
			\end{array}
		\end{equation*}
		By the definition of $\|\cdot\|_*$, 
		\begin{equation}
			\|R_{\lambda_{k,m}}\|_*\leq C(|\nabla \cF_{k,m}(\xi)|_g\rho+\rho^{2-\kappa}|\log \rho|),
		\end{equation}
		where we applied that $\lambda_{k,m}|\mathcal{A}_1(\xi)|\leq|\nabla\log \tau_i\circ y_{\xi_i}^{-1}(0)|\leq C |\nabla_{\xi_i}\cF_{k,m}(\xi)|_g.$
		\begin{equation*}
			\begin{array}{lll}
				R-R_{\lambda_{k,m}}&=& (\lambda-\lambda_{k,m}) \left(\frac{Ve^W}{\int_{\Sigma} Ve^W dv_g}-\frac{1}{|\Sigma|_g}\right)\\
				&=&\mathcal{O}\left( |\lambda-\lambda_{k,m}|\sum_{i=1}^m \chi_i e^{U_i} + |\lambda-\lambda_{k,m}|\right)
			\end{array}
		\end{equation*}
		Then $\|R-R_{\lambda_{k,m}}\|_*\leq C\rho^{-\kappa} |\lambda-\lambda_{k,m}|.$
		By~\eqref{lamda1}, $\|R-R_{\lambda_{k,m}}\|_*\leq \mathcal{O}\left(\rho^{2-\kappa}|\log \rho|\right).$
		Therefore, 
		\[ \|R\|_*\leq C(|\nabla \cF_{k,m}(\xi)|_g\rho+\rho^{2-\kappa}|\log\rho|).\]
	\end{proof}
	
	We denote that the coefficient vector as follows: $$\tilde{c}:=(c_0, c_{11},\cdots,c_{1\i(\xi_1)}, c_{21}, \cdots\cdots, c_{2\i(\xi_2)},\cdots, c_{m1},\cdots, c_{m\i(\xi_m)})\in \RR^{m+k+1}.$$
	\begin{proposition}~\label{propolin}
		Let $k\leq m\in \mathbb{N}_+$ and $\delta>0$.
		Then there exists $\rho_0>0$ such that for any $\rho\in (0,\rho_0)$, for any $h\in C(\Sigma)$ with $\int_{\Sigma}hdv_g=0$ and for any $\xi\in M_{\xi^*,\delta}$, there is a unique solution $(\phi, \tilde{c})\in \overline{\mathrm{H}}^1\cap W^{2,2}(\Sigma)\times \mathbb{R}^{k+m+1}$ to~\eqref{linprob}. Moreover, the map 
		\[ (\rho,\xi)\in (0,\rho_0)\times M_{\xi^*,\delta} \mapsto (\phi,\tilde{c})\]
		is $C^1$-differentiable in $\xi$ with 
		\begin{eqnarray}
			& \|\phi\|_{\infty} \leq C|\log \rho|\|h\|_*, \quad |c_0|+\sum_{i=1}^m \sum_{j=1}^{\i(\xi_i)}\left|c_{i j}\right| \leq C\|h\|_*,~\label{0rest}\\
			&\|\partial_{\rho}\phi\|_{\infty}+\frac{\rho}{|\log \rho|}\|\partial^2_{\rho}\phi\|_{\infty}+\frac{\rho}{|\log \rho|}\|\nabla_{\xi}\partial_{\rho}\phi\|_{\infty}+	\|\nabla_{\xi} \phi\|_{\infty}
			\leq C\frac{|\log \rho|^2}{\rho} \|h\|_*~\label{est_xi1}, 
		\end{eqnarray}
		for some $C>0$.
	\end{proposition}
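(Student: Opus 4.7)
The plan is to establish the a priori bound \eqref{0rest} by a contradiction-and-blow-up argument, then deduce existence from the Fredholm alternative, and finally obtain the $C^1$ dependence on parameters together with \eqref{est_xi1} by differentiating the linear problem in $\rho$ and $\xi$.

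First I would handle the coefficients $\tilde c$. Testing \eqref{linprob} against $PZ$ and each $PZ_{ij}$ and integrating by parts, the orthogonality conditions $\int_\Sigma\phi\,\Delta_gPZ_{ij}\,dv_g=\int_\Sigma\phi\,\Delta_gPZ\,dv_g=0$ eliminate the direct $\phi$-contribution and produce a nearly diagonal linear system for $\tilde c$. In local isothermal coordinates, its entries are computed from $L^2$-pairings of the model functions $z_0,z_j$ with themselves and with $\chi_i e^{-\varphi_i}e^{U_i}z_j$; using Lemma~\ref{lemb1} and the decay of $z_j$, the diagonal entries are bounded away from zero while the off-diagonal ones are $o(1)$. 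This yields $|c_0|+\sum_{i,j}|c_{ij}|\le C\|h\|_*+o(1)\|\phi\|_\infty$, so once the $L^\infty$ bound on $\phi$ is established the bound on $\tilde c$ follows.

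For the $L^\infty$ bound, suppose by contradiction that there exist sequences $\rho_n\to 0$, $\xi_n\in M_{\xi^*,\delta}$, $h_n$ and solutions $(\phi_n,\tilde c_n)$ with $\|\phi_n\|_\infty=1$ but $|\log\rho_n|\,\|h_n\|_*\to 0$. Rescaling around each $\xi_{i,n}$ by $\Phi_{i,n}(y)=\phi_n\circ y_{\xi_{i,n}}^{-1}(\rho_{i,n}y)-\bar\phi_n$, where $\bar\phi_n=\int_\Sigma Ve^W\phi_n\,dv_g/\int_\Sigma Ve^W\,dv_g$, standard elliptic estimates together with Lemma~\ref{lemlin1} give $\Phi_{i,n}\to \Phi_{i,\infty}$ in $C_{\mathrm{loc}}(\mathbb{R}_i)$, with $\Phi_{i,\infty}\in L^\infty_b(\mathbb{R}_i)$ and $\hat L_i(\Phi_{i,\infty})=0$; hence $\Phi_{i,\infty}\in\langle z_0,z_1,\dots,z_{\mathbf{i}(\xi_i)}\rangle$. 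Passing to the limit in the orthogonality conditions $\int_\Sigma\phi_n\Delta_gPZ_{ij}\,dv_g=0$ and $\int_\Sigma\phi_n\Delta_gPZ\,dv_g=0$ kills all the coefficients of $\Phi_{i,\infty}$, so $\Phi_{i,\infty}\equiv 0$ for every $i$. A Green's function representation then propagates this to $\phi_n\to 0$ uniformly on $\Sigma\setminus\bigcup_i U_{r_0}(\xi_{i,n})$: this is the main obstacle, because the logarithmic singularity of $G^g$ at the blow-up points generates the factor $|\log\rho|$ in the final estimate, and careful work on the annular transition region $\rho_{i,n}\lesssim|y_{\xi_i}|\lesssim r_0$ is needed to avoid losing more than one logarithm. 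Combining the inner and outer vanishing contradicts $\|\phi_n\|_\infty=1$ and proves $\|\phi\|_\infty\le C|\log\rho|\,\|h\|_*$.

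Existence and uniqueness of $(\phi,\tilde c)$ then follow from the Fredholm alternative applied in the finite-codimension subspace of $\overline{\mathrm{H}}^1\cap W^{2,2}$ cut out by the orthogonality conditions, once the just-proved a priori estimate rules out a nontrivial kernel. For the $C^1$ dependence, differentiate \eqref{linprob} in $(\xi_i)_j$ and in $\rho$: the functions $\partial_{(\xi_i)_j}\phi$ and $\partial_\rho\phi$ solve analogous linear problems in which the new right-hand sides involve $\partial_{(\xi_i)_j}L$ and $\partial_\rho L$ applied to $\phi$, together with derivatives of the projection defining the orthogonality constraints. Using \eqref{1w} and \eqref{w1strho}, the expansions \eqref{eq:veW_detail} and Lemma~\ref{lemb1}, these source terms are estimated in $\|\cdot\|_*$-norm with a loss of at most one power of $\rho^{-1}$ (respectively $\rho^{-1}|\log\rho|$ for the second $\rho$-derivative); re-applying \eqref{0rest} to the differentiated problems yields \eqref{est_xi1}. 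The $\rho^{-1}$ and $|\log\rho|/\rho$ scalings in \eqref{est_xi1} reflect precisely the way the bubble profile $U_{\rho_i,\xi_i}$ and its projection depend on $\rho$ and $\xi$.
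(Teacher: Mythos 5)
Your overall architecture (a priori bound by a rescaling/contradiction argument, near-diagonal system for $\tilde c$ obtained by testing against $PZ$ and $PZ_{ij}$, Fredholm alternative for existence, and differentiation of the problem with restored orthogonality for the $C^1$ bounds) is the same as the paper's. However, there is a genuine gap at the decisive step of the a priori estimate: you claim that passing to the limit in the orthogonality conditions ``kills all the coefficients of $\Phi_{i,\infty}$''. Counting dimensions shows this cannot work when $m\ge 2$: the limit kernels $\langle z_0,\dots,z_{\i(\xi_i)}\rangle$ have total dimension $\sum_{i=1}^m(1+\i(\xi_i))=2m+k$, whereas the orthogonality constraints supply only $m+k+1$ relations. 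In the limit they give $\int_{\RR_i}\frac{z_j}{(1+|z|^2)^3}\Phi_{i,\infty}=0$ for $j\ge1$ and, because $PZ=\sum_i PZ_{i0}$ is a \emph{single} function, only the one relation $\sum_{i=1}^m\int_{\RR_i}\frac{1-|z|^2}{(1+|z|^2)^3}\Phi_{i,\infty}=0$, i.e.\ $\sum_i a_{i0}=0$. This leaves an $(m-1)$-dimensional family of possible nonzero limits, so $\Phi_{i,\infty}\equiv0$ does not follow. The paper closes exactly this hole with an extra argument (Claim~\ref{c3.2}): using the representation formula for $\Phi(\xi_i)$, the nonlocal constant $\mathfrak c_0=-\lim\int_\Sigma Ve^W\phi\,dv_g/\int_\Sigma Ve^W dv_g$, and the identity $-\int_{\RR_i}\frac{8\log|z|\,(1-|z|^2)}{(1+|z|^2)^3}dz=\varrho(\xi_i)/2$, one gets $\Phi_i(0)=\mathfrak c_0+4a_{i0}+o(1)$; comparing with $\Phi_{i,\infty}(0)=2a_{i0}$ forces $a_{i0}=-\mathfrak c_0/2$ for every $i$, and together with $\sum_i a_{i0}=0$ this yields $a_{i0}=0$ and $\mathfrak c_0=0$. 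Without some substitute for this step your contradiction argument does not terminate. (If instead you meant to invoke Lemma~\ref{lemlin1} as a black box, note that it provides the sup-norm bound itself, not the $C_{\mathrm{loc}}$ convergence you attribute to it, and then re-running the blow-up analysis is unnecessary; as written your proof relies on the flawed orthogonality claim.)

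A secondary, smaller issue: the propagation of smallness from the bubble regions to $\Sigma\setminus\bigcup_iU_{r_0}(\xi_i)$ is not just a Green's function computation with a logarithm to track; in the paper it requires a comparison principle on the annuli $U_{r_0}(\xi_i)\setminus U_{\rho_iR_0}(\xi_i)$ with the explicit barrier $1-\chi_i\,\rho_i^{\kappa}|y_{\xi_i}|^{-\kappa}$ (including an even reflection across $\partial\Sigma$ to handle boundary bubbles), and only afterwards the representation formula identifies the outer limit with the constant $\mathfrak c_0=0$. You flag this region as delicate but do not supply the barrier, so this part too is only a sketch. The remaining ingredients of your proposal (the coefficient estimate via $\|L(PZ_{ij})\|_*=\mathcal{O}(\rho^{1-\kappa/2})$ and the nearly diagonal Gram matrix, Fredholm for existence, and the differentiated problem with correctors $b_0PZ+\sum b_{ij}PZ_{ij}$ of size $\mathcal{O}(\rho^{-1}\|\phi\|_\infty)$ leading to \eqref{est_xi1}) are consistent with the paper's proof.
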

	Before giving the proof of Proposition \ref{propolin}, we sate a prior estimate for the solutions $\phi$ to~\eqref{linprob}, assuming the coefficient vector  $\tilde{c}$ vanishes. Denote
	\[ 	L_{\lambda_{k,m}(\phi)}:= -\Delta_g\phi - \frac{\lambda_{k,m} V e^W}{\int_{\Sigma} Ve^W dv_g }\left(\phi- \frac{ \int_{\Sigma} Ve^W \phi dv_g }{\int_{\Sigma} Ve^W dv_g}\right). \]
	\begin{lemma}~\label{lemlin1}
		There exists $\rho_0>0$ and $C>0$ such that for any $\rho\in (0, \rho_0)$  $\xi=(\xi_1,\xi_2,\cdots,\xi_m)\in M_{\xi^*,\delta}$,  $h\in C(\Sigma)$ with $\int_{\Sigma}h dv_g =0$ and solution $\phi \in \overline{\mathrm{H}}^1\cap W^{2,2}(\Sigma)$ to~\eqref{linprob} with $L=L_{\lambda_{k,m}}$ and $\tilde{c}=0(\in \RR^{m+k+1})$,  
		\begin{equation}
			\|\phi\|_{\infty}\leq C |\log \rho|\|h\|_*.
		\end{equation}
	\end{lemma}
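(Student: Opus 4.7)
}
I would argue by contradiction in the standard Lyapunov--Schmidt style. Suppose the estimate fails: then there exist sequences $\rho_n\to 0$, $\xi^{(n)}\in M_{\xi^*,\delta}$, functions $h_n\in C(\Sigma)$ with $\int_\Sigma h_n\,dv_g=0$, and solutions $\phi_n\in\overline{\mathrm H}^1\cap W^{2,2}(\Sigma)$ of \eqref{linprob} (with $L=L_{\lambda_{k,m}}$ and $\tilde c=0$) satisfying
$$\|\phi_n\|_\infty=1,\qquad |\log\rho_n|\,\|h_n\|_*\longrightarrow 0.$$
The goal is to show $\phi_n\to 0$ uniformly, which contradicts $\|\phi_n\|_\infty=1$.

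\paragraph{Inner blow-up analysis.}
For each blow-up index $i=1,\dots,m$ I would pull back and rescale: set $\widetilde\phi_{i,n}(y):=\phi_n\circ y_{\xi_i^{(n)}}^{-1}(\rho_i^{(n)}y)$ on the expanding domain $\rho_i^{(n)-1}B^{\xi_i^{(n)}}\subset \mathbb{R}_i$ (where $\mathbb{R}_i=\mathbb{R}^2$ or $\mathbb{R}_+^2$). Using \eqref{sumex1} to identify the leading part of $\lambda_{k,m}Ve^W/\int Ve^W$ as $\sum_i\chi_i e^{U_i}$, the equation satisfied by $\widetilde\phi_{i,n}$ becomes, up to $o(1)$ error in the coefficients,
$$-\Delta \widetilde\phi_{i,n}(y)=\frac{8}{(1+|y|^2)^2}\bigl(\widetilde\phi_{i,n}(y)-d_{i,n}\bigr)+\rho_i^{(n)2}e^{\varphi_i\circ y_{\xi_i}^{-1}(\rho_i^{(n)}y)}h_n\circ y_{\xi_i^{(n)}}^{-1}(\rho_i^{(n)}y),$$
with Neumann boundary condition on $\partial\mathbb{R}_i$ when $\xi_i\in\partial\Sigma$, where $d_{i,n}:=\int_\Sigma Ve^W\phi_n\,dv_g/\int_\Sigma Ve^W\,dv_g$. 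The weighted norm $\|h_n\|_*\to 0$ ensures the right-hand side tends to $0$ in $L^\infty_{\mathrm{loc}}(\mathbb{R}_i)$, so by interior/boundary elliptic regularity $\widetilde\phi_{i,n}\to\widetilde\phi_{i,\infty}$ in $C^1_{\mathrm{loc}}(\mathbb{R}_i)$ along a subsequence, with $\widetilde\phi_{i,\infty}-d_{i,\infty}$ bounded and satisfying the kernel equation for $\widehat L_i$. By the classification (Lemma~2.3 of \cite{chen2002sharp}/Lemma~D.1 of \cite{Esposito2005}), $\widetilde\phi_{i,\infty}-d_{i,\infty}$ is a linear combination of $z_0,z_1,\dots,z_{\i(\xi_i)}$; the orthogonality $\int_\Sigma\phi_n\Delta_g PZ_{ij}\,dv_g=0$ and $\int_\Sigma\phi_n\Delta_g PZ\,dv_g=0$ translate after rescaling into the vanishing of all these coefficients, forcing $\widetilde\phi_{i,\infty}\equiv d_{i,\infty}$.

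\paragraph{Outer analysis and elimination of constants.}
Next I handle the region away from the concentration points. On $\Sigma\setminus\bigcup_iU_{r_0}(\xi_i)$ the coefficient of $\phi_n$ in $L_{\lambda_{k,m}}$ is of order $\rho_n^2$ (from \eqref{sumex1}), so $-\Delta_g\phi_n=h_n+O(\rho_n^2)\phi_n$ there. Representing $\phi_n-\overline{\phi_n}$ via the Green's function $G^g$ and using the definition of $\|\cdot\|_*$ (which encodes the scale $\rho^{-\kappa}$ near each $\xi_i$), I would show
$$\phi_n(x)-\overline{\phi_n}=O\bigl(|\log\rho_n|\,\|h_n\|_*\bigr)+o(1)\cdot\|\phi_n\|_\infty,$$
uniformly on $\Sigma\setminus\bigcup_iU_{r_0}(\xi_i)$; this is where the factor $|\log\rho_n|$ enters, coming from the logarithmic singularity of $G^g$ integrated against the worst contribution in $\|\cdot\|_*$. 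Matching with the inner limits gives $d_{i,\infty}=\overline{\phi_\infty}=:d$ for every $i$. Testing the equation against the constant $1$ (and using $\int_\Sigma\phi_n\Delta_g PZ\,dv_g=0$ together with the fact that $PZ$ converges, in a suitable rescaled sense, to the non-trivial kernel element $z_0$ summed over bubbles) eliminates the common constant $d$, forcing $d=0$.

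\paragraph{Reaching the contradiction.}
At this point, all subsequential limits vanish both in the inner regions (where $\widetilde\phi_{i,\infty}=d=0$) and in the outer region. A standard maximum-principle/Harnack argument, combined with $L^\infty$-bounds from $W^{2,p}$ elliptic estimates applied to $-\Delta_g\phi_n$ using the weighted norm of the right-hand side, then yields $\|\phi_n\|_\infty\to 0$, contradicting the normalization $\|\phi_n\|_\infty=1$. The main technical obstacle is keeping track of the constant mode $d_{i,n}$: the term $d_{i,n}\cdot 8(1+|y|^2)^{-2}$ in the inner equation is not small, and one must exploit the orthogonality against $\Delta_g PZ$ (rather than merely against $\Delta_g PZ_{ij}$) to kill it. The $|\log\rho|$ factor is sharp and appears precisely in the outer Green's-function estimate, so tracking the weight $\kappa$ carefully in the definition of $\|\cdot\|_*$ is essential.
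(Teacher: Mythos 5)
Your overall skeleton (contradiction with $\|\phi_n\|_\infty=1$, inner rescaling plus kernel classification, outer Green's-function analysis, barrier/maximum principle) matches the paper's proof, but there is a genuine gap at the decisive step: the elimination of the $z_0$-components. In problem~\eqref{linprob} the orthogonality in the $z_0$-direction is imposed only against the \emph{sum} $PZ=\sum_{i=1}^m PZ_{i0}$ (the coefficient vector has $m+k+1$ entries, with a single $c_0$), so after rescaling you obtain exactly one scalar relation $\sum_{i=1}^m a_{i0}=0$, not $a_{i0}=0$ for each $i$. Your claim that the orthogonality conditions ``translate after rescaling into the vanishing of all these coefficients, forcing $\widetilde\phi_{i,\infty}\equiv d_{i,\infty}$'' is therefore unjustified whenever $m\geq 2$. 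Moreover, your proposed device for killing the residual constant — testing the equation against the constant function $1$ — is vacuous: by the Neumann condition and the structure of $L_{\lambda_{k,m}}$ one has $\int_\Sigma L_{\lambda_{k,m}}(\phi)\,dv_g=\int_\Sigma h\,dv_g=0$ identically, so this test yields no information about $d$ or the $a_{i0}$.

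The paper closes precisely this gap with its Claim 3.2: evaluating $\Phi=\phi-\int_\Sigma Ve^W\phi/\int_\Sigma Ve^W$ at each blow-up point via the Neumann representation formula~\eqref{eq:representation}, and using the identity $-\int_{\mathbb{R}_i}\frac{8\log|z|\,(1-|z|^2)}{(1+|z|^2)^3}\,dz=\tfrac{\varrho(\xi_i)}{2}$, one gets $\Phi(\xi_i)=\mathfrak{c}_0+4a_{i0}+o(1)$, while the inner limit gives $\Phi_i(0)\to 2a_{i0}$; hence $a_{i0}=-\tfrac12\mathfrak{c}_0$ for \emph{every} $i$, and combined with the single aggregate relation $\sum_i a_{i0}=0$ this forces $a_{i0}=0$ and $\mathfrak{c}_0=0$. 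Without this pointwise matching (or an equivalent mechanism coupling each bubble's $z_0$-coefficient to the nonlocal constant), your argument cannot conclude that the inner limits are trivial, and the subsequent outer analysis and contradiction do not go through. The rest of your outline (weighted-norm bookkeeping producing the $|\log\rho|$ factor, the barrier construction in the annular regions — with an even reflection across $\partial\Sigma$ for boundary bubbles — and the final representation-formula argument in the outer region) is consistent with the paper once this step is repaired.
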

	\begin{proof}
		We will prove the lemma by constructing a contradiction. Assume Lemma~\ref{lemlin1} fails. There exists a sequence of $\rho^n\rightarrow 0$, $\xi^n(\in M_{\xi^*,\delta})\rightarrow \xi^*$ in $M_{\xi^*,\delta}$, $h_n\in C(\Sigma)$ with $\int_{\Sigma} h_n dv_g =0$ and $\phi_n\in \overline{\mathrm{H}}^1\cap W^{2,2}(\Sigma)$  solutions to \eqref{linprob} with $L=L_{\lambda_{k,m}}$ and  $\|\phi_n\|_{\infty}=1$ satisfying that 
		\[ \|h_n\|_*|\log (\rho^n)|=o(1),\]
		as $n\rightarrow +\infty$. 
		
		For simplicity, we still use the notations $\rho$, $\xi$, $h$ and $\phi$ instead of $\rho^n$, $\xi^n$, $h_n$ and $\phi_n$, respectively.
		Define that $
		\Phi= \phi- \frac{ \int_{\Sigma} Ve^W\phi dv_g }{\int_{\Sigma} Ve^W dv_g},$ and $\mathcal{K}= \frac{\lambda_{k,m} Ve^W}{\int_{\Sigma} Ve^W dv_g}$. 
		Consequently, problem~\eqref{linprob} transforms into the following equation:
		\begin{equation}~\label{Phi}
			\begin{cases}
				-\Delta_g\Phi -\mathcal{K}\Phi= h & \text{ in }\intsigma\\
				\partial_{ \nu_g }\Phi =0 & \text{ on }\partial \Sigma\\
				\int_{\Sigma} \Phi \Delta_g PZ_{ij}dv_g=0, 	\int_{\Sigma} \Phi \Delta_g PZdv_g=0& \forall i=1,2,\cdots,m, j=1,\cdots, \i(\xi_i)
			\end{cases},
		\end{equation} 
		where $\mathcal{C}= \frac{ \int_{\Sigma} Ve^W\phi dv_g }{\int_{\Sigma} Ve^W dv_g}$. Additionally, $\Phi$ satisfies the same orthogonality properties as $\phi$, and $|\Phi|_{\infty} \leq 2|\phi|_{\infty} \leq 2.$
		
		Recall that $\rho_i^2=\rho^2 \tau_i$. 
		Let $\Phi_i(y)=\Phi\circ y_{\xi_i}^{-1}(\rho_i y)$, for any $y\in \frac 1 {\rho_i}B_{2r_0}^{\xi_i}$.
		It follows that  
		\[- \Delta\Phi_i - \tilde{\mathcal{K}}_i\Phi_i + \tilde{\mathcal{C}}=\tilde{h}_i,\]
		where $\tilde{\mathcal{K}}_i(y)=\rho_i^2 e^{{\varphi}_{\xi_i}(\rho_i y)} \mathcal{K}\circ y^{-1}_{\xi_i}(\rho_i y) $, $\tilde{\mathcal{C}}=\rho_i^2 e^{{\varphi}_{\xi_i}(\rho_i y)} \mathcal{C}$ and $\tilde{h}_i(y)= \rho_i^2 e^{{\varphi}_{\xi_i}(\rho_i y)} h\circ y^{-1}_{\xi_i}(\rho_i y)$. 
		Since $|\mathcal{C}|\leq 1$ and $\|\Phi_i\|_{\infty}\leq 2$, 
		$\rho_i^2 e^{{\varphi}(y)}\mathcal{C}={\mathcal{O}}(\rho^2).$ $|\tilde{h}_i(y)|\leq C \|h\|_{*}$, where $C$ is a constant. And $$\tilde{\mathcal{K}}_i(y)=\frac{8}{(1+|y|^2)^2} \left( 1+\mathcal{O}(\rho) \right)+\mathcal{O}(\rho^2)$$ uniformly in any compact subset of $\RR_i$, by \eqref{sumex1}. 
		For any $p>1$, we have the estimate:
		\begin{equation}~\label{phii}
			\|\Phi_i\|_{W^{2,p}( \frac 1 {\rho_i} B_{2r_0}^{\xi_i})}\leq C,
		\end{equation} 
		where $C$ is a constant that depends only on $\Sigma$.
		Denote the scaled domain $\Omega^{\rho}_{i}:=\frac 1 {\rho_i} B_{2r_0}^{\xi_i}. $ 
		
		\begin{claim}~\label{c3.1}
			For any $i=1,2,\cdots,m$, up to a subsequence,  ${\Phi}_i\rightarrow \Phi_{i,\infty} $ in $C^1_{loc}(\mathbb{R}_i)$ as $\rho\rightarrow 0$, where  ${\Phi}_{i,\infty}$ is  a solution of 
			\begin{equation}
				\begin{cases}
					-\Delta {\Phi}_{i,\infty}=\frac 8 {(1+|z|^2)^2}\Phi_{i,\infty}, & \text{ in } \mathbb{R}_i \\
					|{\Phi}_{i,\infty}|\leq C
				\end{cases},
			\end{equation}
			for some constant $C$. 
			Moreover, for any $i=1,\cdots,m$, $j=1,\cdots,\i(\xi_i)$, \[ \int_{\mathbb{R}_i} \frac{z_j}{(1+|z|^2)^3} {\Phi}_{i,\infty}=0,\]
			and 
			\[ \sum_{i=1}^m \int_{\RR_i} \frac{1-|z|^2}{(1+|z|^2)^3}\Phi_{i, \infty}=0.\]
		\end{claim}
		By the regularity theory, ${\Phi}_{i,\infty}$ is a smooth solution. 
		It follows that $ {\Phi}_{i,\infty}$ is a linear combination of $z_j(j=0,\cdots, \i(\xi_i))$ (see~\cite{baraket_construction_1997} or \cite{Esposito2005}), i.e. ${\Phi}_{i,\infty}(z)= \sum_{j=0}^{\i(\xi_i)} a_{ij}z_j(z) $ in $\mathbb{R}_i$.
		From Claim~\ref{c3.1}, we deduce that $a_{ij} = 0$ for $i=1,\cdots,m$ and $j=1,\cdots, \i(\xi_i)$ and $\sum_{i=1}^m a_{i0}=0.$
		\begin{claim}~\label{c3.2}
			For any $i=1,2,\cdots,m$, as $\rho\rightarrow 0$,
			\begin{equation}
				\Phi_i(0) =\Phi(\xi_i) = \mathfrak{c}_0 +4a_{i0} +o(1), 
			\end{equation}
			where $ \mathfrak{c}_0=-\lim \frac{\int_{\Sigma} Ve^W \phi dv_g }{\int_{\Sigma} Ve^W dv_g}. $
		\end{claim}
		Claim~\ref{c3.2} implies $2 a_{i0} = \mathfrak{c}_0 + 4 a_{i0},$ thereby yielding $a_{i0}=-\frac{1}{2} \mathfrak{c}_0$. Hence, 
		for any $i=1,2,\cdots,m$, $a_{i0}=0$ and $\mathfrak{c}_0=0$. 
		
		Consequently, $\Phi$ converges uniformly to 0 on $\cup_{i=1}^m B_{\rho R}(\xi_i)$ for any $R > 0$.
		Following the idea in~\cite{del_pino_singular_2005}, we now will prove 	\[ \Phi\rightarrow 0 \text{ in } L^{\infty}(\Sigma). \]
		utilizing the maximum principle. 
		
		The linear operator $\mathcal{L}=-\Delta_g -\mathcal{K}$ satisfies the comparison principle in $\Sigma_{\rho}= \bigcup_{i=1}^m (U_{r_0}(\xi_i) \setminus U_{\rho_i R_0}(\xi_i))$, where $R_1 > 0$ is chosen later. Specifically,  if  $\varphi\in C^2(\Sigma_{\rho})$ satisfies 
		\begin{equation}
			~\label{comparison}
			\begin{cases}
				\mathcal{L} \varphi \geq 0& \text{ in } \Sigma_{\rho}\\
				\partial_{ \nu_g } \varphi \geq 0 & \text{ on }  \partial \Sigma\cap \Sigma_{\rho}\\
				\varphi \geq 0 & \text{ on } \intsigma\bigcap \left( \bigcup_{i=1}^m (\partial U_{\rho_i R_0}(\xi_i)\cup \partial U_{r_0}(\xi_i))\right)
			\end{cases},
		\end{equation}
		then 
		\[ \psi\geq 0 \text{ in } \Sigma_{\rho}. \]
		To establish the comparison principle, we need to construct a barrier function $\psi$ that satisfies the following conditions:
		\begin{equation*}
			\begin{array}{cl}
				-\Delta_g\psi-\mathcal{K}\psi\geq  \sum_{i=1}^m\mathbbm{1}_{U_{r_0}(\xi_i)}\frac{\kappa^2}{2}  \frac{\rho_i^{\kappa}}{|y_{\xi_i}(x)|^{\kappa+2}}&\text{ in } \bigcup_{i=1}^m( U_{r_0}(\xi_i)\setminus  U_{\rho_i R_1}(\xi_i)), \\
				\frac{\partial \psi}{\partial_{\nu_g}}\geq 0  &\text{ on }\partial\Sigma\cap \cup_{i=1}^m U_{r_0}(\xi_i)   \setminus  \cup_{i=1}^m U_{\rho_i R_1}(\xi_i), \\
				\psi>0  &\text{ on } \intsigma \bigcap \left(  \bigcup_{i=1}^m(\partial U_{r_0}(\xi_i)\cup \partial  U_{\rho_i R_1}(\xi_i) )\right). 
			\end{array}
		\end{equation*}
		Furthermore, we can expect that $\psi$ is uniformly bounded, i.e.,  for some constant $C>0$, 
		\[ 0<\psi\leq C \text{ in }  \bigcup_{i=1}^m( U_{r_0}(\xi_i)\setminus  U_{\rho_i R_1}(\xi_i)). \]
		% Moreover, the constants $R_0$ and $C$ can be chosen independently with $\rho$ and $\psi$. 
		
		Define $\psi_{1i}(x)= 1-\chi_{i}(x) \frac{\rho_i^{\kappa}}{r^{\kappa}}$, where $r= |y_{\xi_i}(x)|$. 
		Then 
		\begin{equation*}
			\begin{array}{llll}
				-\Delta_g\psi_{1i} &=& \mathbbm{1}_{U_{r_0}(\xi_i)} (x) e^{-{\varphi}_{\xi_i}\circ y_{\xi_i}^{-1}(x)}
				\frac{\rho_i^{\kappa}\kappa^2}{ |y_{\xi_i}(x)|^{\kappa+2}}&\text{ in } U_{r_0}(\xi_i)\setminus U_{\rho_i R}(\xi_i)\\
				\partial_{\nu_g}\psi_{1i}(x)&=&  \mathbbm{1}_{U_{r_0}(\xi_i)} (x) \frac{\kappa \rho_i^{\kappa}}{|y_{\xi_i}(x)|^{\kappa}} \frac{\partial_{ \nu_g }|y_{\xi_i}(x)|}{|y_{\xi_i}(x)|}&\text{ on } \partial(U_{r_0}(\xi_i) \setminus U_{\rho_i R}(\xi_i)). 
			\end{array}
		\end{equation*}
		We observe that  $$	\partial_{\nu_g}\psi_{1i}(x)\equiv 0 $$ on the boundary $\partial\Sigma$. 
		Let $\psi= \sum_{i=1}^m \psi_{1i}$.
		Now, we  choose $R_1>{2}/{\kappa^2} $ to be sufficiently large and $\rho_* > 0$ to be sufficiently small such that for any $\rho \in (0, \rho_*]$ the following conditions hold:
		\begin{eqnarray*}
			\psi_{1i}\geq \frac 1 2,  \text{ in }U_{r_0}(\xi_i)\setminus U_{\rho_i R_1}(\xi_i);\\
		0<\psi \leq m:=C,  \text{ in } 
			\bigcup_{i=1}^m  (U_{r_0}(\xi_i)\setminus B_{\rho_iR_1}(\xi_i)); \\
			\psi \geq\frac  1 2,  \text{ on } \intsigma \bigcap \left(  \bigcup_{i=1}^m(\partial U_{r_0}(\xi_i)\cup \partial  U_{\rho_i R_1}(\xi_i) )\right).
		\end{eqnarray*}
		Moreover, since  for any $x\in U_{r_0}(\xi_i)$
		\[\rho^2 \mathcal{K}\leq   \sum_{i=1}^m \mathbbm{1}_{U_{r_0}(\xi_i)}\frac 1 { \tau_i\left(1+ \left|\frac{y_{\xi_i}(x)}{\rho_i}\right|^2\right)^2} +\mathcal{O}(\rho^2). \]
		\begin{equation*}
			\begin{array}{lll}
				\rho^2 \left(-\Delta_g\psi -\mathcal{K}\psi\right)&\geq &  \sum_{i=1}^m\frac{\kappa^2}{\tau_i} \mathbbm{1}_{U_{r_0}(\xi_i)} \frac{\rho_i^{\kappa+2}}{|y_{\xi_i}(x)|^{\kappa+2}}-C\rho^2\mathcal{K}  \\
				&\geq&  \sum_{i=1}^m\frac{\kappa^2}{2\tau_i}  \mathbbm{1}_{U_{r_0}(\xi_i)} \frac{\rho_i^{\kappa+2}}{|y_{\xi_i}(x)|^{\kappa+2}},
			\end{array}
		\end{equation*}
		$\text{ in } \Sigma\setminus \cup_{i=1}^m U_{\rho_i R_1}(\xi_i).$ And 
		\[ 
		\partial_{ \nu_g }\psi \geq 0,  \text{ on }  \partial\Sigma\cap U_{r_0}(\xi_i)\setminus U_{\rho_i R_1}(\xi_i).
		\]
		Consider $\varphi \in \Sigma_{\rho}$ satisfying ~\eqref{comparison} and take $R_0 = 2R_1$. Let $\varphi_{\epsilon} = \varphi + \epsilon\psi$ for any $\epsilon > 0$. Then $\varphi_{\epsilon}$ solves the following problem: 
		\begin{equation}~\label{com_ep}
			\begin{cases}
				\mathcal{L} \varphi_{\epsilon} \geq  \epsilon \sum_{i=1}^m\frac{\kappa^2}{2}  \mathbbm{1}_{U_{r_0}(\xi_i)} \frac{\rho_i^{\kappa}}{|y_{\xi_i}(x)|^{\kappa+2}} & \text{ in } \Sigma_{\rho}\\
				\partial_{ \nu_g } \varphi_{\epsilon} \geq 0 & \text{ on }  \partial \Sigma\cap \Sigma_{\rho}\\
				\varphi_{\epsilon} >0 & \text{ on } \intsigma\bigcap \left( \bigcup_{i=1}^m (\partial U_{\rho_i R_0}(\xi_i)\cup \partial U_{r_0}(\xi_i))\right)
			\end{cases}.
		\end{equation}
		Consider $\varphi_{\epsilon}(x_0) = \min_{\overline{\Sigma}{\rho}} \varphi_{\epsilon}$. Assuming $\varphi_{\epsilon}(x_0) < 0$, we can deduce $x_0 \notin \partial \Sigma_{\rho};$ otherwise, by the boundary conditions of~\eqref{com_ep}, $x_0\in \partial\Sigma\cap \Sigma_{\rho}$. Then it follows that $\partial_{\nu_g}\varphi_{\epsilon}\equiv 0$ on $\partial\Sigma\cap \Sigma_{\rho}$. Without loss of generality, we suppose that $x_0\in U_{r_0}(\xi_i)$. We conduct an even extension of $\varphi_{\epsilon}\circ y_{\xi_i}(y)$ from $\B^+_{r_0}$ to $\B_{r_0}$, denoted the function by $\tilde{\varphi}^i_{\epsilon}$.
		We define that $y^*= \begin{cases}
			(y_1,y_2) &\text{ if } y_2\geq 0\\
			(y_1,-y_2)& \text{ if } y_2<0
		\end{cases}$ for any $y\in \B_{r_0}.$
		Then for any  $y\in \B_{r_0}$. we have 
		\[ (-\Delta -e^{\varphi_i(y^*)}\mathcal{K}(y_{\xi_i}^{-1}(y^*))\ge  e^{\varphi_i(y^*)}\geq  \epsilon \frac{\kappa^2}{2}   \frac{\rho_i^{\kappa}}{|y|^{\kappa+2}}>0.\]
		The strong maximum principle yields that the minimum point of $\tilde{\varphi}^i_{\epsilon}$ must be on $\partial \B_{r_0}$ if $\tilde{\varphi}^i_{\epsilon}$ is nontrivial, which implies that $x_0\in\intsigma\cap  \partial U_{r_0}(\xi_i)$. It contradicts to $\varphi_{\epsilon}>0$ in~\eqref{com_ep}.

		If $x_0 \in \intsigma_{\rho}$, applying the strong maximum principle yields that $\varphi_{\epsilon}$ is constant on each connected component of $\Sigma_{\rho}$,
		However, since $\varphi_{\epsilon} > 0$ on $\intsigma\cap \partial\Sigma_{\rho}$, this implies that $\varphi_{\epsilon}(x_0) > 0$, leading to a contradiction.
		
		Hence, we can conclude that $\varphi_{\epsilon} = \varphi + \epsilon\psi \geq 0$ on $\Sigma_{\rho}$. By the arbitrary choice of $\epsilon > 0$, it follows that $\varphi \geq 0$ on $\Sigma_{\rho}$.
		
		Let us define
		\[\|\Phi\|_{\rho}= \sup_{\bigcup_{i=1}^m (B_{\rho_i R_0}(\xi_i)\cup (\Sigma\setminus U_{r_0}(\xi_i)))} |\Phi|.\]
		Consider $\hat{\Phi} = C_1 \psi(\mathcal{C} + |\Phi|{\rho} + |h|_{*})$, where $C_1$ is a sufficiently large constant independent of $\Phi$ and $\rho$ such that 
		\[
		\left|h\right|\leq C_1\|h\|_{*} \left( \sum_{i=1}^m\mathbbm{1}_{U_{r_0}(\xi_i)}\frac{\kappa^2}{2}  \frac{\rho_i^{\kappa}}{|y_{\xi_i}(x)|^{\kappa+2}} \right),
		\]
		\[ |\Phi|\leq \hat{\Phi} \text{ on } \intsigma\bigcap \left( \bigcup_{i=1}^m (\partial U_{\rho_i R_1}(\xi_i)\cup \partial U_{r_0}(\xi_i))\right).\]
		Applying the comparison principle~\eqref{comparison} stated earlier, we have
		\[ -\hat{\Phi}\leq \Phi \leq \hat{\Phi}, \text{ for any }x\in \bigcup_{i=1}^m (U_{r_0}(\xi_i)\setminus B_{\rho_iR_1}(\xi_i)).  \]
		We observe that $-\Delta_g \Phi=o(1)$ in $C_{loc}(\Sigma\setminus\{\xi_1^*,\cdots,\xi_m^*\})$, up to a subsequence, 
		$\Phi\rightarrow \Phi_{\infty} $ in $C^1_{loc}(\Sigma\setminus\{\xi_1^*,\cdots,\xi_m^*\})$. Since $\Phi_{\infty}$ is bounded, it extends to a solution of $-\Delta_g \Phi_{\infty}=0$ in $\Sigma$. We observe that  $\frac 1 {|\Sigma|_g}\int_{\Sigma}\Phi dv_g =- \frac{\int_{\Sigma} Ve^W \phi dv_g }{\int_{\Sigma} Ve^W dv_g }$. By the representation formula, for any  $x\in \Sigma\setminus \{\xi_1^*,\cdots,\xi_m^*\}$, 
		\begin{equation*}
			\begin{array}{lll}
				\Phi_{\infty}(x)&=& \lim_{\rho\rightarrow 0}\left(\frac 1 {|\Sigma|_g}\int_{\Sigma} \Phi dv_g +  \int_{\Sigma} G^g(x,z) (-\Delta_g) \Phi dv_g \right. \\
				&&\left.	+ \int_{\partial \Sigma} G^g(x,z) \partial_{ \nu_g } \Phi ds_g\right)\\
				&=& \mathfrak{c}_0=0.
			\end{array}
		\end{equation*}
		Thus, up to a subsequence,
		\[ \|\Phi\|_{\infty}\leq C( \mathcal{C}+\|\Phi\|_{\rho}+ \|h\|_*)\rightarrow 0, \]
		which contradicts with $\|\phi\|_{\infty}=1.$ \par 
		It remains to prove the claims. 
		\par 
		{\it Proof  of Claim~\ref{c3.1}. }
		For the case $i=1,2,\cdots,k$, $\xi_i\in \intsigma$. We refer to Appendix A. of~\cite{Esposito2014singular}. So we just give the proof for $\xi_i\in \partial\Sigma$, here. 
		Note that 
		${\Phi}_i(z)\in C( \Omega^{\rho}_i)$ and $\|{\Phi}_i\|_{W^{2,p}(\Omega^{\rho}_{i})}\leq C \|\Phi_i\|_{W^{2,p}(\frac 1 {\rho_i} B^{\xi_i}_{2r_0})}.$   By the Sobolev embedding theorem ,
		${\Phi}_i$ is uniformly bounded in $C^{1,\gamma_0}(\Omega^{\rho}_i)$ for some  $\gamma_0\in (0,1)$. By the Arzelà–Ascoli theorem, up to a subsequence, in $C^1_{loc}( \mathbb{R}_i)$
		\[{\Phi}_i\rightarrow {\Phi}_{i,\infty}, \]
		where ${\Phi}_{i,\infty}$ is a bounded solution to 
		$-\Delta {\Phi}_{i,\infty}=\frac 8 {(1+|z|^2)^2}{\Phi}_{i,\infty}$ in $\mathbb{R}_i.$
		Note that \[-\Delta_gPZ_{ij}= \chi_{i} e^{-{\varphi}_{\xi_i} }e^{U_i}Z_{ij} -\frac 1 {|\Sigma|_g}\int_{\Sigma} \chi_{i} e^{-{\varphi}_{\xi_i}}e^{U_i}Z_{ij} dv_g . \]
		By the orthogonal properties,
		\begin{equation*}
			\begin{array}{lll}
				0&= &-\int_{\Sigma} \Delta_g PZ_{ij} \Phi dv_g\\ &=& \int_{B_{2r_0}^{\xi_i}}\chi(|y|) \frac{ 8}{(\rho_i^2 +|y|^2)^2} \frac {4\rho_iy_j}{\rho_i^2 + |y|^2}\Phi\circ y^{-1}_{\xi_i}(y) dy\\
				&&- \frac 1 {|\Sigma|_g}
				\int_{\Sigma} \Phi dv_g \int_{B_{2r_0}^{\xi_i}}	\chi(|y|) \frac{ 8}{(\rho_i^2 +|y|^2)^2} \frac {4\rho_i y_j}{\rho_i^2 + |y|^2} dy \\
				&=&  32 \int_{\mathbb{R}^2_{+}} \frac {z_j}{(1+|z|^2)^3} {\Phi}_i(z) dz -  \frac {32}{|\Sigma|_g}
				\int_{\Sigma} \Phi dv_g  \int_{\mathbb{R}^2_{+}} \frac {z_j}{(1+|z|^2)^3} dz+o(1)\\
				&=& 32 \int_{\mathbb{R}^2_{+}} \frac {z_j}{(1+|z|^2)^3} {\Phi}_i(z) dz+o(1),
			\end{array}
		\end{equation*}
		where $j=1$ and $\int_{\mathbb{R}^2_{+}} \frac {z_j}{(1+|z|^2)^3} dz=0$. Applying Lebesgue's dominated convergence theorem,  we have  $32\int_{\mathbb{R}^2_+} \frac{ z_j}{(1+|z|^2)^3} {\Phi}_{i,\infty} dz=0$. 
		Similarly, 
		\begin{equation*}
			\begin{array}{lll}
				0&= &-\int_{\Sigma} \Delta_g PZ \Phi dv_g\\ &=&\sum_{i=1}^m  \int_{B_{2r_0}^{\xi_i}}\chi(|y|) \frac{ 8}{(\rho_i^2 +|y|^2)^2} \frac {2(\rho_i^2-|y|^2)}{\rho_i^2 + |y|^2}\Phi\circ y^{-1}_{\xi_i}(y) dy\\
				&&- \frac 1 {|\Sigma|_g} \sum_{i=1}^m
				\int_{\Sigma} \Phi dv_g \int_{B_{2r_0}^{\xi_i}}	\chi(|y|) \frac{ 8}{(\rho_i^2 +|y|^2)^2} \frac{2(\rho_i^2-|y|^2)}{\rho_i^2 + |y|^2} dy \\
				&=& 16\sum_{i=1}^m \int_{\mathbb{R}^2_{+}} \frac {1-|z|^2}{(1+|z|^2)^3} {\Phi}_i(z) dz -\frac {16} {|\Sigma|_g}\sum_{i=1}^m  
				\int_{\Sigma} \Phi dv_g  \int_{\RR_i} \frac {1-|z|^2}{(1+|z|^2)^3} dz+o(1)\\
				&=& 16\sum_{i=1}^m \int_{\RR_i} \frac {1-|z|^2}{(1+|z|^2)^3} {\Phi}_i(z) dz+o(1),
			\end{array}
		\end{equation*}
		where we applied the fact $ \int_{\RR_i} \frac{1-|z|^2}{(1+|z|^2)^3} dz=0.$
	Lebesgue's dominated convergence theorem yields that 
		$\sum_{i=1}^m \int_{\RR_i} \frac {1-|z|^2}{(1+|z|^2)^3} {\Phi}_{i,\infty}(z) dz=0.$
		Claim~\ref{c3.1} is concluded.  \\
		{\it Proof of Claim~\ref{c3.2}. } 
		We will apply the following representation formula to get the estimate,
		\begin{equation}~\label{represent}
			\Phi(\xi_i)= \frac{1}{|\Sigma|_g} \int_{\Sigma} \Phi dv_g + \int_{\Sigma} G^g(x,\xi_i)(-\Delta_g)\Phi(x) dv_g + \int_{\partial \Sigma} G^g(x,\xi_i) \partial_{ \nu_g }\Phi(x) ds_g. 
		\end{equation}
		For $\xi_i\in\intsigma$, the calculation is simpler, so here we just show the case for $\xi_i\in \partial \Sigma$.
		Recall that $G^g(x,\xi_i)= -\frac  4 {\varrho(\xi_i)}\chi_i\log |y_{\xi_i}(x)|+H^g(x,\xi_i)$. Applying Lemma~\ref{lem0}, we derive that 
		\begin{equation}~\label{est_kphi}
			\begin{array}{lll}
				\int_{U_{2r_0}(\xi_i)} \mathcal{K}\Phi dv_g
				&=& \int_{\frac 1 {\rho_i} B_{2r_0}^{\xi_i}}  \tilde{\mathcal{K}}_i{\Phi}_i dz \\
				&=&  \int_{\mathbb{R}_i} \frac{ 8}{(1+|z|^2)^2} {\Phi}_{i,\infty} dz+{\mathcal{O}}(\rho) \\
				&=& a_{i0} \int_{\mathbb{R}_i} \frac{ 16(1-|z|^2)}{(1+|z|^2)^3} dz+{\mathcal{O}}(\rho)={\mathcal{O}}(\rho).  
			\end{array}
		\end{equation}
		Considering that $(-\Delta_g+\beta)\Phi= \mathcal{K}\Phi+h$, we have to calculate the following integrals: 
		\begin{equation*}
			\begin{array}{lcl}
				&&\int_{\Sigma} G^g\left(x, \xi_i\right) \mathcal{K}\Phi d v_g\\
				&=&\int_{U_{2r_0}(\xi_i)}\left(-\frac{4}{\varrho(\xi_i)}\chi_i \log |y_{\xi_i}(x)|+H^g\left(x, \xi_i\right)\right)\mathcal{K} \Phi d v 
				_g(x)	\\
				&&+ \sum_{l\neq i} 	\int_{U_{2r_0}(\xi_l)} G^g\left(x, \xi_i\right) \mathcal{K}\Phi d v_g
				+ \sum_{\Sigma\setminus\cup_{i=1}^m U_{2r_0}(\xi_i)} G^g(x,\xi_i) \mathcal{K}\Phi d v_g\\
				&\stackrel{\eqref{sumex1}}{=}& \int_{U_{2r_0}(\xi_i)}\left(-\frac{4}{\varrho(\xi_i)} \chi_i\log |y_{\xi_i}(x)|+H^g\left(\xi_i, \xi_i\right)\right)\mathcal{K} \Phi d v 
				_g(x)	\\
				&&+ \sum_{l\neq i} G^g\left(\xi_l, \xi_i\right) 	\int_{U_{2r_0}(\xi_l)} \mathcal{K}\Phi d v_g +o(1)
				\\
				&=& -\frac{4}{\varrho(\xi_i)}\log \rho_i \int_{U_{2r_0}\left(\xi_i\right)} \mathcal{K}\Phi dv_g+\int_{\mathbb{R}_i}\left(-\frac{4}{\varrho(\xi_i)} \log |z|+H^g\left(\xi_i, \xi_i\right)\right) \frac{8}{\left(1+|z|^2\right)^2} \Phi_{i, \infty} d z \\
				&& +\sum_{l\neq i} G^g\left(\xi_l, \xi_i\right) \int_{U_{2r_0}(\xi_l)}  \mathcal{K}\Phi dv_g+ o(1) \\
				&\stackrel{\eqref{est_kphi}}{=} & 
				4a_{i0}+o(1),
			\end{array}
		\end{equation*}
		where we applied $-\int_{\mathbb{R}_i} \frac{8 \log( |z|)(1-|z|^2)}{(1 +|z|^2)^3} dz =\frac {\varrho(\xi_i)}{2};$ and 
		\begin{equation*}
			\begin{array}{ll}
				\left|\int_{\Sigma} G^g\left(x, \xi_i\right) h d v_g\right| &= \left|\left(\int_{\Sigma\setminus U_{\rho_i}(\xi_i)}+ \int_{U_{\rho_i}(\xi_i)} \right)G^g\left(x, \xi_i\right) h d v_g\right| \\
				&	\leq C|\log \rho_i| \int_{\Sigma}|h| d v_g+\frac{\|h\|_*}{\rho_i^2}\left|\int_{U_{\rho_i}\left(\xi_i\right)} G^g\left(x, \xi_i\right) d v_g\right| \\
				&\leq C|\log\rho_i| \|h\|_* + C\|h\|_* (\sup_{U_{\rho_i}(\xi_i)}|H^g(\cdot ,\xi_i)|+|\log \rho_i|+1)\\
				&\leq C|\log \rho_i|\|h\|_*=o(1).
			\end{array}
		\end{equation*}
		We observe that  $\partial_{\nu_g}\Phi= 0$ on $\partial\Sigma$, so the boundary term is vanish.  
		Combining the estimates above, 
		\[\Phi_i(0)=\Phi(\xi_i)= \frac 1 {|\Sigma|_g}\int_{\Sigma} \Phi dv_g + 4a_{i0}+o(1)= \mathfrak{c}_0+ 4a_{i0}+o(1) \]
		as $\rho\rightarrow 0.$
	\end{proof}
	\begin{altproof}{Proposition~\ref{propolin}}
		We observe that there exists a constant $C>0$ such that $\|\Delta_g PZ_{ij}\|_*\leq C$  for any $i=1,2,\cdots, m$ and $j=0,\cdots, \i(\xi_i),$
		and 
		\[ \left\| (\lambda-\lambda_{k,m}) \frac{Ve^W}{\int_{\Sigma} Ve^W dv_g } \left(\phi- \frac{\int_{\Sigma} Ve^W \phi dv_g }{\int_{\Sigma} Ve^W dv_g }\right)\right\|_*=\mathcal{O}( |\lambda-\lambda_{k,m}|\|\phi\|_{\infty}).\]
		Applying Lemma~\ref{lemlin1}, for $\lambda$ close to $\lambda_{k,m}$, we have 
		\begin{equation}\label{eq:phi_inf_h}
			\|\phi\|_{\infty}\leq C |\log \rho|\left( \|h\|_*+|c_0|+ \sum_{i=1}^m \sum_{j=1}^ {\i(\xi_i)} |c_{ij}|\right),
		\end{equation}
		for any solution $\phi$ to the problem~\eqref{linprob}. 
		We apply $PZ_{ij}$ as a test function to \eqref{linprob}. It follows that 
		\begin{equation}\label{eq:etst_PZ_ij}
			\int_{\Sigma} \phi L(PZ_{ij}) dv_g =\int_{\Sigma} h PZ_{ij} dv_g  -c_0 \sum_{i=1}^m \int_{\Sigma}\Delta_g PZ_{i0} Z_{ij}dv_g- \sum_{l=1}^m \sum_{t=1}^{J_{l}} c_{lt}\int_{\Sigma}\Delta_g PZ_{lt}Z_{ij} dv_g.
		\end{equation}
		From Lemma~\ref{lemb3} and Lemma~\ref{lem4}, for any $i,l=1,2,\cdots,m$, we have the following estimates:
		\begin{eqnarray}
			&	\quad	PZ_{ij} =\chi_i Z_{ij}+\mathcal{O}(\rho). \text{ for any } j=1,\cdots,\i(\xi_i); \label{expz}\\ &PZ_{i0}(x)=\chi_{i}(x) \left(Z_{i0}(x) +2\right) +{\mathcal{O}}(\rho^2|\log \rho |)\label{expz0};\\
			&	\quad\int_{\Sigma} Z_{ij} \Delta_g PZ_{lt} dv_g = -\frac{4\varrho(\xi_i)}{3}\delta_{il}\delta_{jt} +\mathcal{O}(\rho)\label{inproz},
		\end{eqnarray}
		for any	$j=0,\cdots, \i(\xi_i), t=0,\cdots, \i(\xi_l).$
		
		Lemma~\ref{lem0} and~\eqref{sumex1} derive that 
		\begin{equation}~\label{zij}
			\frac{\int_{\Sigma} Ve^WPZ_{ij} dv_g}{\int_{\Sigma} Ve^W dv_g }=\mathcal{O}(\rho), \text{for any }j=1,\cdots, \i(\xi_i).
		\end{equation}
		Applying~\eqref{expz},~\eqref{zij} and~\eqref{int13},
		\begin{equation*}
			\begin{array}{lll}
				L_{\lambda_{k,m}}(PZ_{ij})&=& (-\Delta_g	)PZ_{ij}  - \frac{\lambda_{k,m}  Ve^W}{ \int_{\Sigma} Ve^W dv_g }\left(PZ_{ij}-  \frac{\int_{\Sigma} Ve^W PZ_{ij} dv_g }{ \int_{\Sigma} Ve^W dv_g }\right)\\
				&=& \chi_ie^{U_i}Z_{ij}\left( e^{-\varphi_i\circ y_{\xi_i}^{-1}} -1 \right)+\beta \chi_i Z_{ij}+\mathcal{O}(\rho\sum_{i=1}^m \chi_i e^{U_i}+\rho).
			\end{array}
		\end{equation*}
		It follows that 
		\begin{equation}\label{eq:starPZ_ij}
			\begin{array}{lcl}
				\|L(PZ_{ij})\|_*&\leq& \|L_{\lambda_{k,m}}(PZ_{ij})\|_*+C|\lambda-\lambda_{k,m}|\cdot\|PZ_{ij}\|_{\infty}\\
				&\leq& \mathcal{O}(\rho^{1-\kappa/2}+ \rho^2|\log \rho|)=\mathcal{O}(\rho^{1-\kappa/2}). 
			\end{array}
		\end{equation}
		Similarly, \eqref{sumex1} implies that 
		\begin{equation}
			\label{zi0}
			\frac{\int_{\Sigma} Ve^WPZ_{i0} dv_g}{\int_{\Sigma} Ve^W dv_g }=\frac{2\varrho(\xi_i)}{\lambda_{k,m}}+\mathcal{O}(\rho).
		\end{equation}
		Applying~\eqref{expz0},~\eqref{zi0} and~\eqref{int13}, we have 
		\begin{equation*}
			\begin{array}{lll}
				L_{\lambda_{k,m}}(PZ)&=& \sum_{i=1}^m (-\Delta_g+\beta)PZ_{i0}  - \frac{\lambda_{k,m}  Ve^W}{ \int_{\Sigma} Ve^W dv_g }\left(PZ_{ij}-  \frac{\int_{\Sigma} Ve^W PZ_{ij} dv_g }{ \int_{\Sigma} Ve^W dv_g }\right)\\
				&=& \sum_{i=1}^m\left( \chi_ie^{U_i}\left(e^{-\varphi_i\circ y^{-1}_{\xi_i}} Z_{i0}-PZ_{i0} \right)+ \frac{2\varrho(\xi_i)}{\lambda_{k,m}}\sum_{l=1}^m \chi_le^{U_l}\right)  \\
				&&+\beta\sum_{i=1}^m \chi_i (Z_{i0}+2)+ \mathcal{O}(\rho \sum_{i=1}^m \chi_i e^{U_i}+\rho)\\
				&=&\sum_{i=1}^m \chi_ie^{U_i}\left(e^{-\varphi_i\circ y^{-1}_{\xi_i}} Z_{i0}-PZ_{i0}+2 \right)  +\beta\sum_{i=1}^m \chi_i (Z_{i0}+2)\\
				&&+ \mathcal{O}(\rho \sum_{i=1}^m \chi_i e^{U_i}+\rho). 
			\end{array}
		\end{equation*}
		It follows that 
		\begin{equation}\label{eq:starPZ}
			\begin{array}{lcl}
				\|L(PZ\|_*&\leq& \|L_{\lambda_{k,m}}(PZ)\|_*+C|\lambda-\lambda_{k,m}|\cdot\|PZ\|_{\infty}\\
				&\leq& \mathcal{O}(\rho^{1-\kappa/2}+ \rho^2|\log \rho|)=\mathcal{O}(\rho^{1-\kappa/2}). 
			\end{array}
		\end{equation}
		We take $\alpha_*=1-\kappa/2.$
		The equation~\eqref{eq:etst_PZ_ij} combined with~\eqref{inproz},~\eqref{eq:starPZ} and~\eqref{eq:starPZ_ij} deduces that 
		\begin{eqnarray*}
			|c_0|+	\sum_{l=1}^m \sum_{t=1}^{\i(\xi_i)} |c_{ij}|&\leq& C\|h\|_* + \rho^{\alpha_*}\mathcal{O}(\|\phi\|_{\infty})+  \rho \mathcal{O}\left(\sum_{l=1}^m \sum_{t=1}^{\i(\xi_i)} |c_{ij}| \right)\\
			&\stackrel{\eqref{eq:phi_inf_h}}{\leq}& C \|h\|_*+ \rho^{\alpha_*} |\log \rho| \mathcal{O}\left(\sum_{l=1}^m \sum_{t=1}^{\i(\xi_i)} |c_{ij}| \right). 
		\end{eqnarray*}
		Thus we obtain that 
		\begin{equation}~\label{est1}
			\|\phi\|_{\infty}\leq \mathcal{O}\left(|\log \rho| \|h\|_*\right) \text{ and } |c_0|+\sum_{l=1}^m \sum_{t=1}^{\i(\xi_i)} |c_{ij}|\leq \mathcal{O}(\|h\|_*).
		\end{equation}
		We define $\mathrm{H}$ as a subspace of $\overline{\mathrm{H}}^1$ which is orthogonal to  $\la PZ, PZ_{ij}: i=1,\cdots,m, j=1,\cdots, \i(\xi_i)\ra$, i.e. 
		\[\mathrm{H}:=\left\{\begin{array}{ll}
		\phi\in \overline{\mathrm{H}}^1:& \int_{\Sigma} \phi\Delta_g PZ_{ij} dv_g = 0 \text{ for any } i=1,2,\cdots,m, j=1,\cdots, \i(\xi_i);\\ &\int_{\Sigma} \phi\Delta_g PZ dv_g = 0 
		\end{array} \right\}.\]
		To solve~\eqref{linprob} is equivalent to find $\phi\in \mathrm{H}$ such that 
		\[ \int_{\Sigma}\la\nabla \phi, \nabla\psi\ra _g dv_g +\beta \int_{\Sigma} \phi \psi dv_g = \int_{\Sigma} \left( \frac{\lambda Ve^W }{ \int_{\Sigma} Ve^W dv_g  }\left(\phi-   \frac{\int_{\Sigma} Ve^W\phi dv_g  }{ \int_{\Sigma} Ve^W dv_g  } \right)+h \right)\psi dv_g,   \]
		for any $\psi\in \mathrm{H}.$
		The prior estimate~\eqref{est1} implies that 
		for any $h=0$,  the solution to the problem~\eqref{linprob} must be trivial, i.e. $\phi=0$, $c_0$ and $c_{ij}=0$ (for any $i=1,\cdots,m$ and $j=1,\cdots, \i(\xi_i)$). 
		By Riesz's representation formula and Fredholm's alternative,
		there is a  unique solution $T(h):=\phi \in \overline{\mathrm{H}}^1\text{
			and }\tilde{c}\in \RR^{m+k+1}$ solving the problem~\eqref{linprob}. 
		Then the linear operator
		$T:$ \[\left\{h\in L^{\infty}: \int_{\Sigma} h dv_g \text{ and } \|h\|_*<+\infty\right\} \rightarrow \left\{ \phi \in L^{\infty}(\Sigma): \int_{\Sigma}\phi dv_g \text{ and } \|\phi\|_{\infty}<+\infty\right\},\]
		is continuous. 
		
		Next, we will show that the operator $T$ and coefficients $c_{ij}$ are differentiable concerning $\xi$ and $\rho$. 
		Denote that $\iota=\rho \text{ or } (\xi_i)_j$ for any $i=1,2,\cdots,m,j=1,\cdots, \i(\xi_i)$ and $X= \partial_{\iota } \phi$. 
		Differentiate \eqref{linprob} with $\iota$, formally, 
		\[ L(X)=   L_0-d_0\Delta_gPZ  -\sum_{i=1}^m \sum_{j=1}^{\i(\xi_i)} d_{ij} \Delta_g PZ_{ij},\]
		where \begin{eqnarray*}
			L_0&= & -\partial_{\iota}\left(\frac{\lambda V e^W}{\int_{\Sigma} V e^W d v_g}\right) \phi+\partial_{\iota}\left(\frac{\lambda V e^W}{\left(\int_{\Sigma} V e^W d v_g\right)^2}\right) \int_{\Sigma} V e^W \phi d v_g \\
			& &+\frac{\lambda V e^W}{\left(\int_{\Sigma} V e^W d v_g\right)^2} \int_{\Sigma} V e^W \partial_{\iota} W \phi d v_g-c_0\partial_{\iota}\left(\Delta_g P Z\right) -\sum_{i=1}^m \sum_{j=1}^{\i(\xi_i)}  c_{i j} \partial_{\iota}\left(\Delta_g P Z_{i j}\right),
		\end{eqnarray*}   
		$d_0= \partial_{\iota}c_0$ and $d_{ij}= \partial_{\iota} c_{ij}.$ In addition, the orthogonal properties imply that 
		\[ \int_{\Sigma} X \Delta_g P Z d v_g=-\int_{\Sigma} \phi \partial_{\iota}\left(\Delta_g P Z\right) d v_g\text{ and }\int_{\Sigma} X \Delta_g P Z_{i j} d v_g=-\int_{\Sigma} \phi \partial_{\iota}\left(\Delta_g P Z_{i j}\right) d v_g. \]
		Consider $Y=X+b_0PZ+ \sum_{i=1}^m \sum_{j=1}^{\i(\xi_i)} b_{ij} PZ_{ij}$, where  $b_0\text{ and }b_{ij}$ are chosen to satisfy the orthogonal properties,  
		\[ \int_{\Sigma} Y \Delta_g PZd v_g=0\text{ and }\int_{\Sigma} Y \Delta_g P Z_{i j} d v_g=0.\]
		Since $\|\partial_{\iota}(\Delta_gPZ\|_*+\sum_{i=1}^m \sum_{j=1}^{\i(\xi_i)}\|\partial_{\iota}(\Delta_gPZ_{ij})\|_*\leq \mathcal{O}\left(\frac 1 {\rho}\right)$,  we have $|b_0|+\sum_{i=1}^m \sum_{j=1}^{\i(\xi_i)} |b_{ij}|\leq  \frac C{\rho}\|\phi\|_{\infty}\leq C \frac{|\log \rho|}{\rho} \|h\|_*$, by~\eqref{est1}.
		Then $Y$ solves the problem 
		\begin{equation}
			\begin{cases}
				L(Y)=f -d_0\Delta_gPZ-\sum_{i=1}^m \sum_{j=1}^{\i(\xi_i)} d_{ij} \Delta_g PZ_{ij} & \text{ in }\Sigma\\
				\partial_{ \nu_g } Y=0 & \text{ on } \partial \Sigma\\
				\int_{\Sigma} Y \Delta_g P Z_{i j} d v_g=0, 	\int_{\Sigma} Y \Delta_g P Z d v_g=0 &\text{ for any } i=1,\cdots,m,j=1,\cdots, \i(\xi_i)
			\end{cases},
		\end{equation}
		where $f=L_0 +b_0L(PZ)+ \sum_{i=1}^m \sum_{j=1}^{\i(\xi_i)} b_{ij}L(PZ_{ij}).$
		{ We observe that $\|\mathcal{K}\|_*\leq C$, $\|\partial_{\iota} W\|_{\infty}+ \|\partial_{\iota}\mathcal{K}\|_*\leq \frac {C}{\rho}$ and  $\|\partial_{\iota}\left( \frac{\mathcal{K}}{\int_{\Sigma} Ve^W dv_g }\right)\|_*\leq \frac{C}{\rho}\left(\int_{\Sigma} Ve^W dv_g \right)^{-1}$. } Then 
		\begin{eqnarray*}
			\|f\|_*&\leq& \|L_0\|_*+ |b_0|\|L(PZ)\|_*+ \sum_{i=1}^m \sum_{j=1}^{\i(\xi_i)} |b_{ij}|\|L(PZ_{ij})\|_*\\
			&\leq&  \frac{C}{\rho}\left(\|\phi\|_{\infty}+|c_0|+\sum_{i=1}^m \sum_{j=1}^{\i(\xi_i)}|c_{ij}|\right)\\
			&\stackrel{\eqref{est1}}{\leq}& C \frac{|\log \rho|}{\rho}\|h\|_*.
		\end{eqnarray*}
		Applying~\eqref{est1} again, we have 
		\begin{eqnarray*}
			\|\partial_{\iota} \phi\|_{\infty}&\leq& C|\log \rho| \|f\|_* +|b_0|\|PZ\|_{\infty}  +\sum_{i=1}^m \sum_{j=1}^{\i(\xi_i)} |b_{ij}|\|PZ_{ij}\|_{\infty} 
			\leq C\frac{|\log \rho|^2}{\rho} \|h\|_*. 
		\end{eqnarray*}
		We differentiate $\partial_{\rho}\phi$ with respect to  $\rho$. By the same method to get estimate for $\partial_{\rho}\phi$, we can derive that there exists a constant $C>0$ such that 
		\[\|\partial_{\rho}^2\phi\|_{\infty} \leq C \frac{|\log \rho|^3}{\rho^2}\|h\|_{*}\text{ and }\|\nabla_{\xi}\partial_{\rho}\phi\|_{\infty} \leq C \frac{|\log \rho|^3}{\rho^2}\|h\|_{*}.\]
	\end{altproof}
	\par 
	\subsection{The non-linear problem}
	We establish that the linear operator $L$ exhibits partial invertibility, as shown in Proposition~\ref{propolin}. This enables us to proceed with the analysis of the corresponding non-linear problem.
	\begin{proposition}~\label{nonlinear}
		For any $\delta>0$ sufficiently small, 
		there exists $\rho_0>0$ such that for any $\rho\in (0, \rho_0), \xi \in M_{\xi^*,\delta}$, the problem
		\begin{equation}
			~\label{nonlinearprob}
			\begin{cases}L(\phi)=[R+N(\phi)]-c_0\Delta_gPZ-\sum_{i=1}^m \sum_{j=1}^{\i(\xi_i)} c_{i j} \Delta_g P Z_{i j} & \text { in } \intsigma \\ 
				\partial_{ \nu_g }\phi=0 & \text{ on } \partial \Sigma\\
				\int_{\Sigma} \phi \Delta_g P Z_{i j} d v_g=0, \int_{\Sigma}\phi \Delta_g PZd v_g=0  & \forall i=1, \cdots, m, j=1,\cdots,\i(\xi_i)\end{cases}
		\end{equation}
		has a unique solution $\phi\in \overline{\mathrm{H}}^1\cap W^{2,2}(\Sigma)$ and   $ \tilde{c}\in \RR^{m+k+1}$ where $i=1,2,\cdots,m, j=1,\cdots, \i(\xi_i).$  Moreover, 
		the map $(\rho,\xi)\mapsto(\phi(\rho,\xi), \tilde{c}(\rho,\xi))$ is  $C^1$-differentiable with respect to  $\xi$ and $C^2$-differentiable with respect to  $\rho$  satisfying 
		\begin{eqnarray}~\label{est_phi}
			\|\phi\|_{\infty} \leq C \left(\rho|\log \rho||\nabla \cF_{k,m}(\xi)|_g+\rho^{2-\kappa}|\log\rho|^2\right),\\
			\|\partial_{\rho}\phi\|_{\infty}\leq C \left(|\log\rho|^2 |\nabla\cF_{k,m}(\xi)|_g+\rho^{1-\kappa}|\log\rho|^3\right), \label{non1est_rho}\\ 
			\|\nabla_{\xi}\phi\|_{\infty} \leq C \left(|\log\rho|^2 |\nabla\cF_{k,m}(\xi)|_g+\rho^{1-\kappa}|\log\rho|^3\right), \label{non1est}\\
			\|\partial^2_{\rho}\phi\|\leq C\left(\frac{|\log\rho|^3}{\rho} |\nabla\cF_{k,m}(\xi)|_g+\rho^{-\kappa}|\log\rho|^4\right), \label{non2_rho}\\  \|\partial_{\xi}\partial_{\rho}\phi\|\leq C\left(\frac{|\log\rho|^3}{\rho} |\nabla\cF_{k,m}(\xi)|_g+\rho^{-\kappa}|\log\rho|^4\right) , \label{non2_mix}	
		\end{eqnarray}
		for some constant $C>0$ and $\kappa\in (0,\alpha_0).$
	\end{proposition}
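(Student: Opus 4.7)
The plan is to recast \eqref{nonlinearprob} as a fixed-point problem for the operator $T$ constructed in Proposition~\ref{propolin}, and then apply the contraction mapping principle on a suitably chosen ball. Let $T$ be the right inverse of the linear projected problem, so that \eqref{nonlinearprob} is equivalent to finding $\phi$ with
\[
\phi=\mathcal{T}(\phi):=T\bigl(R+N(\phi)\bigr),
\]
where $R$ and $N(\phi)$ are as in \eqref{linear}. By construction $\int_\Sigma (R+N(\phi))dv_g=0$ whenever $\phi\in\overline{\mathrm{H}}^1$, so $T$ applies.

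The first step is to control the nonlinear term in the weighted norm. Using the Taylor expansion $e^\phi-1-\phi=\mathcal{O}(\phi^2 e^{|\phi|})$ together with the pointwise form of $Ve^W/\int_\Sigma Ve^W dv_g$ from \eqref{sumex1}, one shows
\[
\|N(\phi)\|_*\le C\rho^{-\kappa}\|\phi\|_\infty^2,\qquad \|N(\phi_1)-N(\phi_2)\|_*\le C\rho^{-\kappa}\bigl(\|\phi_1\|_\infty+\|\phi_2\|_\infty\bigr)\|\phi_1-\phi_2\|_\infty,
\]
provided $\|\phi_j\|_\infty\le 1$. Combining this with Lemma~\ref{est_R} and the estimate $\|T(h)\|_\infty\le C|\log\rho|\|h\|_*$ from \eqref{0rest}, we obtain on the ball
\[
\mathcal{B}_\rho:=\Bigl\{\phi\in \overline{\mathrm{H}}^1\cap L^\infty(\Sigma): \|\phi\|_\infty\le C^*\bigl(\rho|\log\rho||\nabla \cF_{k,m}(\xi)|_g+\rho^{2-\kappa}|\log\rho|^2\bigr)\Bigr\}
\]
(for a sufficiently large $C^*$) the bounds $\|\mathcal{T}(\phi)\|_\infty\le$ the same radius, and $\|\mathcal{T}(\phi_1)-\mathcal{T}(\phi_2)\|_\infty\le \frac12\|\phi_1-\phi_2\|_\infty$ for $\rho\le\rho_0$ small. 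Banach's fixed-point theorem then yields a unique $\phi=\phi(\rho,\xi)\in\mathcal{B}_\rho$, which establishes \eqref{est_phi}, while the associated coefficients $\tilde c(\rho,\xi)\in\mathbb{R}^{m+k+1}$ come from applying $T$.

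To obtain the differentiability statements and the derivative estimates \eqref{non1est_rho}--\eqref{non2_mix}, we differentiate the fixed-point identity $\phi=T(R+N(\phi))$ with respect to the parameter $\iota\in\{\rho,(\xi_i)_j\}$. This gives a linear equation of the same form as \eqref{linprob} for $\partial_\iota\phi$, with right-hand side involving $\partial_\iota R$, $\partial_\iota N(\phi)$, and terms where the $\iota$-dependence falls on the projectors $PZ$, $PZ_{ij}$ hidden in $T$. The derivative estimates in \eqref{est_xi1} of Proposition~\ref{propolin}, together with $\|\partial_\iota R\|_*=\mathcal{O}(|\nabla \cF_{k,m}(\xi)|_g+\rho^{1-\kappa}|\log\rho|)$ and the pointwise smallness of $\phi$, then yield \eqref{non1est_rho} and \eqref{non1est}; a second differentiation and bootstrapping with these bounds produces \eqref{non2_rho} and \eqref{non2_mix}.

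The main technical obstacle is the careful estimate of $N(\phi)$ and its first and second derivatives in the weighted $\|\cdot\|_*$ norm, since each $\iota$-derivative introduces a factor $\mathcal{O}(\rho^{-1})$ through $\partial_\iota W$ or $\partial_\iota (Ve^W/\int Ve^W dv_g)$, and these factors must be balanced against the smallness of $\phi$ inherited from Lemma~\ref{est_R}. Once this bookkeeping is performed—tracking where $|\log\rho|$ versus $\rho^{-\kappa}$ factors appear—the proof reduces to the contraction argument outlined above and a routine application of the implicit function theorem in the parameter $(\rho,\xi)\in(0,\rho_0)\times M_{\xi^*,\delta}$.
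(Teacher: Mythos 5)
Your proposal follows essentially the same route as the paper: the contraction-mapping argument for $\mathcal{A}(\phi)=T(R+N(\phi))$ on a ball of radius comparable to \eqref{est_phi}, then differentiation of the fixed-point identity using the derivative bounds of Proposition~\ref{propolin} together with estimates for $\partial_\iota R$ and $\partial_\iota N(\phi)$, and the implicit function theorem for the smooth dependence on $(\rho,\xi)$. The only deviation is the extra factor $\rho^{-\kappa}$ in your bounds for $N(\phi)$; since $\|\mathcal{K}\|_*\le C$ the paper obtains $\|N(\phi)\|_*\le C\|\phi\|_\infty^2$ without it, though your weaker bound still closes the contraction because $\kappa<1$.
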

	\begin{proof}
		Following the approach presented in Lemma 4.1 of  \cite{del_pino_singular_2005}, we aim to solve the problem~\eqref{nonlinearprob} by finding a fixed point for the operator $\mathcal{A}(\phi):= T(R+N(\phi))$ on the space
		\[ \mathfrak{F}_{\varpi}:=\left\{ \phi\in C(\Sigma): \int_{\Sigma}\phi dv_g=0,  \|\phi\|_{\infty}\leq \varpi \left(\rho|\log \rho||\nabla \cF_{k,m}(\xi)|_g+\rho^{2-\kappa}|\log\rho|^2 \right)\right\},\]
		for some constant $\varpi>0.$
		For any $\phi\in \mathfrak{F}_{\varpi}$, we can utilize Proposition~\ref{propolin} to deduce that $$\|\mathcal{A}(\phi)\|_{\infty}\leq \|T\|( \|N(\phi)\|_*+ \|R\|_*)\leq C|\log \rho|(\|N(\phi)\|_*+ \|R\|_*).$$
		Given $\|\mathcal{K}\|_*\leq C$ and $|e^s-1|\leq e^{|s|}|s|$, 
		$\|N(\phi)\|_*\leq C\|\phi\|_{\infty}^2$. Consequently, we obtain
		\[ \|\mathcal{A}(\phi)\|_{\infty}\leq  C_0\left(|\log \rho| \|\phi\|_{\infty}^2+\rho|\log \rho| |\nabla \cF_{k,m}(\xi)|_g+\rho^{2-\kappa}|\log\rho|^2\right),\]
		combined with Lemma~\ref{est_R}. 
		
		Take $\varpi=2C_0$, then $ \mathcal{A}(\phi)\in \mathfrak{F}_{\varpi}$, for any $\phi\in \mathfrak{F}_{\varpi}.$
		
		Applying Lemma~\ref{lemb1}, \eqref{eq:B_chi1} and~\eqref{eq:veW_detail}, for any $\phi_1,\phi_2\in \mathfrak{F}_{\varpi}$, denote $\phi_{\theta}=\theta\phi_1+(1-\theta)\phi_2$, for same $\theta\in (0,1)$. 
		\[
		\begin{aligned}
			&\left \| \frac{\lambda V e^{W+\phi_1} }{\int_{\Sigma} V e^{W+\phi_1} d v_g}-\frac{\lambda V e^{W+\phi_2} }{\int_{\Sigma} V e^{W+\phi_2} d v_g}
			- \frac{\lambda V e^{W}(\phi_1-\phi_2)}{\int_{\Sigma} V e^{W} d v_g}+ 
			\frac{\lambda V e^{W}  \int_{\Sigma} V e^{W} (\phi_1-\phi_2) d v_g}{\left(\int_{\Sigma} V e^{W} d v_g\right)^2}\right\|_*\\
			&\leq  \left \|  \frac{\lambda V e^{W} e^{\phi_{\theta}} (\phi_1-\phi_2)}{\int_{\Sigma} V e^{W} e^{\phi_{\theta}} d v_g}-\frac{\lambda V e^{W} (\phi_1-\phi_2)}{\int_{\Sigma} V e^{W} d v_g} \right\|_*\\
			&+ \left \|  - \frac{\lambda V e^{W}  \int_{\Sigma} V e^{W} (\phi_1-\phi_2) d v_g}{\left(\int_{\Sigma} V e^{W} d v_g\right)^2} +
			\frac{\lambda V e^{W} e^{\phi_{\theta}}  \int_{\Sigma} V e^{W} e^{\phi_{\theta}} (\phi_1-\phi_2) d v_g}{\left(\int_{\Sigma} V e^{W} e^{\phi_{\theta}} d v_g\right)^2}   \right\|_*\\
			&\leq C (\|\phi_1\|_{\infty}+ \|\phi_2\|_{\infty} ) \|\phi_1-\phi_2\|_{\infty}.
		\end{aligned}
		\]
		It follows that 
		\begin{eqnarray}~\label{est_N_s}
			\|N(\phi_1)-N(\phi_2)\|_*\leq C_1(\|\phi_1\|_{\infty}+ \|\phi_2\|_{\infty}) \|\phi_1-\phi_2\|_{\infty}. 
		\end{eqnarray}
		Let $\rho'_0>0$ be sufficiently small such that $$2C_1\varpi \left(\rho|\log \rho|^2|\nabla \cF_{k,m}(\xi)|_g+\rho^{2-\kappa}|\log\rho|^3  \right)\leq \frac 1 2,$$ for any $\rho \in (0,\rho'_0).$
		By combining the estimate of $T$ from Proposition~\ref{propolin}, we can deduce that
		\begin{equation}
			\|\mathcal{A}(\phi_1)-\mathcal{A}(\phi_2)\|_{\infty}
			\leq C_1|\log \rho|(\|\phi_1\|_{\infty}+ \|\phi_2\|_{\infty}) \|\phi_1-\phi_2\|_{\infty}\leq \frac 1 2 \|\phi_1-\phi_2\|_{\infty},
		\end{equation}
		Hence, $\mathcal{A}$ is a contraction map from $\mathfrak{F}_{\varpi}$ into itself. There exists a unique fixed point $\phi^{\rho}_{\xi}\in \mathfrak{F}_{\varpi}$. 
		By the $L^p$ theory, we can conclude that  $\phi^{\rho}_{\xi} \in \overline{\mathrm{H}}^1\cap W^{2,2}(\Sigma
		)$. Therefore, 
		$\phi^{\rho}_{\xi}$ solves~\eqref{nonlinearprob}. 
		Consider the map $F(\xi,\phi,\tilde{c}):= \mathcal{A}(\phi)-\phi$. There is a small constant $\rho_0\in (0,\rho'_0)$ which only depends on $\delta$ such that for any fixed $\rho\in (0,\rho_0)$ and $\xi\in M_{\xi^*,\delta}$, we have 
		$F(\xi,\phi^{\rho}_{\xi},\tilde{c}(\rho,\xi))=0$. Applying the  implicit function theorem,, 
		$(\rho,\xi) \mapsto (\phi^{\rho}_{\xi},\tilde{c}(\rho,\xi))$ is $C^1$-mapping with respect to  $\xi$ and $C^2$-mapping with respect to  $\rho$.
		Differentiating $F(\xi,\phi,\tilde{c})=0$ with respect to  $\iota=\rho \text{ or }(\xi_i)_j (\text{for }i=1,2,\cdots,m, j=1,\cdots,\i(\xi_i))$ 
		\[ \partial_{\iota}\phi= (\partial_{\iota} T) ( R+N(\phi))+ T(\partial_{\iota}R+\partial_{\iota} N(\phi)).\]
		By Lemma~\ref{est_R} and \eqref{est_N_s}, 
		\begin{eqnarray*}
			\|\partial_{\iota} T(R+N(\phi))\|_{\infty}
			&\leq& C\frac{|\log \rho|^2}{\rho}( \|R\|_*+ \|N(\phi)\|_*)\\
			&=& C\left(|\log \rho|^2|\nabla\cF_{k,m}(\xi)|_g+ \rho^{1-\kappa}|\log \rho|^3\right).
		\end{eqnarray*}
		\begin{eqnarray*}
			\partial_{\iota} N(\phi)&= & N(\phi) \partial_{\iota} W+\lambda\left(\frac{V e^{W+\phi}}{\int_{\Sigma} V e^{W+\phi} d v_g}-\frac{V e^W}{\int_{\Sigma} V e^W d v_g}\right) \partial_{\iota} \phi \\
			& &-\lambda\left(\frac{V e^{W+\phi} \int_{\Sigma} V e^{W+\phi} \partial_{\iota} W d v_g}{\left(\int_{\Sigma} V e^{W+\phi} d v_g\right)^2}-\frac{V e^W \int_{\Sigma} V e^W \partial_{\iota} W d v_g}{\left(\int_{\Sigma} V e^W d v_g\right)^2}\right. \\
			&& -\frac{V e^W \phi \int_{\Sigma} V e^W \partial_{\iota} W d v_g}{\left(\int_{\Sigma} V e^W d v_g\right)^2}-\frac{V e^W \int_{\Sigma} V e^W \partial_{\iota} W \phi d v_g}{\left(\int_{\Sigma} V e^W d v_g\right)^2} \\
			&& \left.+2 \frac{V e^W\left(\int_{\Sigma} V e^W \partial_{\iota} W d v_g\right)\left(\int_{\Sigma} V e^W \phi d v_g\right)}{\left(\int_{\Sigma} V e^W d v_g\right)^3}\right)\\
			&&-\lambda\left(\frac{V e^{W+\phi} \int_{\Sigma} V e^{W+\phi} \partial_{\iota} \phi d v_g}{\left(\int_{\Sigma} V e^{W+\phi} d v_g\right)^2}-\frac{V e^W \int_{\Sigma} V e^W \partial_{\iota} \phi d v_g}{\left(\int_{\Sigma} V e^W d v_g\right)^2}\right)
		\end{eqnarray*}
		By Lemma~\ref{lem:expansion1st} and Lemma~\ref{expansion_rho},  $\|\partial_{\iota } W\|_{\infty}\leq C/\rho.$
		\begin{eqnarray*}
			\|\partial_{\iota} N(\phi)\|_*&\leq& C(\|\partial_{\iota}W\|_{\infty}\|\phi\|_{\infty}^2+ \|\partial_{\iota}\phi\|_{\infty}\|\phi\|_{\infty})\\
			&\leq & \mathcal{O}\left( \rho|\log\rho|^2|\nabla\cF_{k,m}(\xi)|^2_g +\rho^{3-2\kappa}|\log\rho|^4\right) +o\left( \frac{\|\partial_{\iota}\phi\|_{\infty}}{|\log \rho|}\right). 
		\end{eqnarray*}
		Lemma~\ref{lem0} yields that
		\begin{eqnarray*}
			\partial_{(\xi_i)_j}\left( \int_{\Sigma}\chi_l e^{-\varphi_q\circ y_{\xi_l}^{-1}} e^{U_q} dv_g \right) 
			&=&-\varrho(\xi_l) \frac{ r_0^2\rho_l^2}{(r_0^2+\rho_l^2)^2} \partial_{(\xi_i)_j}\log \tau_l + \mathcal{O}(\rho^2)=\mathcal{O}(\rho^2),
		\end{eqnarray*}
		and 
		\begin{eqnarray*}
			\partial_{\rho}\left( \int_{\Sigma}\chi_l e^{-\varphi_q\circ y_{\xi_l}^{-1}} e^{U_q} dv_g \right) 
			&=&-\varrho(\xi_l) \frac{ 2r_0^2\tau_l\rho}{(r_0^2+\rho_l^2)^2} + \mathcal{O}(\rho)=\mathcal{O}(\rho),
		\end{eqnarray*}
		for any $q=1,\cdots,m.$
		We observe that 
		$e^{-\varphi_i\circ y_{\xi_i}^{-1}(x)}=1+ \mathcal{O}( |y_{\xi_i}(x)|^2)$ and 
		$\partial_{\iota}( \chi_i e^{-\varphi_i\circ y_{\xi_i}^{-1}(x)})= \mathcal{O}( |y_{\xi_i}(x)|).$
		We deduce that 
		\begin{eqnarray}\label{eq:delta_W}
			-\Delta_g \partial_{\iota} W= \sum_{i=1}^m \chi_i e^{U_i} \partial_{\iota} U_i +\mathcal{O}(\rho^{1-\kappa}),
		\end{eqnarray} 
		where $\mathcal{O}$ is  estimated in $\|\cdot\|_*$-norm. 
		We observe that 	\begin{equation*}
			\begin{array}{lcl}
				\partial_{\iota}R &=&\Delta_g \partial_{\iota}W + \frac{\lambda Ve^W}{\int_{\Sigma}Ve^W dv_g} \left(\partial_{\iota}W- \frac{\int_{\Sigma} Ve^W \partial_{\iota}W dv_g}{\int_{\Sigma} Ve^W dv_g} \right).
			\end{array}
		\end{equation*}
		By~\eqref{1w},~\eqref{eq:int_ve_pa_xi_normal},~\eqref{sumex1} and~\eqref{eq:delta_W}, we have  for $\iota=(\xi_i)_j$
		\begin{equation}
			\|\partial_{\iota}R\|_{*}\leq \mathcal{O}\left( \sum_{i=1}^m |\nabla\log (\tau_i\circ y_{\xi_i}^{-1})(0)|+\rho^{1-\kappa} \right). 
		\end{equation}
		By~\eqref{w1strho},~\eqref{eq:int_vew_pa_rho_normal},~\eqref{sumex1} and~\eqref{eq:delta_W}, we have  for $\iota=\rho$
		\begin{equation}
			\|\partial_{\iota}R\|_{*}\leq \mathcal{O}\left( \sum_{i=1}^m |\nabla\log (\tau_i\circ y_{\xi_i}^{-1})(0)|+\rho^{1-\kappa} |\log\rho|\right)
		\end{equation}
		Combining all the estimates above, 
		\[ \|\partial_{\iota}\phi\|_{\infty}=
		\mathcal{O}\left(|\log\rho|^2 |\nabla\cF_{k,m}(\xi)|_g+\rho^{1-\kappa}|\log\rho|^3\right). \]
		Thus the estimates~\eqref{non1est} and~\eqref{non1est_rho} hold. Similarly, we can obtain 
		\[ \|\partial_{\rho}^2\phi\|_{\infty}=\mathcal{ O}\left(\frac{|\log\rho|^3}{\rho} |\nabla\cF_{k,m}(\xi)|_g+\rho^{-\kappa}|\log\rho|^4\right) \]
		and 
		\[ \|\partial_{\rho}^2\phi\|_{\infty}=\mathcal{ O}\left(\frac{|\log\rho|^3}{\rho} |\nabla\cF_{k,m}(\xi)|_g+\rho^{-\kappa}|\log\rho|^4\right). \]
	\end{proof}
	\section{Variational reduction and the expansion of the energy functional}~\label{expansion}
	The function $W+\phi^{\rho}_{\xi}$ is a solution of~\eqref{linear} if $c_0 \text{ and }c_{ij}=0$ for any $i=1,2,\cdots, m$ and $j=1,\cdots, \i(\xi_i)$. The following proposition shows that 
	it is equivalent to find critical points of the reduced energy functional $E_{\lambda}(\rho,\xi):= J_{\lambda}(W+\phi^{\rho}_{\xi}).$
	\begin{proposition}~~\label{pro_equi}
		There exists $\rho_0>0$ such that $(\rho,\xi) \in (0,\rho_0)\times( \intsigma^k\times(\partial\Sigma)^{m-k}\setminus \Delta)$ is a critical point of $E_{\lambda}$  if and only if $u=W+\phi^{\rho}_{\xi}$ is a solution of~\eqref{eq:main_eq} with respect to  $\lambda$.
	\end{proposition}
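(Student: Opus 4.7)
The direction ``$(W+\phi^\rho_\xi)$ solves \eqref{eq:main_eq} $\Longrightarrow$ $(\rho,\xi)$ is critical for $E_\lambda$'' is essentially automatic: by Proposition~\ref{nonlinear}, $\phi^\rho_\xi$ already satisfies the orthogonality constraints $\int_\Sigma \phi^\rho_\xi\,\Delta_g PZ\,dv_g=0$ and $\int_\Sigma \phi^\rho_\xi\,\Delta_g PZ_{ij}\,dv_g=0$, so differentiating $E_\lambda=J_\lambda(W+\phi^\rho_\xi)$ in $\rho$ or $(\xi_i)_j$ and using that $W+\phi^\rho_\xi$ is a critical point of $J_\lambda$ on the full space $\overline{\mathrm{H}}^1$ kills the derivatives. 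The real content is in the converse direction: assuming $\nabla_{(\rho,\xi)}E_\lambda(\rho,\xi)=0$, one must force $c_0=0$ and $c_{ij}=0$ for all $i,j$.

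\textbf{Setting up the linear system.} The plan is to differentiate $E_\lambda(\rho,\xi)=J_\lambda(W+\phi^\rho_\xi)$ in each parameter. Using \eqref{nonlinearprob} and $\int_\Sigma \partial_\iota(W+\phi^\rho_\xi)\,dv_g=0$ for $\iota\in\{\rho,(\xi_i)_j\}$, one obtains
\begin{equation*}
\partial_\iota E_\lambda = -c_0\int_\Sigma \Delta_g PZ\cdot \partial_\iota(W+\phi^\rho_\xi)\,dv_g -\sum_{i,j}c_{ij}\int_\Sigma \Delta_g PZ_{ij}\cdot \partial_\iota(W+\phi^\rho_\xi)\,dv_g.
\end{equation*}
Hence $\nabla_{(\rho,\xi)}E_\lambda=0$ is equivalent to a linear homogeneous system $M(\rho,\xi)\tilde c=0$, where $M$ is a square matrix of size $m+k+1$ whose entries are the pairings $\int_\Sigma \Delta_g PZ_{\ast}\cdot\partial_\iota(W+\phi^\rho_\xi)\,dv_g$.

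\textbf{Inverting the linear system.} The key step is to show that $M(\rho,\xi)$ is invertible for $\rho$ small and $\xi\in M_{\xi^*,\delta}$. For this I would combine the expansions \eqref{1w}, \eqref{w1strho} for $\partial_\iota W$ with \eqref{expz}, \eqref{expz0}, \eqref{inproz} for $PZ$ and $PZ_{ij}$, and the decay estimates \eqref{non1est}--\eqref{non2_mix} for $\partial_\iota\phi^\rho_\xi$ from Proposition~\ref{nonlinear}. The heuristic is that, after the natural rescaling $\partial_\rho W \sim \rho^{-1}\sum_i z_{0}(y_{\xi_i}/\rho_i)$ and $\partial_{(\xi_i)_j}W\sim \rho_i^{-1} z_{j}(y_{\xi_i}/\rho_i)$, the matrix $M$ is (after the appropriate diagonal rescaling) asymptotically block diagonal with nonzero diagonal entries proportional to $\int_{\mathbb R_i}\frac{8}{(1+|y|^2)^2}z_j^2\,dy$, times the universal constant $-\tfrac{4\varrho(\xi_i)}{3}$ from \eqref{inproz}. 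The off-diagonal entries, and the error coming from $\partial_\iota\phi^\rho_\xi$, are $o(1)$ as $\rho\to 0$.

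\textbf{Main obstacle.} The delicate point is controlling the $\phi^\rho_\xi$-contributions and the cross terms between $PZ$ (the dilation direction, coming from $z_0$) and $PZ_{ij}$ (the translation directions). The bounds \eqref{non1est}--\eqref{non2_mix} are borderline logarithmic, so one must be careful that the leading $\rho^{-1}$-scaling of $\partial_\iota W$ is not absorbed by the $\|\partial_\iota\phi^\rho_\xi\|_\infty$ errors; this is exactly why the weighted norm $\|\cdot\|_*$ was introduced. Once this is verified, $M(\rho,\xi)$ is a perturbation of an invertible diagonal matrix, hence invertible, giving $\tilde c=0$. Then $W+\phi^\rho_\xi$ solves the full equation \eqref{eq:main_eq}, concluding the equivalence.
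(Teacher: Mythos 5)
Your proposal follows essentially the same route as the paper: both directions are handled identically, and the key step is the same — writing $\nabla_{(\rho,\xi)}E_\lambda=0$ as a homogeneous linear system in $\tilde c$ whose matrix is, after using the expansions of $\partial_\iota W$ in terms of $PZ$, $PZ_{ij}$ and the pairing $\int_\Sigma Z_{ij}\Delta_g PZ_{lt}\,dv_g=-\tfrac{4\varrho(\xi_i)}{3}\delta_{il}\delta_{jt}+\mathcal O(\rho)$, a perturbation of an invertible diagonal matrix with leading entries of order $\rho^{-1}$, forcing $\tilde c=0$ for small $\rho$. The only cosmetic difference is that the paper controls the $\partial_\iota\phi^\rho_\xi$ contribution by differentiating the orthogonality constraints (so that only $\|\phi\|_\infty$ and $\|\partial_\iota PZ_{ij}\|\lesssim\rho^{-1}$ are needed), whereas you invoke the derivative bounds from Proposition~\ref{nonlinear}; both yield errors of order $o(\rho^{-1})$, so the argument goes through either way.
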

	\begin{proof}
		Assume that $(\xi,\rho)$ is a critical point of the reduced map ${E}_{\lambda}$. There exists $\delta>0$ such that  $\xi$ is in the interior of $ M_{\xi^*,\delta}$ for some $\xi^*\in \intsigma^k\times(\partial\Sigma)^{m-k}\setminus \Delta $ and $\delta>0$. Then for any $\iota= (\xi_{i'})_{j'}$ for $i'=1,\cdots,m, j'=1,\cdots, \i(\xi_{i'})$ and $\rho$,
		\begin{equation}~\label{r1}
			\begin{array}{lll}
				\partial_{\iota}{E}_{\lambda}(\rho, \xi)&=&\int_{\Sigma} \left(L(\phi)-N(\phi)-R \right) \partial_{\iota}\left( W+\phi\right) dv_g(x)\\
				&=& \int_{\Sigma} (-c_0\Delta_gPZ-\sum_{i=1}^m \sum_{j=1}^{\i(\xi_i)} c_{ij}\Delta_gP Z_{ij} )\partial_{\iota}\left( W+\phi\right) dv_g =0.
			\end{array}
		\end{equation}
		Note that the partial derivative of $PU_{i}$ with respect to  $(\xi_{i'})_{j'}$ can be expressed as the sum of two terms:
		\begin{eqnarray*}
			\partial_{(\xi_{i'})_{j'}} PU_{i}= \partial_{(\xi_{i'})_{j'}} PU_{\rho, \xi_i}|_{\rho=\rho_i}+ \frac 1 2 (\rho\partial_{\rho} PU_{\rho, \xi_i})|_{\rho=\rho_i} \partial_{(\xi_{i'})_{j'}}\log \tau_i.
		\end{eqnarray*}
		Lemma~\ref{lem:expansion1st}  combining \eqref{expz} yields that 
		$ PZ_{ij}= \mathcal{O}(\rho)+ \rho_i\partial_{(\xi_i)_j} PU_{\rho, \xi_i}|_{\rho=\rho_i}. $
		Applying~\eqref{inproz}, we have  
		\[ \int_{\Sigma} \left(-c_0\Delta_gPZ-\sum_{i=1}^m \sum_{j=1}^{\i(\xi_i)} c_{ij}\Delta_gP Z_{ij} \right)\partial_{(\xi_{i'})_{j'}} W dv_g = \mathcal{O}\left(|c_0|+\sum_{i=1}^m\sum_{j=1}^{\i(\xi_i)}|c_{ij}|\right)+ c_{i'j'} \frac{4\varrho(xi_{i'})} {3\rho_{i'}},\]
		for any $i'=1,\cdots,m$ and $j'=1,\cdots, \i(\xi_{i'})$. Lemma~\ref{expansion_rho} combining \eqref{expz} yields that
		\begin{equation*}
			\begin{array}{lcl}
				\partial_{\rho} PU_{i}&=&\left. \sqrt{\tau_i}\partial_{\rho} PU_{\rho, \xi_i}|_{\rho=\rho_i}=\sqrt{\tau_i}\left(\chi_i\left( \partial_{\rho}U_{\rho,\xi_i}-\frac 2{\rho}\right) -4\rho F_{\xi_i}+\frac 1 {\rho}h^1_{\rho,\xi_i}+o(\rho)\right)\right|_{\rho=\rho_i}\\
				&=&-\frac 1 {\rho
				}PZ_{i0}+\mathcal{O}\left(\rho|\log\rho|\right).
			\end{array}
		\end{equation*}
		Applying~\eqref{inproz}, we have  
		\[ \int_{\Sigma} \left(-c_0\Delta_gPZ-\sum_{i=1}^m \sum_{j=1}^{\i(\xi_i)} c_{ij}\Delta_gP Z_{ij} \right)\partial_{\rho}W dv_g = \mathcal{O}\left(|c_0|+\sum_{i=1}^m\sum_{j=1}^{\i(\xi_i)}|c_{ij}|\right)+ c_{0} \frac{4\lambda_{k,m}}{3\rho}.\]
		
		Since $\int_{\Sigma} \phi\Delta_gPZ_{ij} dv_g=0$, 
		$\int_{\Sigma} \Delta_gPZ_{ij}\partial_{\iota}\phi dv_g 
		= \int_{\Sigma} \partial_{\iota}\phi \Delta_g\partial_{\iota}PZ_{ij}dv_g.$
		By a direct calculation,  $\| \partial_{\iota}PZ_{ij}\|\leq \| \partial_{\iota}Z_{ij}\|\leq \mathcal{O}(\frac 1{\rho})$. 
		Proposition~\ref{nonlinearprob} deduces that
		\[ \left| \int_{\Sigma} \Delta_gPZ_{ij}\partial_{\iota}\phi dv_g \right|\leq \| \phi\|\| \partial_{\iota}PZ_{ij}\|\leq o\left(\frac 1 {\rho}\right).\]
		Combining the estimates above, for $i'=1,2,\cdots,m, j=1,\cdots, \i(\xi_i),$
		\[c_{i'j'} \frac{3\varrho(\xi_{i'})}{4} +o\left(|c_0|+ \sum_{i=1}^m \sum_{j=1}^{\i(\xi_i)} |c_{ij}|\right)=0,\]
		and 
		\[c_{0} \frac{4\lambda_{k,m}}{3}+o\left(|c_0|+ \sum_{i=1}^m \sum_{j=1}^{\i(\xi_i)} |c_{ij}|\right)=0.\]
		Thus there exists $\rho_0>0$
		sufficiently small, for any $\rho\in (0,\rho_0)$, $\tilde{c}\equiv 0.$
		Consequently, $u=W+ \phi$ solves~\eqref{eq:main_eq} with  respect to $\lambda$.

		Assume that  for $(\xi,\rho)$, there exists a solution of~\eqref{eq:main_eq}, with the form  $\sum_{i=1}^m PU_i +\phi^{\rho}_{\xi}$, it is obvious that \eqref{r1} is true, which implies that $(\rho,\xi)$ is a critical point of the reduced map ${E}_{\lambda}$. 
	\end{proof}
	Next, we will give the asymptotic expansions for the reduced functional $E_{\lambda}$ and its derivatives.
	\begin{proposition}~\label{pro_expansion_Elamda}
		Assume \eqref{xi}-\eqref{lamda1}. The following expansions  hold for any $i=1,2,\cdots, m$ and $j=1,\cdots, \i(\xi_i)$:
		\[ 	\begin{array}{lcl}
			E_\lambda(\rho, \xi)
			&=& -\lambda_{k,m}-\lambda\log\left(\frac{\lambda_{k,m}}{8}\right)-\frac 1 2 \cF_{k,m}(\xi)+2(\lambda-\lambda_{k,m})\log \rho  -\mathcal{A}_1(\xi)\rho\\
			&&+\mathcal{A}_2(\xi)\rho^2\log\rho-\mathcal{B}(\xi)\rho^2+ O\left(\rho^2|\log\rho||\nabla\cF_{k,m}(\xi)|^2_g\right)+o(\rho^2), \\
			\partial_{(\xi_i)_j} E_{\lambda}(\rho,\xi)&=&-\frac 12 \partial_{(\xi_i)_j} \cF_{k,m}+\mathcal{O}(\rho),\\
			\partial_{\rho} E_{\lambda}(\rho,\xi)
			&=& \frac{2(\lambda-\lambda_{k,m})}{\rho}-\mathcal{A}_1(\xi)+\frac{\mathcal{A}^2_1(\xi)}{\lambda_{k,m}}\rho +\mathcal{A}_2(\xi)\rho+2\mathcal{A}_2(\xi)\rho\log\rho-2 \mathcal{B}(\xi)\rho\\
			&&+\mathcal{O}\left( \rho|\log\rho|^2|\nabla\cF_{k,m}(\xi)|_g^2\right)+o(\rho),\\
			\partial^2_{\rho} E_{\lambda}(\rho,\xi)
			&=&- \frac{2(\lambda-\lambda_{k,m})}{\rho^2}+\frac{\mathcal{A}_1^2(\xi)}{\lambda_{k,m}}+3\mathcal{A}_2(\xi)+2\mathcal{A}_2\log\rho-2\mathcal{B}(\xi)\\
			&&+\mathcal{O}\left( |\log\rho|^3|\nabla\cF_{k,m}(\xi)|_g^2\right)+o(1),\\
			\partial_{(\xi_i)_j}\partial_{\rho} E_{\lambda}(\rho,\xi)&=&  \mathcal{O}\left(\rho^{1-\kappa}|\log\rho|^4+ |\log\rho|^3|\nabla\cF_{k,m}(\xi)|_g^2\right),
		\end{array}\]
		as $\rho \rightarrow 0^{+}$. Moreover, $E_{\lambda}(\rho,\xi)$ is $C^1$-differentiable with respect to $\xi$ and $C^2$-differentiable with respect to $\rho$.
	\end{proposition}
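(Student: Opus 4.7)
Since the expansions of $J_\lambda(W)$ and its $(\rho,\xi)$-derivatives are already established in \eqref{expansion_JW}--\eqref{expansion_JW_mix}, and $E_\lambda(\rho,\xi) = J_\lambda(W + \phi^\rho_\xi)$, the task is to quantify the effect of the correction $\phi = \phi^\rho_\xi$ produced by Proposition~\ref{nonlinear}. The starting point is the second-order Taylor expansion
\begin{equation*}
J_\lambda(W+\phi) - J_\lambda(W) = J_\lambda'(W)[\phi] + \tfrac{1}{2}J_\lambda''(W)[\phi,\phi] + R_3(\phi),
\end{equation*}
with cubic remainder $R_3(\phi) = O(\|\phi\|_\infty^3)$. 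Integration by parts (using $\partial_{\nu_g}W = \partial_{\nu_g}\phi = 0$, the zero-mean of $\phi$, and the definition of $R$) yields $J_\lambda'(W)[\phi] = -\int_\Sigma R\phi\,dv_g$, while $J_\lambda''(W)[\phi,\phi] = \int_\Sigma L(\phi)\phi\,dv_g$. Substituting the equation $L(\phi) = R + N(\phi) - c_0\Delta_g PZ - \sum c_{ij}\Delta_g PZ_{ij}$ from \eqref{nonlinearprob} and invoking the orthogonality constraints $\int_\Sigma \phi\,\Delta_g PZ\,dv_g = \int_\Sigma \phi\,\Delta_g PZ_{ij}\,dv_g = 0$, the Lagrange-multiplier terms drop out and
\begin{equation*}
E_\lambda(\rho,\xi) - J_\lambda(W) = -\tfrac{1}{2}\int_\Sigma R\phi\,dv_g + \tfrac{1}{2}\int_\Sigma N(\phi)\phi\,dv_g + R_3(\phi).
\end{equation*}

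\textbf{Estimates.} I would then combine the weighted duality $|\int_\Sigma h\psi\,dv_g| \le C\|h\|_*\|\psi\|_\infty$ (the $*$-weight has $\rho$-uniformly bounded total integral) with Lemma~\ref{est_R}, giving $\|R\|_* = O(\rho\,|\nabla \cF_{k,m}|_g + \rho^{2-\kappa}|\log\rho|)$, and Proposition~\ref{nonlinear}, giving $\|\phi\|_\infty = O(\rho|\log\rho|\,|\nabla \cF_{k,m}|_g + \rho^{2-\kappa}|\log\rho|^2)$. The product of these two bounds produces the principal error $O(\rho^2|\log\rho|\,|\nabla \cF_{k,m}|^2_g)$, while the quadratic bound $\|N(\phi)\|_* \le C\|\phi\|_\infty^2$ and the cubic remainder contribute $o(\rho^2)$ for $\kappa$ sufficiently small; adding these to \eqref{expansion_JW} gives the announced expansion of $E_\lambda$. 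For each derivative $\partial_\iota$ with $\iota \in \{\rho,(\xi_i)_j\}$, I would differentiate the identity above, using (i) the derivative bounds $\|\partial_\iota R\|_*$ derived inside the proof of Proposition~\ref{nonlinear}; (ii) the pointwise estimates on $\partial_\iota\phi$, $\partial_\rho^2\phi$, and $\partial_{(\xi_i)_j}\partial_\rho\phi$ from \eqref{non1est}--\eqref{non2_mix}; and (iii) the derivative expansions of $J_\lambda(W)$ in \eqref{expansion_JW_C_1}--\eqref{expansion_JW_mix}. Cross terms such as $|\int_\Sigma R\,\partial_\iota\phi\,dv_g| \le C\|R\|_*\|\partial_\iota\phi\|_\infty$ and $|\int_\Sigma \partial_\iota R\,\phi\,dv_g| \le C\|\partial_\iota R\|_*\|\phi\|_\infty$ each yield the prescribed error magnitudes after multiplying the relevant bounds, with the $C^1$-in-$\xi$ and $C^2$-in-$\rho$ regularity of $\phi^\rho_\xi$ guaranteeing the needed differentiability of $E_\lambda$.

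\textbf{Main obstacle.} The most delicate steps are the second $\rho$-derivative and the mixed derivative $\partial_{(\xi_i)_j}\partial_\rho E_\lambda$, where one can only invoke the weakest bounds \eqref{non2_rho}--\eqref{non2_mix} on $\partial_\rho^2\phi$ and $\partial_{(\xi_i)_j}\partial_\rho\phi$. Keeping the error within $O(|\log\rho|^3|\nabla \cF_{k,m}|_g^2)$ for $\partial_\rho^2$ and within $O(\rho^{1-\kappa}|\log\rho|^4 + |\log\rho|^3|\nabla\cF_{k,m}|_g^2)$ for the mixed derivative requires careful pairing against $\|R\|_*$, $\|\partial_\iota R\|_*$, and a further exploitation of the fact that the orthogonality constraints $\int_\Sigma\phi\,\Delta_g PZ = \int_\Sigma\phi\,\Delta_g PZ_{ij} = 0$ are preserved under differentiation, so that at every order the Lagrange-multiplier terms continue to vanish and do not contaminate the announced main terms coming from $J_\lambda(W)$. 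The quantitative bounds of Proposition~\ref{nonlinear} were calibrated exactly so that these products feed directly into the error estimates stated here.
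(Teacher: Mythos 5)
Your proposal follows essentially the same route as the paper's proof: the second-order Taylor expansion of $J_\lambda$ at $W$, the identities $DJ_\lambda(W)(\phi)=-\int_\Sigma R\phi\,dv_g$ and $D^2J_\lambda(W)(\phi,\phi)=\int_\Sigma(R+N(\phi))\phi\,dv_g$ obtained from \eqref{nonlinearprob} and the orthogonality constraints, the duality pairing of $\|R\|_*$, $\|\partial_\iota R\|_*$, $\|N(\phi)\|_*$ against the sup-norm bounds of Proposition~\ref{nonlinear}, and finally the combination with the expansions \eqref{expansion_JW}--\eqref{expansion_JW_mix} of $J_\lambda(W)$, with each derivative costing a factor of order $|\log\rho|/\rho$ in the error. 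This matches the paper's argument in both structure and the resulting error magnitudes.
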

	\begin{proof}
		Since $\int_{\Sigma} \phi d v_g=0$,
		$$DJ_{\lambda}(W)(\phi)=\int_{\Sigma}(-\Delta_gW+\beta W- \frac{\lambda V e^W}{\int_{\Sigma} V e^W dv_g }) \phi dv_g =\int_{\Sigma} -R \phi dv_g $$ and $$D^2 J_\lambda(W)(\phi,\phi)=\int_{\Sigma} L(\phi)\phi dv_g =\int_{\Sigma}(R+ N(\phi))\phi dv_g,$$ where $\phi$ is obtained by Proposition~\ref{propolin}. Then 
		$$
		\begin{array}{lll}
			J_\lambda(W+\phi)-J_\lambda(W)&=&D J_\lambda(W)(\phi)+\frac 1 2 {D^2 J_\lambda(W)(\phi,\phi)}\\
			&&+\int_0^1 \int_0^1\left(D^2 J_\lambda(W+t s \phi)-D^2 J_\lambda(W)\right)(\phi,\phi) t d s d t \\
			&=&-\frac{1}{2} \int_{\Sigma} R \phi d v_g+\frac{1}{2} \int_{\Sigma} N(\phi) \phi d v_g\\
			&&+\int_0^1 \int_0^1\left(D^2 J_\lambda(W+t s \phi)-D^2 J_\lambda(W)\right)(\phi,\phi) t d s d t
		\end{array}
		$$
		and 
		\[  D J_\lambda(W)(\phi)+D^2 J_\lambda(W)(\phi,\phi)=\int_{\Sigma} N(\phi) \phi d v_g\]
		hold. Since $\frac{1}{2} \int_{\Sigma} V e^W d v_g \leq \int_{\Sigma} V e^{W+t s \phi} d v_g \leq 2 \int_{\Sigma} V e^W d v_g$ and $\left|e^{W+t s \phi}-e^W\right| \leq C e^W\|\phi\|_{\infty}$, 
		$$
		\begin{array}{lcl}
			& &\left|D J_\lambda(W)(\phi)+D^2 J_\lambda(W)(\phi,\phi)\right|+\left|\int_0^1 t d t \int_0^1 d s\left[D^2 J_\lambda(W+t s \phi)-D^2 J_\lambda(W)\right](\phi,\phi)\right| \\
			& =&O\left(\|N(\phi)\|_*\|\phi\|_{\infty}+\|\phi\|_{\infty}^3\right)\stackrel{\eqref{est_N_s}}{=}O\left(\|\phi\|_{\infty}^3\right).
		\end{array}
		$$
		Lemma~\ref{est_R} and~\eqref{est_phi} yield 
		$$
		\left|J_\lambda(W+\phi)-J_\lambda(W)\right|=O\left(\|R\|_*\|\phi\|_{\infty}+\|\phi\|_{\infty}^3\right)\stackrel{}{=}O\left(\rho^2|\log\rho||\nabla\cF_{k,m}(\xi)|^2_g+ \rho^{3-\kappa}|\log\rho|^2\right). 
		$$
		By \eqref{expansion_JW}, 
		\[ 	\begin{aligned}
			E_\lambda(\rho, \xi)&=J_{\lambda}(W)+O\left(\rho^2|\log\rho||\nabla\cF_{k,m}(\xi)|^2_g+ \rho^{3-\kappa}|\log\rho|^2\right)\\
			&= -\lambda_{k,m}-\lambda\log\left(\frac{\lambda_{k,m}}{8}\right)-\frac 1 2 \cF_{k,m}(\xi)+2(\lambda-\lambda_{k,m})\log \rho  -\mathcal{A}_1(\xi)\rho+\mathcal{A}_2(\xi)\rho^2\log\rho\\
			&-\mathcal{B}(\xi)\rho^2+ O\left(\rho^2|\log\rho||\nabla\cF_{k,m}(\xi)|^2_g\right)+o(\rho^2). 
		\end{aligned}\]
		Differentiating with respect to $\iota=(\xi_i)_j\text{ or }\rho, i=1, \cdots, m, j=1,\cdots,\i(\xi_i)$, 
		$$
		\begin{aligned}
			\partial_\iota\left(J_\lambda(W+\phi)-J_\lambda(W)\right)= & -\frac{1}{2} \int_{\Sigma}\left[\partial_\iota R \phi+R \partial_\iota \phi\right] d v_g+\frac{1}{2} \int_{\Sigma}\left(\partial_\iota[N(\phi)] \phi+N(\phi) \partial_\iota \phi\right) d v_g \\
			& +\int_0^1 t d t \int_0^1 d s \partial_\iota\left\{\left[D^2 J_\lambda(W+t s \phi)-D^2 J_\lambda(W)\right](\phi,\phi)\right\}. 
		\end{aligned}
		$$
		By a direct calculation, we have 
		$$
		\begin{array}{lll}
			&& \left|\int_0^1 t d t \int_0^1 d s \partial_\iota\left\{\left[D^2 J_\lambda(W+t s \phi)-D^2 J_\lambda(W)\right](\phi,\phi)\right\}\right|\\&=&\mathcal{O}\left(\|\phi\|_{\infty}^2\left\|\partial_\iota \phi\right\|_{\infty}+\|\phi\|_{\infty}^3\left\|\partial_\iota W\right\|_{\infty}\right). 
		\end{array}
		$$ 
		Then  by Proposition~\ref{nonlinear},
		\[
		\begin{array}{lll}
			&& \partial_\iota\left(J_\lambda(W+\phi)-J_\lambda(W)\right) \\&=&\mathcal{O}\left(\|R\|_*|\partial_{\iota}\phi|_{\infty}+\|\partial_{\iota} R\|_*|\phi|_{\infty }+ \|N(\phi)\|_*|\partial_{\iota}\phi|_{\infty}+ \|\partial_{\iota} N(\phi)\|_*|\phi|_{\infty}\right)  \\
			&& +\mathcal{O}\left(\|\phi\|_{\infty}^2\left\|\partial_\iota \phi\right\|_{\infty}+\|\phi\|_{\infty}^3\left\|\partial_\iota W\right\|_{\infty}\right)\\
			&=&\mathcal{O}\left((\rho^2|\log\rho||\nabla\cF_{k,m}(\xi)|^2_g +\rho^{3-\kappa}|\log\rho|^2) \frac{|\log\rho|}{\rho}\right).
		\end{array}
		\]
		Thus applying~\eqref{expansion_JW_C_1} and \eqref{expansion_JW_rho1} we have for $\iota=(\xi_i)_j$
		\begin{eqnarray*}
			&&\partial_{\iota} E_{\lambda}(\rho,\xi)= \partial_{\iota}J_\lambda(W)+\mathcal{O}\left((\rho^2|\log\rho||\nabla\cF_{k,m}(\xi)|^2_g +\rho^{3-\kappa}|\log\rho|^2) \frac{|\log\rho|}{\rho}\right)\\
			&=&  -\frac 12 \partial_{\iota} \cF_{k,m}+\mathcal{O}(\rho),
		\end{eqnarray*}
		and for $\iota=\rho$
	\begin{eqnarray*}
			\partial_{\iota} E_{\lambda}(\rho,\xi)&=& \partial_{\iota}J_{\lambda}(W)+\mathcal{O}\left((\rho^2|\log\rho|^2|\nabla\cF_{k,m}(\xi)|^2_g +\rho^{3-\kappa}|\log\rho|^2) \frac{|\log\rho|}{\rho}\right)\\
		&=& \frac{2(\lambda-\lambda_{k,m})}{\rho}-\mathcal{A}_1(\xi)+\frac{\mathcal{A}^2_1(\xi)}{\lambda_{k,m}}\rho +\mathcal{A}_2(\xi)\rho+2\mathcal{A}_2(\xi)\rho\log\rho-2 \mathcal{B}(\xi)\rho\\
		&&+\mathcal{O}\left( \rho|\log\rho|^2|\nabla\cF_{k,m}(\xi)|_g^2\right)+o(\rho). 
	\end{eqnarray*}
		Similarly, by Proposition~\ref{nonlinear}, we can derive that 
		\[\partial^2_\rho\left(J_\lambda(W+\phi)-J_\lambda(W)\right) 
		=\mathcal{O}\left((\rho^2|\log\rho||\nabla\cF_{k,m}(\xi)|^2_g +\rho^{3-\kappa}|\log\rho|^2) \frac{|\log\rho|^2}{\rho^2}\right),
		\]
		and 
		\[\partial_{(\xi_i)_j}\partial_\rho\left(J_\lambda(W+\phi)-J_\lambda(W)\right) 
		=\mathcal{O}\left((\rho^2|\log\rho||\nabla\cF_{k,m}(\xi)|^2_g +\rho^{3-\kappa}|\log\rho|^2) \frac{|\log\rho|^2}{\rho^2}\right).
		\]
		Combining with	\eqref{expansion_JW_rho2} and~\eqref{expansion_JW_mix}, it follows that 
		\[ \begin{array}{lcl}
			\partial^2_{\rho} E_{\lambda}(\rho,\xi)&=& \partial^2_{\rho}J_\lambda(W)+\mathcal{O}\left((\rho^2|\log\rho||\nabla\cF_{k,m}(\xi)|^2_g +\rho^{3-\kappa}|\log\rho|^2) \frac{|\log\rho|^2}{\rho^2}\right)\\
			&=& -\frac{2(\lambda-\lambda_{k,m})}{\rho^2}+\frac{\mathcal{A}_1^2(\xi)}{\lambda_{k,m}}+3\mathcal{A}_2(\xi)+2\mathcal{A}_2\log\rho-2\mathcal{B}(\xi)\\
			&&+\mathcal{O}\left( |\log\rho|^3|\nabla\cF_{k,m}(\xi)|_g^2\right)+o(1),
		\end{array}
		\]
		and 
		\[ \begin{array}{lcl}
			\partial_{(\xi_i)_j}\partial_{\rho} E_{\lambda}(\rho,\xi)&=& \partial_{(\xi_i)_j}\partial_{\rho}J_\lambda(W)+\mathcal{O}\left((\rho^2|\log\rho||\nabla\cF_{k,m}(\xi)|^2_g +\rho^{3-\kappa}|\log\rho|^2) \frac{|\log\rho|^2}{\rho^2}\right)\\
			&=& \mathcal{O}\left(\rho^{1-\kappa}|\log\rho|^4+ |\log\rho|^3|\nabla\cF_{k,m}(\xi)|_g^2\right).
		\end{array}
		\]
	\end{proof}
	\section{The proof of the main theorem}\label{proof_main_thm}
	\begin{altproof}{Theorem~\ref{main_thm}}
		Here, we just construct the solution for $\lambda>\lambda_{k,m}$ and we can deduce it similarly for  $\lambda<\lambda_{k,m}$. 
		Suppose that $\xi^*$ is a $C^1$-stable critical point of $\cF_{k,m}$ and  $\delta>0$ sufficiently small  such that for any $\xi \in M_{\xi^*,\delta}(\subset U)$ there exists a refined isothermal coordinate $(y_{\xi}, U(\xi))$ in Claim~\ref{claim:0} such that $\xi$ satisfies  $\mathfrak{a}_1\text{ or } \mathfrak{a}_2$  in $M_{\xi^*,\delta}$, and  $U(\xi_i)\cap U(\xi_j)=\emptyset$ for any $i\neq j$ and $U(\xi_i)\cap \partial\Sigma=\emptyset$ for $i=1,\cdots, k.$

		Proposition~\ref{pro_expansion_Elamda} implies that 
		\[ \begin{array}{lcl}
			\left.	\frac{\rho\partial_{\rho} E_{\lambda}(\rho,\xi)}{\lambda-\lambda_{k,m}}\right|_{\rho=\mu\sqrt{\lambda-\lambda_{k,m}}}
			&=& 2-\frac{\mu\mathcal{A}_1(\xi)}{\sqrt{\lambda-\lambda_{k,m}}}+\frac{\mu^2\mathcal{A}^2_1(\xi)}{\lambda_{k,m}}+(\mu^2+2\mu^2\log\mu+\mu^2\log(\lambda-\lambda_{k,m}))\mathcal{A}_2(\xi)\\
			&&-2\mu^2 \mathcal{B}(\xi)+\mathcal{O}\left(\mu^2 |\log(\lambda-\lambda_{k,m})|^2|\nabla\cF_{k,m}(\xi)|_g^2\right)+o(\mu^2),\\
			\left. 	\frac{\rho^2\partial^2_{\rho} E_{\lambda}(\rho,\xi)}{\lambda-\lambda_{k,m}}\right|_{\rho=\mu\sqrt{\lambda-\lambda_{k,m}}}&=&-2 +\frac{\mu^2\mathcal{A}_1^2(\xi)}{\lambda_{k,m}}+(3\mu^2
			+2\mu^2\log\mu+\mu^2\log(\lambda-\lambda_{k,m}))\mathcal{A}_2(\xi)\\
			&&-2\mu^2\mathcal{B}(\xi)+\mathcal{O}\left(\mu^2 ( |\log(\lambda-\lambda_{k,m})|^3)|\nabla\cF_{k,m}(\xi)|_g^2\right)+o(\mu^2).
		\end{array}.\]
		
		We define that 
		\[ \mathcal{D}_{\lambda}:=\left\{\xi\in U: |\nabla\cF_{k,m}(\xi)|_g\leq \frac{\sqrt{2}}{|\log(\lambda-\lambda_{k,m})|^3} \right\}. \]
		 \\ 
		{\it Case 1. 
			$ \cA_1(\xi)\equiv 0$ and  $\cA_2(\xi)>0$ in $M_{\xi^*,\delta}$.}\\
		Suppose that $  a_1\leq |\cA_2(\xi)| \leq  a_0$ for any $\xi\in M_{\xi^*,\delta}$ for some constants $a_0, a_1>0.$
		Let  $m_0=\sqrt{ \frac 1 {2a_0|\log(\lambda-\lambda_{k,m})|}}>0$ and 
		$M_0= \sqrt{\frac 1 { a_1}} <\infty.$
		We define $I_{\lambda}=[m_0,M_0]$. 
		For any $\lambda$ sufficiently close to $\lambda_{k,m}$ and $\xi\in \mathcal{D}_{\lambda}$,  we have  as 
		\[ 	\begin{array}{ccl}
		\left.	\frac{(\rho\partial_{\rho} E_{\lambda})(\mu\sqrt{\lambda-\lambda_{k,m}},\xi)}{\lambda-\lambda_{k,m}}\right|_{\mu=m_0}
		&	\geq &2 
		-  m_0^2|\cA_2(\xi)|
		|\log(\lambda-\lambda_{k,m})|+
		o(1)\geq  1,\\
		\left.	\frac{(\rho\partial_{\rho} E_{\lambda})(\mu\sqrt{\lambda-\lambda_{k,m}},\xi)}{\lambda-\lambda_{k,m}}\right|_{\mu=M_0}
		&\leq& 2- M_0^2|\log(\lambda-\lambda_{k,m})|\mathcal{A}_2(\xi)(1+o(1))-2M_0^2\mathcal{B}(\xi)+o(1)\\
		&\leq& -1\\
		\frac{(\rho^2\partial^2_{\rho} E_{\lambda})(\mu\sqrt{\lambda-\lambda_{k,m}},\xi)}{\lambda-\lambda_{k,m}}
		&\leq &-2-\mu^2|\log(\lambda-\lambda_{k,m})|\mathcal{A}_2(\xi)(1+o(1))\\
		&&-2\mu^2\mathcal{B}(\xi) +o(1)\\
		&\leq & -1,
	\end{array}\]
		applying the restriction on $\mathcal{A}_1(\xi),\mathcal{A}_2(\xi)$ and $\mathcal{B}(\xi).$\\
		{\it Case 2. $\cA_1(\xi)=\cA_2(\xi)\equiv 0$ and $\cB(\xi)>0$ in $M_{\xi^*,\delta}$.}\\
		Suppose that $b_1\leq |\cB(\xi)|\leq b_0$ for any $\xi\in M_{\xi^*,\delta}$. 
		Let $ m_0=\sqrt{\frac 1 {4b_0|\log(\lambda-\lambda_{k,m})|}}$ and $M_0= \sqrt{\frac 1 {b_1}}<\infty.$
		It follows that 
			for any $\lambda$ sufficiently close to $\lambda_{k,m}$ and $\xi\in \mathcal{D}_{\lambda}$,  we have  as 
				\[ 	\begin{array}{ccl}
				\left.	\frac{(\rho\partial_{\rho} E_{\lambda})(\mu\sqrt{\lambda-\lambda_{k,m}},\xi)}{\lambda-\lambda_{k,m}}\right|_{\mu=m_0}
				&	\geq &2 
				- m_0^2|\cB(\xi)|+
				o(1)\geq  1,\\
				\left.	\frac{(\rho\partial_{\rho} E_{\lambda})(\mu\sqrt{\lambda-\lambda_{k,m}},\xi)}{\lambda-\lambda_{k,m}}\right|_{\mu=M_0}
				&\leq& 2- M_0^2|\cB(\xi)|+o(1)\\
				&\leq& -1\\
				\frac{(\rho^2\partial^2_{\rho} E_{\lambda})(\mu\sqrt{\lambda-\lambda_{k,m}},\xi)}{\lambda-\lambda_{k,m}}
				&\leq &-2-2\mu^2\mathcal{B}(\xi) +o(1)\\
				&\leq & -1.
			\end{array}\]
		In both cases, we find an interval $I_{\lambda}=[m_0, M_0]$ such that  
		\begin{eqnarray*}
				\left.	\frac{(\rho\partial_{\rho} E_{\lambda})(\mu\sqrt{\lambda-\lambda_{k,m}},\xi)}{\lambda-\lambda_{k,m}}\right|_{\mu=m_0}
			&	\geq&  1,\\
			\left.	\frac{(\rho\partial_{\rho} E_{\lambda})(\mu\sqrt{\lambda-\lambda_{k,m}},\xi)}{\lambda-\lambda_{k,m}}\right|_{\mu=M_0}
			&\leq& -1\\
			\frac{(\rho^2\partial^2_{\rho} E_{\lambda})(\mu\sqrt{\lambda-\lambda_{k,m}},\xi)}{\lambda-\lambda_{k,m}}
			&\leq & -1.
		\end{eqnarray*}
	Using the  intermediate value theorem and the monotonic property,  there exists $\varepsilon>0$ such that for any $\lambda\in(\lambda_{k,m},\lambda_{k,m}+\varepsilon)$ and $\xi\in \mathcal{D}_\lambda$ there is a unique solution $\mu:=\mu(\lambda,\xi)\in I_{\lambda}$ solving  $(\partial_{\rho}E_{\lambda})(\mu\sqrt{\lambda-\lambda_{k,m}},\xi)=0.$ The implicit function theorem implies that 
		$\xi\in \mathcal{D}_{\lambda}\mapsto \rho(\lambda,\xi):=\mu(\lambda,\xi)\sqrt{\lambda-\lambda_{k,m}}$ is $C^1$-differentiable with respect to $\xi$ and 
		\[ \partial_{\xi}\rho(\lambda,\xi)=-\frac{\partial_{\xi}\partial_{\delta}E_{\lambda}(\rho(\lambda,\xi),\xi)}{\partial_{\rho}^2 E_{\lambda}(\rho(\lambda,\xi),\xi)}. \]
		Proposition~\ref{pro_expansion_Elamda} implies that $$\partial_{\xi}\partial_{\rho}E_{\lambda}(\rho,\xi)=\mathcal{O}(|\log \rho|^3|\nabla\cF_{k,m}(\xi)|^2_g+\rho^{1-\kappa}|\log\rho|^4).$$ 
		Considering that  $|\partial^2_{\rho}E_{\lambda}(\rho(\lambda,\xi),\xi)|\geq \frac 1 {\mu^2}$ and $(\mu,\xi)\in I_{\lambda}\times\mathcal{D}_{\lambda}$, 
		\[ \partial_{\xi}\rho(\lambda,\xi)=\mathcal{O}\left(|\log(\lambda-\lambda_{k,m})|^{-3}\right). \]
		We will apply the cut-off function $\chi$ (see Section~\ref{sec_green}) to extend $\rho(\lambda,\xi)$ to be a $C^1$-function on $M_{\xi^*,\delta}$ for $\xi$. 
		Let 
		\begin{equation}\label{eq:de_rho}
			\begin{array}{lcl}
				\tilde{\rho}(\lambda,\xi)&:=& \chi\left(|\log(\lambda-\lambda_{k,m})|^6
				|\nabla\cF_{k,m}(\xi)|^2\right)\rho(\lambda,\xi)+(\lambda-\lambda_{k,m})^{\frac 1 2}\\
				&&\left(1- \chi\left(|\log(\lambda-\lambda_{k,m})|^6
				|\nabla\cF_{k,m}(\xi)|^2\right)\right).
			\end{array}
		\end{equation}
		Denote $\tilde{E}_{\lambda}(\xi)= E_{\lambda}(\tilde{\rho}(\lambda,\xi),\xi): I_{\lambda}\times M_{\xi^*,\delta}\rightarrow \RR.$
		Proposition~\ref{pro_expansion_Elamda} yields that 
		\[ \tilde{E}_{\lambda}(\xi):= -\lambda_{k,m}-\lambda\log\left(\frac{\lambda_{k,m}}{8}\right)-\frac 1 2 \cF_{k,m}(\xi)+\mathcal{O}\left(|\lambda-\lambda_{k,m}| |\log(\lambda-\lambda_{k,m})|\right)\]
		and 
		\begin{eqnarray*}
			\partial_{\xi}\tilde{E}_{\lambda}(\xi)&=&
			\partial_{\xi}E_{\lambda}(\tilde{\rho},\xi)+\partial_{\rho}E_\lambda(\rho,\xi)|_{\rho=\tilde{\rho}(\lambda,\xi)}\partial_{\xi}\tilde{\rho}(\lambda,\xi)\\
			&=&-\frac 12 \partial_{\xi}\cF_{k,m}(\xi)+\mathcal{O}\left(\sqrt{\lambda-\lambda_{k,m}}\cdot|\log(\lambda-\lambda_{k,m})|^2\right) ,
		\end{eqnarray*}
		which is uniformly convergent with respect to $\xi\in M_{\xi^*,\delta}$ as $\lambda\rightarrow\lambda_{k,m}^+.$
		By the assumption $\xi^*$ is a stable critical point of $\cF_{k,m}$, for any $\lambda$ in a small right neighborhood of $\lambda_{k,m}$, there exists $\xi_{\lambda}\in M_{\xi^*,\delta}$ to be a critical point of $\tilde{E}_{\lambda}(\xi)+\lambda_{k,m}+\lambda\log\left(\frac{\lambda_{k,m}}{8}\right)$. Consequently, $\xi_{\lambda}$ is also a critical point of $\tilde{E}_{\lambda}$.  
		Then, $$|\nabla \cF_{k,m}(\xi_{\lambda})|=\mathcal{O}\left(\sqrt{\lambda-\lambda_{k,m}}\cdot|\log(\lambda-\lambda_{k,m})|^2\right),$$  which implies that $\xi_{\lambda}\in \mathcal{D}_{\lambda}$. Hence, we have $\tilde{\rho}(\lambda,\xi_{\lambda})=\rho(\lambda,\xi_{\lambda})$, $\tilde{E}_{\lambda}(\xi_{\lambda}
		)=E_{\lambda}(\rho(\lambda,\xi_{\lambda}),\xi_{\lambda})$, $\partial_{\rho}E_{\lambda}(\rho(\lambda,\xi_{\lambda}),\xi_{\lambda}))=0 $ and $\partial_{\xi}E_{\lambda}(\rho(\lambda,\xi_{\lambda}),\xi_{\lambda}))=0.$ Proposition~\ref{pro_equi} yields that $u_{\lambda}:= W+\phi^{\rho(\lambda,\xi_{\lambda})}_{\xi_{\lambda}}$ is a solution of ~\eqref{eq:main_eq} with respect to $\lambda$.
		
		 For any $\delta>0$ sufficiently small and some $\lambda_{\delta
		}$ sufficiently close to $\lambda_{k,m}$, we can construct $\xi_{\lambda_{\delta}}\in M_{\xi^*,\delta}$, $\rho_{\delta}:=\rho(\lambda_{\delta},\xi_{\lambda_{\delta}})$ such that $\partial_{\xi}E_{\lambda_{\delta}}(\rho(\lambda_{\delta},\xi_{\lambda_{\delta}}),\xi_{\lambda_{\delta}})=0.$
		By Proposition~\ref{nonlinear} and Proposition~\ref{pro_equi},  there exists a solution $u_{\lambda}$ that solves the problem \eqref{eq:main_eq} for $\lambda$ in week sense. 
		$ \xi_{\lambda_{\epsilon}}=\left(\xi_{\lambda_{\epsilon},1}, \cdots,\xi_{\lambda_{\epsilon},m}\right)\rightarrow \xi^*=(\xi_1^*,\cdots,\xi^*_m)$
		as $\delta\rightarrow 0.$
		Then we  define 
		$u_{\delta}=\sum_{i=1}^m P U_{i,\delta}+\phi_{\xi_{\lambda_\delta}}^{\rho(\lambda_{\delta},\xi_{\lambda_\delta})}$, where $U_{i,\delta}=U_{\rho(\lambda_{\delta},\xi_{\lambda_\delta}) \sqrt{\tau_{i}(\xi_{\lambda_{\delta}})}, \xi_{\lambda_{\delta},i}}.$
		
		According to Proposition~\ref{pro_equi}, $u_{\rho_n}$ solves the problem~\eqref{eq:main_eq} in weak sense for some $\lambda_{\delta}$ satisfying that $|\lambda_{\delta}-\lambda_{k,m}|\leq C \rho^2(\lambda_\delta,\xi_{\lambda_\delta})|\log \rho(\lambda_\delta,\xi_{\lambda_\delta}) |$. 
		Claim that 
		$$
		\int_{\Sigma} \frac{ \lambda_\delta Ve^{u_\delta}}{\int_{\Sigma} Ve^{u_\delta} dv_g} \Psi dv_g  \rightarrow  \sum_{i=1}^m \varrho(\xi_i) \Psi\left(\xi_{i}\right)  \quad \text { as } \delta\rightarrow 0, \forall \Psi \in C(\Sigma) .
		$$
		In fact, by the inequality $\left|\mathrm{e}^{s}-1\right| \leqslant \mathrm{e}^{|s|}|s|$ for any $s \in \mathbb{R}$, the estimate \eqref{sumex1} yield that
		\begin{eqnarray}
			\int_{\Sigma} \frac{ \lambda_\delta Ve^{u_\delta}}{\int_{\Sigma} Ve^{u_\delta} dv_g} \Psi dv_g  
			& =&\sum_{i=1}^m \varrho(\xi_i) \Psi\left(\xi_{i}\right)+o(1), 
		\end{eqnarray} 
		as $\delta\rightarrow 0.$
		Therefore, $u_\epsilon$ is a sequence of blow-up solutions of \eqref{eq:main_eq} along $\lambda _\delta\rightarrow\lambda_{k,m}$. The proof is concluded. 
	\end{altproof}

	\newpage 
	\section{Appendix}\label{appendix}
	\subsection*{Appendix A.}
	\begin{lemma}~\label{lem:re_green}
		For any fixed $\xi\in \Sigma$ and $\alpha\in (0,1)$, 
		$H^g(x,\xi)$ is $C^{2,\alpha}$ in $\Sigma$ with respect to  $x$. Moreover, $ H^g(x,\xi)$ is uniformly bounded in $C^{2,\alpha}$ for any $\xi$ in any compact subset of $\intsigma$ or $\partial\Sigma$.\\
	\end{lemma}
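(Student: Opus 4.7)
\noindent\textbf{Proof proposal for Lemma~\ref{lem:re_green}.}
The plan is to exhibit $H^g_\xi$ as the solution of an elliptic Neumann problem whose data are smooth on $\Sigma$ (not merely away from $\xi$), and then apply standard Schauder theory together with the smooth dependence of the isothermal coordinates on $\xi$ from Claim~\ref{claim:0}. The key point is that although $\log|y_\xi|$ itself is singular at $x=\xi$, the quantities appearing in~\eqref{eq:eqR} are always multiplied by $\Delta_g\chi_\xi$, $\nabla\chi_\xi$, $\partial_{\nu_g}\chi_\xi$, or by $\chi_\xi$ paired with a quantity that is smooth on the support of $\chi_\xi$; the singular set of $\log|y_\xi|$ is kept strictly away from these factors.

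First, I will verify regularity of the interior source term. In isothermal coordinates around $\xi$, both $\Delta_g\chi_\xi$ and $\nabla\chi_\xi$ vanish on $\{|y_\xi|\le \bar r_\xi/4\}$ and on $\{|y_\xi|\ge \bar r_\xi/2\}$, hence are supported in an annular region on which $\log|y_\xi|$ and $\nabla\log|y_\xi|$ are $C^\infty$. Therefore the right-hand side of~\eqref{eq:eqR} lies in $C^\infty(\Sigma)$, with uniform $C^k$ bounds depending on $\chi$ and on the family $\{\varphi_\xi\}$. Next I will check the boundary data. When $\xi\in\intsigma$ we may choose $\bar r_\xi$ so that $\supp\chi_\xi\subset\intsigma$; the boundary data vanishes identically. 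When $\xi\in\partial\Sigma$, the refined isothermal coordinate sends $\partial\Sigma$ into $\{y_2=0\}$, so by~\eqref{eq:out_normal_derivatives} one has $\partial_{\nu_g}\log|y_\xi|=-e^{-\varphi_\xi/2}\frac{y_2}{|y|^2}\big|_{y_2=0}=0$ away from $\xi$, while on $\{|y|<\bar r_\xi/4\}$ we have $\partial_{\nu_g}\chi_\xi=0$. A brief case-by-case examination (whether $\chi_\xi=1$ or $\chi_\xi$ is transitioning) shows both summands in the boundary condition of~\eqref{eq:eqR} vanish pointwise, so the Neumann data are smooth (in fact identically zero in a neighborhood of the set where singular behaviour could occur).

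With smooth source $f_\xi$ and smooth Neumann data, the compatibility condition $\int_\Sigma f_\xi\,dv_g=\int_{\partial\Sigma}(\text{bdy data})\,ds_g$ is automatic from the construction (it is equivalent to $\int_\Sigma\Delta_g\Gamma^g_\xi\,dv_g$ being consistent with $-\delta_\xi+1/|\Sigma|_g$), and the integral normalization fixes $H^g_\xi$ uniquely. Standard interior and boundary Schauder estimates for the Neumann problem on a $C^{2,\alpha}$ Riemannian surface (see, e.g., Gilbarg--Trudinger, Ch.~6 adapted to the Laplace--Beltrami operator) then give
\[
\|H^g_\xi\|_{C^{2,\alpha}(\Sigma)}\le C\bigl(\|f_\xi\|_{C^\alpha(\Sigma)}+\|\text{bdy data}\|_{C^{1,\alpha}(\partial\Sigma)}+\bigl|\tfrac{4}{\varrho(\xi)}\textstyle\int_\Sigma\chi_\xi\log|y_\xi|\,dv_g\bigr|\bigr),
\]
with $C=C(\Sigma,g,\alpha)$, proving the $C^{2,\alpha}$ regularity in $x$ for fixed $\xi$.

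For the uniform bound over $\xi$ in a compact subset $K\subset\intsigma$ (or $K\subset\partial\Sigma$), I will invoke Claim~\ref{claim:0}: the maps $\xi\mapsto y_\xi$ and $\xi\mapsto\varphi_\xi$ are $C^\infty$ on the compact parameter set once one works in a finite atlas of refined charts and uses a partition of unity on $K$. Consequently the data $f_\xi$ and the Neumann data depend continuously on $\xi$ in $C^\alpha$ and $C^{1,\alpha}$ respectively, and the normalization integral depends continuously on $\xi$. The Schauder estimate above then gives a uniform $C^{2,\alpha}$-bound on $H^g_\xi$ as $\xi$ ranges over $K$. The mildly delicate point, which I expect to be the main obstacle, is the boundary case $\xi\in\partial\Sigma$: one must check that the refined coordinates furnished by Claim~\ref{claim:0} really do flatten $\partial\Sigma$ smoothly as $\xi$ varies, so that the cancellation $\partial_{\nu_g}\log|y_\xi|\big|_{\partial\Sigma}=0$ persists with $\xi$-uniform estimates, and that the support of $\chi_\xi$ remains contained in the chart domain uniformly for $\xi\in K$; both follow by choosing $\bar r_\xi$ to be a single small constant $\bar r=\bar r(K)$ compatible with the whole compact family.
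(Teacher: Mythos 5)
Your proposal is correct and follows essentially the same route as the paper: you verify that the source term in~\eqref{eq:eqR} is regular because $\Delta_g\chi_\xi$ and $\nabla\chi_\xi$ live on an annulus away from the singularity of $\log|y_\xi|$, use~\eqref{eq:out_normal_derivatives} to see that $\partial_{\nu_g}\log|y_\xi|$ vanishes on $\partial\Sigma$ so the Neumann data are smooth, and then apply Schauder estimates, with uniformity in $\xi$ coming from uniform bounds on the data over compact subsets of $\intsigma$ or $\partial\Sigma$ (the paper invokes the Schauder estimate of Lemma~5 in \cite{yang2021125440} rather than a general reference, but the argument is the same).
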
 
	\begin{proof}
		%Since $H^g(x,\xi)$ solves the following problem~\eqref{eq:eqR}.
		We apply the isothermal coordinate $(y_{\xi}, U(\xi))$ introduced in Section~\ref{prelim}.
		By the transformation law for $\Delta_g$ under a conformal transformation, $\Delta_{\tilde{g}}=e^{-\varphi}\Delta_g$ for any $\tilde{g}=e^{\varphi} g$. It follows that  
		$ \Delta_g\left( \log\frac 1{ |y_{\xi}(x)|}\right) =e^{-{\varphi}_{\xi}(y)} \left. \Delta \log\frac 1 {|y|}\right|_{y= y_{\xi}(x)}= -\frac{\varrho(\xi)} 4 \delta_{\xi},$
		where $\delta_{\xi}$ is the Dirac mass in $\xi\in \Sigma$.  
		For any $x\in U(\xi)\cap \partial \Sigma$, let $y=y_{\xi}(x)$
		\begin{equation*}
			\begin{aligned}
				\partial_{ \nu_g } \log|y_{\xi}(x)| &
				\stackrel{\eqref{eq:out_normal_derivatives}}{=} - e^{-\frac 1 2 {\varphi}_{\xi}(y)}\frac {\partial}{\partial y_2} \log |y|= - e^{-\frac 1 2 {\varphi}_{\xi}(y)}\frac{y_2}{|y|^2}\equiv 0. 
			\end{aligned}
		\end{equation*}
		Clearly, $\partial_{\nu_g} \chi(|y_{\xi}(x)|)=0$ for $x \in \partial\Sigma \cap  U_{r_\xi/4}(\xi)$.
		It follows that that $\partial_{ \nu_g } H^g(\cdot,\xi)$ is smooth on  $\partial \Sigma$. 
		And $\Delta_gH^g(\cdot,\xi)$ is bounded in $C^{\alpha}(\Sigma)$, for any $p\geq 1$. Then by the Schauder estimate in Lemma 5 of~\cite{yang2021125440}, 
		there is a unique solution  $H^g(x,\xi)\in C^{2,\alpha}(\Sigma)$ solves \eqref{eq:eqR}. Given that  $-\Delta_g H^g(\cdot,\xi)$ is uniformly bounded in $C^{\alpha}(\Sigma)$ for any $\xi$ in any compact subset of $\intsigma$ or $\partial\Sigma$, $ H^g(\cdot,\xi)$ is uniformly bounded in $C^{2,\alpha}$, via the Schauder estimates.
	\end{proof}

	\subsection*{Appendix B}
	For any $\xi$ in a compact subset of $\intsigma$ or $\partial \Sigma$ we take that $r_{\xi}= 4 r_0$, for some constant $r_0>0$. Next, we will obtain some important asymptotic behaviors of the approximation solutions.
	
	\begin{lemma}~\label{lem0}
		For any $r,\rho>0$ , we have 
		\begin{eqnarray}
			\int _{\B_r} \frac{\rho^2}{                                                                 ( \rho^2+ |y|^2)^2} dy = 2\int_{\B^+_r} \frac{\rho^2}{ ( \rho^2+ |y|^2)^2} dy= \pi- \frac{\pi\rho^2}{r^2}+ \frac{\pi\rho^4}{(r^2+\rho^2)r^2};
			\label{int01}\\
			\int_{\B_r}     \frac{2\rho^4}{(\rho^2+|y|^2 )^3 } dy =2	\int_{\B^+_r}     \frac{2\rho^4}{(\rho^2+|y|^2 )^3 } dy =
			\pi-\frac{ \pi \rho^4 }{(r^2+\rho^2)^2}; \label{int02}\\
			\int_{B} \frac{ \rho^3y_j}{ (\rho^2+|y|^2)^3} dy=\begin{cases}
				0 
				\text{ if } j=2 \text{ and }B= \B_r \text{ or } j=1\\
				\frac 12+\mathcal{O}(\rho^4) \text{ if } j=2, B=\B_r^+
			\end{cases};
			\label{int13}\\
			\quad	\int_{\B_{r}} \frac{ \rho^2\log( \frac{\rho^2+ |y|^2}{\rho^2} )}{ (\rho^2+|y|^2)^2} dy=	2\int_{\B^+_{r}} \frac{\rho^2\log( \frac{\rho^2+ |y|^2}{\rho^2} )}{ (\rho^2+|y|^2)^2} dy= \pi + \frac{\pi\rho^2\log(\rho^2)}{r^2}+ \mathcal{O}(\rho^2), 	\label{int14}
		\end{eqnarray}
		as $\rho\rightarrow 0,$ where $\mathcal{O}(\cdot)$ relies on $r.$
	\end{lemma}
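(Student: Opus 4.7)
The plan is to prove each of the four displayed identities by reducing to one-variable integrals via polar coordinates, then using elementary substitutions. The symmetry assertions $\int_{\B_r} = 2\int_{\B_r^+}$ in (\ref{int01}), (\ref{int02}), (\ref{int14}) follow immediately because the integrands $\rho^2/(\rho^2+|y|^2)^2$, $2\rho^4/(\rho^2+|y|^2)^3$, and $\rho^2\log((\rho^2+|y|^2)/\rho^2)/(\rho^2+|y|^2)^2$ are all radial, hence in particular even under $y_2 \mapsto -y_2$. So it suffices to evaluate each full-disk integral.

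For (\ref{int01}) I would pass to polar coordinates to obtain $2\pi \int_0^r \rho^2 s (\rho^2+s^2)^{-2}\,ds$, then substitute $t=\rho^2+s^2$ to reduce it to $\pi\rho^2 \int_{\rho^2}^{\rho^2+r^2} t^{-2}\,dt = \pi - \pi\rho^2/(\rho^2+r^2)$; a single algebraic manipulation converts the last expression to the form stated. Identity (\ref{int02}) is handled identically: polar coordinates plus the same substitution $t=\rho^2+s^2$ gives $\pi\rho^4 \int_{\rho^2}^{\rho^2+r^2} t^{-3}\,dt$, which evaluates directly to $\pi - \pi\rho^4/(\rho^2+r^2)^2$.

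For the vanishing cases of (\ref{int13}), I would invoke parity: when $j=1$, the integrand is odd in $y_1$ on both $\B_r$ and $\B_r^+$; when $j=2$ and $B=\B_r$, it is odd in $y_2$. The remaining case $j=2$, $B=\B_r^+$ is the only computational one: I would rescale $y=\rho z$ to get $\int_{\B_{r/\rho}^+} z_2(1+|z|^2)^{-3}\,dz$, then split this off an infinite-region integral plus a tail on $|z|>r/\rho$. Using polar coordinates one evaluates the full $\int_{\B_\infty^+} z_2(1+|z|^2)^{-3}$ explicitly via $s=\tan\theta$, and the tail contributes $\mathcal{O}((r/\rho)^{-4}) = \mathcal{O}(\rho^4)$ by a direct size estimate on the integrand outside a ball of radius $r/\rho$.

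Finally, for (\ref{int14}) I would again reduce to polar coordinates, obtaining $2\pi\int_0^r \rho^2 s \log((\rho^2+s^2)/\rho^2)(\rho^2+s^2)^{-2}\,ds$, substitute $t=\rho^2+s^2$, and end with $\pi\rho^2 \int_{\rho^2}^{\rho^2+r^2} t^{-2}\log(t/\rho^2)\,dt$. Splitting the log as $\log t - \log\rho^2$ and integrating by parts with $u=\log t$, $dv=t^{-2}dt$ produces closed-form evaluations whose expansion as $\rho\to 0$ yields the leading $\pi$, the $\pi\rho^2 r^{-2}\log\rho^2$ correction, and an $\mathcal{O}(\rho^2)$ remainder. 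The only genuine subtlety is tracking which pieces in this expansion are absorbed into the error term versus stated explicitly; otherwise the entire lemma is routine calculus with no analytic obstacle.
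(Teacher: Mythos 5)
Your overall strategy --- radial symmetry to reduce the full-disk integrals to half-disk ones in \eqref{int01}, \eqref{int02}, \eqref{int14}, polar coordinates with the substitution $t=\rho^2+s^2$, parity for the vanishing cases of \eqref{int13}, rescaling $y=\rho z$ for the half-disk case, and integration by parts for the logarithmic integral --- is precisely the ``straightforward computation and symmetry properties'' that the paper's proof consists of, so in method you and the paper coincide.

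There is, however, one quantitative step in your plan that does not hold as written. In the case $j=2$, $B=\B_r^+$ of \eqref{int13}, after rescaling you must control the tail $\int_{\{|z|>r/\rho\}\cap\RR^2_+} z_2(1+|z|^2)^{-3}\,dz$. The integrand there is comparable to $|z|^{-5}$ and is positive, so there is no cancellation, and integrating $|z|^{-5}$ over the planar region $\{|z|>R\}$ gives a quantity of order $R^{-3}$; hence the tail is $O((r/\rho)^{-3})=O(\rho^3)$, not $O(\rho^4)$ as you claim ``by a direct size estimate''. Moreover, you never record the value of the limiting integral: carrying out your own evaluation ($z=(s\cos\theta,s\sin\theta)$, then $s=\tan\phi$) gives $\int_{\RR^2_+} z_2(1+|z|^2)^{-3}\,dz = 2\int_0^\infty s^2(1+s^2)^{-3}\,ds = \pi/8$, so your computation produces $\pi/8+O(\rho^3)$ for this case rather than the $\tfrac12+O(\rho^4)$ appearing in the statement; you should state the value you actually obtain and note the discrepancy (the value $\pi/8$ is the one consistent with how this integral enters the proof of Corollary~\ref{cor0}). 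Finally, a trivial slip: in \eqref{int02} the substitution yields $2\pi\rho^4\int_{\rho^2}^{\rho^2+r^2} t^{-3}\,dt$, not $\pi\rho^4\int_{\rho^2}^{\rho^2+r^2} t^{-3}\,dt$; your stated final value is nevertheless the correct one.
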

	\begin{proof}
		The proof can be derived from straightforward computation and applying symmetry properties.
	\end{proof}
	\begin{cor}~\label{cor0}
		Let $f \in C^{2, \alpha}(\Sigma)$ (possibly depending on $\xi$), $0<\alpha<1$,  $P_2(f)$ be the second-order Taylor expansion of $f(x)$ at $\xi$, i.e.
		$$
		P_2 f(x):=f(\xi)+\left\langle\nabla\left(f \circ y_{\xi}^{-1}\right)(0), y_{\xi}(x)\right\rangle+\frac{1}{2}\left\langle D^2\left(f \circ y_{\xi}^{-1}\right)(0) y_{\xi}(x), y_{\xi}(x)\right\rangle. 
		$$ 
		Let $\mathfrak{f}_2(\xi)=0$ if $\xi\in \intsigma$ and $ \mathfrak{f}_2(\xi)=\frac{\partial}{\partial y_2} f\circ y_{\xi}^{-1}(0)$ if $\xi\in \partial\Sigma$. Then 
		the following expansions hold as $\rho \rightarrow 0$ :
		\begin{equation}~\label{eqspan}
			\begin{array}{lll}
				&&	\int_{\Sigma} \chi_{\xi} e^{-\varphi_{\xi}\circ y_{\xi}^{-1}} f(x) e^{U_{\rho,\xi}} dv_g\\
				&=& \varrho(\xi)f(\xi)+\varrho(\xi)\mathfrak{f}_2(\xi)\rho+ \left( 4 f(\xi)\int_{\Sigma}\frac 1 {r_0} \chi^{\prime}(\frac {|y_{\xi}|}{r_0})e^{-\varphi_{\xi}\circ y_{\xi}^{-1}}\frac {1} {|y_{\xi}|^3}dv_g\right.\\
				&&\left. +  4\mathfrak{f}_2(\xi) \int_{\Sigma} \frac 1 {r_0} \chi^{\prime}(\frac {|y_{\xi}|}{r_0})e^{-\varphi_{\xi}\circ y_{\xi}^{-1}}\frac {(y_{\xi})_2 } {|y_{\xi}|^3}dv_g -\frac {\varrho(\xi)} 4\Delta_g f(\xi)(2\log \rho+1)\right. \\
				&& \left. -2\Delta_g f(\xi) \int_{\Sigma} \frac 1 {r_0} \chi^{\prime}(\frac {|y_{\xi}|}{r_0})e^{-\varphi_{\xi}\circ y_{\xi}^{-1}}\frac {\log|y_{\xi}| } {|y_{\xi}|}dv_g  + 8 \int_{\Sigma} \chi_{\xi} e^{-\varphi_{\xi}\circ y_{\xi}^{-1}} \frac{f-P_2(f)}{|y_{\xi}|^4} dv_g \right)\rho^2
				+o(\rho^2);
			\end{array}
		\end{equation}
		\begin{equation}\label{eqspan_1}
			\begin{array}{lll}
				\int_{\Sigma} \chi_{\xi} e^{-\varphi_{\xi}\circ y_{\xi}^{-1}} f(x) e^{U_{\rho, \xi}} \frac{d v_g}{\rho^2+\left|y_{\xi}(x)\right|^2}&=&\frac{\varrho(\xi) f(\xi)}{2\rho^2}+\frac{\varrho(\xi)}{4\rho}\mathfrak{f}_2(\xi)     \\
				& &+\frac{\varrho(\xi)}{8}\Delta_g  f(\xi)+ \mathcal{O}(\rho^{\alpha});
			\end{array}
		\end{equation}
		for any $a\in \RR$, 
		\begin{equation}\label{eqspan_2}
			\begin{array}{lll}
				\int_{\Sigma} \chi_{\xi} e^{-\varphi_{\xi}\circ y_{\xi}^{-1}} f(x) e^{U_{\rho, \xi}} \frac{( a \rho^2-|y_{\xi}|^2)d v_g}{(\rho^2+\left|y_{\xi}(x)\right|^2)^2}&=& \frac{\varrho(\xi)(2a-1)}{6\rho^2}f(\xi) +\frac{(a-1)\varrho(\xi)}{8\rho} \mathfrak{f}_2(\xi)\\
				&&+\frac{\varrho(\xi)(a-2)}{24}\Delta_g f(\xi)+ \mathcal{O}(\rho^{\alpha}). 
			\end{array}
		\end{equation}
	\end{cor}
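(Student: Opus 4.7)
\textbf{Proof proposal for Corollary~\ref{cor0}.} The plan is to reduce all three expansions to explicit Euclidean integrals over $\B_r$ or $\B_r^+$ by passing to the refined isothermal coordinates $y=y_\xi(x)$ of Claim~\ref{claim:0}. Under this change of variables $dv_g=e^{\varphi_\xi(y)}\,dy$, which cancels the factor $e^{-\varphi_\xi\circ y_\xi^{-1}}$, and $U_{\rho,\xi}\circ y_\xi^{-1}(y)=\log\frac{8\rho^2}{(\rho^2+|y|^2)^2}$. Hence each integrand reduces to a product of $\chi(|y|/r_0)$, $f\circ y_\xi^{-1}(y)$, an explicit rational function of $|y|$ and $\rho$, and (after interchanging the $e^{\varphi_\xi}$ that comes from $dv_g$ with the weight) no metric factor. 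The domain is $\B_{2r_0}$ when $\xi\in\intsigma$ and $\B_{2r_0}^+$ when $\xi\in\partial\Sigma$; this is the reason for the factor $2$ in $\varrho(\xi)$.

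Next, I would write $f\circ y_\xi^{-1}(y)=P_2(f)(y)+R_2(y)$ with $R_2(y)=o(|y|^{2+\alpha})$ by the $C^{2,\alpha}$ hypothesis. The polynomial part splits into three pieces according to homogeneity in $y$: the constant $f(\xi)$, the linear term $\langle\nabla(f\circ y_\xi^{-1})(0),y\rangle$, and the quadratic term. For \eqref{eqspan}, the constant piece yields $\varrho(\xi)f(\xi)$ plus a $\chi'$-remainder via \eqref{int01}. The linear piece vanishes by symmetry when $\xi\in\intsigma$, while for $\xi\in\partial\Sigma$ the $y_1$-component still vanishes by symmetry in $y_1$ but the $y_2$-component survives and, by \eqref{int13} rescaled, produces $\varrho(\xi)\mathfrak{f}_2(\xi)\rho$. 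The quadratic piece contributes a $\rho^2\log\rho$-term via \eqref{int14} after noting $\mathrm{tr}\,D^2(f\circ y_\xi^{-1})(0)=\Delta f\circ y_\xi^{-1}(0)=(\Delta_gf)(\xi)$ (using $\varphi_\xi(0)=0$, $\nabla\varphi_\xi(0)=0$ in the interior, and the modification $\partial_{y_2}\varphi_\xi(0)=-2k_g(\xi)$ on the boundary, which is absorbed into $\Delta_g f(\xi)$ via the identity $\Delta_g f=e^{-\varphi_\xi}\Delta f$). The remainder $R_2$ gives an $o(\rho^2)$ contribution after rescaling $y=\rho z$ and using dominated convergence with the integrable weight $1/(1+|z|^2)^2$ against $R_2(\rho z)/\rho^2=o(\rho^\alpha|z|^{2+\alpha})$ on compacta.

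For \eqref{eqspan_1} and \eqref{eqspan_2}, I would proceed identically but with the extra weight $(\rho^2+|y|^2)^{-1}$ or $(a\rho^2-|y|^2)(\rho^2+|y|^2)^{-2}$; these weights have one or two more powers of decay, so only the constant, linear, and quadratic parts of $P_2(f)$ matter and the remainder $R_2$ contributes $\mathcal{O}(\rho^\alpha)$ after the $y=\rho z$ rescaling. The scalar integrals needed are either read directly from \eqref{int01}--\eqref{int14} or are elementary variants obtained by the substitution $t=|y|^2/\rho^2$, in which the terms of the form $\int (2a-1)\rho^2-\cdots$ produce the combinatorial coefficients $(2a-1)/6$, $(a-1)/8$, $(a-2)/24$ via the beta-function identities $\int_0^\infty t^{k}(1+t)^{-n}dt$.

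The only genuinely delicate point is the $\chi$-dependent correction in \eqref{eqspan}: when $\chi'\not\equiv 0$ the annular region $r_0\le|y|\le 2r_0$ contributes a $\rho^2$ term that is not negligible. I would handle this by splitting $\int=\int_{\B_{r_0}}+\int_{\B_{2r_0}\setminus\B_{r_0}}$, treating the first piece with $\chi\equiv 1$ exactly as above and writing the second piece in the $\chi'$-integration-by-parts form displayed in \eqref{eqspan}; explicitly, one rewrites $\chi(|y|/r_0)|y|^{-4}=\frac{1}{2}(|y|^{-2})'r_0^{-1}\chi'(\cdot)\cdot|y|^{-1}$-type manipulations to match the boundary-layer integrals $\int r_0^{-1}\chi'(|y_\xi|/r_0)|y_\xi|^{-3}dv_g$ and $\int r_0^{-1}\chi'(|y_\xi|/r_0)(y_\xi)_2|y_\xi|^{-3}dv_g$ appearing in the claim. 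This bookkeeping is the main obstacle: ensuring that all $\chi$-dependent quantities cancel against the $\chi$-independent term $8\int\chi_\xi e^{-\varphi_\xi\circ y_\xi^{-1}}(f-P_2(f))|y_\xi|^{-4}dv_g$, so that the final expression is manifestly independent of the particular cutoff (as it must be, given that the left-hand side is).
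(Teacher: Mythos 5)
Your overall route coincides with the paper's: pass to the refined isothermal coordinates so that $e^{-\varphi_\xi\circ y_\xi^{-1}}dv_g=dy$, split $f\circ y_\xi^{-1}=P_2(f)+R_2$, kill the linear term (interior case) and the off-trace quadratic terms by symmetry on $\B_{2r_0}$ resp. $\B^+_{2r_0}$, extract the order-$\rho$ boundary term from the $y_2$-moment, get the $\rho^2\log\rho$ term from the trace of the Hessian via $\Delta_g f(\xi)=\Delta(f\circ y_\xi^{-1})(0)$, produce the $\chi'$ boundary-layer integrals by splitting off the annulus $r_0\le|y|\le 2r_0$ and integrating by parts, and treat \eqref{eqspan_1}, \eqref{eqspan_2} by the rescaling $y=\rho z$.

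There is, however, one step that as written would fail, and it is precisely the step that generates one of the asserted $\rho^2$-coefficients in \eqref{eqspan}: you dismiss the Taylor remainder as ``an $o(\rho^2)$ contribution after rescaling $y=\rho z$ and using dominated convergence with the integrable weight $1/(1+|z|^2)^2$''. After rescaling, the remainder contribution is $8\int\chi(\rho|z|/r_0)\,R_2(\rho z)(1+|z|^2)^{-2}dz$ with $|R_2(\rho z)|\le C\rho^{2+\alpha}|z|^{2+\alpha}$, and $|z|^{2+\alpha}(1+|z|^2)^{-2}\sim|z|^{\alpha-2}$ is \emph{not} integrable over the expanding domain $|z|\le 2r_0/\rho$; the honest bound is only $\mathcal{O}(\rho^2)$, and in fact the remainder contributes exactly the term $8\rho^2\int_\Sigma\chi_\xi e^{-\varphi_\xi\circ y_\xi^{-1}}\frac{f-P_2(f)}{|y_\xi|^4}dv_g$ appearing in \eqref{eqspan}, which is generically nonzero. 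To capture this constant you must not rescale: keep the variable at fixed scale and apply dominated convergence to $\chi_\xi\frac{f-P_2(f)}{(\rho^2+|y_\xi|^2)^2}\to\chi_\xi\frac{f-P_2(f)}{|y_\xi|^4}$ with dominating function $C|y|^{\alpha-2}$, as in \eqref{eq:est_D_re}. Relatedly, your closing ``cancellation'' picture is off: that term visibly contains $\chi_\xi$, nothing cancels against it, and one simply computes each contribution, only the total being cutoff-independent. (Your rescaling argument is fine for \eqref{eqspan_1} and \eqref{eqspan_2}, where the rescaled weights decay fast enough that $|z|^{2+\alpha}$ is integrable against them, giving the claimed $\mathcal{O}(\rho^\alpha)$.) Two smaller inaccuracies: the order-$\rho$ boundary term comes from $\int_{\B^+_{2r_0}}8\rho^2y_2(\rho^2+|y|^2)^{-2}dy$, i.e. \eqref{eq:inte_34}, not from \eqref{int13} (and the quadratic term uses an elementary variant, not \eqref{int14}); and no ``absorption'' of $\partial_{y_2}\varphi_\xi(0)=-2k_g(\xi)$ is needed, since in two dimensions $\Delta_g f=e^{-\varphi_\xi}\Delta(f\circ y_\xi^{-1})$ has no first-order term, so $\Delta_g f(\xi)=\Delta(f\circ y_\xi^{-1})(0)$ follows from $\varphi_\xi(0)=0$ alone.
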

	\begin{proof}
		Applying the Taylor expansion, we have 
		\begin{equation*}
			\begin{array}{lll}
				&&\int_{\Sigma} \chi_{\xi} e^{-\varphi_{\xi}\circ y_{\xi}^{-1}} f e^{U_{\rho,\xi}} dv_g=
				\int_{U_{2r_0}(\xi)} \chi_{\xi} e^{-\varphi_{\xi}\circ y_{\xi}^{-1}} f e^{U_{\rho,\xi}} dv_g\\
				&=&\int_{\Sigma} \chi_{\xi} e^{-\varphi_{\xi}\circ y_{\xi}^{-1}} P_2(f)(x) e^{U_{\rho,\xi}} dv_g + \int_{\Sigma} \chi_{\xi} e^{-\varphi_{\xi}\circ y_{\xi}^{-1}} (f-P_2(f)) e^{U_{\rho,\xi}} dv_g.  
			\end{array}
		\end{equation*}
		Since $|f(x)-P_2f(x)|\leq C |y_{\xi}(x)|^{2+\alpha}$ for same $\alpha\in (0,1)$, Lebesgue's dominated convergence theorem yields that 
		\begin{equation}\label{eq:est_D_re}
			\begin{array}{lll}
				\int_{\Sigma} \chi_{\xi} e^{-\varphi_{\xi}\circ y_{\xi}^{-1}} (f-P_2(f)) e^{U_{\rho,\xi}} dv_g
				&=& 8\rho^2 \int_{\Sigma}\chi_{\xi} e^{-\varphi_{\xi}\circ y_{\xi}^{-1}} \frac{f-P_2(f)}{|y_{\xi}|^4} dv_g+o(\rho^2). 
			\end{array}
		\end{equation}
		Applying~\eqref{int01} and integrating by part, we have 
		\begin{equation}\label{eq:est_D_0}
			\begin{array}{lll}
				&&	f(\xi)\int_{\Sigma} \chi_{\xi} e^{-\varphi_{\xi}\circ y_{\xi}^{-1}} e^{U_{\rho,\xi}} dv_g \\&=& f(\xi)\int_{ B_{2r_0}^{\xi}} \frac {8\rho^2}{(\rho^2+|y|^2)^2} dy + f(\xi)\int_{ B_{2r_0}^{\xi}} (\chi(|y|/r_0)-1)\frac {8\rho^2}{(\rho^2+|y|^2)^2} dy \\
				&=&  \left(\varrho(\xi)f(\xi)- \frac{\varrho(\xi)f(\xi)\rho^2}{4r_0^2}\right)+\left( \frac{\varrho(\xi)f(\xi)\rho^2}{4r_0^2}+\frac{\varrho(\xi)f(\xi)\rho^2}{r_0} \int_0^{\infty} \chi^{\prime}(\frac s {r_0})s^{-2}ds\right)+
				\mathcal{O}(\rho^4)\\
				&=& \varrho(\xi)f(\xi)+\frac{\varrho(\xi)\rho^2}{r_0} f(\xi)\int_0^{\infty} \chi^{\prime}(\frac s {r_0})s^{-2}ds+
				\mathcal{O}(\rho^4).
			\end{array}
		\end{equation}
		By a straightforward calculation, for $\xi\in \partial\Sigma$, i.e. $  B_{2r_0}^{\xi}=\B_{2r_0}^+$
		\begin{equation}
			\label{eq:inte_34}
			\begin{array}{lll}
				\int_{ B_{2r_0}^{\xi}} \frac{8\rho^2 y_2}{(\rho^2+|y|^2)^2}&=& 8\rho \arctan(2r_0/\rho)- \frac{4\rho^2}{r_0}+\mathcal{O}(\rho^4)\\
				&=& 4\pi \rho -8\rho\int_{2r_0/\rho}^{\infty}\frac 1 {(s+1)^2} s + 8\rho\int_{2r_0/\rho}^{\infty}( \frac 1 {(s+1)^2}- \frac 1 {s^2+1}) ds - \frac{4\rho^2}{r_0}+\mathcal{O}(\rho^4)\\
				&=& 4\pi \rho-\frac{8\rho^2}{r_0}+\mathcal{O}(\rho^3). 
			\end{array}
		\end{equation}
		Similarly,  for $\xi\in \partial\Sigma$, we use~\eqref{eq:inte_34} and integrating by part to derive the following estimate: 
		\begin{equation}\label{eq:est_D_1}
			\begin{array}{lcl}
				&&\int_{\Sigma} \chi_{\xi} e^{-\varphi_{\xi}\circ y_{\xi}^{-1}} \la \nabla f\circ y_{\xi}^{-1}(0), y_{\xi}(x)\ra e^{U_{\rho,\xi}} dv_g \\&=& \frac{\partial}{\partial y_2}f\circ y_{\xi}^{-1}(0)\int_{ B_{2r_0}^{\xi}} \frac {8\rho^2y_2}{(\rho^2+|y|^2)^2} dy + \frac{\partial}{\partial y_2}f\circ y_{\xi}^{-1}(0)\int_{ B_{2r_0}^{\xi}} (\chi(|y|/r_0)-1)\frac {8\rho^2y_2}{(\rho^2+|y|^2)^2} dy \\
				&=& \varrho(\xi)\frac{\partial}{\partial y_2}f\circ y_{\xi}^{-1}(0)\rho-\frac{ \varrho(\xi)}{\pi r_0}\frac{\partial}{\partial y_2}f\circ y_{\xi}^{-1}(0)\rho^2\\
				&&+ \frac{\partial}{\partial y_2}f\circ y_{\xi}^{-1}(0)\int_{ B_{2r_0}^{\xi}} (\chi(|y|/r_0)-1)\frac {8\rho^2y_2}{(\rho^2+|y|^2)^2} dy+
				\mathcal{O}(\rho^3)\\
				&=& \varrho(\xi)\frac{\partial}{\partial y_2}f\circ y_{\xi}^{-1}(0)\rho-\frac{ \varrho(\xi)}{\pi r_0}\frac{\partial}{\partial y_2}f\circ y_{\xi}^{-1}(0)\rho^2\\
				&&+\frac{\partial}{\partial y_2}f\circ y_{\xi}^{-1}(0)\left(\frac{\varrho(\xi)\rho^2}{\pi r_0} +\frac{\varrho(\xi)\rho^2}{\pi r_0}  \int_0^{\infty} \chi'(\frac s {r_0})s^{-1} ds\right)+ \mathcal{O}(\rho^3)\\
				&=& \varrho(\xi)\frac{\partial}{\partial y_2}f\circ y_{\xi}^{-1}(0)\rho+ \frac{\varrho(\xi)\rho^2}{\pi r_0} \frac{\partial}{\partial y_2}f\circ y_{\xi}^{-1}(0) \int_0^{\infty} \chi'(\frac s {r_0})s^{-1} ds+\mathcal{O}(\rho^3).
			\end{array}
		\end{equation}
		For $\xi\in \intsigma$, we have $\int_{\Sigma} \chi_{\xi} e^{-\varphi_{\xi}\circ y_{\xi}^{-1}} \la \nabla f\circ y_{\xi}^{-1}(0), y_{\xi}(x)\ra e^{U_{\rho,\xi}} dv_g =0$ by the symmetric property. 
		In view of that $\int_{B^{\xi}_{2r_0}} \frac{y_1y_2}{(\rho^2+|y|^2)^2} dy=0$ and $\frac 1 2 \int_{B^{\xi}_{2r_0}} \frac{y_1^2-y_2^2}{(\rho^2+|y|^2)^2} dy=0,$ 
		\[\frac 1 2 \int_{\Sigma} \chi_{\xi} e^{-\varphi_{\xi}\circ y_{\xi}^{-1}} \la D^2 f\circ y_{\xi}^{-1}(0) y_{\xi}(x), y_{\xi}(x)\ra e^{U_{\rho,\xi}} dv_g
		=\frac 1 4 \Delta f\circ y_{\xi}^{-1}(0) \int_{B^{\xi}_{2r_0}}\chi(|y|/r_0)  \frac{ 8\rho^2 |y|^2}{(\rho^2+|y|^2)} dy. \]
		By integrating by part again, we have 
		\begin{equation}\label{eq:est_D_2}
			\begin{array}{lll}
				&&\frac 1 4 \Delta f\circ y_{\xi}^{-1}(0) \int_{B^{\xi}_{2r_0}}\chi(|y|/r_0)  \frac{ 8\rho^2 |y|^2}{(\rho^2+|y|^2)} dy\\&=& -\frac {\varrho(\xi)} 4\Delta f\circ y_{\xi}^{-1}(0)\left(2\rho^2\log \rho+\rho^2 + \frac{2\rho^2}{r_0}\int_0^{\infty} \chi'(\frac s {r_0}) \log s ds\right)\\
				&&+\mathcal{O}(\rho^4).
			\end{array}
		\end{equation}
		Combining the estimates~\eqref{eq:est_D_re},~\eqref{eq:est_D_0},~\eqref{eq:est_D_1} and \eqref{eq:est_D_2}, 
		\begin{equation*}
			\begin{array}{lll}
				&&	\int_{\Sigma} \chi_{\xi} e^{-\varphi_{\xi}\circ y_{\xi}^{-1}} f(x) e^{U_{\rho,\xi}} dv_g\\
				&=& \varrho(\xi)f(\xi)+\varrho(\xi)\mathfrak{f}_2(\xi)\rho+ \left( 4 f(\xi)\int_{\Sigma}\frac 1 {r_0} \chi^{\prime}(\frac {|y_{\xi}|}{r_0})e^{-\varphi_{\xi}\circ y_{\xi}^{-1}}\frac {1} {|y_{\xi}|^3}dv_g\right.\\
				&&\left. +  4\frac{\partial}{\partial y_2}f\circ y_{\xi}^{-1}(0) \int_{\Sigma} \frac 1 {r_0} \chi^{\prime}(\frac {|y_{\xi}|}{r_0})e^{-\varphi_{\xi}\circ y_{\xi}^{-1}}\frac {(y_{\xi})_2 } {|y_{\xi}|^3}dv_g\right. \\
				&& \left. -\frac {\varrho(\xi)} 4\Delta f\circ y_{\xi}^{-1}(0)(2\log \rho+1)-2\Delta f\circ y_{\xi}^{-1}(0) \int_{\Sigma} \frac 1 {r_0} \chi^{\prime}(\frac {|y_{\xi}|}{r_0})e^{-\varphi_{\xi}\circ y_{\xi}^{-1}}\frac {\log|y_{\xi}| } {|y_{\xi}|}dv_g \right.\\
				&&\left. + 8 \int_{\Sigma} \chi_{\xi} e^{-\varphi_{\xi}\circ y_{\xi}^{-1}} \frac{f-P_2(f)}{|y_{\xi}|^4} dv_g \right)\rho^2
				+o(\rho^2),
			\end{array}
		\end{equation*}
		where $\mathfrak{f}_2(\xi)=0$ if $\xi\in \intsigma$ and $ \mathfrak{f}_2(\xi)=\frac{\partial}{\partial y_2} f\circ y_{\xi}^{-1}(0)$ if $\xi\in \partial\Sigma$. Since $ \Delta_g f(\xi)= \Delta f\circ y_{\xi}^{-1}(0)$,~\eqref{eqspan} is concluded. 
		Lemma~\ref{lem0} yields that 
		\begin{equation*}
			\begin{array}{lll}
				&&\int_{\Sigma} \chi_{\xi} e^{-\varphi_{\xi}\circ y_{\xi}^{-1}} f(x) e^{U_{\rho, \xi}} \frac{d v_g}{\rho^2+|y_{\xi}|^2}= 	\int_{U_{2r_0}(\xi)} \chi_{\xi} e^{-\varphi_{\xi}\circ y_{\xi}^{-1}} f(x) e^{U_{\rho, \xi}} \frac{d v_g}{\rho^2+|y_{\xi}|^2}\\
				&=& \int_{U_{2r_0}(\xi)} \chi_{\xi} e^{-\varphi_{\xi}\circ y_{\xi}^{-1}} P_2f(x) e^{U_{\rho, \xi}} \frac{d v_g}{\rho^2+\left|y_{\xi}(x)\right|^2}+ \int_{U_{2r_0}(\xi)} \chi_{\xi} e^{-\varphi_{\xi}\circ y_{\xi}^{-1}} (f-P_2f) e^{U_{\rho, \xi}} \frac{d v_g}{\rho^2+|y_{\xi}|^2} \\
				&=&\frac 8 {\rho^2} f(\xi) \int_{\frac 1 {\rho} B_{2r_0}^{\xi}}\chi(|y|/r_0) \frac{1}{(1+|y|^2)^3} dy+ \frac{8}{\rho
				}\frac{\partial}{\partial y_2} f\circ y_{\xi}^{-1}(0) \int_{\frac 1 {\rho} B_{2r_0}^{\xi}}\chi(|y|/r_0) \frac{y_2}{(1+|y|^2)^3} dy\\
				&& + \frac 1 4 \Delta  f\circ y_{\xi}^{-1}(0) \int_{\frac 1 {\rho} B_{2r_0}^{\xi}}\chi(|y|/r_0) \frac{8 |y|^2}{(1+|y|^2)^3} dy + \mathcal{O}(\rho^{\alpha}) \\
				&=& \frac{\varrho(\xi) f(\xi)}{2\rho^2}+\frac{\varrho(\xi)\mathfrak{f}_2(\xi)}{4\rho}  \frac{\partial}{\partial y_2} f\circ y_{\xi}^{-1}(0) +\frac{\varrho(\xi)}{8}\Delta  f\circ y_{\xi}^{-1}(0) + \mathcal{O}(\rho^{\alpha}), 
			\end{array}
		\end{equation*}
		for any $\alpha\in (0,1).$
		We observe that 
		\begin{equation*}
			\begin{array}{lll}
				\int_{ \frac 1{\rho
					} B^{\xi}_{2r_0}} \frac 8 {(1+|y|^2)^4} dy&=& \frac {\varrho(\xi)} 3\left(1- \frac{\rho^6}{(4r_0^2+\rho^2)^3}\right),\\
				\int_{ \frac 1{\rho
					} B^{\xi}_{2r_0}} \frac {8y_2} {(1+|y|^2)^4} dy&=&\begin{cases}
					0& \xi\in\intsigma\\
					\frac \pi 2 +\mathcal{O}(\rho^2)& \xi\in \partial\Sigma
				\end{cases}. 
			\end{array}
		\end{equation*}
		It follows that  
		\begin{equation*}
			\begin{array}{lll}
				&&	\int_{\Sigma} \chi_{\xi} e^{-\varphi_{\xi}\circ y_{\xi}^{-1}} f e^{U_{\rho, \xi}} \frac{( a \rho^2-|y_{\xi}|^2)d v_g}{(\rho^2+|y_{\xi}|^2)^2}=	\int_{U_{2r_0}(\xi)} \chi_{\xi} e^{-\varphi_{\xi}\circ y_{\xi}^{-1}} f e^{U_{\rho, \xi}} \frac{( a \rho^2-|y_{\xi}|^2)d v_g}{(\rho^2+|y_{\xi}|^2)^2}\\
				&=& \int_{U_{2r_0}(\xi)} \chi_{\xi} e^{-\varphi_{\xi}\circ y_{\xi}^{-1}} P_2f e^{U_{\rho, \xi}} \frac{( a \rho^2-|y_{\xi}|^2)d v_g}{(\rho^2+|y_{\xi}|^2)^2}\\
				&&+ \int_{U_{2r_0}(\xi)} \chi_{\xi} e^{-\varphi_{\xi}\circ y_{\xi}^{-1}} (f-P_2f) e^{U_{\rho, \xi}} \frac{( a \rho^2-|y_{\xi}|^2)d v_g}{(\rho^2+|y_{\xi}|^2)^2}\\
				&=&\frac 8 {\rho^2} f(\xi) \int_{\frac 1 {\rho} B_{2r_0}^{\xi}}\chi(|y|/r_0) \frac{a-|y|^2}{(1+|y|^2)^4} dy+ \frac{8}{\rho
				}\frac{\partial}{\partial y_2} f\circ y_{\xi}^{-1}(0) \int_{\frac 1 {\rho} B_{2r_0}^{\xi}}\chi(|y|/r_0) \frac{(a-|y|^2)y_2}{(1+|y|^2)^4} dy\\
				&& + \frac 1 4 \Delta  f\circ y_{\xi}^{-1}(0) \int_{\frac 1 {\rho} B_{2r_0}^{\xi}}\chi(|y|/r_0) \frac{8(a-|y|^2)|y|^2}{(1+|y|^2)^4} dy + \mathcal{O}(\rho^{\alpha})\\
				&=& \frac{\varrho(\xi)(2a-1)}{6\rho^2}f(\xi) +\frac{(a-1)\varrho(\xi)}{8\rho} \mathfrak{f}_2(\xi)+\frac{\varrho(\xi)(a-2)}{24}\Delta f\circ y_{\xi}^{-1}(0)+ \mathcal{O}(\rho^{\alpha}),
			\end{array}
		\end{equation*}	for any $\alpha\in (0,1).$
	\end{proof}
	
	The following lemma shows the asymptotic expansion of $PU_{\rho,\xi}$  as $\rho\rightarrow 0$.
	Let $$f_{\xi}=\frac{1}{|y_{\xi}|^2}\Delta_g\chi_{\xi}+2 \left\la\nabla \chi_{\xi},\nabla \frac 1{|y_{\xi}|^2}\right\ra_g+\frac 2 {|\Sigma|_g} \int_{\Sigma}\frac 1 {r_0} \chi^{\prime}\left(\frac{|y_{\xi}|}{r_0}\right)\frac{e^{-\varphi_{\xi}\circ y_{\xi}^{-1}} }{ |y_{\xi}|^3} dv_g$$ and  $F_{\xi}$  be the unique solution to the following equations:
	\begin{equation}\label{eq:F_xi}
		\begin{cases}
			-\Delta_g F_{\xi}= f_{\xi}& \text{ in }\Sigma\\
			\partial_{\nu_g}F_{\xi}=0 & \text{ on }\partial\Sigma\\
			\int_{\Sigma} F_{\xi} dv_g=0
		\end{cases}. 
	\end{equation}
	$F_{\xi}$ is well-defined. 
	By integrating \eqref{eq:delta_f_xi} over $\Sigma$: 
	\[ \int_{\Sigma} f_{\xi}=- \frac 1 {2\rho^2}\int_{\Sigma} \Delta_g{\eta}_{\rho,\xi} + \mathcal{O}(\rho^2).\]
	Applying the divergence theorem and leveraging~\eqref{eq:f_i_b}, we deduce that
	\[ \int_{\Sigma} \Delta_g {\eta}_{\rho,\xi}= \int_{\Sigma} \partial_{\nu_g}{\eta}_{\rho,\xi} =\mathcal{O}(\rho^4).\]
	Consequently, we derive $\int_{\Sigma} f_{\xi}=0.$
	\begin{lemma}~\label{lemb1}
		The function $P U_{\delta, \xi}$ has the following expansion, 
		$$
		P U_{\rho, \xi}=\chi_{\xi}\left(U_{\rho, \xi}(x)-\log \left(8 \rho^2\right)\right)+\varrho(\xi) H^g(x, \xi)+h_{\rho,\xi}-2\rho^2 F_{\xi} +\mathcal{O}(\rho^3|\log \rho|)\quad(\rho\rightarrow 0),
		$$
		in $C(\Sigma)$, where $F_{\xi}$ is defined by~\eqref{eq:F_xi} and $h_{\rho,\xi}$  is a constant defined by 
		\[ h_{\rho,\xi}=	-\frac{\varrho(\xi)\rho^2\log \rho}{2|\Sigma|_g}+ \frac{2\rho^2}{|\Sigma|_g} \left(\frac{\varrho(\xi)}{8}+\int_{\Sigma} \chi_{\xi} \frac{1- e^{-\varphi_{\xi}\circ y_{\xi}^{-1}}}{|y_{\xi}|^2} dv_g -\int_{\Sigma} \frac 1 {r_0} \chi^{\prime}\left(\frac{|y_{\xi}|} {r_0}\right) \frac{ \log|y_{\xi}|}{|y_{\xi}|}dv_g \right). \] 
		Moreover, the convergent is locally uniform for $\xi$ in  $\intsigma$ and $\partial \Sigma$. 
		In particular, 
		$$
		P U_{\rho, \xi}=\varrho(\xi)G^g(x, \xi)-2 \rho^2\frac {\chi_{\xi}} {|y_{\xi}|^2}+h_{\rho,\xi}-2\rho^2F_{\xi}+\mathcal{O}(\rho^3|\log \rho|)\quad(\rho\rightarrow 0), 
		$$
		in $C_{loc}(\Sigma\setminus\{\xi\})$.
	\end{lemma}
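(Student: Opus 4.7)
The plan is to introduce the error function
\[
\eta_{\rho,\xi}(x) := PU_{\rho,\xi}(x) - \chi_{\xi}(x)\bigl(U_{\rho,\xi}(x)-\log(8\rho^2)\bigr) - \varrho(\xi)H^g(x,\xi) + 2\rho^2 F_{\xi}(x) - h_{\rho,\xi},
\]
and to show $\|\eta_{\rho,\xi}\|_{C(\Sigma)}=\mathcal{O}(\rho^3|\log\rho|)$ by solving the Neumann problem it satisfies. The first step is a Leibniz computation of $-\Delta_g\eta_{\rho,\xi}$: writing $-\Delta_g[\chi_{\xi}(U_{\rho,\xi}-\log(8\rho^2))]=-\chi_{\xi}\Delta_g U_{\rho,\xi}-2\langle\nabla\chi_{\xi},\nabla U_{\rho,\xi}\rangle_g-(U_{\rho,\xi}-\log(8\rho^2))\Delta_g\chi_{\xi}$ and using \eqref{eq:eqR} to substitute $-\varrho(\xi)\Delta_g H^g_{\xi} = -4(\Delta_g\chi_{\xi})\log|y_{\xi}|-8\langle\nabla\chi_{\xi},\nabla\log|y_{\xi}|\rangle_g-\varrho(\xi)/|\Sigma|_g$ together with $-\Delta_g F_{\xi}=f_{\xi}$. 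Since $\nabla\chi_{\xi}$ and $\Delta_g\chi_{\xi}$ are supported in the annulus $r_{\xi}/4\le|y_{\xi}|\le r_{\xi}/2$, on this set one Taylor-expands
\[
U_{\rho,\xi}-\log(8\rho^2)=-4\log|y_{\xi}|-\tfrac{2\rho^2}{|y_{\xi}|^2}+\mathcal{O}(\rho^4),\qquad \nabla U_{\rho,\xi}=-4\nabla\log|y_{\xi}|-2\rho^2\nabla\tfrac{1}{|y_{\xi}|^2}+\mathcal{O}(\rho^4).
\]
All $\log|y_{\xi}|\,\Delta_g\chi_{\xi}$ and $\langle\nabla\chi_{\xi},\nabla\log|y_{\xi}|\rangle_g$ terms cancel, and the remaining $\rho^2$-contributions $\tfrac{2\rho^2}{|y_{\xi}|^2}\Delta_g\chi_{\xi}+4\rho^2\langle\nabla\chi_{\xi},\nabla\tfrac{1}{|y_{\xi}|^2}\rangle_g$ are absorbed by the first two pieces of $-2\rho^2 f_{\xi}$. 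What survives is the scalar residual
\[
-\Delta_g\eta_{\rho,\xi}=\overline{\chi_{\xi}\Delta_g U_{\rho,\xi}}+\tfrac{\varrho(\xi)}{|\Sigma|_g}+\tfrac{4\rho^2}{|\Sigma|_g}\int_{\Sigma}\tfrac{1}{r_0}\chi'\bigl(\tfrac{|y_{\xi}|}{r_0}\bigr)\tfrac{e^{-\varphi_{\xi}\circ y_{\xi}^{-1}}}{|y_{\xi}|^3}\,dv_g+\mathcal{O}(\rho^4),
\]
and applying Corollary~\ref{cor0} with $f\equiv1$ (for which $\mathfrak{f}_2=0=\Delta_g f$, so only the $\chi'$ boundary contribution survives at order $\rho^2$) shows the three leading pieces cancel, leaving $-\Delta_g\eta_{\rho,\xi}=\mathcal{O}(\rho^3|\log\rho|)$ uniformly. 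The third term of $f_{\xi}$, which is constant and built from precisely this $\chi'$-integral, is engineered for this cancellation.

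The next step is to check $\partial_{\nu_g}\eta_{\rho,\xi}=0$ on $\partial\Sigma$. In the refined isothermal chart of Claim~\ref{claim:0} the boundary is $\{y_2=0\}$, and a direct computation gives $\partial_{y_2}U_{\rho,0}|_{y_2=0}=0$, $\partial_{y_2}\chi(|y|/r_0)|_{y_2=0}=0$ and $\partial_{y_2}\log|y|\,|_{y_2=0}=0$, so by \eqref{eq:out_normal_derivatives} all normal derivatives of $\chi_{\xi}(U_{\rho,\xi}-\log(8\rho^2))$ vanish; combined with $\partial_{\nu_g}PU_{\rho,\xi}=\partial_{\nu_g}H^g_{\xi}=\partial_{\nu_g}F_{\xi}=0$ from \eqref{eq:projection}, \eqref{eq:eqR} and \eqref{eq:F_xi}, this yields the Neumann condition on $\eta_{\rho,\xi}$.

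The constant $h_{\rho,\xi}$ is fixed by requiring $\int_{\Sigma}\eta_{\rho,\xi}\,dv_g=0$. Using $\int PU_{\rho,\xi}=0$, $\int F_{\xi}=0$, and $\int H^g_{\xi}=\tfrac{4}{\varrho(\xi)}\int\chi_{\xi}\log|y_{\xi}|\,dv_g$ from \eqref{eq:eqR}, together with the identity $U_{\rho,\xi}-\log(8\rho^2)=-4\log|y_{\xi}|-2\log(1+\rho^2/|y_{\xi}|^2)$, all $\log|y_{\xi}|$-terms cancel and one obtains
\[
h_{\rho,\xi}=\frac{2}{|\Sigma|_g}\int_{\Sigma}\chi_{\xi}\log\!\Bigl(1+\tfrac{\rho^2}{|y_{\xi}|^2}\Bigr)\,dv_g.
\]
Expanding this integral asymptotically in isothermal coordinates (isolate the singular logarithm at $y=0$ via the identity $\log(1+\rho^2/|y|^2)=\log(|y|^2+\rho^2)-2\log|y|$, integrate by parts, and use $e^{\varphi_{\xi}(y)}=1+(e^{\varphi_{\xi}(y)}-1)$) produces the leading piece $-\tfrac{\varrho(\xi)\rho^2\log\rho}{2|\Sigma|_g}$ and, after collecting the $\rho^2$-corrections coming from the boundary term $\chi'$-integral and the conformal factor $e^{\varphi_{\xi}}-1$, matches the explicit formula stated in the lemma.

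Finally, I invoke standard $L^p$/Schauder regularity for the Neumann Laplacian on $(\Sigma,g)$: a function with zero mean, homogeneous Neumann data, and $\|\Delta_g(\cdot)\|_{L^{\infty}}=\mathcal{O}(\rho^3|\log\rho|)$ has $C(\Sigma)$-norm of the same order. The locally smooth dependence on $\xi$ in Claim~\ref{claim:0} and Lemma~\ref{lem:re_green} then gives uniform convergence on compact subsets of $\intsigma$ or $\partial\Sigma$. The second displayed expansion (in $C_{\mathrm{loc}}(\Sigma\setminus\{\xi\})$) follows by substituting $\chi_{\xi}(U_{\rho,\xi}-\log(8\rho^2))=-4\chi_{\xi}\log|y_{\xi}|-2\rho^2\chi_{\xi}/|y_{\xi}|^2+\mathcal{O}(\rho^4)$ and using $G^g=\Gamma^g+H^g$ with $\varrho(\xi)\Gamma^g_{\xi}=-4\chi_{\xi}\log|y_{\xi}|$. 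The main obstacle is the bookkeeping in Step 1: one must carefully identify which constant from Corollary~\ref{cor0} matches the boundary-type term in $f_{\xi}$, so that the leading residual in $-\Delta_g\eta_{\rho,\xi}$ collapses to $o(\rho^2)$ and a second round of Taylor estimates upgrades this to $\mathcal{O}(\rho^3|\log\rho|)$.
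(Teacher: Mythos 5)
Your proposal is correct and follows essentially the same route as the paper: you study the same error function, reduce $-\Delta_g\eta_{\rho,\xi}$ to the cancellation provided by Corollary~\ref{cor0} with $f\equiv1$ against the $\chi'$-term built into $f_{\xi}$, fix the constant via $\int_\Sigma\chi_\xi\log(1+\rho^2/|y_\xi|^2)\,dv_g$, check the Neumann data using $\partial_{\nu_g}|y_\xi|^2=0$ on the boundary, and conclude by $L^p$/Schauder estimates. The only difference is cosmetic — you subtract $-2\rho^2F_\xi$ and $h_{\rho,\xi}$ from the start, whereas the paper identifies them at the end through $-\Delta_g\eta=-2\rho^2 f_\xi+\mathcal{O}(\rho^4)$ and the mean-value computation — and your sketched bookkeeping (cancellation of the $\log|y_\xi|$ cutoff terms, the $\varrho(\xi)/8$ constant from the log-integral) matches the paper's detailed estimates.
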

	\begin{proof}
		Let $\eta_{\rho,\xi}(x)= PU_{\rho,\xi}-\chi_{\xi}(U_{\rho,\xi}-\log 8\rho^2)  -\varrho(\xi) H^g(x,\xi)$.
		\begin{equation*}
			\int_{\Sigma}\eta_{\rho,\xi} dv_g = 2\int_{\Sigma}\chi_{\xi}\log\left(1+ \frac{\rho^2}{|y_{\xi}|^2}\right) dv_g. 
		\end{equation*}
		We divide the integral into two parts: 
		\[ 	2\int_{\Sigma}  \chi_{\xi}\log\left(1+ \frac{\rho^2}{|y_{\xi}|^2}\right) dv_g=2\left(\int_{B^{\xi}_{r_0}} +\int_{B^{\xi}_{2r_0}\setminus B^{\xi}_{r_0}}\right) \chi(|y|/r_0)e^{\varphi_{\xi}(y)} \log \left(1+ \frac{\rho^2 }{|y|^2}\right)dy. \]
		Since $\log \left(1+ \frac{\rho^2 }{|y|^2}\right)= \frac{\rho^2}{|y|^2}+\mathcal{O}(\rho^4)$  for any $r_0\leq |y|\leq 2r_0$, 
		\begin{equation}\label{eq:int_res}
			\begin{array}{lll}
				&&2\int_{B^{\xi}_{2r_0}\setminus B^{\xi}_{r_0}} \chi(|y|/r_0) e^{\varphi_{\xi}(y)} \log \left(1+ \frac{\rho^2 }{|y|^2}\right)dy
				=2\rho^2 \int_{B^{\xi}_{2r_0}\setminus B^{\xi}_{r_0}} \chi(|y|/r_0) e^{\varphi_{\xi}(y)} |y|^{-2}dy +\mathcal{O}(\rho^4)\\
				&=& 2\rho^2 \int_{B^{\xi}_{2r_0}\setminus B^{\xi}_{r_0}} \chi(|y|/r_0) (e^{\varphi_{\xi}(y)} -1)\frac 1 {|y|^2}dy-\frac{\varrho(\xi)\rho^2}{2}\log r_0 -\frac{\varrho(\xi)\rho^2}{2} \int_0^{\infty}\frac 1 {r_0} \chi^{\prime}(\frac s {r_0}) \log s ds+\mathcal{O}(\rho^4) 
			\end{array}
		\end{equation}
		We observe that 
		\begin{equation}\label{eq:int_0_main}
			\begin{array}{lll}
				\int_{B^{\xi}_{r_0}} \log \left(1+ \frac{\rho^2 }{|y|^2}\right)dy&=& \frac{\varrho(\xi)\rho^2}{4} \int_0^{\infty} \left(\log(1+s^{-2})- {(1+s^2)^{-1}}\right)s ds\\
				&& + \frac{\varrho(\xi)\rho^2}{8}\log\left(1+ \frac{r_0^2}{\rho^2}\right)+ \mathcal{O}(\rho^4)
				\\
				&=& \frac{\varrho(\xi)\rho^2}{8}  + \frac{\varrho(\xi)\rho^2}{8}\log\left(1+ \frac{r_0^2}{\rho^2}\right)+ \mathcal{O}(\rho^4). 
			\end{array}
		\end{equation}
		Here, the last inequality applied the integral $\int_0^{\infty} \left(\log(1+s^{-2})- {(1+s^2)^{-1}}\right)s ds=\frac 1 2. $
		By $e^{\varphi_{\xi}(y)}=\begin{cases}
			1+ \mathcal{O}(|y|^2) & \text{ for }\xi\in \intsigma \\
			1-2k_g(\xi)y_2+ \mathcal{O}(|y|^2)  & \text{ for }\xi\in \partial\Sigma
		\end{cases}$, we can deduce that 
		\begin{equation}\label{eq:int_main_r1}
			\begin{array}{lll}
				&&	\int_{B^{\xi}_{r_0}} \left( \log \left(1+ \frac{\rho^2 }{|y|^2}\right)-\frac {\rho^2}{|y|^2}\right) (e^{\varphi_{\xi}(y)}-1) dy \\
				&=& \rho^3\mathcal{O}\left( \int_{\B_{r_0/\rho}} \left| \log \left(1+ \frac{1 }{|y|^2}\right)-\frac {1}{|y|^2}\right| |y|dy \right)\\
				&=&\rho^3 \mathcal{O}\left(\left( \int_{\B_{r_0/\rho}\setminus\B_1}+ \int_{\B_1} \right)\left| \log \left(1+ \frac{1 }{|y|^2}\right)-\frac {1}{|y|^2}\right| |y|dy \right)\\
				&=&\mathcal{O}(\rho^3|\log \rho|)
			\end{array}
		\end{equation}
		Combining estimates~\eqref{eq:int_res},~\eqref{eq:int_0_main} and~\eqref{eq:int_main_r1}, it follows that 
		\begin{equation}
			\begin{array}{lll}
				&&\int_{\Sigma}  \eta_{\rho,\xi} dv_g=  \frac{\varrho(\xi)\rho^2}{4} + \frac{\varrho(\xi)\rho^2}{4}\log\left(1+ \frac{r_0^2}{\rho^2}\right)+2\rho^2 \int_{B^{\xi}_{2r_0}\setminus B^{\xi}_{r_0}} \chi(|y|/r_0) (e^{\varphi_{\xi}(y)} -1)\frac 1 {|y|^2}dy\\
				&&-\frac{\varrho(\xi)\rho^2}{2}\log r_0 -\frac{\varrho(\xi)\rho^2}{2} \int_0^{\infty}\frac 1 {r_0} \chi^{\prime}(\frac s {r_0}) \log s ds+\mathcal{O}(\rho^3|\log \rho|)\\
				&=& -\frac{\varrho(\xi)\rho^2\log \rho}{2}+ 2\rho^2 \left(\frac{\varrho(\xi)}{8}+\int_{\Sigma} \chi_{\xi} \frac{1- e^{-\varphi_{\xi}\circ y_{\xi}^{-1}}}{|y_{\xi}|^2} dv_g -\int_{\Sigma} \frac 1 {r_0} \chi^{\prime}\left(\frac{|y_{\xi}|} {r_0}\right) \frac{ \log|y_{\xi}|}{|y_{\xi}|}dv_g \right)+\mathcal{O}(\rho^3 |\log \rho|).
			\end{array}
		\end{equation}	
		If $\xi\in \intsigma$,  $\partial _{ \nu_g }\eta_{\rho,\xi}=2\log \left(1+\frac{\rho^2}{|y_{\xi}(x)|^2}\right) \partial_{\nu_g} \chi_{\xi}  +2 \chi_{\xi} \partial_{ \nu_g }\log \left(1+\frac{\rho^2}{|y_{\xi}(x)|^2}\right) \equiv 0$ on $\partial \Sigma$.  
		If $\xi\in \partial \Sigma$,  \eqref{eq:out_normal_derivatives} yields that 
		$  \partial_{\nu_g} |y_{\xi}(x)|^2=  -e^{-\frac 1 2\varphi_{\xi}\circ y_{\xi}^{-1}(y) }  \frac{\partial |y|^2}{\partial y_2} |_{y=y_{\xi}(x)} \equiv0$ on $U(\xi)\cap \partial\Sigma$. 
		For any $x\in \partial \Sigma$, 
		\begin{equation} \label{eq:f_i_b}
			\begin{array}{lll}
				\partial_{ \nu_g }\eta_{\rho,\xi}(x)
				&=&2\frac{\rho^2}{|y_{\xi}(x)|^2}\partial_{ \nu_g}\chi_{\xi} +2\chi_{\xi} \frac{\rho^2}{\rho^2+ |y_{\xi}(x)|^2} \frac{ \partial_{\nu_g} |y_{\xi}(x)|^2}{|y_{\xi}(x)|^2}+\mathcal{O}(\rho^4) \\
				&=& \mathcal{O}(\rho^4),\quad\text{ as } \rho\rightarrow 0.
		\end{array}\end{equation}
		By \eqref{eqspan}, we have 
		\begin{equation}
			\label{eq:int_av}
			\begin{array}{lll}
				\int_{\Sigma} \chi_{\xi}(-\Delta_g)U_{\rho,\xi}dv_g&=&\int_{\Sigma}\chi_{\xi} e^{-\varphi_{\xi}\circ y_{\xi}^{-1}} e^{U_{\rho,\xi}} dv_g\\
				&=&\varrho(\xi)+4 \rho^2\int_{\Sigma}\frac 1 {r_0} \chi^{\prime}\left(\frac {|y_{\xi}|} {r_0}\right)\frac {e^{-\varphi_{\xi} \circ y_{\xi}^{-1}} } {|y_{\xi}|^3} dv_g+\mathcal{O}(\rho^4). 
			\end{array}
		\end{equation}
		We observe that for any $x\in U(\xi)$, $-\Delta_g U_{\rho,\xi}= e^{-{\varphi}_{\xi}(y)} \Delta u_{\rho,0}|_{ y=y_{\xi}(x)} =e^{-\varphi_{\xi}\circ y_{\xi}^{-1}} e^{U_{\rho,\xi}}$.  It follows that
		\begin{equation} 	~\label{eq:delta_f_xi} \begin{array}{lcl}
				&&	-\Delta_g \eta_{\rho,\xi}\stackrel{\eqref{eq:projection}\text{ and }\eqref{eq:eqR}}{=}-\Delta _g\left( PU_{\rho,\xi}-\chi_{\xi}(U_{\rho,\xi}-\log  (8\rho^2))\right)
				+\varrho(\xi)\Delta_g H^g(\cdot,\xi)    \\
				&=&-2\rho^2\left(\frac{1}{|y_{\xi}|^2}\Delta_g\chi_{\xi}+2 \left\la\nabla \chi_{\xi},\nabla \frac 1{|y_{\xi}|^2}\right\ra_g\right)+ \frac 1 {|\Sigma|_g}\left( \varrho(\xi)- \int_{\Sigma} \chi_{\xi} e^{-\varphi_{\xi}\circ y_{\xi}^{-1}} e^{U_{\rho,\xi}} dv_g \right)\\
				&&+\mathcal{O}(\rho^4)\\
				&\stackrel{\eqref{eq:int_av}}{=}& -2\rho^2 f_{\xi}+\mathcal{O}(\rho^4).
			\end{array}
		\end{equation}
		
		By $L^p$-estimate (see~\cite{Agmon1959,Wehrheim2004}, for instance)  and Sobolev embedding theorem  theorem , we have  
		\[ \eta_{\rho,\xi}=\overline{\eta_{\rho,\xi}}-2\rho^2 F_{\xi}+\mathcal{O}(\rho^4)= h_{\rho,\xi}-2\rho^2F_{\xi}+\mathcal{O}(\rho^3|\log \rho|), \]
		in $C(\Sigma)$. 
		The proof is concluded. 
	\end{proof} 
	\begin{lemma}~\label{lem:expansion1st} 
		\begin{equation*}
			\partial_{\xi_j}PU_{\rho,\xi}= \chi_{\xi} \partial_{\xi_j} U_{\rho,\xi} +\varrho(\xi) \partial_{\xi_j} H^g(x,\xi)-4\log |y_{\xi}(x)|\partial_{\xi_j}\chi_{\xi}  +{\mathcal{O}}(\rho^2|\log\rho|),
		\end{equation*}
		in $C(\Sigma)$,
		where $j=1,2$ if $\xi\in\intsigma$; $j=1$ if $\xi\in\partial\Sigma.$
		And 
		\[ 	\partial_{\xi_j}PU_{\rho,\xi}=\varrho(\xi) \partial_{\xi_j} G^g(x,\xi) +{\mathcal{O}}(\rho^2|\log\rho|),\]
		in $C_{loc}(\Sigma\setminus\{\xi\})$,
		which is uniformly convergent for $\xi$ in any compact subset of $\intsigma$ or $\xi\in \partial\Sigma$.
	\end{lemma}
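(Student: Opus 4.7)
The plan is to differentiate the expansion of $PU_{\rho,\xi}$ from Lemma~\ref{lemb1} with respect to the parameter $\xi_j$, carefully propagating the error estimate from the $C^0$-expansion to its $\xi$-derivative via the elliptic PDE satisfied by the remainder. Set
\begin{equation*}
\eta_{\rho,\xi}(x) := PU_{\rho,\xi}(x) - \chi_{\xi}(x)\big(U_{\rho,\xi}(x) - \log(8\rho^2)\big) - \varrho(\xi) H^g(x,\xi),
\end{equation*}
so that Lemma~\ref{lemb1} reads $\eta_{\rho,\xi} = h_{\rho,\xi} - 2\rho^2 F_\xi + \mathcal{O}(\rho^3|\log\rho|)$ in $C(\Sigma)$. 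Recall that $\varrho(\xi)$ is locally constant on each of $\intsigma$ and $\partial\Sigma$, so $\partial_{\xi_j}\varrho(\xi) = 0$ in $M_{\xi^*,\delta}$.

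The first step is to upgrade the error to a $\xi$-derivative estimate: $\|\partial_{\xi_j}\eta_{\rho,\xi}\|_{C(\Sigma)} = \mathcal{O}(\rho^2|\log\rho|)$. From the proof of Lemma~\ref{lemb1}, $\eta_{\rho,\xi}$ satisfies the Neumann problem
\begin{equation*}
-\Delta_g \eta_{\rho,\xi} = -2\rho^2 f_\xi + \mathcal{O}(\rho^4)\ \text{in}\ \intsigma,\quad \partial_{\nu_g}\eta_{\rho,\xi} = \mathcal{O}(\rho^4)\ \text{on}\ \partial\Sigma,\quad \int_\Sigma \eta_{\rho,\xi}\,dv_g = h_{\rho,\xi}|\Sigma|_g + \mathcal{O}(\rho^3|\log\rho|),
\end{equation*}
with $\mathcal{O}$-terms uniform in $\xi$. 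Claim~\ref{claim:0} guarantees smooth $\xi$-dependence of $y_\xi$, $\varphi_\xi$, $\chi_\xi$, $H^g(\cdot,\xi)$, $f_\xi$, $F_\xi$, and consequently of $h_{\rho,\xi}$ itself. Differentiating the above system termwise yields an analogous Neumann problem for $\partial_{\xi_j}\eta_{\rho,\xi}$ with forcing in $L^\infty$ of size $\mathcal{O}(\rho^2)$, boundary data of size $\mathcal{O}(\rho^4)$, and integral average $\partial_{\xi_j}h_{\rho,\xi}|\Sigma|_g + \mathcal{O}(\rho^3|\log\rho|)$. A direct differentiation of the explicit formula for $h_{\rho,\xi}$ in Lemma~\ref{lemb1} shows that $\partial_{\xi_j}h_{\rho,\xi} = \mathcal{O}(\rho^2|\log\rho|)$. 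Applying the $L^p$-Schauder theory (as in Lemma~\ref{lem:re_green}) together with Sobolev embedding then yields the claimed $C^0$-bound.

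The second step is to expand $\partial_{\xi_j}\big(\chi_{\xi}(U_{\rho,\xi}-\log 8\rho^2)\big)$ by the product rule:
\begin{equation*}
\partial_{\xi_j}\big(\chi_{\xi}(U_{\rho,\xi}-\log 8\rho^2)\big) = (\partial_{\xi_j}\chi_{\xi})\big(U_{\rho,\xi}-\log(8\rho^2)\big) + \chi_{\xi}\partial_{\xi_j}U_{\rho,\xi}.
\end{equation*}
On the annular support $\{\bar r_\xi/4 \leq |y_\xi| \leq \bar r_\xi/2\}$ of $\partial_{\xi_j}\chi_\xi$, the quantity $|y_\xi|$ is bounded away from both $0$ and $\infty$, so
\begin{equation*}
U_{\rho,\xi}(x) - \log(8\rho^2) = -2\log(\rho^2+|y_\xi(x)|^2) = -4\log|y_\xi(x)| + \mathcal{O}(\rho^2).
\end{equation*}
Hence $(\partial_{\xi_j}\chi_{\xi})(U_{\rho,\xi}-\log 8\rho^2) = -4\log|y_\xi|\,\partial_{\xi_j}\chi_\xi + \mathcal{O}(\rho^2)$ uniformly in $\Sigma$. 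Combining with the first step proves the first expansion. For the interior convergence in $C_{loc}(\Sigma\setminus\{\xi\})$, one uses $G^g(x,\xi) = -\tfrac{4}{\varrho(\xi)}\chi_\xi(x)\log|y_\xi(x)| + H^g(x,\xi)$ away from $\xi$: on a compact set disjoint from $\xi$ we have $\chi_\xi\partial_{\xi_j}U_{\rho,\xi} = -4\chi_\xi\partial_{\xi_j}\log|y_\xi| + \mathcal{O}(\rho^2)$, and these local terms recombine with $\varrho(\xi)\partial_{\xi_j}H^g$ and $-4\log|y_\xi|\partial_{\xi_j}\chi_\xi$ into $\varrho(\xi)\partial_{\xi_j}G^g(x,\xi)$.

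The main obstacle is the first step: the error term $\mathcal{O}(\rho^3|\log\rho|)$ in Lemma~\ref{lemb1} is only a $C^0(\Sigma)$-estimate and cannot be differentiated directly. Overcoming it requires exploiting the smooth $\xi$-dependence of all the ingredients (guaranteed by the refined isothermal coordinates in Claim~\ref{claim:0}) to rederive the elliptic system at the level of $\partial_{\xi_j}\eta_{\rho,\xi}$ and then re-apply the Schauder/Sobolev machinery. Once that is in place, the remaining expansion is a bookkeeping exercise with the explicit formula $U_{\rho,\xi}(x) = \log\frac{8\rho^2}{(\rho^2+|y_\xi(x)|^2)^2}$ and the boundedness of $|y_\xi|$ on the support of $\partial_{\xi_j}\chi_\xi$.
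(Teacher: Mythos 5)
Your overall strategy is the same as the paper's: the paper introduces the remainder $\eta^0_{\rho,\xi}=\partial_{\xi_j}PU_{\rho,\xi}-\chi_\xi\partial_{\xi_j}U_{\rho,\xi}-\varrho(\xi)\partial_{\xi_j}H^g(\cdot,\xi)+4\log|y_\xi|\,\partial_{\xi_j}\chi_\xi$ (which differs from your $\partial_{\xi_j}\eta_{\rho,\xi}$ only by the explicit annulus term $(\partial_{\xi_j}\chi_\xi)\bigl(-2\log(1+\rho^2/|y_\xi|^2)\bigr)=\mathcal{O}(\rho^2)$), checks its Neumann data, its mean value and $-\Delta_g\eta^0=\mathcal{O}(\rho^2)$, and concludes by the $L^p$-estimate and Sobolev embedding — exactly the scheme you outline. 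So the route is not genuinely different; the issue is that one key quantitative step in your plan is not carried by the argument you give.

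The gap is the mean-value estimate. You assert that "termwise differentiation" of the system from Lemma~\ref{lemb1} gives average $\partial_{\xi_j}h_{\rho,\xi}|\Sigma|_g+\mathcal{O}(\rho^3|\log\rho|)$; as you yourself note, the $\mathcal{O}(\rho^3|\log\rho|)$ remainder is only a $C^0$ bound and cannot be differentiated, and controlling $\partial_{\xi_j}h_{\rho,\xi}$ does not control $\partial_{\xi_j}\overline{\eta_{\rho,\xi}}$. The correct route is to differentiate the \emph{exact} identity $\int_\Sigma\eta_{\rho,\xi}\,dv_g=2\int_\Sigma\chi_\xi\log\bigl(1+\rho^2/|y_\xi|^2\bigr)dv_g$, and there the crude bound fails: the resulting integrand is of size $\frac{\rho^2\,|\partial_{\xi_j}|y_\xi|^2|}{|y_\xi|^2(\rho^2+|y_\xi|^2)}\sim\frac{\rho^2}{|y_\xi|(\rho^2+|y_\xi|^2)}$, whose integral is only $\mathcal{O}(\rho)$. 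To reach $\mathcal{O}(\rho^2|\log\rho|)$ one must use $\partial_{\xi_j}y_\xi(x)=-e_j+\mathcal{O}(|y_\xi(x)|)$ (cf.~\eqref{eq:extension_pa_y}) together with the odd-symmetry cancellation $\int_{B^\xi_{r_0}}\frac{y_j}{\rho^2+|y|^2}\,dy=0$ — valid on the full disk for $\xi\in\intsigma$ and on the half-disk only for the tangential index $j=1$ when $\xi\in\partial\Sigma$ (this is precisely where the restriction on $j$ enters; the $y_2$-moment does not vanish on the half-disk, cf.~\eqref{int13}), plus $\nabla\varphi_\xi(0)=0$ resp. $(0,-2k_g(\xi))$ from Claim~\ref{claim:0} to kill the next-order term. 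This is exactly the cancellation the paper invokes when it computes $\int_\Sigma\eta^0_{\rho,\xi}\,dv_g=\mathcal{O}(\rho^2|\log\rho|)$. Without it your elliptic estimate only yields $\partial_{\xi_j}PU_{\rho,\xi}=\chi_\xi\partial_{\xi_j}U_{\rho,\xi}+\varrho(\xi)\partial_{\xi_j}H^g-4\log|y_\xi|\,\partial_{\xi_j}\chi_\xi+\mathcal{O}(\rho)$, which falls short of the stated $\mathcal{O}(\rho^2|\log\rho|)$. The rest of your plan (smooth $\xi$-dependence of the explicit forcing and boundary data, the Taylor expansion $U_{\rho,\xi}-\log(8\rho^2)=-4\log|y_\xi|+\mathcal{O}(\rho^2)$ on the support of $\partial_{\xi_j}\chi_\xi$, and the recombination into $\varrho(\xi)\partial_{\xi_j}G^g$ away from $\xi$) is sound.
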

	\begin{proof}
		$\eta_{\rho,\xi}^0= \partial_{\xi_j}PU_{\rho,\xi}- \chi_{\xi} \partial_{\xi_j} U_{\rho,\xi} -\varrho(\xi) \partial_{\xi_j} H^g(x,\xi)+4\log |y_{\xi}(x)|\partial_{\xi_j}\chi_{\xi}$.
		We observe that for $x\in U(\xi)$
		\begin{equation}\label{eq:extension_pa_y}
			\partial_{\xi_{j'}} y_{\xi}(x)_j= -\delta_{j'j}+ \mathcal{O}(|y_{\xi}(x)|). 
		\end{equation}
		If $\xi\in\intsigma$, $\partial_{\nu_g} \eta_{\rho,\xi}^0\equiv 0. $ If $\xi\in \partial\Sigma$, for any $x=y_{\xi}^{-1}(y)\in U(\xi)$
		\begin{equation*}
			\begin{array}{lll}
				\partial_{ \nu_g}\eta_{\rho,\xi}^0(x)&=& -\partial_{ \nu_g}\left( \chi_{\xi}(x) \partial_{\xi_j} U_{\rho,\xi}(x) +\varrho(\xi) \partial_{\xi_j} H^g(x,\xi)-4\partial_{\xi_j}\chi_{\xi}(x) \log |y_{\xi}(x)|\right)\\
				&=&0,
			\end{array}
		\end{equation*}
		where we applied that $\partial_{\nu_g}|y_{\xi}(x)|^2=0$ for any $x\in U(\xi)\cap\partial\Sigma$.
		\begin{equation*}
			\begin{array}{lcl}
				\int_{\Sigma} \eta_{\rho,\xi}^0dv_g &
				=& -\int_{\Sigma}\chi_{\xi} \partial_{\xi_j}\log\left(\frac{|y_{\xi}|^4}{(\rho^2+|y_{\xi}|^2)^2}\right)dv_g \\
				&\stackrel{\eqref{eq:extension_pa_y}}{=}& 2 \int_{\Sigma} \chi_{\xi} \frac{2\rho^2 y_{\xi}(x)_j}{\rho^2+|y_{\xi}(x)|^2}dv_g(x) + \mathcal{O}\left( \int_{\Sigma} \chi_{\xi} \frac{\rho^2 }{\rho^2+|y_{\xi}(x)|^2} dv_g(x) \right) \\
				&=& \mathcal{O}(\rho^2|\log \rho|),
			\end{array}
		\end{equation*}
		where we applied the fact that $\int_{B_{r_0}^{\xi}} \frac{y_j}{\rho^2+|y|^2} dy=0$, for $j=1,\cdots, \i(\xi).$ 
		It follows d
		\begin{equation*}
			\begin{array}{lcl}
				-\Delta_g\eta_{\rho,\xi}^0&=&  \mathcal{O}(\rho^2). 
			\end{array}
		\end{equation*}
		By $L^p$-estimate (see~\cite{Agmon1959,Wehrheim2004}, for instance)  and Sobolev embedding theorem  theorem , we have  
		\[ \eta^0_{\rho,\xi}=\mathcal{O}(\rho^2|\log\rho|)\]
		in $C(\Sigma)$. 
	\end{proof}

	\begin{lemma}~\label{expansion_rho} As $\rho\rightarrow 0,$
		\begin{equation} \label{eq:rho_p_rho1}
			\rho\partial_{\rho}PU_{\rho,\xi}=\chi_{\xi}(\rho \partial_{\rho}U_{\rho,\xi}-2) -4\rho^2F_{\xi}+h^1_{\rho,\xi}+o(\rho^2),
		\end{equation}
		and 
		\begin{equation}\label{eq:rho_p_rho2}
			\rho^2	\partial^2_{\rho}PU_{\rho,\xi}= \chi_{\xi}(2+\rho^2 \partial^2_{\rho} U_{\rho,\xi} )-4\rho^2F_{\xi
			}+h^2_{\rho,\xi}+o(\rho^2),
		\end{equation}
		in $C(\Sigma)$, where 
		\[h^1_{\rho,\xi}=-\frac{\varrho(\xi)}{|\Sigma|_g}\rho^2\log\rho+\frac{4}{|\Sigma|_g}\left( \int_{\Sigma} \chi_{\xi}\frac{1-e^{-\varphi_{\xi}\circ y_{\xi}^{-1}}}{|y_{\xi}|^2}dv_g-\int_{\Sigma} \frac 1 {r_0}\chi^{\prime}\left(\frac {|y_{\xi}|} {r_0}\right)\frac{\log |y_{\xi}|}{|y_\xi|} dv_g\right)\rho^2\]	and 
		\[ h^2_{\rho,\xi}=	\frac{-\varrho(\xi)\rho^2\log\rho}{|\Sigma|_g}+\frac {4}{|\Sigma|_g}\left( \int_{\Sigma} \chi_{\xi}\frac{1-e^{-\varphi_{\xi}\circ y_{\xi}^{-1}}}{|y_{\xi}|^2}dv_g-\int_{\Sigma} \frac 1 {r_0}\chi^{\prime}\left(\frac {|y_{\xi}|} {r_0}\right)\frac{\log |y_{\xi}|}{|y_\xi|} dv_g- \frac{\varrho(\xi)}{4}\right)\rho^2.\]
		Moreover,	the convergences are uniform for $\xi$ in any compact subset of $\intsigma$ or $\xi\in \partial\Sigma$.
	\end{lemma}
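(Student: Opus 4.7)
The plan is to mimic, step for step, the proof of Lemma~\ref{lemb1} but now for the functions $\rho\partial_\rho PU_{\rho,\xi}$ and $\rho^2\partial_\rho^2 PU_{\rho,\xi}$, exploiting the fact that differentiating~\eqref{eq:projection} in $\rho$ yields analogous Neumann problems with explicit source terms. Concretely, writing $u_k := \rho^k\partial_\rho^k PU_{\rho,\xi}$ and noting that $\int_\Sigma PU_{\rho,\xi}\,dv_g = 0$ is an identity in $\rho$, one has
\begin{equation*}
  \begin{cases}
    -\Delta_g u_k = -\chi_\xi \Delta_g(\rho^k \partial_\rho^k U_{\rho,\xi}) + \overline{\chi_\xi\Delta_g(\rho^k\partial_\rho^k U_{\rho,\xi})} & \text{in } \intsigma,\\
    \partial_{\nu_g} u_k = 0 & \text{on } \partial\Sigma,\\
    \int_\Sigma u_k\,dv_g = 0.
  \end{cases}
\end{equation*}
Using $-\Delta_g U_{\rho,\xi} = e^{-\varphi_\xi\circ y_\xi^{-1}} e^{U_{\rho,\xi}}$, together with the explicit identities $\rho\partial_\rho U_{\rho,\xi}-2 = -4\rho^2/(\rho^2+|y_\xi|^2)$ and $\rho^2\partial_\rho^2 U_{\rho,\xi}+2 = 4\rho^2(\rho^2-|y_\xi|^2)/(\rho^2+|y_\xi|^2)^2$, the source on the right is $O(\rho^2)$ pointwise away from $\xi$ and has a fully explicit structure near $\xi$.

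Next, I introduce the remainder functions
\begin{equation*}
  \eta^{(1)}_{\rho,\xi} := \rho\partial_\rho PU_{\rho,\xi}-\chi_\xi(\rho\partial_\rho U_{\rho,\xi}-2)+4\rho^2 F_\xi,\qquad
  \eta^{(2)}_{\rho,\xi} := \rho^2\partial_\rho^2 PU_{\rho,\xi}-\chi_\xi(2+\rho^2\partial_\rho^2 U_{\rho,\xi})+4\rho^2 F_\xi,
\end{equation*}
and I will show that $\eta^{(k)}_{\rho,\xi}=h^k_{\rho,\xi}+o(\rho^2)$ in $C(\Sigma)$ for $k=1,2$. By the same product-rule computation as in Lemma~\ref{lemb1} and the definition~\eqref{eq:F_xi} of $F_\xi$, one verifies that $-\Delta_g\eta^{(k)}_{\rho,\xi}$ is a constant (absorbed into the mean-value term coming from~\eqref{eq:projection}) plus $O(\rho^4)$ in the $L^\infty$-sense; the cross-term $\langle\nabla\chi_\xi,\nabla(\rho^k\partial_\rho^k U_{\rho,\xi})\rangle_g$, coupled with the $\frac{1}{|y_\xi|^2}\Delta_g\chi_\xi$ part, reproduces exactly $-4\rho^2 f_\xi$ after using $(\rho^2+|y_\xi|^2)^{-1}=|y_\xi|^{-2}+O(\rho^2)$ on the support of $\nabla\chi_\xi$. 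The Neumann boundary condition is handled as in~\eqref{eq:f_i_b}, using $\partial_{\nu_g}|y_\xi|^2\equiv 0$ on $U(\xi)\cap\partial\Sigma$ (from~\eqref{eq:out_normal_derivatives}) together with $\partial_{\nu_g}\chi_\xi\equiv 0$ there.

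The constants $h^k_{\rho,\xi}$ then come from evaluating $\int_\Sigma\eta^{(k)}_{\rho,\xi}\,dv_g$, which, since $\int_\Sigma\rho^k\partial_\rho^k PU_{\rho,\xi}\,dv_g=0$ and $\int_\Sigma F_\xi\,dv_g=0$, reduces to the explicit integrals
\begin{equation*}
  \int_\Sigma \chi_\xi\,\frac{4\rho^2}{\rho^2+|y_\xi|^2}\,dv_g \quad\text{and}\quad
  -\int_\Sigma\chi_\xi\,\frac{4\rho^2(\rho^2-|y_\xi|^2)}{(\rho^2+|y_\xi|^2)^2}\,dv_g.
\end{equation*}
These are computed by splitting into $B^\xi_{r_0}$ and $B^\xi_{2r_0}\setminus B^\xi_{r_0}$ exactly as in the proof of Lemma~\ref{lemb1}, using the radial formulas of Lemma~\ref{lem0} and the expansion of $e^{\varphi_\xi}$ from Claim~\ref{claim:0}. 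The leading $\rho^2\log\rho$ contribution comes from $\int_0^{r_0}\rho^2 s/(\rho^2+s^2)\,ds$ (respectively its second-order analogue), while the remaining integrals convert into the boundary-term coefficients $\int_\Sigma\chi_\xi(1-e^{-\varphi_\xi\circ y_\xi^{-1}})/|y_\xi|^2\,dv_g$ and $\int_\Sigma r_0^{-1}\chi'(|y_\xi|/r_0)\log|y_\xi|/|y_\xi|\,dv_g$ after integration by parts. The discrepancy $h^2_{\rho,\xi}-h^1_{\rho,\xi}=-\varrho(\xi)\rho^2/|\Sigma|_g$ is produced by the contribution of the $(\rho^2-|y_\xi|^2)$ numerator, computed via~\eqref{int01}–\eqref{int02}. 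Having identified the correct mean value, I upgrade to $C(\Sigma)$-convergence by $L^p$-Neumann estimates and Sobolev embedding, exactly as in the last line of the proof of Lemma~\ref{lemb1}; the uniformity in $\xi$ follows from the uniformity already established in Claim~\ref{claim:0} and Lemma~\ref{lem:re_green}.

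The main obstacle is the careful bookkeeping of the $\rho^2\log\rho$ terms: the cross-terms between $\nabla\chi_\xi$ and $\rho^k\partial_\rho^k U_{\rho,\xi}$, which are negligible in the pointwise estimate, give $O(\rho^2\log\rho)$ contributions after averaging, so one must verify that they combine correctly with the average of $\chi_\xi\Delta_g(\rho^k\partial_\rho^k U_{\rho,\xi})$ appearing in the defining projection problem. Once this cancellation is checked (which is essentially a re-run of the computation giving $h_{\rho,\xi}$ in Lemma~\ref{lemb1}, with $\log(1+\rho^2/|y|^2)$ replaced by $\rho^2/(\rho^2+|y|^2)$ or $\rho^2(\rho^2-|y|^2)/(\rho^2+|y|^2)^2$), the two expansions follow in the claimed uniform sense.
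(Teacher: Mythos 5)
Your proposal follows essentially the same route as the paper's proof: differentiate the projection problem in $\rho$, introduce the remainders $\eta^{(k)}$, identify their mean values (which produce $h^1_{\rho,\xi}$, $h^2_{\rho,\xi}$ via exactly the integrals you list and the $(\rho^2-|y_\xi|^2)$ correction for $k=2$), check $\partial_{\nu_g}\eta^{(k)}=\mathcal{O}(\rho^4)$ on $\partial\Sigma$ and $-\Delta_g\eta^{(k)}=\text{const}+o(\rho^2)$ using the cancellation against $-4\rho^2 f_\xi$ absorbed by $F_\xi$, and conclude by $L^p$ estimates and Sobolev embedding. The only cosmetic difference is that you fold $4\rho^2 F_\xi$ into the remainder from the start and state the Laplacian error as $\mathcal{O}(\rho^4)$ where the paper only needs (and only gets) $o(\rho^2)$, which does not affect the conclusion.
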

	\begin{proof}
		We observe that for any $x\in U(\xi)$, we have $\rho\partial_{\rho} U_{\rho,\xi}-2=\frac{-4\rho^2}{\rho^2+|y_{\xi}(x)|^2}.$
		Let $$\eta_{\rho,\xi}^1= \rho\partial_{\rho}PU_{\rho,\xi}-\chi_{\xi}(\rho \partial_{\rho}U_{\rho,\xi}-2).$$
		\begin{equation*}
			\begin{array}{lll}
				\int_{\Sigma} \eta_{\rho,\xi}^1 dv_g &=& \int_{\Sigma}\frac{4\rho^2\chi_{\xi}}{\rho^2+|y_{\xi}|^2}dv_g \\ 
				&=&\int_{B^{\xi}_{2r_0}}\chi(|y|/r_0)\frac{4\rho^2}{\rho^2+|y|^2}dy + 4\rho^2\int_{\Sigma} \chi_{\xi}\frac{1-e^{-\varphi_{\xi}\circ y_{\xi}^{-1}}}{|y_{\xi}|^2}dv_g+\mathcal{O}(\rho^3)
			\end{array}
		\end{equation*}
		By a direct calculation,  \[\int_{B^{\xi}_{2r_0}}\frac{4\rho^2}{\rho^2+|y|^2}dy=-\varrho(\xi)\rho^2\log \rho+\varrho(\xi)\log(2r_0)\rho^2+\mathcal{O}(\rho^4).\]
		By integrating by part, 
		\begin{equation*}
			\begin{array}{lll}
				\int_{B^{\xi}_{2r_0}}\chi(|y|/r_0)\frac{4\rho^2}{\rho^2+|y|^2}dy&=&\varrho(\xi)\rho^2 \int_{r_0}^{2r_0} \chi(\frac s {r_0})\frac{s}{\rho^2+s^2}ds\\
				&=& -\varrho(\xi)\log(2r_0)\rho^2-\varrho(\xi)\rho^2\int_0^{\infty} \frac 1 {r_0}\chi^{\prime}(\frac s {r_0})\log s ds  +\mathcal{O}(\rho^4)
			\end{array}
		\end{equation*}
		Thus
		\begin{equation}
			\label{eq:int_eta1}
			\begin{array}{lll}
				\int_{\Sigma}
				&&\frac{4\rho^2\chi_{\xi}}{\rho^2+|y_{\xi}|^2}dv_g\\
				&
				=&-\varrho(\xi)\rho^2\log\rho+4\left( \int_{\Sigma} \chi_{\xi}\frac{1-e^{-\varphi_{\xi}\circ y_{\xi}^{-1}}}{|y_{\xi}|^2}dv_g-\int_{\Sigma} \frac 1 {r_0}\chi^{\prime}\left(\frac {|y_{\xi}|} {r_0}\right)\frac{\log |y_{\xi}|}{|y_\xi|} dv_g\right)\rho^2+\mathcal{O}(\rho^4).
			\end{array}
		\end{equation}
		For $x\in \partial\Sigma$, we have 
		\[ \partial_{ \nu_g}\eta_{\rho,\xi}^1= \frac{4\rho^2}{|y_{\xi}(x)|^2}\partial_{ \nu_g}\chi_{\xi}+\chi_{\xi} \partial_{\nu_g}\frac{4\rho^2}{\rho^2+|y_{\xi}(x)|^2}+\mathcal{O}(\rho^4). \]
		If $\xi\in \intsigma$, $\partial_{ \nu_g}\eta_{\rho,\xi}^1\equiv0$; if $\xi\in \partial\Sigma$, $\partial_{\nu_g}\frac{4\rho^2}{\rho^2+|y_{\xi}(x)|^2}= \frac{-\partial}{\partial y_2} \frac{4\rho^2}{\rho^2+|y|^2}|_{y=y_{\xi}(x)}\equiv 0$  and 
		$\partial_{\nu_g}\chi\left(\frac {|y_{\xi}(x)|} {r_0}\right)=-\chi'\left(\frac{|y|}{r_0}\right)\frac{y_2}{|y|}|_{y=y_{\xi}(x)}\equiv 0$ for any $x\in U(\xi)\cap \partial\Sigma.$
		So $$\partial_{ \nu_g }\eta_{\rho,\xi}^1=\mathcal{O}(\rho^4),$$ on the boundary $\partial\Sigma$. 
		Applying~\eqref{eq:projection}, we deduce that 
		\begin{equation*}
			\begin{array}{lll}
				-\Delta_g \eta_{\rho,\xi}^1&=& -\Delta _g\left( \rho\partial_{\rho}PU_{\rho,\xi}-\chi_{\xi}(\rho \partial_{\rho}U_{\rho,\xi}-2)  \right)\\
				&=&-4\rho^2\left((\Delta_g\chi_{\xi})\frac{1}{|y_{\xi}(x)|^2}+2 \left\la\nabla \chi_{\xi},\nabla \frac 1{|y_{\xi}(x)|^2}\right\ra_g\right)+\frac{1}{|\Sigma|_g} \int_{\Sigma}\chi_{\xi}\rho\partial_{\rho} \Delta_g U_{\rho,\xi}dv_g+\mathcal{O}(\rho^4). 
			\end{array}
		\end{equation*}
		Considering $-\Delta_g U_{\rho,\xi}= e^{-\varphi_{\xi}\circ y_{\xi}^{-1}} e^{U_{\rho,\xi}}$ in $U(\xi)$, we can derive that  from Corollary~\ref{cor0}
		\begin{equation*}
			\begin{array}{lcl}
				\int_{\Sigma}\chi_{\xi}\rho\partial_{\rho} \Delta_g U_{\rho,\xi}dv_g&=&- \int_{\Sigma} \chi_{\xi} e^{-\varphi_{\xi}\circ y_{\xi}^{-1}}  e^{U_{\rho,\xi}}\rho\partial_{\rho} U_{\rho,\xi} dv_g \\
				&=& -\int_{B^{\xi}} \chi(|y|/r_0)\frac{8\rho^2}{(\rho^2+|y|^2)^2}\left(2-\frac{4\rho^2}{\rho^2+|y|^2}\right)dy\\
				&=&-8\rho^2\int_{\Sigma}\frac 1 {r_0} \chi^{\prime}\left(\frac{|y_{\xi}|}{r_0}\right)\frac{e^{-\varphi_{\xi}\circ y_{\xi}^{-1}} }{ |y_{\xi}|^3 } dv_g+o(\rho^2). 
			\end{array}
		\end{equation*}
		Hence, $	-\Delta_g \eta_{\rho,\xi}^1=-4\rho^2f_{\xi}+o(\rho^2).$
		By $L^p$-estimate (see~\cite{Agmon1959,Wehrheim2004}, for instance)  and Sobolev embedding theorem  theorem , we have 
		\[ \|\eta^1_{\rho,\xi}+4\rho^2F_{\xi}-\overline{\eta^1_{\rho,\xi}}\|_{W^{1+s,p}}\leq C\left(\|-\Delta_g\eta^1_{\rho,\xi}\|_{L^p(\Sigma)}+  \|\partial_{\nu_g}\eta^1_{\rho,\xi}\|_{L^p(\partial\Sigma)}\right)\leq o(\rho^2).\]
		Hence,
		\[ \rho\partial_{\rho}PU_{\rho,\xi}=\chi_{\xi}(\rho \partial_{\rho}U_{\rho,\xi}-2) -4\rho^2F_{\xi}+h^1_{\rho,\xi}+o(\rho^2),\]
		where $h^1_{\rho,\xi}=-\frac{\varrho(\xi)}{|\Sigma|_g}\rho^2\log\rho+\frac{4}{|\Sigma|_g}\left( \int_{\Sigma} \chi_{\xi}\frac{1-e^{-\varphi_{\xi}\circ y_{\xi}^{-1}}}{|y_{\xi}|^2}dv_g-\int_{\Sigma} \frac 1 {r_0}\chi^{\prime}\left(\frac {|y_{\xi}|} {r_0}\right)\frac{\log |y_{\xi}|}{|y_\xi|} dv_g\right)\rho^2.$
		The estimate \eqref{eq:rho_p_rho1} is concluded.
		
		Let $\eta_{\rho,\xi}^2=	\rho^2	\partial^2_{\rho}PU_{\rho,\xi}- \chi_{\xi}(2+\rho^2 \partial^2_{\rho} U_{\rho,\xi} ). $
		We observe that $ 2+\rho^2\partial^2 U_{\rho,\xi}= \frac{4\rho^2(\rho^2-|y_{\xi}(x)|^2)}{(\rho^2+|y_{\xi}(x)|^2)^2}$ in $U(\xi)$. 
		By~\eqref{eqspan} and~\eqref{eq:int_eta1}, we can obtain
		\begin{eqnarray*}
			\int_{\Sigma}\eta_{\rho,\xi}^2 dv_g&=& -\int_{\Sigma} \chi_{\xi} \frac{4\rho^2(\rho^2-|y_{\xi}(x)|^2)}{(\rho^2+|y_{\xi}(x)|^2)^2} dv_g\\
			&=& \int_{\Sigma}\frac{4\rho^2\chi_{\xi}}{\rho^2+|y_{\xi}|^2}dv_g-\rho^2 \int_{\Sigma}\frac{8\rho^2\chi_{\xi}}{(\rho^2+|y_{\xi}|^2)^2} dv_g\\
			&=& -\varrho(\xi)\rho^2\log\rho+4\left( \int_{\Sigma} \chi_{\xi}\frac{1-e^{-\varphi_{\xi}\circ y_{\xi}^{-1}}}{|y_{\xi}|^2}dv_g-\int_{\Sigma} \frac 1 {r_0}\chi^{\prime}\left(\frac {|y_{\xi}|} {r_0}\right)\frac{\log |y_{\xi}|}{|y_\xi|} dv_g- \frac{\varrho(\xi)}{4}\right)\rho^2\\
			&&
			+ \mathcal{O}(\rho^3|\log \rho|).
		\end{eqnarray*}
		Let $h^2_{\rho,\xi}=\frac{-\varrho(\xi)\rho^2\log\rho}{|\Sigma|_g}+\frac {4}{|\Sigma|_g}\left( \int_{\Sigma} \chi_{\xi}\frac{1-e^{-\varphi_{\xi}\circ y_{\xi}^{-1}}}{|y_{\xi}|^2}dv_g-\int_{\Sigma} \frac 1 {r_0}\chi^{\prime}\left(\frac {|y_{\xi}|} {r_0}\right)\frac{\log |y_{\xi}|}{|y_\xi|} dv_g- \frac{\varrho(\xi)}{4}\right)\rho^2.$
		Analogue to calculation of $\partial_{\nu_g}\eta_{\rho,\xi}^1$, we can obtain $
		\partial_{ \nu_g}\eta_{\rho,\xi}^2= \mathcal{O}(\rho^4)$. 
		\begin{equation}
			\begin{array}{lll}
				-\Delta_g\eta_{\rho,\xi}^2&=&-\Delta _g\left( \rho^2\partial^2_{\rho}PU_{\rho,\xi}-\chi_{\xi}(2+\rho^2\partial^2_{\rho}U_{\rho,\xi})  \right)\\
				&=&-4\rho^2\left((\Delta_g\chi_{\xi})\frac{1}{|y_{\xi}|^2}+2 \left\la\nabla \chi_{\xi},\nabla \frac 1{|y_{\xi}|^2}\right\ra_g\right)+\frac{1}{|\Sigma|_g} \int_{\Sigma}\chi_{\xi}\rho^2\partial^2_{\rho} \Delta_g U_{\rho,\xi}dv_g+\mathcal{O}(\rho^4). 
			\end{array}
		\end{equation}
		Corollary~\ref{cor0} yields that 
		\begin{equation*}
			\begin{array}{lcl}
				\rho^2\partial^2_{\rho}	\int_{\Sigma}\chi_{\xi}\Delta_gU_{\rho,\xi}dv_g&=& 
				-	\int_{B_{2r_0}^{\xi}}\chi(|y|/r_0) \left(\frac{16\rho^2}{(\rho^2+|y|^2)^2}- \frac{160\rho^4}{(\rho^2+|y|^2)^3}+ \frac{192\rho^6}{(\rho^2+|y|^2)^4}\right) dy\\
				&=&
				-	2\varrho(\xi)+ 10\varrho(\xi)-8\varrho(\xi)-8 \rho^2\int_{\Sigma}\frac 1 {r_0} \chi^{\prime}\left(\frac {|y_{\xi}|} {r_0}\right)\frac {e^{-\varphi_{\xi} \circ y_{\xi}^{-1}} } {|y_{\xi}|^3} dv_g+o(\rho^2)\\
				&=& -8 \rho^2\int_{\Sigma}\frac 1 {r_0} \chi^{\prime}\left(\frac {|y_{\xi}|} {r_0}\right)\frac {e^{-\varphi_{\xi} \circ y_{\xi}^{-1}} } {|y_{\xi}|^3} dv_g+ o(\rho^2).
			\end{array}
		\end{equation*}
		Thus, $-\Delta_g\eta_{\rho,\xi}^2= -4\rho^2f_{\xi}+o(\rho^2)$. 
		By $L^p$-estimate (see~\cite{Agmon1959,Wehrheim2004}, for instance)  and Sobolev embedding theorem  theorem ,
		\[ \|\eta^2_{\rho,\xi}+4\rho^2F_{\xi}-\overline{\eta^2_{\rho,\xi}}\|_{C(\Sigma)}\leq o(\rho^{2}).\]

		The estimate \eqref{eq:rho_p_rho2} is concluded.
	\end{proof}
	\begin{lemma}~\label{lem:expansion_mix} 
		\begin{equation*}
			\partial_{\xi_j}\partial_{\rho}PU_{\rho,\xi}= \chi_{\xi} \partial_{\xi_j}\partial_{\rho}U_{\rho,\xi}  +{\mathcal{O}}(\rho|\log\rho|),
		\end{equation*}
		in $C(\Sigma)$,
		where $j=1,2$ if $\xi\in\intsigma$; $j=1$ if $\xi\in\partial\Sigma.$
		And 
		\[ 	\partial_{\xi_j} \partial_{\rho}PU_{\rho,\xi}=\varrho(\xi) \partial_{\xi_j} \partial_{\rho}G^g(x,\xi) +{\mathcal{O}}(\rho|\log\rho|),\]
		in $C_{loc}(\Sigma\setminus\{\xi\})$,
		which is uniformly convergent for $\xi$ in any compact subset of $\intsigma$ or $\xi\in \partial\Sigma$.
	\end{lemma}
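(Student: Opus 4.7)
The plan is to follow exactly the same scheme as in Lemma~\ref{expansion_rho} and Lemma~\ref{lem:expansion1st}: introduce an error function, compute its mean value, its Neumann boundary data, and its $g$-Laplacian, then conclude via the standard $L^p$-estimate and Sobolev embedding used in the appendix. Concretely, I would set
\[
\eta_{\rho,\xi}^{12} := \partial_{\xi_j}\partial_{\rho}PU_{\rho,\xi} - \chi_{\xi}\,\partial_{\xi_j}\partial_{\rho}U_{\rho,\xi},
\]
and aim to show $\|\eta_{\rho,\xi}^{12}\|_{C(\Sigma)}=\mathcal{O}(\rho|\log\rho|)$. Throughout the argument, the crucial fact is that $\nabla\chi_\xi$ and $\Delta_g\chi_\xi$ are supported on the annulus where $|y_\xi|\sim r_0$, and in that region $\partial_\rho U_{\rho,\xi}=-\tfrac{4\rho}{\rho^2+|y_\xi|^2}=\mathcal{O}(\rho)$, together with $\partial_{\xi_j}\partial_\rho U_{\rho,\xi}=\mathcal{O}(\rho)$ and $\partial_\rho\Delta_g U_{\rho,\xi}=\mathcal{O}(\rho)$.

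First, I would compute $-\Delta_g\eta_{\rho,\xi}^{12}$. Differentiating \eqref{eq:projection} with respect to $\xi_j$ and $\rho$ and subtracting $-\Delta_g(\chi_\xi\partial_{\xi_j}\partial_\rho U_{\rho,\xi})$, all terms containing $\chi_\xi\Delta_g U_{\rho,\xi}$ (and its derivatives) cancel, leaving only cross terms carrying at least one factor of $\partial_{\xi_j}\chi_\xi$, $\nabla\chi_\xi$ or $\Delta_g\chi_\xi$, plus the constant mean-value correction. By the annulus observation above, each such term is $\mathcal{O}(\rho)$ pointwise, so $-\Delta_g\eta_{\rho,\xi}^{12}=\mathcal{O}(\rho)$ in any $L^p(\Sigma)$.

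Next, I would verify the boundary condition. Since Claim~\ref{claim:0} gives $\partial_{\nu_g}|y_\xi|^2=0$ on $\partial\Sigma\cap U(\xi)$ and $\partial_{\nu_g}\chi_\xi=0$ on $\partial\Sigma$, and since $\partial_{\nu_g} PU_{\rho,\xi}=0$ by \eqref{eq:projection}, both $\partial_{\nu_g}\partial_{\xi_j}\partial_\rho PU_{\rho,\xi}$ and $\partial_{\nu_g}(\chi_\xi\partial_{\xi_j}\partial_\rho U_{\rho,\xi})$ vanish up to $\mathcal{O}(\rho^4)$ terms, exactly as in the computations for $\eta^1_{\rho,\xi}$ and $\eta^2_{\rho,\xi}$ in Lemma~\ref{expansion_rho}. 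For the mean value, I would use $\int_\Sigma\partial_{\xi_j}\partial_\rho PU_{\rho,\xi}\,dv_g=0$ and apply Corollary~\ref{cor0} (or the explicit integrals in Lemma~\ref{lem0} combined with differentiating in $\xi_j$ the expansion \eqref{eq:int_eta1} for $\int_\Sigma\tfrac{4\rho^2\chi_\xi}{\rho^2+|y_\xi|^2}\,dv_g$) to extract $\int_\Sigma\chi_\xi\partial_{\xi_j}\partial_\rho U_{\rho,\xi}\,dv_g=\mathcal{O}(\rho|\log\rho|)$; this is the place where the logarithm enters, for the same reason it did in $h^1_{\rho,\xi}$. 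Standard $L^p$-regularity then yields $\|\eta_{\rho,\xi}^{12}-\overline{\eta_{\rho,\xi}^{12}}\|_{W^{1+s,p}}=\mathcal{O}(\rho)$ and Sobolev embedding promotes this to $C(\Sigma)$ with the stated bound. The $C_{loc}(\Sigma\setminus\{\xi\})$ assertion then follows immediately, because $\chi_\xi\equiv0$ away from $U(\xi)$ and $\partial_\rho G^g(x,\xi)\equiv 0$ (so that term is vacuous), leaving $\partial_{\xi_j}\partial_\rho PU_{\rho,\xi}=\eta_{\rho,\xi}^{12}=\mathcal{O}(\rho|\log\rho|)$ in that region.

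The main obstacle, as in the previous appendix lemmas, is the bookkeeping for $-\Delta_g\eta_{\rho,\xi}^{12}$: mixing one $\xi_j$-derivative with one $\rho$-derivative on a product $\chi_\xi\cdot U_{\rho,\xi}$ produces several cross terms (involving $\partial_{\xi_j}\chi_\xi\cdot\partial_\rho\Delta_g U_{\rho,\xi}$, $\nabla\chi_\xi\cdot\nabla\partial_{\xi_j}\partial_\rho U_{\rho,\xi}$, etc.), and one must check case by case that each is genuinely supported where $|y_\xi|\sim r_0$ so that the $\mathcal{O}(\rho)$ bound applies. The only subtle point is the appearance of the $|\log\rho|$ factor in the mean-value integral, which forces the final error to be $\mathcal{O}(\rho|\log\rho|)$ rather than $\mathcal{O}(\rho)$; it arises exactly as in the computation of $h^1_{\rho,\xi}$ and $h^2_{\rho,\xi}$ through the integral $\int_\Sigma\tfrac{\rho^2\chi_\xi}{\rho^2+|y_\xi|^2}\,dv_g$.
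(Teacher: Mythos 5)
Your proposal is correct and follows essentially the same route as the paper: the same error function $\eta_{\rho,\xi}^3=\partial_{\xi_j}\partial_{\rho}PU_{\rho,\xi}-\chi_{\xi}\partial_{\xi_j}\partial_{\rho}U_{\rho,\xi}$, the same three checks (the Laplacian is $\mathcal{O}(\rho)$ since all cross terms carry a derivative of $\chi_\xi$ supported where $|y_\xi|\sim r_0$, the Neumann data vanish via $\partial_{\nu_g}|y_\xi|^2=0$ and $\partial_{\nu_g}\chi_\xi=0$, and the mean value is $\mathcal{O}(\rho|\log\rho|)$, which is where the logarithm enters), concluded by the $L^p$-estimate and Sobolev embedding. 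The only cosmetic difference is that the paper extracts the $|\log\rho|$ from $\tfrac1\rho\int\chi_\xi\tfrac{\rho^2|y_\xi|^2}{(\rho^2+|y_\xi|^2)^2}\,dv_g$ rather than from the $h^1_{\rho,\xi}$-type integral you cite, but these are the same $\rho^2|\log\rho|$-size quantity.
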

	\begin{proof}
		$\eta_{\rho,\xi}^3= \partial_{\xi_j}\partial_{\rho}PU_{\rho,\xi}- \chi_{\xi} \partial_{\xi_j}\partial_{\rho} U_{\rho,\xi}$.
		If $\xi\in\intsigma$, $\partial_{\nu_g} \eta_{\rho,\xi}^3\equiv 0. $ If $\xi\in \partial\Sigma$, for any $x=y_{\xi}^{-1}(y)\in U(\xi)$
		\begin{equation*}
			\begin{array}{lll}
				\partial_{ \nu_g}\eta_{\rho,\xi}^0(x)&=& -\partial_{ \nu_g}\left( \chi_{\xi}(x) \partial_{\xi_j}\partial_{\rho} U_{\rho,\xi}(x)\right)=0,
			\end{array}
		\end{equation*}
		where we applied that $\partial_{\nu_g}|y_{\xi}(x)|^2=0$ for any $x\in U(\xi)\cap\partial\Sigma$.
		\begin{equation*}
			\begin{array}{lcl}
				\int_{\Sigma} \eta_{\rho,\xi}^3dv_g &
				=& -\int_{\Sigma}\chi_{\xi}\frac {4\rho\partial_{\xi_j}|y_{\xi}(x)|^2}{(\rho^2+|y_{\xi}(x)|^2)^2}dv_g \\
				&\stackrel{\eqref{eq:extension_pa_y}}{=}&\frac 1 \rho\left( -\int_{\Sigma} \chi_{\xi} \frac{4\rho^2 y_{\xi}(x)_j}{(\rho^2+|y_{\xi}(x)|^2)^2}dv_g(x) + \mathcal{O}\left( \int_{\Sigma} \chi_{\xi} \frac{\rho^2|y_{\xi}(x)|^2 }{(\rho^2+|y_{\xi}(x)|^2)^2} dv_g(x) \right) \right)\\
				&=& \mathcal{O}(\rho|\log \rho|),
			\end{array}
		\end{equation*}
		where we applied the fact that $\int_{B_{r_0}^{\xi}} \frac{y_j}{(\rho^2+|y|^2)} dy=0$. 
		\begin{equation*}
			\begin{array}{lcl}
				-\Delta_g\eta_{\rho,\xi}^3&=&-\partial_{\xi_j}\chi_{\xi}\partial_{\rho}\Delta_g U_{\rho,\xi}+ \overline{\partial_{\xi_j}\chi_{\xi}\partial_{\rho}\Delta_g U_{\rho,\xi}} \\
				&=& \mathcal{O}(\rho). 
			\end{array}
		\end{equation*}
		By $L^p$-estimate and Sobolev embedding theorem  theorem , we have  
		\[ \eta^3_{\rho,\xi}=\mathcal{O}(\rho|\log\rho|).\]
		in $C(\Sigma)$.  
	\end{proof}
	
	%%%%%%%%%%%%%%%%%%%%%%%%
	Analogue to the proof of Lemma~\ref{lemb1}, we can get the expansions of $PZ_{ij}$ (see Section~\ref{sec_linear}) for $i=1,2,\cdots, m$ and $j=0,1,\cdots, \i(\xi_i)$. 
	\begin{lemma}~\label{lemb3}
		{\it
			\[PZ_{i0}(x)=\chi_{i}(x) \left(Z_{i0}(x) +2\right) +{\mathcal{O}}(\rho^2|\log \rho |) = 4\chi_i(x)\frac{\rho_i^2}{ \rho_i^2 +|y_{\xi_i}(x)|^2}+{ \mathcal{O}} (\rho^2|\log \rho |), \] 
			in $C(\Sigma)$  as $\rho\rightarrow 0$. 
			And 
			\[ PZ_{i0}(x)= \mathcal{O}(\rho^2|\log \rho|), \]
			in $C_{loc}(\Sigma \setminus\{\xi\})$ as $\rho\rightarrow 0$.
			\[PZ_{ij}(x)
			= \chi_{i}(x)Z_{ij}(x)+ {\mathcal{O}}(\rho),
			\] 
			in $C(\Sigma)$ as $\rho\rightarrow 0$, and 
			\[PZ_{ij}(x)=\mathcal{O}(\rho), \]
			in $C_{loc}(\Sigma\setminus\{\xi_i\})$ as $\rho\rightarrow 0$, where   $j=1,\cdots, \i(\xi_i)$. 
			In addition, the convergences above are 
			uniform for $\xi_i$ in any compact subset of $\intsigma$ or  $\xi_i\in \partial \Sigma$. }
		
	\end{lemma}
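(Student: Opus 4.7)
The plan is to adapt the strategy from Lemma~\ref{lemb1}, handling $PZ_{i0}$ and $PZ_{ij}$ separately. For each projection I will introduce a remainder function, control its Laplacian, its outward normal derivative on $\partial\Sigma$, and its average over $\Sigma$, then invoke $L^p$-elliptic theory together with Sobolev embedding to conclude the desired $C(\Sigma)$ expansion. The $C_{loc}(\Sigma\setminus\{\xi_i\})$ estimates are immediate consequences since $\chi_i(Z_{i0}+2)$ and $\chi_i Z_{ij}$ each decay pointwise to zero off $\xi_i$ at the required rate.

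For $PZ_{i0}$ I would set $\eta^0_i := PZ_{i0} - \chi_i(Z_{i0}+2)$ and exploit the identity $Z_{i0}+2 = \frac{4\rho_i^2}{\rho_i^2+|y_{\xi_i}|^2}$. From the defining projection equation, $-\Delta_g \eta^0_i$ consists of a localized expression of the form $-2\rho_i^2\bigl(\tfrac{1}{|y_{\xi_i}|^2}\Delta_g\chi_i + 2\langle\nabla\chi_i, \nabla\tfrac{1}{|y_{\xi_i}|^2}\rangle_g\bigr)$ together with the constant correction $\overline{\chi_i\Delta_g Z_{i0}}$, both of order $\rho^2$. The boundary trace reduces to $\partial_{\nu_g}\eta^0_i = \mathcal{O}(\rho^4)$ since $\partial_{\nu_g}\chi_i \equiv 0$ and $\partial_{\nu_g}|y_{\xi_i}|^2 \equiv 0$ on $\partial\Sigma \cap U(\xi_i)$, by the refined isothermal coordinates of Claim~\ref{claim:0}. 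The average $\int_\Sigma\eta^0_i\,dv_g$ can be computed by the same splitting as in \eqref{eq:int_eta1}, producing a term of size $\rho^2|\log\rho|$. Standard $L^p$-estimates then yield $\|\eta^0_i - \overline{\eta^0_i}\|_{C(\Sigma)} = \mathcal{O}(\rho^2|\log\rho|)$, establishing the first expansion.

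For $PZ_{ij}$ with $j=1,\ldots,\i(\xi_i)$, I will set $\eta^j_i := PZ_{ij} - \chi_i Z_{ij}$, where $Z_{ij}(x) = \frac{4\rho_i (y_{\xi_i})_j}{\rho_i^2+|y_{\xi_i}|^2}$ has odd parity in the $y_j$-variable. This oddness causes the leading contribution to $\int_\Sigma\chi_i Z_{ij}\,dv_g$ to vanish; the remaining perturbation coming from $e^{\varphi_i}-1$ is controlled using $\nabla\varphi_\xi(0)=0$ (interior) and $\partial_{y_1}\varphi_\xi(0)=0$ (boundary) from Claim~\ref{claim:0}, giving $\int_\Sigma\eta^j_i\,dv_g = \mathcal{O}(\rho)$. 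The boundary relation $\partial_{\nu_g}\eta^j_i \equiv 0$ holds exactly, since for $\xi_i\in\partial\Sigma$ only $j=1$ is considered and $\partial_{\nu_g}(y_{\xi_i})_1 = 0$ on $\partial\Sigma\cap U(\xi_i)$. The source $-\Delta_g\eta^j_i$ is again a localized expression of order $\rho$, and the $L^p$-Sobolev machinery delivers $\|\eta^j_i\|_{C(\Sigma)} = \mathcal{O}(\rho)$.

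The main obstacle is the boundary case $\xi_i\in\partial\Sigma$: the half-disk geometry breaks the full rotational symmetry available in the interior, so the cancellations underpinning the estimate of $\int_\Sigma \chi_i Z_{ij}\,dv_g$ must be tracked with care. However, because the refined coordinates flatten the conformal factor in the tangential direction to first order, the relevant integrands remain odd in $y_1$ up to $\mathcal{O}(|y|)$ terms, and the bookkeeping already performed in Lemma~\ref{lemb1} and Lemma~\ref{expansion_rho} transfers directly. No genuinely new analytic input is required beyond what has been developed in the preceding lemmas.
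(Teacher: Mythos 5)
Your proposal follows essentially the same route as the paper's proof: the same remainder functions $PZ_{i0}-\chi_i(Z_{i0}+2)$ and $PZ_{ij}-\chi_i Z_{ij}$, the same control of the localized Laplacian, the Neumann trace (using $\partial_{\nu_g}\chi_i=0$ and $\partial_{\nu_g}|y_{\xi_i}|^2=0$ on $\partial\Sigma$), and the mean value, concluded by $L^p$-estimates plus Sobolev embedding, with the $C_{loc}(\Sigma\setminus\{\xi_i\})$ statements read off from the decay of $\chi_i(Z_{i0}+2)$ and $\chi_iZ_{ij}$. The only blemishes are cosmetic (the localized term for $\eta^0_i$ carries the coefficient $4\rho_i^2$ rather than $-2\rho_i^2$, and the boundary trace is in fact $O(\rho^2)$ or better), neither of which affects the $\mathcal{O}(\rho^2|\log\rho|)$ and $\mathcal{O}(\rho)$ conclusions.
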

	\begin{proof}
		Recall that for any $x\in U_{2r_0}(\xi_i)$, 
		$ Z_{i0}(x)=2
		\frac{(\rho_i^2- |y_{\xi_i}(x)|^2)}{\rho_i^2+ |y_{\xi_i}(x)|^2}$ and $\chi_i=\chi(|y_{\xi}(x)|/r_{0})$.
		Let $$\eta_{i} = PZ_{i0}-\chi_i\left( Z_{i0}+2\right). $$
		If $\xi_i\in\intsigma$, $\partial_{\nu_g} \eta_i\equiv 0$ on the boundary $\partial\Sigma.$
		If $\xi_i\in \partial\Sigma$, 
		for $x\in\partial \Sigma$, 
		\begin{equation*}
			\begin{array}{lll}
				\partial_{ \nu_g } \eta_i &=& -(Z_{i0}+2)\partial_{ \nu_g } \chi_i+ \chi_i \partial_{ \nu_g } Z_{i0}\\
				&=& -  \chi_i(x)\frac{ 8\rho^2_i| y_{\xi_i}(x)|^2}{( | y_{\xi_i}(x)|^2+\rho_i^2)^2 } \partial_{ \nu_g } \log | y_{\xi_i}(x)|+\mathcal{O}(\rho^2),
			\end{array}
		\end{equation*}
		where we applied $\partial_{ \nu_g } \chi_{\xi}=0$ on  $U_{r_0}(\xi)$. For any $x\in \partial\Sigma\cap U_{2r_0}(\xi_i)$, 
		\begin{equation*}
			\begin{array}{lll}
				\partial_{ \nu_g } \log {|y_{\xi_i}(x)|}&=& \left. -\frac{\partial}{\partial y_2} \log|y| \right|_{y=y_{\xi}(x)}=\left.  -\frac{ 2y_2}{|y|^2}\right|_{y=y_{\xi}(x)}=0.
			\end{array}
		\end{equation*}
		Hence, $ \partial_{ \nu_g }\eta_i=\mathcal{O}(\rho^2)$  in $\partial \Sigma.$
		\begin{equation*}
			\begin{array}{ll}
				\left|	\int_{\Sigma} \chi_{i} \left( Z_{i0}(x) +2\right) dv_g\right| &=4
				\int_{\Sigma}\chi_{i} \frac{ \rho^2_i}{ |y_{\xi_i}(x)|^2 +\rho_i^2 } dv_g(x)\\
				&= 2 \rho^2_i \int_{ B_{2r_0}^{\xi}} \chi(|y|) \frac{1}{|y|^2+\rho_i^2} e^{{\varphi}_{\xi_i} (y)} dy\\
				&\leq \mathcal{O}\left(2\rho^2_i \int_{ B_{r_0}^{\xi}} \frac{1}{|y|^2+\rho_i^2}  dy\right) +\mathcal{O}(\rho)\\
				&= \mathcal{O}(\rho^2|\log \rho|).
			\end{array}
		\end{equation*}
		By Lemma~\ref{lem0},
		\begin{equation*}
			\begin{array}{lcl}
				-\Delta_g \eta_{i}(x)&=&  -\Delta_g\left(  PZ_{i0} - \chi_{i}\left( Z_{i0}+2\right)   \right)\\
				&=&
				\Delta_g \chi_{i} \left(Z_{i0}+2\right) +2 \left\la \nabla \chi_{i}, \nabla Z_{i0} \right\ra_g + \overline{ \chi_i\Delta_g Z_{i0}}  \\
				&=& \frac 1 {|\Sigma|_g }\int_{B^{\xi_i }_{r_0} } 2 \Delta\frac{\rho_i^2- |y|^2}{\rho_i^2+ |y|^2} dy  +  \mathcal{O}(\rho^2)\\
				&=&\frac 1 {|\Sigma|_g }\int_{B_{r_0}^{\xi_i}} \frac{16\rho^2_i(|y|^2-\rho_i^2)}{(\rho_i^2+|y|^2)^3} dy + \mathcal{O}(\rho^2)
				\\
				&\stackrel{\eqref{int01}\text{ and } \eqref{int02}}{=}&  \mathcal{O}(\rho^2).
			\end{array}
		\end{equation*}
		By the $L^p$-estimate and the Sobolev embedding theorem  theorem , 
		$\eta_{i}= {\mathcal{O}}(\rho^2|\log \rho|)$ in $C(\Sigma).$
		%%%%%%%%%%%%%%%%%%%%%
		
		Recall that for any $x\in U_{2r_0}(\xi_i)$, 
		$ Z_{ij}(x)=
		\frac{4 \rho_i y_{\xi_i}(x)_j }{\rho_i^2+ |y_{\xi_i}(x)|^2}$. 
		Let $\eta^1_i= \chi_{i}(x)Z_{ij}(x).$
		If $\xi_i\in\intsigma$, $\partial_{\nu_g} \eta^1_i\equiv 0$ on the boundary $\partial\Sigma.$
		If $\xi_i\in \partial\Sigma$, 
		for $x\in\partial \Sigma$, 
		\begin{equation*}
			\begin{array}{lll}
				\partial_{ \nu_g } \eta^1_i &=& -Z_{ij}\partial_{ \nu_g } \chi_i+ \chi_i \partial_{ \nu_g } Z_{ij}\\
				&=&\mathcal{O}(\rho),
			\end{array}
		\end{equation*}
		where we applied $\partial_{ \nu_g } \chi_{\xi}=0$ on  $U_{r_0}(\xi)$. 
		
		\begin{equation*}
			\begin{array}{ll}
				\int_{\Sigma} \chi_{i}  Z_{ij}(x) dv_g &=
				\int_{\Sigma}\chi_{i} \frac{4 \rho_i y_{\xi_i}(x)_j }{\rho_i^2+ |y_{\xi_i}(x)|^2} dv_g(x)\\
				&= 4 \rho_i \int_{ B_{r_0}^{\xi_i}}  \frac{y_j}{|y|^2+\rho_i^2} e^{{\varphi}_{\xi_i} (y)} dy +\mathcal{O}(\rho)\\
				&= \mathcal{O}(\rho).
			\end{array}
		\end{equation*}
		By~\eqref{int13}, for any $i=1,\cdots,m$ and $j=1,\cdots, \i(\xi_i)$, we have 
		\begin{equation*}
			\begin{array}{lcl}
				-\Delta_g \eta^1_{i}(x)&=&  -\Delta_g\left(  PZ_{ij} - \chi_{i} Z_{ij}\right)\\
				&=&
				Z_{ij}\Delta_g \chi_{i}  +2 \left\la \nabla \chi_{i}, \nabla Z_{ij} \right\ra_g + \overline{ \chi_i\Delta_g Z_{ij}}  \\
				&=& \frac 1 {|\Sigma|_g }\int_{B^{\xi_i }_{r_0} } \Delta\frac{4 \rho_i y_j }{\rho_i^2+ |y|^2} dy  +  \mathcal{O}(\rho)\\
				&=&\frac 1 {|\Sigma|_g }\int_{B_{r_0}^{\xi_i}} \frac{32\rho^3_iy_j}{(\rho_i^2+|y|^2)^3} dy + \mathcal{O}(\rho)
				\\
				&=&  \mathcal{O}(\rho).
			\end{array}
		\end{equation*}
		
		By the $L^p$-estimate and the Sobolev embedding theorem  theorem , 
		$\eta^1_{i}= \mathcal{O}(\rho)$ in $C(\Sigma)$. 
	\end{proof}
	
	The asymptotic ``orthogonality'' properties of $PZ_{ij}$ as $\rho\rightarrow 0$. 
	\begin{lemma}~\label{lem4}
		For any $i,l=1,2,\cdots,m, j=0,1,\cdots, \i(\xi_i)$ and $t=0,1,\cdots, \i(\xi_l)$, 
		\[	\langle PZ_{ij}, PZ_{lt}\rangle_g= \frac{4\varrho(\xi_i)}{3}\delta_{il}\delta_{jt}+\mathcal{O}(\rho).\]  
	\end{lemma}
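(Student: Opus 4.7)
The plan is to start from $\langle PZ_{ij}, PZ_{lt}\rangle_g=\int_\Sigma\langle\nabla PZ_{ij},\nabla PZ_{lt}\rangle_g\,dv_g$ and integrate by parts. The Neumann condition $\partial_{\nu_g}PZ_{lt}=0$ kills the boundary term, and the defining PDE $-\Delta_g PZ_{lt}=-\chi_l\Delta_g Z_{lt}+\overline{\chi_l\Delta_g Z_{lt}}$ combined with $\int_\Sigma PZ_{ij}\,dv_g=0$ gives
\[
\langle PZ_{ij},PZ_{lt}\rangle_g=-\int_\Sigma PZ_{ij}\,\chi_l\,\Delta_g Z_{lt}\,dv_g.
\]
Next, since $z_t$ satisfies $-\Delta z_t=8(1+|\cdot|^2)^{-2}z_t$ on $\RR_l$ and $\Delta_g=e^{-\varphi_l\circ y_{\xi_l}^{-1}}\Delta$ in isothermal coordinates, one obtains the pointwise identity $-\Delta_g Z_{lt}=e^{-\varphi_l\circ y_{\xi_l}^{-1}}e^{U_l}Z_{lt}$ on $U_{2r_0}(\xi_l)$, so
\[
\langle PZ_{ij},PZ_{lt}\rangle_g=\int_\Sigma\chi_l\,e^{-\varphi_l\circ y_{\xi_l}^{-1}}\,e^{U_l}\,Z_{lt}\,PZ_{ij}\,dv_g.
\]

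I would then split into cases. For $i\neq l$, Lemma~\ref{lemb3} gives $PZ_{ij}(x)=\mathcal{O}(\rho)$ (or $\mathcal{O}(\rho^2|\log\rho|)$ if $j=0$) uniformly on $U_{2r_0}(\xi_l)$, which is disjoint from $\xi_i$; combined with $\int_\Sigma\chi_l e^{-\varphi_l\circ y_{\xi_l}^{-1}}e^{U_l}\,dv_g=\varrho(\xi_l)+\mathcal{O}(\rho^2)$ from Corollary~\ref{cor0} applied with $f\equiv1$, the whole integral is $\mathcal{O}(\rho)$. For $i=l$, substitute the global expansion $PZ_{ij}=\chi_i(Z_{ij}+2\delta_{j0})+E_j$ with $\|E_j\|_\infty=\mathcal{O}(\rho)+\mathcal{O}(\rho^2|\log\rho|)\delta_{j0}$; the error term contributes $\mathcal{O}(\rho)$ again. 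Changing variables $y=y_{\xi_i}(x)$ (so $dv_g=e^{\varphi_i(y)}\,dy$) and rescaling $y=\rho_i z$ turns the leading term into
\[
\int_{\frac1{\rho_i}B^{\xi_i}_{2r_0}}\chi(\rho_i|z|/r_0)^2\,\frac{8}{(1+|z|^2)^2}\,\bigl(z_j(z)+2\delta_{j0}\bigr)\,z_t(z)\,dz,
\]
and dominated convergence (with integrable dominator $C(1+|z|^2)^{-2}$) sends this to
\[
\int_{\RR_i}\frac{8\,z_j\,z_t}{(1+|z|^2)^2}\,dz+2\delta_{j0}\int_{\RR_i}\frac{8\,z_t}{(1+|z|^2)^2}\,dz.
\]

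The final step is to evaluate these two explicit integrals. The second one vanishes for every $t$: for $t\geq1$ by the reflection $z_t\mapsto-z_t$, which leaves both $\RR_i\in\{\RR^2,\RR^2_+\}$ and the radial weight invariant (in the boundary case only $t=1$ is tangential, so this reflection is permissible), and for $t=0$ by the substitution $u=1+r^2$ which gives $\int_1^\infty(2u^{-3}-u^{-2})\,du=0$. For the first integral, the off-diagonal terms $j\neq t$ vanish by the same parity argument, while for $j=t$ one computes $\int_{\RR_i}\frac{8z_0^2}{(1+|z|^2)^2}dz$ and $\int_{\RR_i}\frac{8z_j^2}{(1+|z|^2)^2}dz$ ($j\geq1$) in polar coordinates with $u=1+r^2$, obtaining $\frac{32\pi}{3}$ on $\RR^2$ and $\frac{16\pi}{3}$ on $\RR^2_+$, i.e.\ exactly $\frac{4\varrho(\xi_i)}{3}$ in both settings.

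The main obstacle is purely bookkeeping: one must keep straight the two geometries ($\xi_i\in\intsigma$ vs.\ $\xi_i\in\partial\Sigma$) and the asymmetry between $j=0$ (where the constant shift $+2$ in the expansion of $PZ_{i0}$ produces an extra piece and the remainder is $\mathcal{O}(\rho^2|\log\rho|)$) and $j\geq1$. The algebraic cancellations then ensure that the extra $2\delta_{j0}$ term contributes nothing to the leading order, and the diagonal integrals match $\tfrac{4\varrho(\xi_i)}{3}$ in both the interior and boundary cases. No deeper estimate than those already appearing in Lemma~\ref{lemb3} and Corollary~\ref{cor0} is needed.
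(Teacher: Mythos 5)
Your proposal is correct and follows essentially the same route as the paper: integrate by parts against the defining equation of one projection, insert the expansion of the other projection from Lemma~\ref{lemb3}, rescale to reduce everything to the Euclidean integrals of $z_jz_t$ and $z_t$ against $8(1+|z|^2)^{-2}$, and finish by parity plus the explicit values $\tfrac{32\pi}{3}$ (interior) and $\tfrac{16\pi}{3}$ (boundary); the only cosmetic difference is that you expand $PZ_{ij}$ while the paper expands $PZ_{lt}$. One tiny refinement: to get the stated $\mathcal{O}(\rho)$ rather than $o(1)$ from your limiting step, replace the dominated-convergence appeal by the tail estimate $\int_{|z|\ge r_0/\rho_i}C(1+|z|^2)^{-2}dz=\mathcal{O}(\rho^2)$, which your explicit dominator already provides.
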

	\begin{proof}
		Applying Lemma~\ref{lemb3} with the symmetric property, we deduce that  
		\begin{equation*} \begin{array}{lll}
				\langle PZ_{ij}, PZ_{lt}\rangle_g&=&- \int_{\Sigma} \chi_{i}(x)\Delta_g Z_{ij}(x) P Z_{lt}dv_g(x)\\
				&=&\begin{cases}
					\delta_{il}\int_{B^{\xi_i}_{r_0}}\frac{8\rho_i^2}{( \rho_i^2+|y|^2)^2}z_{j}\left(\frac{|y|}{\rho_i}\right)\left(  z_{t}\left(\frac{|y|}{\rho_i}\right)+ \mathcal{O}(\rho)\right)    +\mathcal{O}(\rho), &t\neq 0\\
					\delta_{il}\int_{B^{\xi_i}_{r_0}}\frac{8\rho_i^2}{( \rho_i^2+|y|^2)^2}z_{j}\left(\frac{|y|}{\rho_i}\right)\left(  z_{t}\left(\frac{|y|}{\rho_i}\right)+2+ \mathcal{O}(\rho)\right)    +\mathcal{O}(\rho), &t=0
				\end{cases} \\
				&=&\frac{\varrho(\xi_i)}{\pi}D_j\delta_{il}\delta_{jt}  + \mathcal{O}(\rho),
		\end{array}\end{equation*}
		where $\delta_{il}$'s are the Kronecker symbols and  $D_0= 4\int_{\RR^2} \frac{(1-|y|^2)^2}{(1+|y|^2)^4}dy, D_1=D_2=8 \int_{\RR^2} \frac{|y|^2}{(1+|y|^2)^4}dy$.
		Calculating the integrals, we derive that $D_j=\frac{4\pi}{3}$ for any $j=0,\cdots,2.$
	\end{proof}
	%	\cleardoublepage
	\newpage
	
	% Reference
	\bibliographystyle{plain} 
	\bibliography{name} 
	\vspace{2mm}\noindent
	{\sc  Mohameden Ahmedou}\\
	Mathematisches Institut,
Universit\"{a}t Giessen,
Arndtstr.\ 2,
35392 Giessen, Germany\\
	\href{mailto:Mohameden.Ahmedou@math.uni-giessen.de}{Mohameden.Ahmedou@math.uni-giessen.de}
	\\
	{\sc  Thomas Bartsch}\\
	Mathematisches Institut,
	Universit\"{a}t Giessen,
	Arndtstr.\ 2,
	35392 Giessen, Germany\\
	\href{mailto:Thomas.Bartsch@math.uni-giessen.de}{Thomas.Bartsch@math.uni-giessen.de}
	\\
	{\sc Zhengni Hu}\\
School of Mathematical Sciences,
 Shanghai Jiao Tong University,
  800 Dongchuan RD, Minhang District, Shanghai, 200240, China
	\\
	\href{mailto:Zhengni_Hu2021@outlook.com}{zhengni\_hu2021@outlook.com}
	\\
	
	%\include{Hu_main_result}
	%\include{Hu_prelim}
	%\include{Hu_reduction}
	%\include{Hu_expansion}
	%\include{Hu_proof_main_result}
	%	\cleardoublepage
	%\include{Hu_appendix_full}
	
	%\cleardoublepage
	%\newpage
	%Reference

\end{document}